\xpatchcmd{\@thm}{\thm@headpunct{.}}{\thm@headpunct{}}{}{}
\newtheorem{definition}{Definition}[section]
\newtheorem{assumption}{Assumption}
\newtheorem{theorem}{Theorem}[section]
\newtheorem{lemma}{Lemma}[section]
\newtheorem{remark}{Remark}[section]
\newtheorem{corollary}{Corollary}[section]
\newtheorem{proposition}{Proposition}[section]
\newtheorem{example}{Example}
\xpatchcmd{\@thm}{\thm@headpunct{.}}{\thm@headpunct{}}{}{}
\newcommand{\bsigma}{\mathbf{\Sigma}}
\newcommand{\blambda}{\mathbf{\Lambda}}
\DeclareMathOperator{\id}{id}
\DeclareMathOperator{\vectorize}{vec}
\DeclareMathOperator{\argmin}{argmin}
\DeclareMathOperator{\supp}{spt}
\newcommand{\curve}[1]{(#1)_{t\in[0,1]}}
\newcommand{\R}{\mathbb{R}}
\newcommand\numberthis{\addtocounter{equation}{1}\tag{\theequation}}
\newcommand{\Wp}{\mathsf{W}_p}
\newcommand{\bA}{\mathbf{A}}
\newcommand{\bB}{\mathbf{B}}
\newcommand{\bH}{\mathbf{H}}
\newcommand{\bI}{\mathbf{I}}
\newcommand{\bK}{\mathbf{K}}
\newcommand{\bL}{\mathbf{L}}
\newcommand{\bM}{\mathbf{M}}
\newcommand{\bO}{\mathbf{O}}
\newcommand{\bP}{\mathbf{P}}
\newcommand{\bQ}{\mathbf{Q}}
\newcommand{\bS}{\mathbf{S}}
\newcommand{\bU}{\mathbf{U}}
\newcommand{\bX}{\mathbf{X}}
\newcommand{\cB}{\mathcal{B}}
\newcommand{\cD}{\mathcal{D}}
\newcommand{\cH}{\mathcal{H}}
\newcommand{\cI}{\mathcal{I}}
\newcommand{\cJ}{\mathcal{J}}
\newcommand{\cL}{\mathcal{L}}
\newcommand{\cN}{\mathcal{N}}
\newcommand{\cO}{\mathcal{O}}
\newcommand{\cP}{\mathcal{P}}
\newcommand{\cX}{\mathcal{X}}
\newcommand{\cY}{\mathcal{Y}}
\newcommand{\sC}{\mathsf{C}}
\newcommand{\sD}{\mathsf{D}}
\newcommand{\sd}{\mathsf{d}}
\newcommand{\sF}{\mathsf{F}}
\newcommand{\sH}{\mathsf{H}}
\newcommand{\sI}{\mathsf{I}}
\newcommand{\sR}{\mathsf{R}}
\newcommand{\sV}{\mathsf{V}}
\newcommand{\sW}{\mathsf{W}}
\newcommand{\NN}{\mathbb{N}}
\newcommand{\RR}{\mathbb{R}}
\newcommand{\op}{\mathrm{op}}
\newcommand{\F}{\mathrm{F}}
\newcommand{\lip}{\mathrm{Lip}}
\newcommand{\llangle}{\left\langle}
\newcommand{\rrangle}{\right\rangle}
\newcommand{\Od}{\mathrm{O}(d)}
\newcommand{\SOd}{\mathrm{SO}(d)}
\newcommand{\tr}{\mathsf{Tr}}
\newcommand{\dom}{\mathsf{Dom}}
\newcommand{\spec}{\mathsf{spec}}
\newcommand{\ac}{\mathrm{ac}}
\newcommand{\GW}{\mathsf{GW}}
\newcommand{\IGW}{\mathsf{IGW}}
\newcommand{\grad}{\mathsf{grad}}
\newcommand{\W}{\mathsf{W}}
\newcommand{\MMD}{\mathsf{MMD}}
\definecolor{darkblue}{rgb}{0.0,0.0,0.66}  
\definecolor{darkred}{rgb}{100,0.0,0.0} 
\begin{document}

\title[Gradient Flows and Riemannian Structure in the Gromov-Wasserstein Geometry]{Gradient Flows and Riemannian Structure \\in the Gromov-Wasserstein Geometry}

\author[Z. Zhang]{Zhengxin Zhang}
\address[Z. Zhang]{Center for Applied Mathematics, Cornell University}
\email{zz658@cornell.edu}

\author[Z. Goldfeld]{Ziv Goldfeld}
\address[Z. Goldfeld]{School of Electrical and Computer Engineering, Cornell University}
\email{goldfeld@cornell.edu}

\author[K. Greenewald]{Kristjan Greenewald}
\address[K. Greenewald]{MIT-IBM Watson AI Lab; IBM Research}
\email{kristjan.h.greenewald@ibm.com}

\author[Y. Mroueh]{Youssef Mroueh}
\address[Y. Mroueh]{IBM Research}
\email{mroueh@us.ibm.com}

\author[B. K. Sriperumbudur]{Bharath K. Sriperumbudur}
\address[B. K. Sriperumbudur]{Department of Statistics, Pennsylvania State University}
\email{bks18@psu.edu}

\keywords{Gromov-Wasserstein distance, Gradient flow.}
\subjclass{Primary 49Q22, Secondary 53C23 58E30 35A15 49K20}

\maketitle

\begin{center}
\small Communicated by L\'ena\"ic Chizat
\end{center}

\begin{abstract}
The Wasserstein space of probability measures is known for its intricate Riemannian structure, which underpins the Wasserstein geometry and enables gradient flow algorithms. However, the Wasserstein geometry may not be suitable for certain tasks or data modalities. Motivated by scenarios where the global structure of the data needs to be preserved, this work initiates the study of gradient flows and Riemannian structure in the Gromov-Wasserstein (GW) geometry, which is particularly suited for such purposes. We focus on the inner product GW (IGW) distance between distributions on $\RR^d$, which preserves the angles within the data and serves as a convenient initial setting due to its analytic tractability. Given a functional $\sF:\cP_2(\RR^d)\to\RR$ to optimize and an initial distribution $\rho_0\in\cP_2(\RR^d)$, we present an implicit IGW minimizing movement scheme that generates a sequence of distributions $\{\rho_i\}_{i=0}^n$, which are close in IGW and aligned in the 2-Wasserstein sense. Taking the time step to zero, we prove that the (piecewise constant interpolation of the) discrete solution converges to an IGW generalized minimizing movement (GMM) $(\rho_t)_t$ that follows the continuity equation with a velocity field $v_t\in L^2(\rho_t;\RR^d)$, specified by a global transformation of the Wasserstein gradient of $\sF$ (viz., the gradient of its first variation). The transformation is given by a mobility operator that modifies the Wasserstein gradient to encode not only local information, but also global structure, as expected for the IGW gradient flow.
Our gradient flow analysis leads us to identify the Riemannian structure that gives rise to the intrinsic IGW geometry, using which we establish a Benamou-Brenier-like formula for IGW. We conclude with a formal derivation, akin to the Otto calculus, of the IGW gradient as the inverse mobility acting on the Wasserstein gradient. Numerical experiments demonstrating the global nature of IGW interpolations are provided to complement the~theory.
 \end{abstract}

\section{Introduction}

The Wasserstein gradient flow describes the evolution of probability measures along a trajectory that minimizes a given objective within the Wasserstein geometry. This concept was introduced in the seminal work of Jordan, Kinderlehrer, and Otto (JKO) \cite{jordan1998variational}, who demonstrated that the evolution of marginal distributions along the Langevin diffusion can be interpreted as a gradient flow of the Kullback-Leibler (KL) divergence over the 2-Wasserstein space $(\cP_2(\RR^d), \W_2)$.\footnote{Here and throughout, $\cP_2(\RR^d)$ represents the class of Borel probability measures with a bounded second absolute moment.} Since then, Wasserstein gradient flows have profoundly impacted various fields, including optimal transport (OT) \cite{ambrosio2005gradient, santambrogio2015optimal, carlier2017convergence}, partial differential equations (PDEs) \cite{otto2001geometry}, physics \cite{adams2011large, carrillo2022primal}, machine learning \cite{cheng2018convergence,frogner2020approximate,lin2021wasserstein,alvarez2021optimizing,bunne2022proximal}, and sampling \cite{bernton2018langevin, cheng2018convergence, wibisono2018sampling, lambert2022variational}. Advancements in Wasserstein gradient flows, in turn, revealed the Riemannian structure of the 2-Wasserstein space \cite{otto2001geometry}, whose tangent space $T_\mu\cP_2(\RR^d)$ at $\mu$ is given by (closure of) $\{\nabla\varphi:\,\varphi:\RR^d\to\RR^d\}$ endowed with the inner product of $L^2(\mu;\RR^d)$, which induces the~Riemannian~metric~tensor.

However, the Wasserstein geometry is not always the appropriate choice, no matter the application at hand. As a simple example, consider interpolating between a cat shape and its rotation, as illustrated in \cref{fig:GW_interpolation}. Interpolation methods based on Wasserstein geodesics (shown in second row), which are prevalent in computer vision for shape morphing tasks \cite{solomon2015convolutional,bonneel2018optimal,bonneel2019spot,zhang2022wassersplines}, do not fit the bill as they can introduce arbitrary deformations and distortion. This is because the Wasserstein geodesic displaces particles along the path that minimizes transportation cost, with no regard to preserving the global shape. Instead, the desired interpolation should behave as a rigid body OT that maintains the global structure of the data. As this work will demonstrate, a suitable choice of geometry over $\mathcal{P}_2(\mathbb{R}^d)$ for such tasks is induced by the Gromov-Wasserstein (GW) alignment problem. Our exploration begins with a JKO-like implicit scheme for GW gradient flows, which will reveal the structure of GW gradients and lead us to identify the Riemannian structure of GW spaces. Among other promising~applications, this approach enables the desired structure-preserving interpolations, as the first row of \cref{fig:GW_interpolation} shows.

\begin{figure}[t!]
    \centering
    \includegraphics[width=1\textwidth]{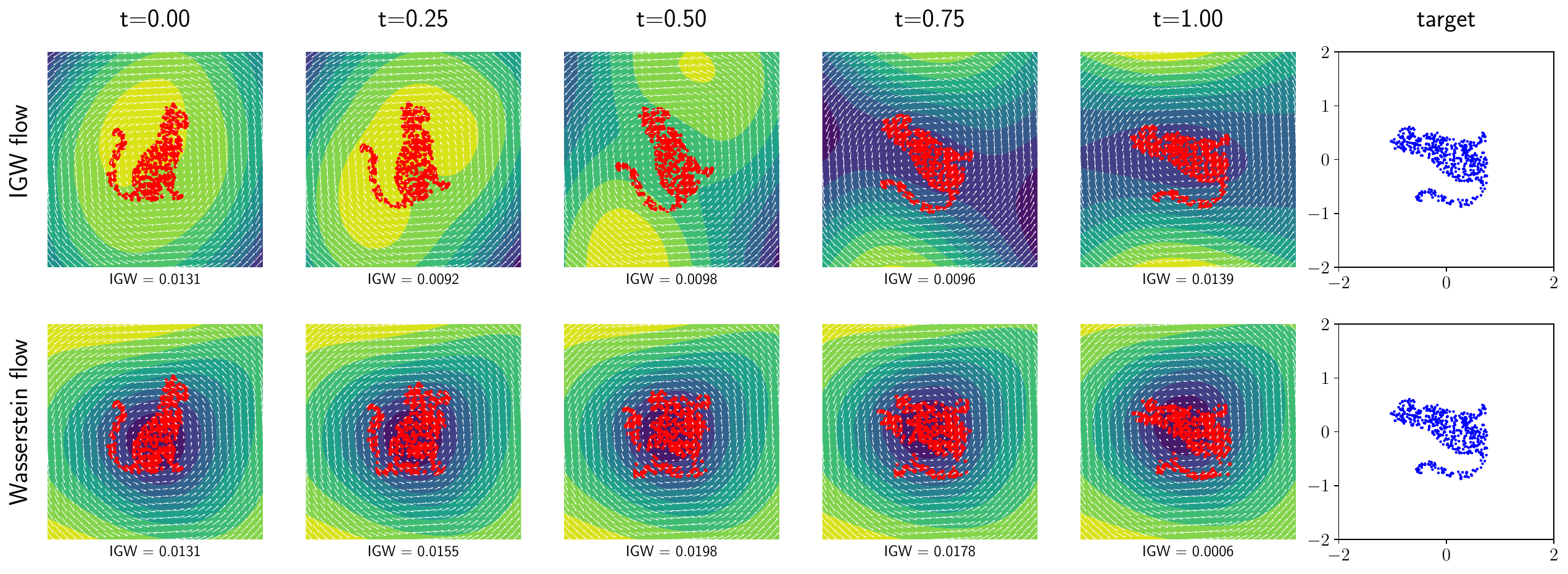}
    \caption{GW versus Wasserstein interpolation between rotated cat shapes: the Wasserstein interpolation (second line) breaks the structure of the shape to minimize the transportation cost, while the GW interpolation (first line) respects the structure and produces the desired effect. See \cref{sec:experiment} for details on this experiment.
    }\label{fig:GW_interpolation}
\end{figure}

\subsection{Gromov-Wasserstein Alignment}

Alignment of heterogeneous datasets varying in modality, location, or semantics, is fundamental to data science, spanning applications to language models \cite{alvarez2018gromov}, computer vision \cite{memoli2009spectral,xu2019gromov,xu2019scalable,koehl2023computing}, and genomics \cite{demetci2020gromov,blumberg2020mrec}. The GW distance, introduced by M\'emoli \cite{memoli2007use,memoli2011gromov} as a relaxation of the Gromov–Hausdorff distance, provides a mathematical framework for alignment by abstracting datasets into metric measure (mm) spaces and seeking to optimally match them. Specifically, the $(p,q)$-GW alignment cost between two mm spaces $(\cX,\mathsf{d}_\cX,\mu)$ and $(\cY,\mathsf{d}_\cY,\nu)$~is
\begin{equation}
    \mathsf{GW}_{p,q}(\mu,\nu)\coloneqq \inf_{\pi\in\Pi(\mu,\nu)}\left( \int_{\cX\times\cY}\int_{\cX\times\cY}
 \Delta_q\big((x,x'),(y,y')\big)^pd\pi\otimes\pi(x,y,x',y')\right)^{\frac{1}{p}},\label{eq:GW_intro}
\end{equation}
where $\Delta_q\big((x,x'),(y,y')\big)\coloneqq\big| \mathsf{d}_\cX(x,x')^q-\mathsf{d}_\cY(y,y')^q\big|$ is the distance distortion cost.\footnote{The original GW distance in \cite{memoli2007use} was defined with $q=1$. The general $(p,q)$-GW distance was introduced later in \cite{sturm2012space,sturm2023space}.} An optimal $\pi^\star$~yields an alignment plan that minimizes distortion and reveals how inherently different the datasets are. The GW distance defines a metric on the space of all mm spaces modulo measure preserving isometries.\footnote{Two mm spaces $(\cX,\mathsf{d}_\cX,\mu)$ and $(\cY,\mathsf{d}_\cY,\nu)$ are isomorphic if there exists a measure-preserving isometry between them, namely, an isometry $T:\cX\to\cY$ with $T_\sharp\mu=\nu$. The quotient space is the one induced by this equivalence relation.} More generally, the GW framework can be used to preserve any arbitrary notion of similarity between the considered spaces (i.e., not necessarily induced by metrics), by defining the similarity functions $c_\cX:\cX^2\to \RR$ and $c_\cY:\cY^2\to \RR$ and replacing the distance distortion cost $\Delta_q$ in~\eqref{eq:GW_intro} with $\big| c_\cX(x,x')-c_\cY(y,y')\big|$; see, e.g., \cite{sturm2012space,sturm2023space,chowdhury2019gromov,arya2024gromov}

While rooted in OT theory, GW alignment is hard to analyze due to its quadratic nature in $\pi$, leading to severe lack of convexity and leaving techniques developed for the (linear) OT problem inapplicable. To overcome this impasse, \cite{zhang2024gromov} have recently derived a variational representation---loosely speaking, a dual form---of the quadratic GW distance between the Euclidean mm spaces $(\RR^{d_x},\|\cdot\|,\mu)$ and $(\RR^{d_y},\|\cdot\|,\nu)$ given~by
\begin{equation}
\GW_{2,2}(\mu,\nu)=C_{\mu,\nu}+\inf_{\bA\in \cD_M} \big\{32\|\bA\|_\F^2+\mathsf{OT}_{\bA}(\mu,\nu)\big\},\label{eq:dual_intro}
\end{equation}
where $C_{\mu,\nu}$ is a constant that depends only on the moments of the marginals, $\cD_M \subset \RR^{d_x\times d_y}$ is a compact rectangle, and $\mathsf{OT}_{\bA}(\mu,\nu)$ is an OT problem with a particular cost function that depends on the auxiliary variable $\bA$. A similar result, tailored for solving the optimization in GW through Fenchel-Moreau duality, was first presented in \cite[Theorem 4.2.5]{vayer2020contribution}. This connection to the well-understood OT problem unlocked it as a tool for the study of GW, leading to notable progress. The dual was used to derive the sample complexity of estimating the quadratic GW alignment cost with/without entropic regularization \cite{zhang2024gromov,groppe2023lower}, as well as to develop inaugural algorithms with convergence guarantees for approximate computation of the GW problem via fast gradient methods \cite{rioux2023entropic}. The dual representation in \eqref{eq:dual_intro} also plays a pivotal role in our development of GW gradient flows and Riemannian structure.

\subsection{Contributions}

This work makes the fist steps in uncovering the GW differential geometry over $\cP_2(\RR^d)$ by considering arguably the simplest variant: the inner product GW (IGW) problem over the same $d$-dimensional Euclidean space 
\begin{equation}
    \mathsf{IGW}(\mu,\nu)\coloneqq\left(\inf_{\pi\in\Pi(\mu,\nu)}\int_{\RR^d\times \RR^d}\int_{\RR^d\times \RR^d} \left|\llangle x,x'\rrangle-\llangle y,y'\rrangle\right|^2d\pi\otimes\pi(x,y,x',y')\right)^{\frac 12},   \label{eq:IGW_intro}
\end{equation}
which is obtained by instantiating $\cX=\cY=\RR^d$ and taking the similarity functions as $c_\cX=c_\cY=\langle \cdot,\cdot\rangle$. The IGW distance captures the change in angles, and as such it is invariant under orthogonal transformations (but not translations). This is an appealing starting point for two main reasons. First, via a simple adaptation of the argument from \cite[Theorem 1]{zhang2024gromov}, the IGW distance adheres to a similar variational form as the $(2,2)$-GW distance, given in \eqref{eq:dual_intro}. Second, and more importantly, IGW between Euclidean spaces is the only known example for which, under mild conditions, Gromov-Monge maps exist. That is, there is a deterministic measure-preserving function that induces an optimal IGW coupling \cite{vayer2020contribution,dumont2024existence}, which is crucial for our construction of IGW gradient flows.\footnote{We note that Gromov-Monge maps exist for other variants of $\GW_{p,q}$ under symmetry assumptions \cite{sturm2012space,sturm2023space,delon2022gromov,arya2024gromov}, but the conditions are too restrictive for our framework.}

As we a priori do not know the differential structure of the IGW space, we draw inspiration from the JKO scheme \cite{jordan1998variational} and initiate our study from an implicit scheme. Given a functional $\sF:\cP_2(\RR^d)\to\RR$ to optimize (assumed to also have orthogonal invariance) and an initial distribution $\mu_0$, we consider a minimizing movement sequence of measures defined recursively via
\begin{equation}
\begin{cases}
    &\rho_0 = \mu_0,\\
    &\rho_{i+1} \in \argmin_{\rho\in\cP_2(\RR^d)} \sF(\rho) + \frac{1}{2\tau}\IGW(\rho,\rho_i)^2,\\
\end{cases}\label{eq:minimizing_movement_intro}
\end{equation}
where $\tau>0$ is the step size. Notably, the invariance to orthogonal transformations means that each $\rho_{i+1}$ in the $\argmin$ represents its entire orbit along the orthogonal group $\mathrm{O}(d)$. To obtain a sequence of distributions, rather than equivalence classes, we instantiate the orthogonal transformations using the SVD decomposition of an optimal $\bA^\star$ matrix in \eqref{eq:dual_intro} between each two consecutive steps. As $\bA^\star$ is given by the cross-correlation matrix under an optimal IGW coupling $\pi^\star$, the transformation aligns the steps with each other in the 2-Wasserstein sense, resulting in a sequence $\{\rho_i\}_{i=0}^n$ that converges to a limit that is continuous not only in $\IGW$, but in $\W_2$ as well. This construction renders our sequence compliant with the natural Wasserstein gradient flow structure associated with the continuity equation 
\begin{equation}
\partial_t\rho_t+\nabla\cdot(v_t\rho_t)=0,\label{eq:cont_intro}
\end{equation}
in the limit of $\tau\to 0$. In other words, the limit of our minimizing movement sequence $\{\rho_i\}_{i=0}^n$ instantiates IGW gradient flows in the Wasserstein space.

Our main result for the IGW gradient flow is twofold: (i) we prove convergence in $\IGW$ (and in $\W_2$ along a subsequence) of the minimizing movement sequence, as $\tau\to 0$, towards a $\W_2$-continuous curve $(\rho_t)_t$; and (ii) the limiting sequence follows the continuity equation \eqref{eq:cont_intro} with a velocity field $v_t\in L^2(\rho_t;\RR^d)$ that is given in terms of the gradient of the first variation of $\sF$ via the partial integro-differential equation (PIDE)
\begin{equation}
v_t=\cL_{\bsigma_t,\rho_t}^{-1}\left[-\nabla\delta \sF(\rho_t)\right],\quad \rho_t\mbox{-a.s.},\label{eq:velocity_intro}
\end{equation}
where $\bsigma_t$ is the covariance matrix of $\rho_t$ and $\cL_{\bsigma,\rho}[v](x) \coloneqq 2\left(\bsigma  v(x) + \int_{\RR^d} y  \llangle v(y),x\rrangle d\rho(y)\right)$ is called the mobility operator, whose inverse we prove exists. To prove this, we utilize the concavity structure of IGW, that gives rise to the variational form \eqref{eq:dual_intro}, which allows us to freeze the dual variable $\bA$ in each JKO step and conduct differential calculus as in the Wasserstein gradient flow. Note the global nature of $\cL_{\bsigma,\rho}[v]$, whose direction at any $x\in\RR^d$ depends on the entire velocity field $v(y)$, $y\in\RR^d$. Unlike the Wasserstein gradient flow, the transformed velocity field $\cL^{-1}_{\bsigma_t,\rho_t}[-\nabla\delta\sF(\rho_t)]$ encodes not only local information, but also global structure, as expected for IGW. Our proof of convergence is constructive, as it builds the velocity field by combining Gromov-Monge maps between the steps of the minimizing movement scheme (following the orthogonal transformations). The analysis employs the framework of generalized minimizing movement (GMM) and Fr\'echet subdifferentials from \cite{ambrosio2005gradient}. This approach circumvents the subdifferential calculus typically employed in Wasserstein gradient flows, which depends on a well-defined understanding of the Wasserstein tangent space at $\mu\in\cP_2(\RR^d)$ as the closure of $\{\nabla\varphi:\,\varphi:\RR^d\to\RR^d\}$ in $L^2(\mu;\RR^d)$. In contrast, the IGW tangent space is not yet well understood and appears to constitute only a small subset of $L^2(\mu;\RR^d)$. Clarifying the structure of the IGW tangent space remains a key open question for future research.

Recalling that the Wasserstein gradient flow obeys the velocity field $\tilde v_t=-\nabla\delta\sF(\rho_t)$, the expression from \eqref{eq:velocity_intro} presents us with a natural candidate for the IGW gradient and leads us to identifying the Riemannian structure that gives rise to the intrinsic~IGW~geometry. Upon defining the intrinsic IGW metric and the induced geodesics, we identify $g_\rho(v,w)\coloneqq \llangle v,  \cL_{\bsigma_\rho,\rho}[w]\rrangle_{L^2(\rho;\RR^d)}$, $v,w\in L^2(\rho;\RR^d)$, as the Riemannian metric tensor that induces the intrinsic metric $\mathsf{d}_{\IGW}$, giving rise to a Benamou-Brenier-like formula \cite{benamou2000computational}  for IGW:
\begin{equation}
        \mathsf{d}_{\IGW}(\mu_0,\mu_1)^2 = \min_{\mu\in\{\mu_1,\bI^-_\sharp\mu_1\}} \inf_{\substack{(\rho_t,v_t):\\\partial_t \rho_t + \nabla\cdot  (\rho_t v_t)=0\\
        \rho_0=\mu_0, \rho_1=\mu}} \int_0^1  g_{\rho_t}(v_t,v_t) dt,\label{eq:IGW_BB_intro}
    \end{equation}
    where $\bI^-$ is any fixed reflection matrix. This dynamical formulation represents the shortest IGW path length as the accumulated kinetic energy, quantified in terms of the metric tensor $g_\rho$ evaluated on the underlying velocity field $v_t$, subject to the continuity equation.\footnote{Note that for the 2-Wasserstein space, the intrinsic metric coincides with $\W_2$, which results in the celebrated Benamou-Brenier formula \cite{benamou2000computational}. In the GW case, geodesic between points in $\cP_2(\RR^d)$ may not be realizable as curves in $\cP_2(\RR^d)$. Consequently, the dynamical formulation in \eqref{eq:IGW_BB_intro} is for $\mathsf{d}_\mathrm{int}$, which may not coincide with $\IGW$ in general.} We conclude with a formal derivation \'a la Otto calculus of the IGW gradient as 
\[
    \grad_\IGW \sF(\rho)=\cL_{\bsigma_\rho,\rho}^{-1}[\nabla\delta\sF(\rho)].
\]
Notably, this corresponds to the PIDE from \eqref{eq:velocity_intro}, whereby $v=-\grad_\IGW \sF(\rho)$ is the direction of the steepest descent of $\sF$ in the IGW geometry. Recalling that the 2-Wasserstein gradient is given by  $\grad_\W \sF(\rho)=\nabla\delta\sF(\rho)$ \cite{otto2001geometry}, we see that $\grad_\IGW \sF(\rho)=\cL_{\bsigma_\rho,\rho}^{-1}[\grad_\W \sF(\rho)]$. The IGW gradient is thus obtained by transforming the Wasserstein gradient using the mobility operator, which enforces the preservation of global structure. Numerical experiments validating our theory and demonstrating the distinctive global nature of IGW interpolations are provided to complement the theory. 
We present experiments for (i) the IGW gradient flow of the potential energy, interaction energy, and entropy functionals, initiated from various shapes (represented as uniform discrete distributions over points in $\RR^d$), and (ii) flow matching between different shapes via the Benamou-Brenier-like IGW formula. The results are compared to their Wasserstein counterparts, highlighting the difference between the global structure-preserving nature of IGW flows versus the Wasserstein flows, that only seek to minimize transportation cost while (possibly) significantly distorting the shape.

\subsection{Literature Review}

The GW distance was originally proposed by M\'emoli in \cite{memoli2007use,memoli2011gromov} as a relaxation of the Gromov-Hausdorff distance, where various structural properties were studied (another GW variant, which is quite different from the one considered herein, was proposed even earlier by Sturm \cite{sturm2006geometry}). The quotient geometry of GW spaces was later explored by Sturm in \cite{sturm2012space,sturm2023space}, where completion, curvature, geodesics, and the tangent structure were analyzed under the general framework of gauged measure spaces. That work provided an account of gradient flows over the quotient space and constructed a functional akin to the Einstein-Hilbert functional, which enabled drawing connections to Ricci flows. However, even when specialized to $\RR^d$, the geodesics (between points in $\cP_2(\RR^d)$, identified with their natural mm spaces) and gradient flows (initiated at a point in $\cP_2(\RR^d)$) arising from Sturm's framework cannot be generally instantiated in $\cP_2(\RR^d)$, as intermediate points typically no longer correspond to Euclidean mm spaces themselves. In contrast, we opt to study IGW gradient flows and differential geometry in $\cP_2(\RR^d)$ without quotient operations, so as to preserve the dynamics arising from OT. This enables us to uncover additional structure, such as the PIDE that characterizes our IGW gradient flows and the Riemannian metric tensor that induces the IGW~geometry.

The increasing interest in GW alignment has driven the exploration of its various facets, encompassing existence of Gromov-Monge maps \cite{vayer2020contribution,memoli2022distance,dumont2024existence}, solutions to the one-dimensional \cite{vayer2020sliced,beinert2022assignment} and Gaussian \cite{delon2022gromov,le2022entropic} GW problems, entropic regularization \cite{solomon2016entropic,peyre2016gromov,scetbon2022linear,rioux2023entropic}, computation \cite{vayer2020sliced,sejourne2021unbalanced,rioux2023entropic}, and statistics \cite{zhang2024gromov,groppe2023lower}. In particular, the characterization of Gromov-Monge maps for the IGW distance \cite{vayer2020contribution,dumont2024existence} and the variational form that connects IGW to OT \cite{vayer2020contribution,zhang2024gromov,rioux2023entropic,sebbouh2024structured} play a crucial role in our~derivations.

While IGW gradient flows and dynamical forms in $\cP_2(\RR^d)$ have not been previously explored, other related metrics and discrepancies on $\cP_2(\RR^d)$ have been studied in this context. 
This includes entropic OT \cite{chen2016relation,gentil2017analogy,conforti2021formula,gigli2020benamou,chizat2020faster}, the Wasserstein-Fisher-Rao metric \cite{chizat2018interpolating,kondratyev2016new}, Sobolev-Fisher Discrepancy \cite{mroueh2020unbalanced}, Stein Discrepancy \cite{liu2016stein,korba2020non,he2022regularized,duncan2023geometry}, Wasserstein gradient flow for maximum mean discrepancy (MMD) \cite{arbel2019maximum}, and Hessian transport \cite{li2019hessian}. Recently, \cite{burger2023covariance} proposed a covariance-modulated dynamical formulation of OT by modifying the Riemannian structure and action functional in the standard Benamou-Brenier formula \cite{benamou2000computational}. Their modulated energy resembles the first term of our mobility operator $\cL_{\bsigma,\rho}$ and induces a modified geometry, under which geodesics and gradient flows are explored. In contrast to the ad-hoc definition from \cite{burger2023covariance}, our operator organically arises from the IGW structure and includes the second (global) term in $\cL_{\bsigma,\rho}$, which accounts for the alignment.

\section{Background and Preliminaries}\label{sec:background}

We briefly review basic definitions and preliminary results concerning the OT and the GW problems. 

\subsection{Notations}

Let $\|\cdot\|$ be the Euclidean norm and write $\llangle \cdot,\cdot\rrangle$ for the inner product. The $d$-dimensional orthogonal group is denoted by $\mathrm{O}(d)$, and the special orthogonal group by $\mathrm{SO}(d)$. The operator and Frobenious norms of a matrix $\bA\in\RR^{d\times k}$ are denoted by $\|\bA\|_\op$ and $\|\bA\|_\F$, respectively. The singular value decomposition (SVD) of $\bA$ is denoted by $\mathbf{A} = \mathbf{P}\blambda\mathbf{Q}^\intercal$, where $\mathbf{P},\mathbf{Q}\in \mathrm{O}(d)$ and $\blambda$ is a diagonal matrix with diagonal entries $\sigma_1(\mathbf{A})\geq\cdots\geq\sigma_d(\mathbf{A})$ sorted from largest to smallest. For a diagonalizable $\mathbf{A}\in\RR^{d\times d}$, we similarly write $\mathbf{A} = \mathbf{P}\blambda\mathbf{P}^\intercal$ for its diagonalization, where $\blambda$ has the eigenvalues $\lambda_{\max}(\mathbf{A}) = \lambda_1(\mathbf{A})\geq\cdots\geq\lambda_d(\mathbf{A})=\lambda_{\min}(\mathbf{A})$ on its diagonal. 
A symmetric matrix $\bA$ is positive semi-definite (PSD) if its eigenvalues are nonnegative. For two symmetric matrices $\bA,\bB\in\RR^{d\times d}$, we write $\bA\succcurlyeq\bB$ when $\bA-\bB$ is PSD.

Write $\cP(\RR^d)$ for the space of Borel probability measures over $\RR^d$, and let $\cP_p(\RR^d)$ be its restriction to distributions with finite absolute $p$-th moments. The subsets of distributions that have a density with respect to (w.r.t.) the Lebesgue measure on $\RR^d$ are denoted by $\cP^{\mathrm{ac}}(\RR^d)$ and $\cP_p^{\mathrm{ac}}(\RR^d)$, respectively. For $\mu\in\cP(\RR^d)$, we write $\supp(\mu)$ for its support, while $\bsigma_\mu$ and $M_p(\mu)$ denote the covariance matrix and the absolute $p$-th moment of $\mu$, respectively. We write $T_\sharp \mu$ for the pushforward measure of $\mu$ through a measurable map $T$, defined as $T_\sharp \mu(B)=\mu\big(T^{-1}(B)\big)$, for every Borel set $B$. 
A sequence of probability measures $\{\nu_k\}_k$ converges weakly to $\nu$, denoted as $\nu_n\stackrel{w}{\to}\nu$, if $\lim_{k\to\infty}\int f\,d\nu_k = \int f\, d\nu$, for any bounded continuous function $f$.

For $\mu\in\cP(\RR^d)$, let $L^2(\mu;\RR^d)\coloneqq\big\{v:\RR^d\to\RR^d\,:\, \|v\|_{L^2(\mu;\RR^d)} \coloneqq \left(\int\|v(x)\|^2 d\mu(x)\right)^{1/2}<\infty\big\}$. The inner product over $L^2(\mu;\RR^d)$ is  $\llangle v, w\rrangle_{L^2(\mu;\RR^d)}\coloneqq \int \llangle v(x),w(x)\rrangle d\mu(x)$. Denote the space of compactly supported smooth function on a metric space $\cX$ with value in $\RR^k$ as $C_c^\infty(\cX;\RR^k)$, and write $C_c^\infty(\cX)$ when $k=1$ for brevity. For a metric space $(\cX,\sd)$, denote the ball with radius $r>0$ at $x\in \cX$ as $\cB_{\sd}(x,r)\coloneqq\{y\in\cX: \sd(y,x)\leq r\}$, where typically we instantiate $\sd$ as the Wasserstein or the IGW distance. For any two (pseudo)metric spaces $(\cX,\sd_\cX),(\cY,\sd_\cY)$, denote by $\lip(\cX;\cY)\coloneqq\{ f:\cX\to\cY, \sup_{\sd_\cX(x,y)\neq0} \frac{\sd_\cY(f(x),f(y))}{\sd_\cX(x,y)}<\infty\}$ the collection of Lipschitz mappings, and for $\gamma\in\lip(\cX;\cY)$, define the Lipschitz constant by $\lip(\gamma) \coloneqq  \sup_{\sd_\cX(x,y)\neq0} \frac{\sd_\cY(\gamma(x),\gamma(y))}{\sd_\cX(x,y)}$. When the second space is $(\cY,\mathsf{d}_\cY)=\big(\cP_2(\RR^d),\IGW\big)$, we emphasize this in our notation by writing $\lip_{\IGW}\big(\cX;\cP_2(\RR^d)\big)$ for the class and $\lip_\IGW(\gamma)$ for the Lipschitz constant; a similar convention is used when $\IGW$ is replaced with $\W_2$. We use $\lesssim_x$ to denote inequalities up to constants that only depend on $x$; the subscript is dropped when the constant is universal.

\subsection{Optimal Transport}\label{subsec:OT}

Let $\cX,\cY$ be two Polish spaces and consider a lower semi-continuous cost function $c:\cX\times\cY\to\RR$. The OT problem \cite{villani2008optimal,santambrogio2015optimal,peyre2019computational} between $\mu\in\cP(\cX)$ and $\nu\in\cP(\cY)$ with cost function $c$ is
\begin{equation}
    \mathsf{OT}_c(\mu,\nu) \coloneqq \inf_{\pi\in\Pi(\mu,\nu)} \int_{\cX\times\cY} c(x,y)  d \pi(x,y),\label{eq:OT}
\end{equation}
where $\Pi(\mu,\nu)$ is the set of all couplings of $\mu$ and $\nu$. The special case of the $p$-Wasserstein distance, for $p \in [1,\infty)$, is given by $\mathsf{W}_p(\mu,\nu)\coloneqq\left(\mathsf{OT}_{\|\cdot\|^p}(\mu,\nu)\right)^{1/p}$. It is well known that $\mathsf{W}_p$ is a metric on $\cP_p(\R^d)$, and metrizes weak convergence plus convergence of $p$-th moments, i.e., $\Wp(\mu_n,\mu) \to 0$ if and only if $\mu_n \stackrel{w}{\to} \mu$ and $M_p(\mu_n) \to M_p(\mu)$. The Wasserstein space $\mathfrak{W}_p=\big(\cP_p(\R^d),\mathsf{W}_p\big)$ entails a rich geometry, where one may reason about geodesic curves, barycenters, gradient flows, and even Riemannian structure; cf. \cite{villani2008optimal,santambrogio2015optimal} for details.

OT is a linear program (generally, an infinite-dimensional one) and as such it admits strong duality. Suppose that the cost $c$ satisfies $c(x,y)\geq a(x)+b(y)$, for all $(x,y)\in\cX\times\cY$, for some upper semi-continuous functions $(a,b)\in L^1(\mu)\times L^1(\nu)$. Then (cf. \cite[Theorem 5.10]{villani2008optimal}): 
\begin{equation}
    \mathsf{OT}_c(\mu,\nu) = \sup_{(\varphi,\psi)\in\Phi_c} \int_\cX \varphi d \mu + \int_\cY \psi d \nu,\label{eq:OT_dual}
\end{equation}
where $\Phi_c\coloneqq\big\{(\varphi,\psi)\in C_b(\cX)\times C_b(\cY): \varphi(x)+\psi(y)\leq c(x,y),\,\forall (x,y)\in\cX\times\cY\big\}$. Furthermore, defining the $c$- and $\bar{c}$-transform of $\varphi\in C_b(\cX)$ and $\psi\in C_b(\cY)$ as $\varphi^c(y)\coloneqq\inf_{x\in\cX} c(x,y)-\varphi(x)$ and $\psi^{\bar{c}}(x)\coloneqq\inf_{y\in\cY} c(x,y)-\psi(y)$, respectively, the optimization above can be restricted to pairs $(\varphi,\psi)$ such that $\psi=\varphi^c$ and $\varphi=\psi^{\bar{c}}$.

A key ingredient for analysis of classical gradient flows in $\mathfrak{W}_2$ is the celebrated Brenier's theorem \cite{brenier1991polar} (see also \cite[Theorem 9.4]{villani2008optimal} or \cite[Section 6.2.3]{ambrosio2005gradient}). Under appropriate conditions, this result establishes the existence of OT (also known as, Brenier or Monge) maps, thus equating the Kantorovich formulation from \eqref{eq:OT} and the Monge problem \cite{mongeOT1781}
\begin{align*}
    \inf_{\substack{T:\RR^d\to\RR^d\\ T_\sharp\mu=\nu}} \int c\big(x,T(x)\big) d\mu(x).
\end{align*}
\begin{theorem}(Brenier's Theorem; simplified)\label{thm:brenier}
    If $\mu\in\cP^{\ac}_2(\RR^d)$ and $\nu\in\cP_2(\RR^d)$, then there exists a unique OT coupling $\pi^\star\in\Pi(\mu,\nu)$ for $\W_2$, which is induced by a transport map $T^{\mu\to\nu}:\RR^d\to\RR^d$, i.e., $\pi^\star = (\id,T^{\mu\to\nu})_\sharp\mu$. Furthermore, $T^{\mu\to\nu}$ is given by the gradient of a convex function, i.e., $T^{\mu\to\nu}=\nabla \varphi$ a.e., for a convex $\varphi:\RR^d\to\RR$. 
\end{theorem}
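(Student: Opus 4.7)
The plan is to reduce the $\W_2^2$ minimization to an inner product maximization and then exploit Kantorovich duality together with convex analysis. Expanding the cost as $\|x-y\|^2=\|x\|^2-2\llangle x,y\rrangle+\|y\|^2$ and integrating against any $\pi\in\Pi(\mu,\nu)$, the first and last summands contribute the fixed constants $M_2(\mu)^2$ and $M_2(\nu)^2$, so minimizing $\int\|x-y\|^2 d\pi$ is equivalent to maximizing $\int \llangle x,y\rrangle d\pi$. This places the problem within the OT framework of \eqref{eq:OT} with cost $c(x,y)=-\llangle x,y\rrangle$, which is bounded below by an integrable function on $\cX\times\cY=\RR^d\times\RR^d$ (by Cauchy--Schwarz and the second moment hypothesis), thereby licensing the strong duality~\eqref{eq:OT_dual}.

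Next, I would extract the convex structure via the $c$-transform. Strong duality supplies an optimal pair of potentials $(\varphi,\psi)$ with $\varphi(x)+\psi(y)\geq \llangle x,y\rrangle$ and equality holding $\pi^\star$-a.e. Restricting to a $c$-conjugate pair, $\psi$ may be taken as the Legendre transform $\varphi^\star$, and $\varphi$ in turn replaced by its biconjugate $\varphi^{\star\star}$, which is proper, convex, and lower semi-continuous. The Fenchel--Young equality $\varphi(x)+\varphi^\star(y)=\llangle x,y\rrangle$ is equivalent to $y\in\partial\varphi(x)$, so the support of every optimal coupling lies in the graph of the subdifferential of a fixed convex function $\varphi$. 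Equivalently, this step can be obtained by the cyclical monotonicity of $\supp(\pi^\star)$ together with Rockafellar's theorem, which expresses any cyclically monotone set as a subset of the subdifferential of a convex function.

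The last step converts this multivalued characterization into a bona fide transport map using the absolute continuity of $\mu$. By the classical a.e.\ differentiability of convex functions on the interior of their domain, $\varphi$ is differentiable at Lebesgue-a.e.\ point; since $\mu\ll\mathcal{L}^d$, it is differentiable at $\mu$-a.e.\ point as well, so $\partial\varphi(x)=\{\nabla\varphi(x)\}$ for $\mu$-a.e.\ $x$. Combined with the subdifferential inclusion from Step 2, this forces $y=\nabla\varphi(x)$ on $\supp(\pi^\star)$, so $\pi^\star=(\id,\nabla\varphi)_\sharp\mu$ with $T^{\mu\to\nu}\coloneqq\nabla\varphi$. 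Uniqueness follows at once: any other optimal plan $\tilde\pi$ must also concentrate on the same (single-valued $\mu$-a.e.) subdifferential, whose graph carries a unique plan with first marginal $\mu$, so $\tilde\pi=\pi^\star$.

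The main obstacle is the reconciliation between the abstract duality output and the concrete convex-analytic object needed to identify the map: one must check that the supremum in~\eqref{eq:OT_dual} is attained, that the attaining pair can be chosen $c$-conjugate (so the Legendre/biconjugate manipulation is licit), and that the resulting $\varphi$ is measurable and finite enough at $\mu$-a.e.\ $x$ for $\nabla\varphi$ to define a Borel map pushing $\mu$ forward to $\nu$. Once these regularity points are secured, the absolute continuity hypothesis delivers $\partial\varphi\mapsto\nabla\varphi$ $\mu$-a.e., and the remaining identification and uniqueness claims are short convex-analytic consequences.
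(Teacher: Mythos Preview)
The paper does not prove this statement at all; Theorem~\ref{thm:brenier} is quoted as a classical background result with references to \cite{brenier1991polar}, \cite[Theorem~9.4]{villani2008optimal}, and \cite[Section~6.2.3]{ambrosio2005gradient}. Your sketch is the standard route followed in those references (reduce $\|x-y\|^2$ to the bilinear cost, invoke Kantorovich duality and $c$-conjugacy to obtain a convex potential, then use Lebesgue-a.e.\ differentiability of convex functions together with $\mu\ll\mathcal{L}^d$ to collapse $\partial\varphi$ to $\nabla\varphi$), so there is nothing to compare against beyond noting that your plan matches the literature proofs the paper cites. One cosmetic slip: the marginal contributions from expanding $\|x-y\|^2$ are $M_2(\mu)$ and $M_2(\nu)$, not their squares.
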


\subsection{Inner Product Gromov-Wasserstein Distance}
We consider the GW distance with inner product cost (abbreviated IGW) between the Euclidean spaces $(\RR^{d_x},\|\cdot\|,\mu)$ and $(\RR^{d_y},\|\cdot\|,\nu)$, given~by
\begin{equation}
    \mathsf{IGW}(\mu,\nu)\coloneqq\left(\inf_{\pi\in\Pi(\mu,\nu)}\iint \left|\llangle x,x'\rrangle-\llangle y,y'\rrangle\right|^2d\pi\otimes\pi(x,y,x',y')\right)^{\frac 12}.   \label{eq:IGW}
\end{equation}
The minimum is achieved thanks to the compactness of the coupling set and the weak continuity of the objective, see, e.g., \cite{chowdhury2019gromov}. Unlike the standard GW distance from \eqref{eq:GW_intro}, originally defined in \cite{memoli2007use,memoli2011gromov}, the above cost function does not quantify distortion of distances but rather captures the distortion in similarities, i.e., the change in angles. As such, while IGW is invariant under orthogonal transformations, it does not possess translation invariance~\cite{le2022entropic}. IGW received recent attention due to its analytic tractability and since it captures a meaningful notion of discrepancy between mm spaces with a natural inner product structure \cite{le2022entropic,delon2022gromov,vayer2020contribution}. Building on this tractability, herein we present the first IGW gradient flow algorithm, study its convergence, and derive the PIDE characterizing the solution in the continuous-time limit. This, in turn, leads us to identify the Riemannian metric tensor that gives rise to the intrinsic geometry and differential structure of IGW spaces.

\medskip
We rely on two key properties of the IGW distance: (i) it adheres to a variational/dual representation that connects back to OT duality, and (ii) any optimal IGW alignment plan with nonsingular cross-covariance is induced by a map. We next discuss both these aspects and the relevant result.

Recently, \cite[Lemma 2]{rioux2023entropic} derived a variational representation of IGW, following an argument similar to \cite[Theorem 1]{zhang2024gromov}, which originally accounted for the quadratic GW distance (see also \cite{vayer2020contribution,sebbouh2024structured}). This dual, which we restate below, connects IGW to a certain OT cost, thereby enabling us to borrow tools from OT theory (in particular, Monge/Brenier maps). To state this result, we first expand the squared cost from~\eqref{eq:IGW} to decompose IGW as $\mathsf{IGW}^2=\sF_1+\sF_2$, where 
\begin{align*}
    \sF_1(\mu,\nu)&=\int |\llangle x,x'\rrangle|^2d\mu\otimes \mu(x,x')+\int |\llangle y,y'\rrangle|^2d\nu\otimes \nu(y,y')
    \\
    \sF_2(\mu,\nu)&=\inf_{\pi\in\Pi(\mu,\nu)} -2\int \llangle x,x'\rrangle\llangle y,y'\rrangle d\pi\otimes \pi(x,y,x',y').
\end{align*}
We have the following dual for the $\sF_2$ functional, proven in \cref{appen:F2Dual_proof} for completeness.

\begin{lemma}[IGW duality]
\label{lem:F2Dual}
Fix  $(\mu,\nu)\in\cP_4(\RR^{d_x})\times\cP_4(\RR^{d_y})$, and define $M_{\mu,\nu}:=\sqrt{M_2(\mu)M_2(\nu)}$. We have
    \begin{equation}
    \sF_2(\mu,\nu)=\inf_{\mathbf{A}\in\RR^{d_x\times d_y}} 8\|\mathbf{A}\|_\F^2+\mathsf{IOT}_{\mathbf{A}}(\mu,\nu),\label{eq:F2Decomp}
    \end{equation}
    where $\mathsf{IOT}_{\bA}(\mu,\nu)$ is the OT problem with cost function $c_{\mathbf{A}}\mspace{-3mu}:\mspace{-3mu}   
    (x,y)\in\RR^{d_x}\times \RR^{d_y}\mapsto\mspace{-3mu}-8x^{\intercal}\mathbf{A}y$ and the infimum is achieved at some $\mathbf{A}^{\star}\mspace{-3mu}\in\mspace{-3mu}\cD_{M_{\mu,\nu}}\mspace{-3mu}\coloneqq\mspace{-3mu}[-M_{\mu,\nu}/2,M_{\mu,\nu}/2]^{d_x\times d_y}$. Furthermore, denoting an optimal coupling for $\mathsf{IOT}_{\mathbf{A}}(\mu,\nu)$ by $\pi^\star_{\bA}$, then the following two statements hold:
    \begin{enumerate}[leftmargin=*]
    \item[(i)] if $\bA^\star$ achieves the infimum in \eqref{eq:F2Decomp}, then $\pi^\star_{\bA^\star}\in\Pi(\mu,\nu)$ is optimal for $\IGW(\mu,\nu)$ in \eqref{eq:IGW} and $\bA^\star = \frac{1}{2}\int xy^\intercal\pi^\star_{\bA^\star}(x,y)$;
    \item[(ii)] conversely, there exists $\pi^\star\in\Pi(\mu,\nu)$ that is optimal for $\IGW(\mu,\nu)$ in \eqref{eq:IGW}, such that $\bA^\star = \frac{1}{2} \int xy^\intercal\pi^\star(x,y)$, in which case we further have $\pi^\star_{\bA^\star}=\pi^\star$.
    \end{enumerate}    
\end{lemma}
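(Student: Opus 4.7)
The plan is to trade the quadratic dependence of $\sF_2$ on $\pi$ for a linear one by variationally parametrizing the squared Frobenius norm through an auxiliary matrix $\bA$, and then swap the order of the two optimizations. This mirrors the strategy of \cite[Theorem 1]{zhang2024gromov} for the quadratic GW distance; the inner-product cost should make the calculation cleaner because the integrand is already bilinear.

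First, I would rewrite the IGW cost in terms of the cross-covariance $\bB_\pi \coloneqq \int xy^\intercal d\pi(x,y)$ of the coupling. A Fubini computation gives
\[
\int \llangle x,x'\rrangle\llangle y,y'\rrangle\,d\pi\otimes\pi(x,y,x',y') = \|\bB_\pi\|_\F^2,
\]
so $\sF_2(\mu,\nu) = -2\sup_{\pi\in\Pi(\mu,\nu)} \|\bB_\pi\|_\F^2$. Next, I would insert the elementary completing-the-square identity $\|\bB\|_\F^2 = \sup_{\bA\in\RR^{d_x\times d_y}} \{4\llangle \bA,\bB\rrangle_\F - 4\|\bA\|_\F^2\}$, uniquely attained at $\bA = \bB/2$, with $\bB = \bB_\pi$, and recognize $\llangle \bA,\bB_\pi\rrangle_\F = \int x^\intercal\bA y\,d\pi(x,y)$ to embed $\bA$ as a variational parameter inside a linear functional of $\pi$. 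Swapping the two suprema (trivially valid) and multiplying by $-2$ converts both into infima, isolates $8\|\bA\|_\F^2$ from the coupling, and exposes the OT problem with cost $c_\bA(x,y) = -8x^\intercal\bA y$, yielding \eqref{eq:F2Decomp}. For attainability and the localization, I would invoke existence of an IGW-optimal coupling $\pi^\star$ (weak compactness of $\Pi(\mu,\nu)$ plus weak continuity of the cost under the $\cP_4$ hypothesis) and set $\bA^\star \coloneqq \bB_{\pi^\star}/2$; an entrywise Cauchy–Schwarz $|(\bB_{\pi^\star})_{ij}| \leq \sqrt{\int x_i^2\,d\mu}\sqrt{\int y_j^2\,d\nu} \leq M_{\mu,\nu}$ places $\bA^\star$ inside the compact rectangle $\cD_{M_{\mu,\nu}}$.

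The correspondence claims (i) and (ii) should fall out of chasing equality cases. If $\bA^\star$ is a dual minimizer and $\pi^\star_{\bA^\star}$ solves $\IOT_{\bA^\star}$, the chain
\[
\sF_2 = 8\|\bA^\star\|_\F^2 - 8\llangle\bA^\star,\bB_{\pi^\star_{\bA^\star}}\rrangle_\F \geq -2\|\bB_{\pi^\star_{\bA^\star}}\|_\F^2 \geq \sF_2
\]
must collapse to equalities throughout. The first equality forces $\bA^\star = \bB_{\pi^\star_{\bA^\star}}/2$ by uniqueness of the quadratic optimum in $\bA$, while the second forces $\pi^\star_{\bA^\star}$ to maximize $\|\bB_\pi\|_\F^2$ over $\Pi(\mu,\nu)$ and hence to be IGW-optimal, giving (i). Part (ii) is the same chain run backwards: starting from an IGW-optimal $\pi^\star$ and setting $\bA^\star = \bB_{\pi^\star}/2$ saturates every inequality, certifying $\bA^\star$ as a dual minimizer and $\pi^\star$ as a solver of $\IOT_{\bA^\star}$.

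The main obstacle I anticipate is not in the computation itself but in the bookkeeping for part (i), where one must simultaneously extract IGW optimality of $\pi^\star_{\bA^\star}$ and the explicit identity $\bA^\star = \frac{1}{2}\int xy^\intercal d\pi^\star_{\bA^\star}$ from a single saturated chain; both conclusions ultimately follow from strictness of the quadratic maximum in $\bA$, but they are easy to conflate. Everything else reduces to routine completion-of-square and Cauchy–Schwarz.
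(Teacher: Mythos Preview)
Your proposal is correct and follows essentially the same approach as the paper: both linearize the quadratic dependence on $\pi$ via the completing-the-square identity in the auxiliary matrix $\bA$, swap the joint optimization, and then read off the correspondence (i)--(ii) from the strict convexity in $\bA$ (the paper phrases the last step as minimizing a strongly convex quadratic in $\bA$ for fixed $\pi^\star_{\bA^\star}$, which is exactly your saturated chain). If anything, your write-up is more self-contained, since the paper outsources the variational form \eqref{eq:F2Decomp} and the localization $\bA^\star\in\cD_{M_{\mu,\nu}}$ to \cite{rioux2023entropic} and only spells out the correspondence claims.
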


Under mild conditions, the IGW distance between Euclidean spaces enjoys the existence of Gromov-Monge maps. That is, there is a deterministic measure-preserving function that induces an optimal IGW alignment plan. This observation was made in \cite[Theorem 4.2.3]{vayer2020contribution} and \cite[Theorem 4]{dumont2024existence}, but we rederive it in \cref{appen:gromov_monge_proof} in a form that is compatible with our needs and \cref{lem:F2Dual}.

\begin{lemma}[Gromov-Monge map]\label{lem:gromov_monge}
    For $\mu\in\cP_2^{\mathrm{ac}}(\RR^d)$ and $\nu\in\cP_2(\RR^d)$, let $\pi^\star\in\Pi(\mu,\nu)$ be an optimal IGW coupling such that $\bA^\star=\frac{1}{2}\int xy^\intercal\pi^\star(x,y)$ is nonsingular. Then $T^\star\coloneqq (8\bA^\star)^{-1}T^{\mu\to (8\bA^\star)_\sharp\nu}$ is a Gromov-Monge map for $\IGW(\mu,\nu)$, i.e., we have $\pi^\star = (\id,T^\star)_\sharp\mu$ and $\bA^\star = \frac{1}{2}\int x\, T^\star(x)^\intercal d\mu(x)$.
\end{lemma}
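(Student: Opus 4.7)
The strategy is to reduce the IGW problem to a standard quadratic OT problem via the variational representation of \cref{lem:F2Dual}, exploit the invertibility of $\bA^\star$ to convert this into a genuine $\W_2$ problem, and then invoke Brenier's theorem to extract a Monge map. The first step is to show that the given IGW-optimal $\pi^\star$ is in fact also optimal for the linear OT problem $\mathsf{IOT}_{\bA^\star}(\mu,\nu)$ with cost $c_{\bA^\star}(x,y)=-8x^\intercal\bA^\star y$. This follows from \cref{lem:F2Dual}: a direct expansion using $\bA^\star=\tfrac{1}{2}\int xy^\intercal d\pi^\star$ gives $-2\int\llangle x,x'\rrangle\llangle y,y'\rrangle\,d\pi^\star\otimes\pi^\star=-8\|\bA^\star\|_\F^2$, while weak duality forces $\bA^\star$ to attain the dual infimum in \eqref{eq:F2Decomp} and $\pi^\star$ to saturate the primal OT at that $\bA^\star$, so $\pi^\star=\pi^\star_{\bA^\star}$.

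Next, I would recast this OT problem between $\mu$ and $\nu$ as a $\W_2$ problem between $\mu$ and $(8\bA^\star)_\sharp\nu$. Because $\bA^\star$ is nonsingular, the linear map $\tilde y=8\bA^\star y$ is a bijection and induces a bijection $\pi\mapsto(\id\times 8\bA^\star)_\sharp\pi$ between $\Pi(\mu,\nu)$ and $\Pi(\mu,(8\bA^\star)_\sharp\nu)$ that preserves the transport objective via the identity $\int -8x^\intercal\bA^\star y\,d\pi(x,y)=\int -x^\intercal\tilde y\,d(\id\times 8\bA^\star)_\sharp\pi(x,\tilde y)$. Operator-norm boundedness of $8\bA^\star$ and $\nu\in\cP_2(\RR^d)$ give $(8\bA^\star)_\sharp\nu\in\cP_2(\RR^d)$, and minimizing $-x^\intercal\tilde y$ over $\Pi(\mu,(8\bA^\star)_\sharp\nu)$ is equivalent, modulo marginal-only constants, to solving $\W_2^2(\mu,(8\bA^\star)_\sharp\nu)$.

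Then \cref{thm:brenier} yields a unique $\W_2$-optimal coupling $(\id,T^{\mu\to(8\bA^\star)_\sharp\nu})_\sharp\mu$ between $\mu$ and $(8\bA^\star)_\sharp\nu$. Pulling this back through $(8\bA^\star)^{-1}$ along the bijection above identifies the unique optimizer of $\mathsf{IOT}_{\bA^\star}(\mu,\nu)$ as $(\id,T^\star)_\sharp\mu$ with $T^\star=(8\bA^\star)^{-1}T^{\mu\to(8\bA^\star)_\sharp\nu}$; combined with $\pi^\star=\pi^\star_{\bA^\star}$ from the first step, this gives $\pi^\star=(\id,T^\star)_\sharp\mu$. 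The identity $\bA^\star=\tfrac{1}{2}\int x\,T^\star(x)^\intercal d\mu(x)$ then drops out by substituting this representation of $\pi^\star$ into its defining formula. The main delicate point is the transfer of \emph{uniqueness} across the change of variables: this is what guarantees that the specific IGW optimizer $\pi^\star$ we started with---not merely some optimizer---coincides with the map-induced coupling, and it is precisely where the nonsingularity assumption on $\bA^\star$ is indispensable (without it, the bijection $y\mapsto 8\bA^\star y$ fails and one cannot invoke Brenier).
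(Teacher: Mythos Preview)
Your proposal is correct and follows essentially the same route as the paper: reduce via \cref{lem:F2Dual} to show $\pi^\star$ is optimal for $\mathsf{IOT}_{\bA^\star}$, push forward by the nonsingular map $y\mapsto 8\bA^\star y$ to obtain a $\W_2$-equivalent problem between $\mu$ and $(8\bA^\star)_\sharp\nu$, invoke Brenier's theorem for the unique map-induced optimizer, and pull back through $(8\bA^\star)^{-1}$. Your explicit justification that $\bA^\star$ attains the dual infimum and $\pi^\star$ saturates $\mathsf{IOT}_{\bA^\star}$ (via the identity $-2\int\llangle x,x'\rrangle\llangle y,y'\rrangle\,d\pi^\star\otimes\pi^\star=-8\|\bA^\star\|_\F^2$) is a bit more detailed than the paper's, which simply cites \cref{lem:F2Dual}, but the argument is the same.
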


\subsection{Subdifferential Calculus}
We recall some basic definition from subdifferential calculus in the space of probability measures. The first variation of a functional describes its first-order change when the input measure is perturbed (see, e.g., \cite[Definition 7.12]{santambrogio2015optimal}). We later use it in our characterization of the PIDE that governs the IGW gradient~flow.

\begin{definition}[First variation]
Given a functional $\sF:\cP(\RR^d)\to\RR\cup\{+\infty\}$, we say that $\mu\in\cP(\RR^d)$ is regular for $\sF$ if $\ \sF((1-\epsilon)\mu+\epsilon\nu)<+\infty$, for every $\epsilon\in[0,1]$ and $\nu\in\cP^{\mathrm{ac}}(\RR^d)$ with bounded density and compact support. If $\mu$ is regular for $\sF$, the first variation of $\sF$ at $\mu$, if exists, is any measurable function $\delta\sF(\mu):\RR^d\to\RR$ such that
    \begin{align*}
        \frac{d}{d\epsilon}\sF\big(\mu+\epsilon(\nu-\mu)\big)\big|_{\epsilon=0} = \int \delta\sF(\mu) d(\nu-\mu),
    \end{align*}
    for any $\nu\in\cP^{\ac}(\RR^d)$ with bounded density and compact support.
\end{definition}

While the first variation is used to describe the solution (as well as to gain intuition on some proofs), the rigorous derivation employs the Fr\'echet subdifferential \cite[Definition 10.1.1]{ambrosio2005gradient}. 

\begin{definition}[Fr\'echet subdifferential]\label{definition:frechet_subdifferential}
    Given a proper and lower semi-continuous functional $\sF:\cP_2(\RR^d)\to\RR\cup\{+\infty\}$ with $\dom(\sF) \coloneqq\{\mu\in\cP_2(\RR^d):\sF(\mu)<+\infty\} \subset\cP_2^{\ac}(\RR^d)$, its Fr\'echet subdifferential $\partial\sF(\mu)$ is the collection of all $\xi\in L^2(\mu;\RR^d)$ such that
    \begin{align*}
        \sF(T_\sharp\mu)-\sF(\mu)\geq\int \llangle\xi(x),T(x)-x\rrangle d\mu(x) +o(\|T-\id\|_{L^2(\mu;\RR^d)})
    \end{align*}
    for any map $T$. Any element $\xi\in\partial\sF(\mu)$ is called a strong subdifferential.
\end{definition}

The Fr\'echet subdifferential $\partial\sF(\mu)$ relies on the tangent structure of Wasserstein space $\mathfrak{W}_2$ and exists under certain convexity assumptions. The first variation $\delta\sF$, on the other hand, conicides with the linear G\^ateaux derivative of $\sF$, whose existence typically requires  further regularity assumptions. We note that when both exist, $\nabla\delta\sF(\mu)\in\partial\sF(\mu)$ holds, for instance, when $\sF$ is variational integral with high smoothness, see \cite[Lemma 10.4.1]{ambrosio2005gradient}. Thus without much loss of generality, we will present our main results in terms of $\delta\sF$ for conciseness.

\section{Wasserstein Comparisons and Local Convexity}\label{sec:IGW_structure}

We establish structural properties of IGW that are later employed for deriving the convergence of the gradient flow algorithm and characterizing its continuous-time limit. Importantly, in \cref{subsec:local_convexity} we provide a detailed study of the local convexity of IGW along generalized geodesics, which we believe may be of independent interest. Proofs for the results of this section are deferred to \cref{appen:IGW_structure_proofs}.

\medskip

To begin, we explore the metric structure of the IGW distance. Past works \cite{delon2022gromov,le2022entropic,vayer2020contribution} partially account for these aspects but do not establish the metric properties in full (e.g., the triangle inequality follows from the general results in \cite[Theorem 16]{chowdhury2019gromov}, but not the nullification condition).
The next proposition, proven in \cref{appen:prop:IGW_pmetric_proof}, closes this gap, showing that the IGW distance defines a pseudometric on probability distributions over a shared Hilbert space.

\begin{proposition}[IGW pseudometric]\label{prop:IGW_pmetric}
For a Hilbert space $\cH$, the IGW distance defines a pseudometric on $\cP_2(\cH)$, with $\IGW(\mu,\nu) = 0$ if and only if there exists a $\mu\otimes\mu$-a.s. unitary isomorphism $T:\supp(\mu)\to\supp(\nu)$ with $T_\sharp\mu=\nu$.\footnote{Namely, $T$ is a bijection with $T_\sharp\mu=\nu$ and $\llangle x,x'\rrangle = \llangle T(x),T(x')\rrangle$, for $x,x'$ $\mu\otimes\mu$-a.s. Such maps are $\mu$-a.s. linear.}
\end{proposition}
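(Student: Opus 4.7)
The plan is to verify each of the pseudometric axioms and then separately tackle the characterization of nullity. Non-negativity and symmetry are immediate: the objective is a squared integrand, and the distortion $|\langle x,x'\rangle - \langle y,y'\rangle|^2$ is symmetric under swapping the two pairs. For the triangle inequality, I would use the standard gluing argument: given $\mu,\nu,\lambda\in\cP_2(\cH)$, pick optimal couplings $\pi_{12}\in\Pi(\mu,\nu)$ and $\pi_{23}\in\Pi(\nu,\lambda)$, disintegrate each against $\nu$, and stitch them into a three-marginal measure $\pi_{123}$ on $\cH^3$. Applying Minkowski in $L^2(\pi_{123}\otimes\pi_{123})$ to the pointwise bound
\[
|\langle x,x'\rangle - \langle z,z'\rangle| \leq |\langle x,x'\rangle - \langle y,y'\rangle| + |\langle y,y'\rangle - \langle z,z'\rangle|,
\]
and noting that the $(1,3)$-marginal of $\pi_{123}$ lies in $\Pi(\mu,\lambda)$, delivers the triangle inequality; this is just the specialization of \cite[Theorem~16]{chowdhury2019gromov} to the inner-product cost, which I would cite rather than redo.

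The ``if'' direction of the nullity statement is immediate: given $T$ with $T_\sharp\mu=\nu$ and $\langle x,x'\rangle = \langle T(x),T(x')\rangle$ $\mu\otimes\mu$-a.s., the coupling $\pi = (\id,T)_\sharp\mu\in\Pi(\mu,\nu)$ makes the integrand in \eqref{eq:IGW} vanish. For the ``only if'' direction, extract an optimal coupling $\pi\in\Pi(\mu,\nu)$ with zero cost (existence follows from tightness of $\Pi(\mu,\nu)$ and weak lower semi-continuity of the cost), so that $\langle x,x'\rangle = \langle y,y'\rangle$ for $\pi\otimes\pi$-a.s.\ $(x,y,x',y')$. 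The crux is to show $\pi$ is supported on the graph of a map. Disintegrating $\pi(dx,dy) = \mu(dx)\,K(x,dy)$, for $\mu$-a.s.\ $x$ there is a $K(x,\cdot)$-full set $B_x$ on which $\langle y,y'\rangle = \|x\|^2$ for every $y,y'\in B_x$. Choosing $y=y'\in B_x$ yields $\|y\|=\|x\|$ throughout $B_x$; then the equality case of Cauchy-Schwarz applied to $\langle y,y'\rangle = \|y\|\,\|y'\|$ forces $y=y'$. Hence $K(x,\cdot) = \delta_{T(x)}$ for a measurable map $T$, so $\pi = (\id,T)_\sharp\mu$ with $T_\sharp\mu=\nu$, and the zero-cost identity rewrites as $\langle T(x),T(x')\rangle = \langle x,x'\rangle$ $\mu\otimes\mu$-a.s.

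To promote this $\mu$-a.s.\ defined $T$ to a bijective isomorphism between the full supports (as the statement demands), I would derive from inner-product preservation the isometry identity $\|T(x)-T(x')\| = \|x-x'\|$ on a $\mu$-full set $A\subseteq\supp(\mu)$. Since any point of $\supp(\mu)$ lies in the closure of $A$, uniform continuity allows a unique extension of $T$ to $\supp(\mu)$, and joint continuity of the inner product preserves the identity $\langle T(x),T(x')\rangle = \langle x,x'\rangle$ on the extension. Running the symmetric construction from the $\nu$-side and invoking uniqueness of isometric extensions produces a two-sided inverse on $\supp(\nu)$, establishing bijectivity. The $\mu$-a.s.\ linearity stated in the footnote then follows because an inner-product-preserving map on a subset of a Hilbert space extends uniquely to a linear isometry on the closed linear span of that subset.

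I expect the main obstacle to be this bijection-and-extension step: one must carefully handle null sets on which $T$ is not initially defined, accommodate the possible infinite-dimensionality of $\cH$, and coordinate the two one-sided extensions into a genuine set-theoretic isomorphism of supports. The disintegration-plus-Cauchy-Schwarz argument that collapses $K(x,\cdot)$ to a Dirac mass is the main conceptual point; everything else is standard analytic bookkeeping.
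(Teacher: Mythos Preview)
Your overall architecture matches the paper's: gluing plus Minkowski for the triangle inequality, and Cauchy--Schwarz equality to force uniqueness in the zero-cost case. There is, however, a genuine gap in your ``only if'' argument. From $\langle x,x'\rangle = \langle y,y'\rangle$ holding $\pi\otimes\pi$-a.s.\ you claim that, after disintegrating $\pi=\mu\otimes K$, for $\mu$-a.e.\ $x$ there is a $K(x,\cdot)$-full set $B_x$ with $\langle y,y'\rangle=\|x\|^2$ for all $y,y'\in B_x$. But this requires evaluating the a.s.\ identity on pairs $((x,y),(x,y'))$ with the \emph{same} first coordinate, and when $\mu$ is non-atomic the diagonal $\{x=x'\}$ is $\mu\otimes\mu$-null, hence $\pi\otimes\pi$-null. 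Disintegration alone does not deliver the fiber identity you assert.

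The paper closes this gap by working on supports rather than fibers: since the distortion $|\langle x,x'\rangle-\langle y,y'\rangle|$ is continuous and vanishes $\pi\otimes\pi$-a.s., it vanishes on all of $\supp(\pi)\times\supp(\pi)$. Then for any $x\in\supp(\mu)$ and any two points $(x,y_1),(x,y_2)\in\supp(\pi)$ one can plug in \emph{pointwise} to get $\|y_1\|^2=\|y_2\|^2=\langle y_1,y_2\rangle=\|x\|^2$, whence $y_1=y_2$ by Cauchy--Schwarz. This directly shows $\supp(\pi)$ is the graph of a map $T:\supp(\mu)\to\supp(\nu)$, so your subsequent extension-by-isometry step (while correct in spirit) becomes unnecessary: working on supports from the outset already produces a map defined on all of $\supp(\mu)$, and the symmetric argument gives the inverse on $\supp(\nu)$.
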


While the pseudometric structure is enough for our needs, this proposition directly implies that $\IGW$ metrizes the quotient space of $\cP_2(\cH)$, modulo the above unitary isomorphic relationship. We leave formalizing this observation and the exploration of the quotient topology for future work.

\subsection{Wasserstein Comparisons}

We next present comparison results between $\IGW$ and $\W_2$ over Euclidean spaces, which play a crucial role in our construction of the GMM scheme. First, we define a certain subset of the orthogonal group in $\RR^d$ using which cross-covariance matrices induced by optimal IGW couplings can be symmetrized. This symmetrization is crucial for instantiating the IGW gradient flow in the Wasserstein space, so as to adhere to the continuity equation (see \cref{sec:IGW_gradient_flow}).

\begin{definition}[Cross-covariance PSD transform]\label{def:PSD_transform}
    Fix $\mu,\nu\in\cP_2(\RR^d)$ and let $\pi^\star\in\Pi(\mu,\nu)$ be an optimal IGW coupling. Consider the SVD of the cross-covariance matrix $\int xy^\intercal d\pi^\star(x,y) =\bP\blambda\bQ^\intercal$ and let $\bO=\bP\bQ^\intercal$. Define $\cO_{\mu,\nu}\subset\Od$ as the collection of all orthogonal matrices $\bO$ constructed as above, from any optimal IGW coupling (indeed, optimal IGW couplings need not be~unique).
\end{definition}

A key consequence of the above definition that we repeatedly use in the sequel is the following. 

\begin{lemma}[Cross-covariance PSD tranform]\label{lem:symmetrization}
For $\mu,\nu\in\cP_2(\RR^d)$ and any $\bO\in\cO_{\mu,\nu}$, there exists an optimal IGW coupling $\pi^\star$ for $(\mu,\bO_\sharp\nu)$, such that the cross-covariance matrix $\int xy^\intercal d\pi^\star(x,y)$ is PSD and $\bA^\star= \frac{1}{2}\big(\int xy^\intercal d\pi^\star\big) $ achieves optimality in the dual form in \eqref{eq:F2Decomp}.
\end{lemma}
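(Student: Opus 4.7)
The plan is to exploit the orthogonal invariance of IGW together with the polar-type factorization of the cross-covariance that is baked into \cref{def:PSD_transform}. Given $\bO \in \cO_{\mu,\nu}$, I first fix an optimal IGW coupling $\pi \in \Pi(\mu,\nu)$ that witnesses it: by construction of $\cO_{\mu,\nu}$, the SVD of its cross-covariance has the form $\int xy^\intercal d\pi = \bP \blambda \bQ^\intercal$ with $\bO = \bP \bQ^\intercal$ and $\blambda$ diagonal with non-negative entries. The candidate coupling is then $\pi^\star \coloneqq (\id, \bO)_\sharp \pi \in \Pi(\mu, \bO_\sharp \nu)$.

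The first thing to check is that $\pi^\star$ is optimal for $\IGW(\mu, \bO_\sharp \nu)$. This is immediate from the identity $\llangle \bO y, \bO y'\rrangle = \llangle y, y'\rrangle$: the map $\pi' \in \Pi(\mu,\nu) \mapsto (\id,\bO)_\sharp \pi' \in \Pi(\mu, \bO_\sharp \nu)$ is a bijection leaving the IGW integrand invariant, so it sends optimal couplings to optimal couplings. Next, by the change-of-variables formula,
\begin{equation*}
\int xy^\intercal d\pi^\star(x,y) \;=\; \int x y^\intercal \bO^\intercal d\pi(x,y) \;=\; \bP \blambda \bQ^\intercal \bQ \bP^\intercal \;=\; \bP \blambda \bP^\intercal,
\end{equation*}
which is symmetric with eigenvalues equal to the singular values of $\int x y^\intercal d\pi$, and hence PSD. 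This settles the structural claim about the cross-covariance of $\pi^\star$.

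It remains to verify that $\bA^\star \coloneqq \tfrac{1}{2}\int xy^\intercal d\pi^\star$ attains the infimum in \eqref{eq:F2Decomp} at the pair $(\mu, \bO_\sharp \nu)$. The cleanest route is through the completion-of-squares identity underlying \cref{lem:F2Dual}: for any coupling $\pi'$, the minimization over $\bA$ of $8\|\bA\|_\F^2 - 8\tr\!\big(\bA^\intercal \int xy^\intercal d\pi'\big)$ is attained at $\bA = \tfrac{1}{2}\int xy^\intercal d\pi'$ and equals $-2\big\|\int xy^\intercal d\pi'\big\|_\F^2$. Plugging in the optimal $\pi^\star$ together with the associated $\bA^\star$ therefore saturates the dual. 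There is no substantive obstacle here—the entire argument is a short polar-factorization computation—but one should be careful with the bookkeeping around \cref{def:PSD_transform}: multiple optimal couplings may induce different $\bO$'s, so it is important to fix at the outset one specific $\pi$ whose cross-covariance SVD realizes the prescribed $\bO$ and to work with this $\pi$ consistently throughout.
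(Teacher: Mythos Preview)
Your proposal is correct and follows essentially the same approach as the paper: fix the optimal coupling $\pi$ whose SVD yields the given $\bO$, push it forward via $(\id,\bO)$ to obtain $\pi^\star$, use orthogonal invariance of the IGW integrand to transfer optimality, and compute $\int xy^\intercal d\pi^\star = \bP\blambda\bP^\intercal$ to get the PSD property, with the dual optimality of $\bA^\star$ following from the completion-of-squares step underlying \cref{lem:F2Dual}. Your write-up is in fact slightly more explicit than the paper's on both the bijection argument for optimality of $\pi^\star$ and the dual-attainment step.
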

See \cref{appen:lem:symmetrization_proof} for proof. The cross-covariance symmetrization further allows establishing a certain equivalence between Wasserstein and IGW distances defined on the same ambient space. The following lemma, proven in \cref{appen:lem:equivalence_igw_w_proof}, is subsequently used to realize $\IGW$-continuous curve that are also $\W_2$-continuous.

\begin{lemma}[IGW and Wasserstein comparison]\label{lem:equivalence_igw_w}
For $\mu,\nu\in\cP_{2}(\RR^d)$, we have
\[
    \IGW(\mu,\nu) \leq \big(2 M_2(\mu)+2M_2(\nu) \big)^{\frac{1}{2}} \W_2(\mu,\nu).
\]
Conversely, if $\bsigma_\mu$ and $\bsigma_\nu$ are nonsingular, then for any $\bO\in\cO_{\mu,\nu}$, we have
\[
\left(\frac{1}{2}\big(\lambda_{\min}(\bsigma_\mu)^2+\lambda_{\min}(\bsigma_\nu)^2\big)\right)^{\frac 14} \W_2(\mu,\bO_\sharp\nu) \leq  \IGW(\mu,\nu).
\]
\end{lemma}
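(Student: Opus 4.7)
I would prove the two directions separately, as they require quite different techniques; the upper bound is a short Cauchy--Schwarz manipulation while the lower bound needs the symmetrization lemma together with a moment-matrix expansion of $\IGW^2$.

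For the upper bound, I would begin with the algebraic identity
\[
\langle x,x'\rangle - \langle y,y'\rangle \;=\; \langle x-y,x'\rangle + \langle y, x'-y'\rangle,
\]
square both sides, and apply $(a+b)^2 \leq 2(a^2+b^2)$ together with Cauchy--Schwarz on each inner product to obtain the pointwise estimate $|\langle x,x'\rangle - \langle y,y'\rangle|^2 \leq 2\|x'\|^2\|x-y\|^2 + 2\|y\|^2\|x'-y'\|^2$. Taking $\pi\in\Pi(\mu,\nu)$ to be an optimal $\W_2$-coupling and integrating the pointwise bound against $\pi\otimes\pi$, the right-hand side collapses to $2\bigl(M_2(\mu)+M_2(\nu)\bigr)\W_2(\mu,\nu)^2$, while the left-hand side upper-bounds $\IGW(\mu,\nu)^2$ because $\pi$ is IGW-admissible. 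This directly delivers the first inequality.

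For the lower bound, my starting point is \cref{lem:symmetrization}: for any $\bO\in\cO_{\mu,\nu}$ there is an optimal IGW coupling $\pi^\star\in\Pi(\mu,\bO_\sharp\nu)$ whose cross-covariance $\bC \coloneqq \int xy^\intercal \, d\pi^\star(x,y)$ is PSD (so in particular $\bC=\bC^\intercal$). Using $\pi^\star$ as a (suboptimal) Wasserstein coupling yields $\W_2(\mu,\bO_\sharp\nu)^2 \leq \int\|x-y\|^2 d\pi^\star = \tr(\bD)$, where $\bD \coloneqq \bSigma_\mu + \bO\bSigma_\nu\bO^\intercal - 2\bC \succeq 0$ is the second-moment matrix of $X-Y$. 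The core task is then to upper-bound $\tr(\bD)$ by $\IGW(\mu,\nu)$. For this, I would use the companion identity $\langle x,x'\rangle - \langle y,y'\rangle = \tfrac{1}{2}\bigl[\langle x+y,x'-y'\rangle + \langle x-y,x'+y'\rangle\bigr]$; squaring, integrating against $\pi^\star\otimes\pi^\star$, and exploiting independence of the two copies together with the symmetry $\bC=\bC^\intercal$ (which is what collapses the cross term to a coupling-free expression), one obtains
\[
\IGW(\mu,\bO_\sharp\nu)^2 \;=\; \tfrac{1}{2}\tr(\bD\bE) + \tfrac{1}{2}\|\bF\|_F^2,
\]
with $\bE \coloneqq \E_{\pi^\star}\bigl[(X+Y)(X+Y)^\intercal\bigr] \succeq 0$ and $\bF \coloneqq \bSigma_\mu - \bO\bSigma_\nu\bO^\intercal$.

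To convert this into the stated eigenvalue-weighted bound, I would drop the nonnegative $\tfrac{1}{2}\|\bF\|_F^2$ and use the inclusion $\bE \succeq \bsigma_\mu + \bO\bsigma_\nu\bO^\intercal$ (which follows from $\bSigma \succeq \bsigma$). Applying the PSD trace inequality $\tr(\bD\bA) \geq \lambda_{\min}(\bA)\tr(\bD)$ (valid for PSD $\bD,\bA$ via $\bD^{1/2}\bA\bD^{1/2} \succeq \lambda_{\min}(\bA)\bD$) to each summand gives $\tr(\bD\bE) \geq \tr(\bD\bsigma_\mu) + \tr(\bD\bO\bsigma_\nu\bO^\intercal)$, and the sharper squared version $\tr(\bD\bE)^2 \geq \tr(\bD\bsigma_\mu)^2 + \tr(\bD\bO\bsigma_\nu\bO^\intercal)^2 \geq \bigl(\lambda_{\min}(\bsigma_\mu)^2 + \lambda_{\min}(\bsigma_\nu)^2\bigr)\tr(\bD)^2$ (using $(x+y)^2\geq x^2+y^2$ for $x,y\geq 0$) when combined with $\IGW^4 \geq \tfrac{1}{4}\tr(\bD\bE)^2$ yields the desired lower bound up to the stated constant. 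The main obstacle is pinning down the precise leading factor $\tfrac{1}{2}$ inside the fourth root: the straightforward chain above naturally produces a constant $\tfrac{1}{4}$, and closing the factor-of-two gap demands a finer matrix inequality in which the $\|\bF\|_F^2$ term is retained (and supplies the extra slack when $\bsigma_\mu$ and $\bO\bsigma_\nu\bO^\intercal$ differ). I would verify the tight inequality first in the diagonal/commuting case, where it reduces to the one-dimensional statement $(\alpha^4+\beta^4)(\alpha^2+\beta^2-2\gamma)^2 \leq 2(\alpha^4+\beta^4-2\gamma^2)^2$ (provable by reduction to $0 \leq -(P+3u)Q^2(P-u) + Q^4 + (P-u)^2 u(2P+u)$ after the substitutions $P=\alpha^2+\beta^2$, $Q=\alpha^2-\beta^2$, $u=2\gamma$), and then lift it to the non-commuting PSD setting via trace-cyclicity and the cross-covariance bounds $\bsigma_\mu \succeq \bC\bsigma_\nu^{-1}\bC$ implied by joint PSD-ness.
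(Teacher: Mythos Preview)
Your upper-bound argument is essentially identical to the paper's (the algebraic split differs by a harmless symmetry), so no issue there.

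For the lower bound, your route diverges from the paper's and, as you yourself note, is incomplete: you obtain the constant $\tfrac14$ rather than $\tfrac12$ inside the fourth root, and the proposed repair (``retain $\|\bF\|_F^2$, verify the scalar inequality, then lift to non-commuting PSD matrices via trace-cyclicity and $\bsigma_\mu\succeq\bC\bsigma_\nu^{-1}\bC$'') is not a proof---the lifting step is where the difficulty lives, and your sketch does not explain how to carry it out.

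The paper sidesteps this matrix-inequality issue entirely by \emph{diagonalizing the PSD cross-covariance first}. Writing $\bC=\bP\blambda^\star\bP^\intercal$ and rotating both marginals by $\bP^\intercal$, one has $\int\|x-y\|^2d\pi^\star=\sum_i(a_i+b_i-2\lambda_i)$ exactly, where $a_i,b_i$ are the diagonal entries of the rotated (uncentered) second-moment matrices and $\lambda_i\geq 0$ are the eigenvalues of $\bC$. On the IGW side one uses $\IGW^2=\|\bsigma_\mu\|_F^2+\|\bsigma_\nu\|_F^2-2\|\bC\|_F^2\geq\sum_i(a_i^2+b_i^2-2\lambda_i^2)$, dropping the off-diagonal contributions of the rotated $\bsigma$'s. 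The problem is now a sum of one-dimensional pieces, and for each $i$ the elementary chain
\[
a_i^2+b_i^2-2\lambda_i^2=2(r_i+\lambda_i)(r_i-\lambda_i)\ \geq\ 2r_i(r_i-\lambda_i),\qquad a_i+b_i-2\lambda_i\leq 2(r_i-\lambda_i),
\]
with $r_i=\sqrt{(a_i^2+b_i^2)/2}$ and $\lambda_i\leq\tfrac{a_i+b_i}{2}\leq r_i$, gives $\sum_i(a_i+b_i-2\lambda_i)\leq\IGW^2/\min_i r_i$. The bound $\min_i r_i\geq\sqrt{(\lambda_{\min}(\bsigma_\mu)^2+\lambda_{\min}(\bsigma_\nu)^2)/2}$ then yields the stated constant. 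Note that this scalar inequality is exactly the one you wrote (with $\alpha^2=a_i$, $\beta^2=b_i$, $\gamma=\lambda_i$); the point is that diagonalization \emph{is} the lifting device, and it is available precisely because $\bC$ is symmetric PSD by the choice of $\bO$.

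Your matrix identity $\IGW^2=\tfrac12\tr(\bD\bE)+\tfrac12\|\bF\|_F^2$ is correct and pleasant, but to close the gap to $\tfrac12$ you would end up reproducing the diagonalization argument in disguise; the paper's direct coordinate-wise route is shorter.
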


Lastly, we show that Lipschitz continuous curves in IGW can be transformed into Lipschitz continuous curves in the 2-Wasserstein distance. We do not directly use this fact later, but find it of independent interest, as it testifies to the regularity of IGW Lipschitz curves. For proof see \cref{appen:prop:IGW_curve_realization_proof}

\begin{proposition}[Lipschitz IGW and Wasserstein curves]\label{prop:IGW_curve_realization}
 Let $\rho\in\lip_{\IGW}\big([0,1];\cP_2(\RR^d)\big)$ with $\lip_\IGW(\rho)=L$, i.e., $\IGW\big(\rho_s,\rho_t\big) \leq L|s-t|$, and suppose $\inf_{t\in[0,1]}\lambda_{\min} \big(\bsigma_{\rho_t}\big)\geq c>0$. Then there exists $\tilde\rho\in\lip_{\W_2}\big([0,1];\cP_2(\RR^d)\big)$ with $\lip_{\W_2}(\tilde\rho)=\frac{L}{\sqrt{c}}$ and $\sup_{t\in[0,1]}\IGW\big(\rho_t,\bar{\rho}_t\big)=0$.
\end{proposition}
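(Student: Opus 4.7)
The plan is to construct $\tilde\rho$ as a $\W_2$-uniform limit of discrete approximations, each obtained by inductively rotating the curve $\rho$ along a refining dyadic partition so that consecutive rotated measures align in $\W_2$. For each $n \in \NN$, consider the grid $\{t_i^n := i/2^n\}_{i=0}^{2^n}$. Set $\bU_0^n = \I$ and recursively $\bU_i^n := \bU_{i-1}^n \bV_i^n$, where $\bV_i^n$ is any element of $\cO_{\rho_{t_{i-1}^n},\, \rho_{t_i^n}}$ (nonempty by \cref{def:PSD_transform}, since optimal IGW couplings exist). Define $\tilde\rho^n_{t_i^n} := (\bU_i^n)_\sharp \rho_{t_i^n}$. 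Using $\W_2$-invariance under orthogonal transformations and invoking the lower bound of \cref{lem:equivalence_igw_w} under the hypothesis $\lambda_{\min}(\bsigma_{\rho_t}) \geq c$,
\[
\W_2\big(\tilde\rho^n_{t_{i-1}^n},\, \tilde\rho^n_{t_i^n}\big) = \W_2\big(\rho_{t_{i-1}^n},\, (\bV_i^n)_\sharp \rho_{t_i^n}\big) \leq \tfrac{1}{\sqrt c}\IGW\big(\rho_{t_{i-1}^n},\, \rho_{t_i^n}\big) \leq \tfrac{L}{2^n \sqrt c}.
\]
I would then extend $\tilde\rho^n$ to $[0,1]$ by $\W_2$-geodesic interpolation on each subinterval, producing a $\W_2$-Lipschitz curve with constant at most $L/\sqrt c$, uniformly in $n$.

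For the compactness step, the key observation is that at any fixed dyadic time $t_0 = k/2^m$, the measure $\tilde\rho^n_{t_0}$ for $n \geq m$ equals $(\bU^n)_\sharp \rho_{t_0}$ for some $\bU^n \in \Od$, and hence lies in $\{\bO_\sharp \rho_{t_0} : \bO \in \Od\}$. This set is $\W_2$-compact because $\Od$ is compact and $\bO \mapsto \bO_\sharp \rho_{t_0}$ is $\W_2$-continuous via $\W_2(\bO_\sharp\rho_{t_0}, \bO'_\sharp\rho_{t_0})^2 \leq \|\bO - \bO'\|_\op^2 M_2(\rho_{t_0})$. A diagonal argument over the countable set of dyadic times produces a subsequence $\{n_k\}$ along which $\tilde\rho^{n_k}_{t_0}$ converges in $\W_2$ for every dyadic $t_0$. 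Combining uniform $\W_2$-equicontinuity with the density of dyadics then upgrades this to uniform $\W_2$-convergence $\tilde\rho^{n_k} \to \tilde\rho$ on $[0,1]$, and the limit inherits the Lipschitz constant $L/\sqrt c$.

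To show $\IGW(\tilde\rho_t, \rho_t) = 0$ for each $t$, let $t_i^n$ be a nearest grid point. The $\IGW$ triangle inequality (\cref{prop:IGW_pmetric}) gives
\[
\IGW\big(\tilde\rho_t^n, \rho_t\big) \leq \IGW\big(\tilde\rho_t^n, \tilde\rho_{t_i^n}^n\big) + \IGW\big(\tilde\rho_{t_i^n}^n, \rho_{t_i^n}\big) + \IGW\big(\rho_{t_i^n}, \rho_t\big).
\]
The middle term vanishes since $\tilde\rho_{t_i^n}^n$ is an orthogonal pushforward of $\rho_{t_i^n}$ (\cref{prop:IGW_pmetric}); the last term is at most $L/2^n$ by the $\IGW$-Lipschitz hypothesis; and the first is controlled by the $\W_2$-upper bound of \cref{lem:equivalence_igw_w} combined with a uniform $M_2$-bound on $\tilde\rho^n$ (derived from $\IGW(\rho_t,\delta_0) = \|\E[XX^\intercal]\|_\F \geq M_2(\rho_t)/\sqrt d$ together with boundedness of $\IGW(\rho_t,\delta_0)$ on $[0,1]$). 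Passing $n_k \to \infty$ and using the $\W_2$-continuity of $\IGW$ on sets of bounded second moment then yields $\IGW(\tilde\rho_t, \rho_t) = 0$.

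The main obstacle I anticipate is pointwise $\W_2$-precompactness of $\{\tilde\rho_t^n\}_n$ at non-dyadic times, where the rotation structure is lost and a direct tightness / uniform-integrability argument for $\|x\|^2$ is not immediate. My resolution sidesteps this by establishing $\W_2$-convergence only at dyadic times, where compactness of $\Od$ plays the decisive role, and propagating to all $t \in [0,1]$ via the uniform $\W_2$-equicontinuity of $\{\tilde\rho^n\}$.
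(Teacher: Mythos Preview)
Your proposal is correct and follows essentially the same route as the paper: dyadic rotations via $\cO_{\cdot,\cdot}$, $\W_2$-geodesic interpolation to get a uniformly $L/\sqrt{c}$-Lipschitz family, pointwise compactness at dyadic times via compactness of $\Od$, a diagonal extraction, and propagation to all of $[0,1]$ by equicontinuity. The only substantive difference is the final IGW-equivalence step: the paper argues that for each fixed $t$ the rotations $\bU^{n_k}$ subconverge in $\Od$ to some $\bar\bO$, whence $\tilde\rho_t=\bar\bO_\sharp\rho_t$ directly; you instead bound $\IGW(\tilde\rho_t^{n},\rho_t)$ by the triangle inequality and pass to the limit using the $\W_2$-to-$\IGW$ comparison of \cref{lem:equivalence_igw_w}. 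Both are valid; the paper's route yields the slightly stronger pointwise statement $\tilde\rho_t\in\Od_\sharp\rho_t$, while yours is a bit more streamlined for the claim as stated.
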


\subsection{Local Convexity along Generalized Geodesics}\label{subsec:local_convexity}

Crucial for our convergence analysis of the IGW gradient flow is its convexity profile. Specifically, we establish local convexity of the IGW distance along generalized geodesics, as defined next and illustrated in \cref{fig:generalized_geodesic_diagram}.

\begin{figure}[t!]
    \centering
\includegraphics[width=0.5\textwidth]{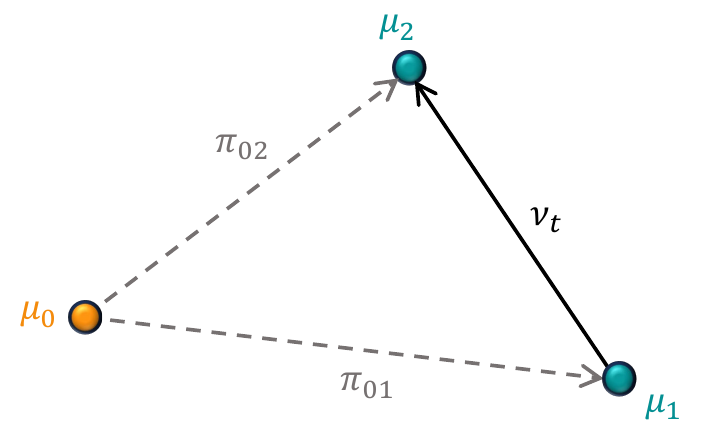}    
\caption{Generalized IGW geodesic between between $\mu_1$ and $\mu_2$ w.r.t. $\mu_0$.}\label{fig:generalized_geodesic_diagram}
\end{figure}

\begin{definition}[Generalized IGW geodesics]\label{def:gen_geo_IGW}
    For measures $\mu_0,\mu_1,\mu_2\in\cP(\RR^d)$, denote by $\pi_{01},\pi_{02}$ optimal IGW couplings for $(\mu_0,\mu_1)$ and $(\mu_0,\mu_2)$, respectively. Denote the glued joint distribution by $\pi\in\Pi(\mu_0,\mu_1,\mu_2)$ \cite[Lemma 7.6]{villani2003topics} and suppose that $\mu_1, \mu_2$ is already rotated w.r.t. $\mu_0$ such that both optimal coupling have PSD and nonsingular (uncentered) cross-covariance matrices $\int xy^\intercal d\pi_{01}(x,y)$, $\int xz^\intercal d\pi_{02}(x,z)$. The generalized IGW geodesic between $\mu_1$ and $\mu_2$ w.r.t. $\mu_0$ is given by $\nu_t\coloneqq ((1-t)y + tz)_\sharp\pi$, $t\in[0,1]$. 
\end{definition}

Note that $\nu_t$ depends on the choice of the (nonunique) optimal couplings, which we always assume to be fixed and consider the resulting glued joint distribution. The assumption that $\mu_1,\mu_2$ are rotated is introduced for convenience to simplify subsequent derivations. This does not limit the generality of our IGW gradient flow framework since the relevant objects are rotationally invariant. Therefore, we can always rotate one of the measures (usually $\mu_2$) to achieve the PSD property. The nonsingularity requirement on the cross-covariance matrices is more stringent. It is introduced to account for the entropy objective functional, whose convexity is contingent on this assumption (see \cref{appen:entropy}); other objectives of interest, such as potential or interaction energy, do not need this assumption. Nevertheless, in our subsequent derivations, we make careful choices of various parameters to ensure this nonsingulatiry (see \cref{lem:bar_delta}).

\medskip
IGW satisfies the following property, which is in parallel to the convexity of $\W_2$ along Wasserstein generalized geodesics, see \cite[Lemma 9.2.1]{ambrosio2005gradient}.

\begin{lemma}[Local convexity of $\IGW$]\label{lem:local_convexity_igw}
    Let $\mu_0,\mu_1, \mu_2\in\cP_2(\RR^d)$ be such that $\lambda_{\min}(\bsigma_{\mu_0}) \geq c$, for some constant $c>0$ and $\IGW(\mu_0,\mu_1)\leq \frac{c}{16\sqrt{2}}$. Then, along the generalized geodesic $\nu_t$ from $\mu_1$ to $\mu_2$  w.r.t. $\mu_0$, we have
    \begin{align*}
        \IGW(\nu_t,\mu_0)^2 &\leq (1-t)\IGW(\mu_1,\mu_0)^2 + t\IGW(\mu_2,\mu_0)^2 - t\left(1- \frac{8\sqrt{2}}{c}\IGW(\mu_0,\mu_1)\right)\mspace{-4mu} \IGW(\mu_1,\mu_2)^2 \\
        &\qquad + \frac{12\sqrt{2}}{c}t\IGW(\mu_0,\mu_1)^3 + O(t^2)
    \end{align*}
    where in $O(t^2)$ we have omitted constants depending only on the second moments of $\mu_0,\mu_1, \mu_2$.
\end{lemma}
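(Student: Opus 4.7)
The plan is to bound $\IGW(\nu_t,\mu_0)^2$ by the cost of a specific suboptimal coupling, Taylor expand in $t$, and match the resulting expression against the target bound. Since the cross-covariances of $\pi_{01},\pi_{02}$ are PSD and nonsingular, Lemma 2.2 produces Gromov-Monge maps $T_1,T_2:\RR^d\to\RR^d$ such that $\pi_{0i}=(\id,T_i)_\sharp\mu_0$. The glued joint law is then $\pi=(\id,T_1,T_2)_\sharp\mu_0$ and $\nu_t=(S_t)_\sharp\mu_0$ with $S_t\coloneqq(1-t)T_1+tT_2$. The natural coupling $(\id,S_t)_\sharp\mu_0\in\Pi(\mu_0,\nu_t)$ supplies the upper bound
\[
\IGW(\nu_t,\mu_0)^2\;\leq\;\int\!\!\int \bigl[\langle x,x'\rangle-\langle S_t(x),S_t(x')\rangle\bigr]^2 d\mu_0(x) d\mu_0(x').
\]

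To expand the integrand I use the identity $\langle S_t(x),S_t(x')\rangle=(1-t)\beta+t\gamma-t(1-t)g$, where $\beta=\langle T_1(x),T_1(x')\rangle$, $\gamma=\langle T_2(x),T_2(x')\rangle$, and $g=\langle T_1(x)-T_2(x),T_1(x')-T_2(x')\rangle$, together with the convex-combination identity $[(1-t)a+tb]^2=(1-t)a^2+tb^2-t(1-t)(a-b)^2$. Writing $\alpha=\langle x,x'\rangle$ and using that $(\id,T_i)_\sharp\mu_0$ is optimal for $\IGW(\mu_i,\mu_0)$ to identify $\int\!\int(\alpha-\beta)^2 d\mu_0^{\otimes 2}=\IGW(\mu_1,\mu_0)^2$ (and analogously for $T_2$), the expansion becomes
\begin{align*}
    \IGW(\nu_t,\mu_0)^2 & \leq (1-t)\IGW(\mu_1,\mu_0)^2+t\IGW(\mu_2,\mu_0)^2-t(1-t)I_{12}\\
    &\quad +2t(1-t)\!\int\!\!\int g\bigl[(1-t)(\alpha-\beta)+t(\alpha-\gamma)\bigr] d\mu_0^{\otimes 2}+t^2(1-t)^2 G,
\end{align*}
with $I_{12}\coloneqq\int\!\!\int(\beta-\gamma)^2 d\mu_0^{\otimes 2}$ and $G\coloneqq\int\!\!\int g^2 d\mu_0^{\otimes 2}$. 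Since $(T_1,T_2)_\sharp\mu_0\in\Pi(\mu_1,\mu_2)$, one has $I_{12}\geq\IGW(\mu_1,\mu_2)^2$. All genuinely $O(t^2)$ contributions (from the $t^2$ remainder of $-t(1-t)\IGW(\mu_1,\mu_2)^2$, the higher-order part of the cross integral, and the final $t^2(1-t)^2 G$) absorb into the stated $O(t^2)$ error, with constants controlled by the second moments of $\mu_0,\mu_1,\mu_2$.

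Matching the coefficient of $t$ against the target reduces the lemma to establishing
\[
2J_1\;\leq\;\bigl[I_{12}-\IGW(\mu_1,\mu_2)^2\bigr]+\tfrac{8\sqrt 2}{c}\IGW(\mu_0,\mu_1)\IGW(\mu_1,\mu_2)^2+\tfrac{12\sqrt 2}{c}\IGW(\mu_0,\mu_1)^3,
\]
where $J_1\coloneqq\int\!\!\int g(\alpha-\beta) d\mu_0^{\otimes 2}$. By Cauchy-Schwarz, $|J_1|\leq G^{1/2}\IGW(\mu_0,\mu_1)$, and since the PSD matrix $\bM\coloneqq\int(T_1-T_2)(T_1-T_2)^\intercal d\mu_0$ satisfies $G=\|\bM\|_\F^2\leq(\tr\bM)^2$, we have $G^{1/2}\leq\int\|T_1-T_2\|^2 d\mu_0$. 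The remaining task is to bound $\int\|T_1-T_2\|^2 d\mu_0$ by $\tfrac{4\sqrt 2}{c}\IGW(\mu_1,\mu_2)^2+\tfrac{6\sqrt 2}{c}\IGW(\mu_0,\mu_1)^2$, possibly after exploiting the nonnegative slack $I_{12}-\IGW(\mu_1,\mu_2)^2$. For this I combine three ingredients: the Wasserstein-IGW comparison in Lemma 3.2, which under $\lambda_{\min}(\bsigma_{\mu_0})\geq c$ yields $\W_2(\mu_0,\mu_1)^2\leq(\sqrt 2/c)\IGW(\mu_0,\mu_1)^2$; the PSD-cross-covariance transform in Lemma 3.1, ensuring that the rotational component of $T_1$ is close to the identity rather than to an arbitrary orthogonal transform; and the explicit representation $T_1=(8\bA^\star_{01})^{-1}\nabla\phi_1$ from Lemma 2.2, allowing one to transfer Wasserstein closeness into an $L^2(\mu_0)$-bound on $T_1-\id$.

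The main obstacle is precisely the last step. The Gromov-Monge map $T_1$ is the nonlinear composition of a Brenier potential with $(8\bA^\star_{01})^{-1}$, and $\bA^\star_{01}$ is itself defined implicitly through the fixed-point equation $\bA^\star_{01}=\tfrac{1}{2}\int xT_1(x)^\intercal d\mu_0$. Converting the IGW smallness $\IGW(\mu_0,\mu_1)\leq c/(16\sqrt 2)$ into a sharp $L^2(\mu_0)$-bound on $T_1-\id$ therefore requires simultaneously controlling this fixed point and ensuring $(8\bA^\star_{01})^{-1}$ stays bounded. The two hypotheses are tailored precisely for this: the lower bound $\lambda_{\min}(\bsigma_{\mu_0})\geq c$ prevents $\bA^\star_{01}$ from degenerating, and the IGW smallness confines $T_1$ to a controlled neighborhood of the identity where a linearized estimate is valid. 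Once this bound is in place, assembling the remaining pieces yields the stated local convexity.
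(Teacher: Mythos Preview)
Your expansion and the Cauchy--Schwarz step are essentially the same as the paper's: both arrive at the bound $2|J_1|\leq 2\IGW(\mu_0,\mu_1)\int\|y-z\|^2\,d\pi$, where $y=T_1(x),z=T_2(x)$ in your notation. The divergence is in how you propose to control $\int\|y-z\|^2\,d\pi$, and here your proposal has a genuine gap.

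You aim to bound $\int\|T_1-T_2\|^2\,d\mu_0$ by $\tfrac{4\sqrt2}{c}\IGW(\mu_1,\mu_2)^2+\tfrac{6\sqrt2}{c}\IGW(\mu_0,\mu_1)^2$ and propose doing so via the Brenier structure of $T_1=(8\bA^\star_{01})^{-1}\nabla\phi_1$ to control $\|T_1-\id\|_{L^2(\mu_0)}$. This route does not close. First, a triangle inequality through $\id$ would also require controlling $\|T_2-\id\|_{L^2(\mu_0)}$, which is governed by $\IGW(\mu_0,\mu_2)$, a quantity on which there is \emph{no} smallness assumption. Second, the coupling $(T_1,T_2)_\sharp\mu_0\in\Pi(\mu_1,\mu_2)$ is neither IGW-optimal nor does it have PSD cross-covariance, so the comparison in Lemma~3.2 is unavailable for the pair $(\mu_1,\mu_2)$. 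Third, invoking Gromov--Monge maps at all requires $\mu_0\in\cP_2^{\ac}(\RR^d)$, which the lemma does not assume; the paper works directly with the glued coupling $\pi\in\Pi(\mu_0,\mu_1,\mu_2)$ and never needs maps.

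The paper's argument for this step is both simpler and sharper. Use the triangle inequality through $x$: $\int\|y-z\|^2\,d\pi\leq 2\int\|x-y\|^2\,d\pi+2\int\|x-z\|^2\,d\pi$. Since $\pi_{01},\pi_{02}$ have PSD cross-covariances by the definition of the generalized geodesic, the proof of Lemma~3.2 gives $\tfrac{c}{\sqrt2}\int\|x-y\|^2\,d\pi\leq\IGW(\mu_0,\mu_1)^2$ and $\tfrac{c}{\sqrt2}\int\|x-z\|^2\,d\pi\leq\IGW(\mu_0,\mu_2)^2$. Then the crucial algebraic move is to bound $\IGW(\mu_0,\mu_2)^2=\int|\langle x,x'\rangle-\langle z,z'\rangle|^2\,d\pi^{\otimes2}\leq 2I_{12}+2\IGW(\mu_0,\mu_1)^2$ at the integrand level, yielding
\[
\int\|y-z\|^2\,d\pi\;\leq\;\tfrac{4\sqrt2}{c}I_{12}+\tfrac{6\sqrt2}{c}\IGW(\mu_0,\mu_1)^2.
\]
Note the bound is in terms of $I_{12}$, not $\IGW(\mu_1,\mu_2)^2$. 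You must \emph{retain} $I_{12}$ until after recombining with the $-tI_{12}$ term to obtain $-t\bigl(1-\tfrac{8\sqrt2}{c}\IGW(\mu_0,\mu_1)\bigr)I_{12}$; only then does the hypothesis $\IGW(\mu_0,\mu_1)\leq c/(16\sqrt2)$ make the coefficient positive so that $I_{12}\geq\IGW(\mu_1,\mu_2)^2$ finishes the job. Your stated target bound with $\IGW(\mu_1,\mu_2)^2$ in place of $I_{12}$ is actually stronger than what is needed and not what one proves.
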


The proof of this lemma, given in \cref{appen:lem:local_convexity_igw_proof}, relies on the polynomial expansion of the primal form of $\IGW(\nu_t,\mu_0)^2$ and \cref{lem:equivalence_igw_w}, which leads to a formula similar to usual convexity, up to an additive higher order terms. This suggests that $\IGW^2$ behaves like a convex functional near the starting point (i.e., near $t=0$), which we refer to as local convexity. This notion of convexity is sufficient to run and prove the convergence of the proximal point method, akin to the JKO scheme for $\W_2$ \cite{jordan1998variational}, as described in the next section.

\section{IGW Gradient Flows}\label{sec:IGW_gradient_flow}

Our goal is to develop a gradient flow in $\cP_2(\RR^d)$ w.r.t. the IGW geometry. Alas, there is currently no understanding of the differential/Riemannian structure of the IGW space. To circumvent this issue, we draw inspiration from the celebrated Jordan-Kinderlehrer-Otto (JKO) scheme \cite{jordan1998variational}, and consider a proximal point method with $\sW_2$ replaced  by $\IGW$.  We next describe the setting, the employed proximal point algorithm, and conclude this section with the main result, accounting for convergence and a characterization of the limiting curve. Throughout, we impose the following assumption.

\begin{assumption}[Objective and initialization]\label{assumption:main}
The initialization point $\mu_0\in\dom(\sF)\subset\cP_2(\RR^d)$ has a nonsingular covariance matrix $\bsigma_{\mu_0}=\int xx^\intercal d \mu_0(x)$ and the target functional $\sF:\cP_2(\RR^d)\to\RR\cup\{\infty\}$~is:
\begin{enumerate}[leftmargin=*]
    \item[(i)] rotation invariant (e.g., negative entropy  $\sF(\mu) = \int \log\mu\, d \mu$), lower bounded (i.e. $\sF^\star\coloneqq\inf_\rho \sF(\rho)>-\infty$), and weakly lower semi-continuous;
    \item[(ii)] regular in the sense of \cite[Definition 10.1.4]{ambrosio2005gradient} and has $\dom(\sF)\subset\cP_2^{\mathrm{ac}}(\RR^d)$.
    \item[(iii)] $\lambda$-convex along any generalized IGW geodesics with some $\lambda\in\RR$, i.e., for $\mu_0,\mu_1,\mu_2 \in \dom(\sF)$ with a glued joint distribution $\pi\in\Pi(\mu_0,\mu_1,\mu_2)$ as defined in \cref{def:gen_geo_IGW}, and the resulting generalized geodesic $\nu_t$ from $\mu_1$ to $\mu_2$  w.r.t. $\mu_0$, we~have
    \begin{equation}
    \sF(\nu_t)\leq (1-t)\sF(\mu_1)+t\sF(\mu_2) - \lambda t(1-t) \int \left|\llangle y,y'\rrangle-\llangle z,z'\rrangle\right|^2d\pi\otimes\pi(x,y,z,x',y',z'), \ \forall t\in[0,1];\label{eq:lambda_convexity}
    \end{equation}
\end{enumerate}
\end{assumption}

We note that while $\lambda$-convexity is introduced to broaden the range of admissible functionals, most interesting examples satisfy it with $\lambda=0$, as described next.

\begin{remark}[Examples of functionals]\label{rem:func_examples}
   \cref{assumption:main} is satisfied by a wide range of functionals, see \cite[Section 9.3, Remark 9.2.5]{ambrosio2005gradient}, in particular, those which are convex in $\W_2$. 
   For instance, for the potential energy $\sV(\mu)\coloneqq \int V(x)d\mu(x)$ and the interaction energy $\sW(\mu)\coloneqq \int W(x_1,\ldots,x_k)d\mu^{\otimes k}(x_1,\ldots,x_k)$, convexity along generalized IGW geodesic (in fact, any linearly interpolating curves, see \cite[Proposition 9.3.2, Proposition 9.3.5]{ambrosio2005gradient}) follows from convexity of functions $V$ and $W$, respectively. For the important example of the entropy functional $\sH(\mu)\coloneqq \int \log\left(\frac{d\mu}{dx}\right)d\mu $, we could only establish convexity along a modified version of generalized geodesics, which hinges on a proper choice of PSD rotations and the structure of Gromov-Monge maps (see \cref{appen:entropy} for details and the formal derivation). The modified displacement is not linear, which leaves it unclear whether the potential and interaction energy functionals are also convex along it. We leave the characterization of generalized geodesics along which $\sV$, $\sW$, and $\sH$ are simultaneously convex for future work. 
\end{remark}

\begin{figure}
    \centering
\includegraphics[width=0.8\textwidth]{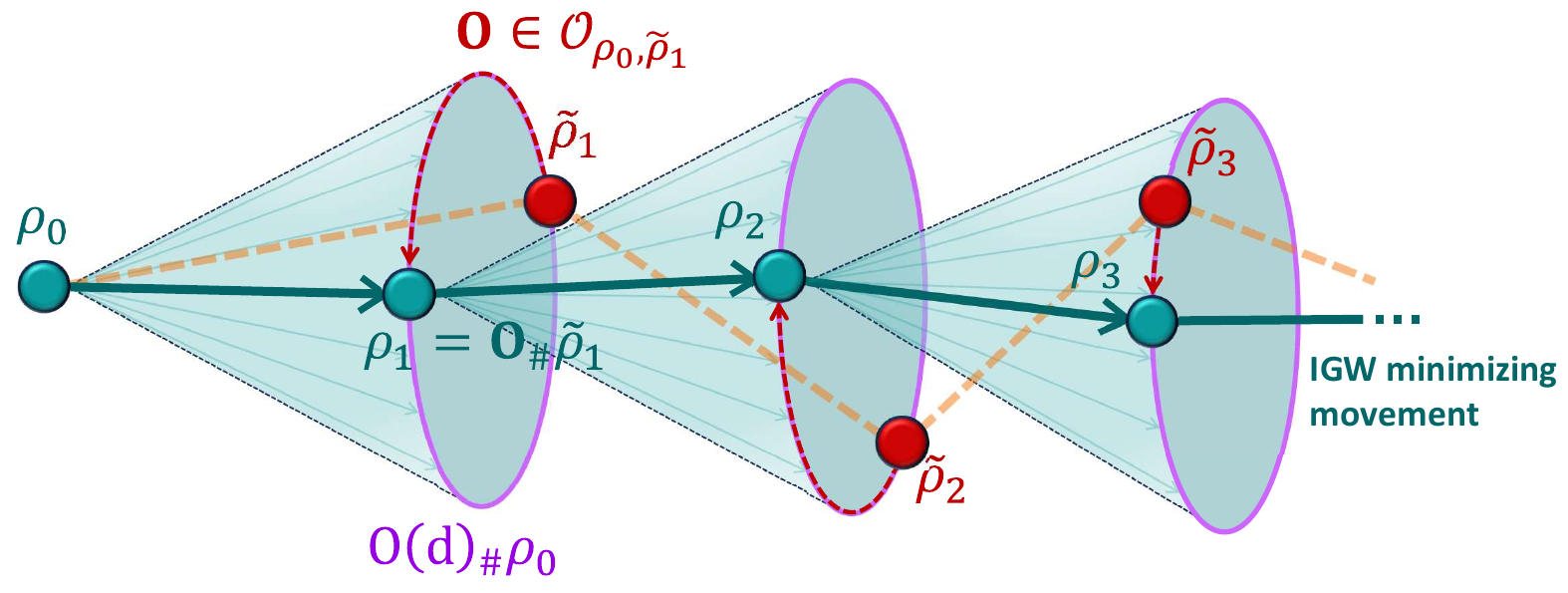}    
\caption{Illustration of steps along IGW proximal point method from \eqref{eq:minimizing_movement}: each purple ring represents an orbit of $\tilde\rho_i$ along the orthogonal group $\mathrm{O}(d)$ in $\RR^d$, i.e., $\mathrm{O}(d)_\sharp\tilde\rho_i\coloneqq \{\bU_\sharp\tilde\rho_i:\,\bU\in\mathrm{O}(d)\}$; after each update step of the algorithm, the obtained distribution $\tilde\rho_{i+1}$ (shown in red, arbitrarily chosen from $\mathrm{O}(d)_\sharp\rho_i$) is transformed using any $\bO\in\cO_{\rho_i,\tilde\rho_{i+1}}$ to obtain $\rho_{i+1}=\bO_\sharp\tilde\rho_{i+1}$ (shown in teal; see \cref{def:PSD_transform}). The transformation aligns each two consecutive steps and guarantees the the cross-covariance matrix between any two consecutive steps is PSD. This results in a regular and controlled path (marked in dark teal), whose convergence and limiting characterization we establish in \cref{thm:main}. The dashed orange line shows the path that would have been obtained without the transformations, which notably lacks stability.}\label{fig:IGW_steps}
\end{figure}

\paragraph{\textbf{Minimizing movement scheme.}} 
We first describe the scheme for a finite total amount of time $\delta>0$ (to be specified in \cref{prop:existence_of_scheme} ahead), and account for its extension to the infinite-time horizon afterwards. Fix the number of steps $n\in\NN$ and set $\tau=\frac{\delta}{n}$ as the step size. The minimizing movement scheme follows a proximal point method that generates a sequence of measures $\{\rho_i\}_{i=0}^n\subset\cP_2(\RR^d)$, termed the \emph{discrete solution}, which is defined recursively, for $i=0,\ldots,n-1$, via
\begin{equation}
\begin{cases}
    &\rho_0 = \mu_0,\\
    &\tilde\rho_{i+1} \in \argmin_{\rho\in\cP_2(\RR^d)} \sF(\rho) + \frac{1}{2\tau}\IGW(\rho,\rho_i)^2,\\
    &\rho_{i+1} = \bO_\sharp\tilde{\rho}_{i+1}, \ \ \bO\in\cO_{\rho_i,\tilde{\rho}_{i+1}}.
\end{cases}\label{eq:minimizing_movement}
\end{equation}
Note that the rotational invariance of the objective in the second line implies that if $\tilde\rho_{i+1}$ belongs to the $\argmin$, then so does the entire orbit 
$\mathrm{O}(d)_\sharp\tilde\rho_{i+1}\coloneqq \{\bU_\sharp\tilde\rho_{i+1}:\,\bU\in\mathrm{O}(d)\}$.\footnote{Each orbit $\mathrm{O}(d)_\sharp\tilde\rho_{i+1}$ has two connected components, depending on whether the matrix entails a reflection or not. This fact is overlooked in our illustration of the scheme in \cref{fig:IGW_steps}, where each orbit is represented by a single ring for simplicity.}
Instead of defining a flow over equivalence classes (i.e., in the quotient space), at each step, we pick a specific representative by starting from an arbitrary minimizing $\tilde{\rho}_{i+1}$ and rotating it w.r.t. $\rho_i$ as described in \cref{lem:symmetrization}, namely, setting $\rho_{i+1} = \bO_\sharp\tilde{\rho}_{i+1}$, for any $\bO\in\cO_{\rho_i,\tilde{\rho}_{i+1}}$.  While we do not specify a selection criteria of $\bO\in\cO_{\rho_i,\tilde{\rho}_{i+1}}$, our theory holds for any sequence $\{\rho_i\}_{i=0}^n$ that adheres to the aforementioned structure. We henceforth fix such an arbitrary minimizing movement sequence and provide our results for it. 

The above construction aligns $\rho_{i+1}$ with $\rho_{i}$ in the $\W_2$ sense, resulting in a sequence $\{\rho_i\}_{i=0}^n$ that complies with the natural Wasserstein gradient flow structure associated with the continuity equation
\begin{equation}
\partial_t\rho_t+\nabla\cdot(v_t\rho_t)=0,\label{eq:cont}
\end{equation} 
in the limit of small step size. The alignment further guarantees that there exists an optimal IGW coupling $\pi^\star\in\Pi(\rho_i, \rho_{i+1})$ such that $\int xy^\intercal d\pi^\star$ is PSD. These properties of the discrete solution are crucial for our convergence analysis and the characterization of the limiting IGW gradient flow curve, stated in \cref{thm:main}. As seen in the theorem, the limiting flow is instantiated in the Wasserstein space and follows the continuity equation. The overall scheme is illustrated in \cref{fig:IGW_steps}, which also shows how the alignment of adjacent steps forms a more regular path.

\medskip

\paragraph{\textbf{Convergence of minimizing movement sequence and continuous-time limit.}} We study the convergence of the discrete solution $\{\rho_i\}_{i=0}^n$ to a curve in $\cP_2(\RR^d)$ as the step size $\tau\to 0$. The limiting curve is identified as the IGW gradient flow and we derive the PIDE that characterizes it. Recall that the Wasserstein gradient of $\sF$ at $\rho$ is given by gradient of its first variation, i.e., $\grad_{\W_2}\sF(\rho)=\nabla\delta \sF(\rho)$ \cite{otto2001geometry}. The Wasserstein gradient flow then evolves according to the velocity field $v_t=-\nabla\delta\sF(\rho_t)$, subject to the continuity equation $\partial_t\rho_t=-\nabla\cdot(\rho_t v_t)$. Interestingly, the corresponding velocity field for the IGW gradient flow is given by a certain inverse (global) linear operator acting on the Wasserstein~gradient.

To describe it, for any PSD matrix $\bA\in\RR^{d\times d}$, probability measure $\mu\in\cP_2(\RR^d)$, and vector field $v\in L^2(\mu;\RR^d)$, define the linear operator $\cL_{\bA,\mu}:L^2(\mu;\RR^d)\to L^2(\mu;\RR^d)$ by 
\begin{equation}
    \cL_{\bA,\mu}[v](x) \coloneqq 2\left(\bA  v(x) + \int_{\RR^d} y  \llangle v(y),x\rrangle d\mu(y)\right).\label{eq:operator}
\end{equation}

In particular, when $\bA=\bsigma_{\mu}$, we call $\cL_{\bsigma_\mu,\mu}$ the \emph{mobility operator}.\footnote{This terminology is borrowed from \cite{burger2023covariance}, where a variant of Benamou-Brenier formula with a certain covariance modulation is studied.} Note the global nature of $\cL_{\bA,\mu}[v]$, whose direction at any $x\in\RR^d$ depends not only on $v(x)$, but also on all other velocities $v(y)$, $y\in\RR^d$, through the second term. Writing $\bsigma_t$ for the covariance matrix of $\rho_t$, \cref{thm:main} ahead, shows that the IGW gradient flow velocity field is $v_t=-\cL^{-1}_{\bsigma_t,\rho_t}[\nabla\delta\sF]$, subject to the continuity equation (the inverse exists under mild conditions; see \cref{rem:property_of_operator} below). 
Unlike the Wasserstein gradient flow, the transformed velocity field $\cL^{-1}_{\bsigma_t,\rho_t}[\nabla\delta\sF]$ encodes not only local information, but also global structure. In the later \cref{subsec:IGW_grad}, this characterization of the limiting velocity field leads us to identifying the IGW gradient as the inverse operator acting on the Wasserstein gradient, i.e., $\grad_\IGW \sF(\rho)=\cL_{\bsigma_\rho,\rho}^{-1}[\grad_{\W_2} \sF(\rho)]$. 

\begin{remark}[Properties of operator]\label{rem:property_of_operator}
We collect here some facts about $\cL_{\bA,\mu}$, proven in \cref{appen:rem:property_of_operator}:
\begin{enumerate}[leftmargin=*]
    \item Clearly, $\cL_{\bA,\mu}$ is self-adjoint on $L^2(\mu;\RR^d)$, and whenever $\bA \succcurlyeq \bsigma_\mu$, it is also PSD.
    \item A direct computation shows that
    $\cI_\mu\coloneqq\{v\in L^2(\mu;\RR^d): \int xv(x)^\intercal d\mu(x) \text{ is symmetric}\}$ is 
    an invariant space of~$\cL_{\bsigma_\mu,\mu}$.
    \item $\cL_{\bsigma_\mu,\mu}$ has a nontrivial kernel. To see this, recall that the tangent space of $\SOd$ at the identity $\bI$ is the set of skew-symmetric matrices. For any tangent element $\bS$ and $\mu\in\cP_2(\RR^d)$, set $v(x)\coloneqq \bS x$ and observe
    \begin{align*}
        \cL_{\bsigma_{\mu},\mu}[v] &= 2 \bsigma_{\mu}\bS x + 2\int_{\RR^d} y y^\intercal  d\mu(y) \bS^\intercal x= 2 \bsigma_{\mu}\bS x + 2\bsigma_{\mu} (-\bS) x=0.
    \end{align*}
    Consequently, the skew-symmetric transformation is nullified by the mobility operator.
    \item As $\cL_{\bsigma_\mu,\mu}$ has a nontrivial kernel it does not have a canonical inverse. Nevertheless, we can invert it on the invariant space $\cI_\mu$ given that $\mu$ has a nonsingular covariance $\bsigma_\mu$. First, we show in \cref{appen:rem:property_of_operator} that for a general nonsingular PSD $\bA$, 
    if $\cL_{\bA,\mu}[v]=w$ with $v\in\cI_\mu$, then $v$ is uniquely determined by $w$ via
    \begin{align*}
        v(x)
        = \frac{1}{2}\bA^{-1}w(x) - \frac{1}{2}x^\intercal\otimes \bI (\bI\otimes\bA^2+\bsigma_\mu\otimes \bA)^{-1} \int (y\otimes \bI) w(y) d \mu(y),\quad x\in\RR^d.
    \end{align*}
    Due to the uniqueness, we call this formula the principle inverse of $\cL_{\bA,\mu}$, and slightly abusing notation, we write $v = \cL_{\bA,\mu}^{-1}[w]$. Now take $\bA=\bsigma_\mu$ and observe that the restriction $\cL_{\bsigma_\mu,\mu}\big|_{\cI_\mu}$ is bijective, with an inverse given by the formula above with $\bA$ replaced by $\bsigma_\mu$.
    \item Restricted to the invariant space $\cI_\mu$, the spectrum of $\cL_{\bsigma_\mu,\mu}$ is discrete and contains at most $d^2+d$ elements. See \cref{appen:rem:property_of_operator} for a detailed study of the eigenvalues and eigenfunctions.
\end{enumerate}
Overall, the mobility operator and its inverse both tend to align the entire velocity field, as is shown in \cref{fig:L_example}. As will be seen subsequently, in the context of the IGW gradient flow, this action will serve to align the movement so as to preserve the global structure of the distribution. 
\begin{figure}
    \centering
    \includegraphics[width=1\linewidth]{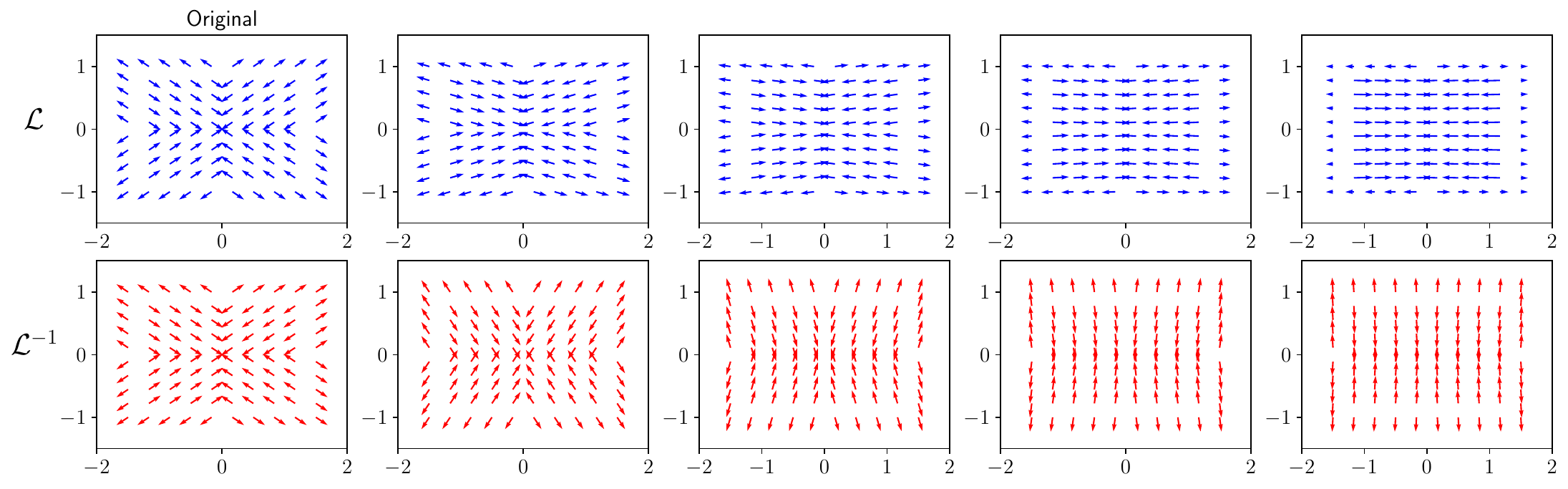}
    \caption{Illustration of the action of the mobility operator $\cL_{\bsigma_\mu,\mu}$ and its inverse. Fixing $\mu$ as uniform distribution over the grid, and we initiate the vector field as pointing inward except for the periphery. The first shows the repeated application of $\cL_{\bsigma_\mu,\mu}$ to the initial vector field, while the second shows the inverse $\cL_{\bsigma_\mu,\mu}^{-1}$. 
    }
    \label{fig:L_example}
\end{figure}

\end{remark}

To formally state our main result in terms of the gradient of the first variation of $\sF$, we impose the following differentiability assumption. This is not strictly required for the derivation, which relies on the more general notion of Fr\'echet subdifferential.

\begin{assumption}[Differentiability of $\sF$]\label{assumption:differentiability}
    Suppose functional $\sF$ has first variation $\delta\sF$ and, for each $\mu\in\dom(\sF)$, the Fr\'echet subdifferential is the singleton $\partial \sF(\mu)=\{\nabla\delta\sF(\mu)\}$.
\end{assumption}

The following theorem is the main result on the IGW gradient flow. It first states the convergence of the discrete solution as $\tau\to 0$ by considering the piecewise constant curve $\bar{\rho}_n(t)=\rho_i$, for $t\in ((i-1)\tau,i\tau]$ and $i=1,\ldots,n$. In addition, the theorem provides a PIDE characterization of the limiting curve in $\cP_2(\RR^d)$. Notably, the result accounts for the convergence and limiting characterization of the IGW gradient flow within a finite time interval $[0,\delta]$, where $\delta$ depends on $\sF$ and $\mu_0$; following the statement, \cref{rem:infinite_time_extension} discusses the extension to the infinite-time horizon.

\begin{theorem}[IGW gradient flow]\label{thm:main}
    Let $\mu_0\in\cP_2(\RR^d)$ and $\sF:\cP_2(\RR^d)\to\RR$ satisfy Assumptions \ref{assumption:main}~and~\ref{assumption:differentiability}. Then, for any $0<\delta< \frac{(1-1/\sqrt{2})\lambda_{\min}(\bsigma_{\mu_0})}{2^{5/4}(\sF(\mu_0) - \sF^\star)}$, the following statements hold:
    \begin{enumerate}[leftmargin=*]
        \item \textbf{Convergence:} The piecewise constant curve $\bar{\rho}_n:[0,\delta]\to\cP_2(\RR^d)$ with step size $\tau=\frac{\delta}{n}$, converges uniformly in $\IGW$ to a $\W_2$-continuous curve $\rho:[0,\delta]\to \cP_2(\RR^d)$ with error $O(\sqrt{\tau})$, and the convergence can be lifted to $\W_2$ convergence along a subsequence. The limiting curve is called the IGW GMM.

        \medskip
        \item \textbf{Limiting solution:} The IGW GMM $\rho:[0,\delta]\to\dom(\sF)\subset\cP_2(\RR^d)$ 
        satisfies the PIDE 
        \begin{align*}
            \begin{cases}
            \rho_0 = \mu_0 &\\
            \partial_t\rho_t + \nabla\cdot (\rho_t v_t)= 0, &\mbox{in distributional sense on }\RR^d\times[0,\delta]\\
             v_t=-\cL_{\bsigma_t,\rho_t}^{-1}\left[\nabla\delta \sF(\rho_t)\right], &\rho_t\mbox{-a.s. }t\in[0,\delta]\mbox{ a.e.}
            \end{cases}
        \end{align*}
        where $\bsigma_t\coloneqq \int_{\RR^d} xx^\intercal d \rho_t(x)$ and $\nabla\delta\sF(\rho_t)\in\cI_{\rho_t}$, for all $t\in[0,\delta]$, whence the inverse $-\cL_{\bsigma_t,\rho_t}^{-1}$ is well-defined (see \cref{rem:property_of_operator}).
    \end{enumerate}
\end{theorem}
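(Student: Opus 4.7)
The plan is to adapt the generalized minimizing movement (GMM) framework of Ambrosio--Gigli--Savar\'e to the IGW geometry, making essential use of the duality in \cref{lem:F2Dual}, the Gromov--Monge maps in \cref{lem:gromov_monge} (applied after the PSD alignment of \cref{def:PSD_transform}), the comparison \cref{lem:equivalence_igw_w}, and the local convexity of \cref{lem:local_convexity_igw}. I proceed in three stages, with the main difficulty concentrated at the end.

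\textbf{Stage 1 (existence, a priori estimates, compactness).} I first establish existence at each proximal step via the direct method: the functional $\rho \mapsto \sF(\rho) + \frac{1}{2\tau}\IGW(\rho,\rho_i)^2$ is weakly lower semi-continuous and coercive in second moment. Comparing $\rho_{i+1}$ to $\rho_i$ in \eqref{eq:minimizing_movement} and telescoping yields the standard discrete dissipation
$\sum_{i=0}^{n-1}\tfrac{1}{2\tau}\IGW(\rho_i,\rho_{i+1})^2 \leq \sF(\mu_0)-\sF^\star$,
hence the $1/2$-H\"older bound $\IGW(\rho_i,\rho_j)\leq\sqrt{2|j-i|\tau(\sF(\mu_0)-\sF^\star)}$. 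Combined with \cref{lem:equivalence_igw_w} and a Gr\"onwall-type control on $\|\bsigma_{\rho_{i+1}}-\bsigma_{\rho_i}\|_\op$, the specific value of $\delta$ assumed in the theorem guarantees $\lambda_{\min}(\bsigma_{\rho_i})\geq (1-\tfrac{1}{\sqrt{2}})\lambda_{\min}(\bsigma_{\mu_0})$ for all $i$; this keeps the Gromov--Monge maps of \cref{lem:gromov_monge} well-defined and allows every $\IGW$-estimate to be upgraded to a $\W_2$-estimate via \cref{lem:equivalence_igw_w}. A refined Ascoli--Arzel\`a argument then extracts a subsequence along which $\bar\rho_n$ converges uniformly in $\IGW$ to a $1/2$-H\"older limit $\rho_t$, with rate $O(\sqrt{\tau})$ by the standard AGS analysis under $\lambda$-convexity; $\W_2$-continuity of the limit and $\W_2$-convergence along a further subsequence follow from the lower bound on $\lambda_{\min}(\bsigma_{\rho_i})$.

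\textbf{Stage 2 (discrete Euler--Lagrange).} Perturbing $\rho_{i+1}$ along $(\id+\epsilon\xi)_\sharp\rho_{i+1}$ and using the $\lambda$-convexity of $\sF$ together with the local convexity of $\IGW^2$ (\cref{lem:local_convexity_igw}) within the Fr\'echet subdifferential framework (\cref{definition:frechet_subdifferential}) yields a variational identity at each step. The key computation is $\nabla\delta_\rho \IGW(\rho,\rho_i)^2|_{\rho_{i+1}}$: decomposing $\IGW^2=\sF_1+\sF_2$ and applying the envelope theorem to the dual of \cref{lem:F2Dual} (freezing $\bA^\star_i$ at its optimum), one obtains a local contribution $4\bsigma_{\rho_{i+1}}x$ from $\sF_1$ and a nonlocal contribution $-8\bA^\star_i T_{i+1,i}(x)$ from the Kantorovich potential of the linear OT problem with cost $-8x^\intercal \bA^\star_i y$. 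This produces the E--L identity
\begin{align*}
    \int \llangle \nabla\delta\sF(\rho_{i+1})(x) + \tfrac{1}{\tau}\bigl[2\bsigma_{\rho_{i+1}}x - 4\bA^\star_i T_{i+1,i}(x)\bigr],\,\xi(x)\rrangle\,d\rho_{i+1}(x) = o_\tau(1)\|\xi\|_{L^2(\rho_{i+1})},
\end{align*}
where $T_{i+1,i}$ is the Gromov--Monge map from $\rho_{i+1}$ to $\rho_i$ and $\bA^\star_i=\tfrac{1}{2}\int xT_{i+1,i}(x)^\intercal d\rho_{i+1}$ is PSD by \cref{lem:symmetrization}.

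\textbf{Stage 3 (continuous-time limit and emergence of the mobility operator).} Define $v_n(t,\cdot)=(\id-T_{i+1,i})/\tau$ on $(i\tau,(i+1)\tau]$. The dissipation gives a uniform $L^2(dt\otimes\bar\rho_n)$ bound on $v_n$, hence a weak limit $v_t\in L^2(\rho_t;\RR^d)$ along the extracted subsequence; the discrete continuity equation produced by the $T_{i+1,i}$-pushforwards passes to the limit distributionally, giving $\partial_t\rho_t+\nabla\cdot(\rho_tv_t)=0$. To identify $v_t$, substitute $T_{i+1,i}=\id-\tau v_n+o(\tau)$ into the E--L expression and expand $\bA^\star_i=\tfrac12\bsigma_{\rho_{i+1}}-\tfrac\tau2\int x v_n(x)^\intercal d\rho_{i+1}+o(\tau)$; a direct algebraic expansion gives
\begin{align*}
    2\bsigma_{\rho_{i+1}}x - 4\bA^\star_i T_{i+1,i}(x) = \tau\,\cL_{\bsigma_{\rho_{i+1}},\rho_{i+1}}[v_n](x) + o(\tau),
\end{align*}
which is precisely where the mobility operator emerges. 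Dividing by $\tau$ and sending $\tau\to 0$ yields $\nabla\delta\sF(\rho_t)+\cL_{\bsigma_t,\rho_t}[v_t]=0$ in $L^2(\rho_t;\RR^d)$ for a.e.~$t$. Finally, rotational invariance of $\sF$ forces $\nabla\delta\sF(\rho_t)\in\cI_{\rho_t}$: differentiating $\sF\bigl((e^{\epsilon\bS})_\sharp\rho_t\bigr)=\sF(\rho_t)$ at $\epsilon=0$ for any skew-symmetric $\bS$ gives $\int x^\intercal\bS\nabla\delta\sF(\rho_t)(x)\,d\rho_t=0$, which is the symmetry condition defining $\cI_{\rho_t}$ (\cref{rem:property_of_operator}(2)). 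Inverting $\cL_{\bsigma_t,\rho_t}$ on $\cI_{\rho_t}$ via \cref{rem:property_of_operator}(4) (using nonsingularity of $\bsigma_t$ from Stage 1) delivers $v_t=-\cL^{-1}_{\bsigma_t,\rho_t}[\nabla\delta\sF(\rho_t)]$, as required.

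\textbf{Main obstacle.} The principal technical difficulty is the clean emergence of the mobility operator from the discrete E--L equation. Unlike the Wasserstein case, where the proximal term's first variation is simply (twice) the Kantorovich potential, IGW produces two structurally distinct terms---a local $\bsigma_{\rho_{i+1}}x$ contribution and a nonlocal $\bA^\star_iT_{i+1,i}(x)$ one---that only combine to yield $\cL_{\bsigma,\rho}$ after expanding $T_{i+1,i}$ around the identity in the continuum limit. Rigorously controlling the $o(\tau)$ remainders to secure weak $L^2$-convergence of $v_n$ to $v_t$, keeping both $v_t$ and $\nabla\delta\sF(\rho_t)$ inside the invariant subspace $\cI_{\rho_t}$ throughout, and reconciling sign conventions between the backward Gromov--Monge maps and the forward velocity in the continuity equation, are the most delicate parts, all addressable within the AGS subdifferential framework together with the structural lemmas established earlier.
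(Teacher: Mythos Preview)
Your three-stage plan matches the paper's architecture closely: existence and stepwise estimates (Proposition~4.1), discrete Euler--Lagrange via freezing the dual $\bA^\star$ (Proposition~4.2), weak compactness and continuity equation (Propositions~4.3--4.4), strong convergence via a Cauchy estimate (Propositions~4.5--4.6), and identification of the limiting operator (Proposition~4.7). Two points where the paper's execution differs in substance from your sketch:

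\textbf{The discrete identity is exact.} The paper shows $2\bsigma_{\rho_{i}}x - 4\bA^\star_{i} T^\star_{i}(x) = \tau\,\cL_{2\bA^\star_{i},\rho_{i}}[v_{i}](x)$ \emph{without remainder}, by taking the mobility operator with first argument $2\bA^\star_{i}$ rather than $\bsigma_{\rho_{i}}$ (Proposition~4.2(iii)). The approximation $2\bA^\star_i\to\bsigma_t$ is then handled separately (Proposition~4.1(ii)) and fed into the distributional-limit argument (\cref{lem:velocity_distributional_limits_equations}). Your $o(\tau)$ expansion is equivalent but buries the quadratic-in-$v$ term $\tau\bL_i v_i$ that the paper tracks explicitly.

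\textbf{The $O(\sqrt\tau)$ rate needs more than ``standard AGS under $\lambda$-convexity.''} Because $\IGW^2$ is only \emph{locally} convex along generalized geodesics (\cref{lem:local_convexity_igw}), the variational inequality carries extra $\IGW(\mu_0,\mu_1)^3$ and $\IGW(\mu_0,\mu_1)\cdot\IGW(\mu_1,\mu_2)^2$ terms. The paper absorbs these into a modified cross-partition error function $d_{\tau\eta}$ and a tailored Gr\"onwall lemma (Proposition~4.5, Lemmas~4.4--4.5); this adaptation is where the smallness constraint on $\tau$ and $\delta$ actually bites.

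\textbf{Membership in $\cI_{\rho_t}$.} Your rotational-invariance argument correctly gives $\nabla\delta\sF(\rho_t)\in\cI_{\rho_t}$, but the formula $v_t=-\cL^{-1}_{\bsigma_t,\rho_t}[\nabla\delta\sF(\rho_t)]$ also requires $v_t\in\cI_{\rho_t}$, since $\cL_{\bsigma_t,\rho_t}$ has nontrivial kernel. The paper obtains this differently: the PSD alignment makes $2\bA^\star_i=\int x T^\star_i(x)^\intercal d\rho_i$ symmetric, hence $v_i\in\cI_{\rho_i}$ at the discrete level, and this symmetry is passed to the limit (end of the proof of Proposition~4.7). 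Your invariance argument for $\nabla\delta\sF$ alone does not supply $v_t\in\cI_{\rho_t}$; the missing ingredient is precisely the discrete-level symmetry inherited from the PSD rotation step in \eqref{eq:minimizing_movement}.
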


The remainder of this section deals with the proof of \cref{thm:main}, which is dissected into four~steps:
\begin{enumerate}[leftmargin=*]
    \item[(i)] Show that the discrete solution $\{\rho_i\}_{i=0}^n$ is well-defined and derive basic estimates on the corresponding covariance matrices and stepwise IGW and 2-Wasserstein displacements (\cref{subsec:min_exist}).
    \item[(ii)] Study the first-order optimality condition for $\{\rho_i\}_{i=0}^n$ to arrive at a discrete-time analogue of the above gradient flow equation by constructing a velocity field $v_i$ that relates $\rho_{i+1}$ and $\rho_i$~(\cref{subsec:discrete_optimality}). This is done by utilizing the concavity of IGW that leads to the variational form \cref{lem:F2Dual} and allows transforming the JKO step \eqref{eq:minimizing_movement} into a joint infimization over $\rho$, $\pi$, and $\bA$. Freezing the latter to the optimal $\bA^\star$ (see \cref{lem:F2Dual}) then enables to conduct the Wasserstein differentiation. We then obtain a quadratic equation in the discrete-time velocity field $v_i$, whose quadratic term vanishes as $\tau\to 0$, resulting in the linearization \eqref{eq:operator}, as explained in (iv) below. 
    \item[(iii)] Define the piecewise constant interpolation $\bar{v}_n(t)\coloneqq v_i$, for $t\in((i-1)\tau,i\tau]$ and $i=1,\ldots,n$, and show that as $n\to\infty$ (whence $\tau\to 0$), $(\bar{\rho}_n,\bar{v}_n)_{t\in[0,\delta]}$ weakly converges to a pair $(\rho_t,v_t)_{t\in[0,\delta]}$ that solves the continuity equation $\partial_t \rho_t + \nabla\cdot(\rho_t v_t) = 0$ (\cref{subsec:weak_convergence}).
    \item[(iv)] Strengthen the weak convergence from the previous step to 2-Wasserstein convergence (requires showing that $\bar{\bsigma}_n(t)\coloneqq\bsigma_{\rho_i}$, $t\in((i-1)\tau,i\tau]$, converges to $\bsigma_t$ as $n\to\infty$), using which the discrete-time gradient flow equation from Step (ii) can be transferred to the limit, thereby establishing the PIDE above (\cref{subsec:strong_convergence}). 
\end{enumerate}

These steps are stated and proven across several propositions and corollaries in the following subsections. Throughout these derivations, we state various technical lemmas concerning structural properties of IGW gradient flows and related objects (bounds, convexity, variational forms, etc.). Any such lemma holds under the conditions of the proposition/corollary being proven, possibly plus some local assumptions. To simplify the lemma statements, we do not repeat the general assumptions and only mention the local ones. Proofs of the technical lemmas are deferred to \cref{appen:IGW_gradientflow_aux_proofs}.

\begin{remark}[Analysis challenges and approach]
    Since $\IGW$ defines a discrepancy measure on $\cP_2(\RR^d)$ that is weaker than $\W_2$ (see \cref{lem:equivalence_igw_w}), our arguments do not directly follow from those in \cite{ambrosio2005gradient}, where 2-Wasserstein gradient flows are analyzed. Several marked differences are:
\begin{enumerate}[leftmargin=*]
    \item The $\IGW$ displacement interpolation, namely, the curve between $\mu_0,\mu_1$ defined by $\mu_t\coloneqq (g_t)_\sharp \pi^\star$, for $g_t(x,y)\coloneqq (1-t)x+ty$ and an IGW optimal $\pi^\star\in\Pi(\mu_0,\mu_1)$, is generally not an $\IGW$ nor $\W_2$ geodesic between $\mu_0,\mu_1$, and a priori, an $\IGW$ geodesic may not be realizable in~$\cP_2(\RR^d)$. 
    \item The IGW tangent space at $\mu\in\cP_2(\RR^d)$ is markedly different from that under $\W_2$. This precludes us from being able to define the subdifferential of the functional $\sF$ with respect to $\IGW $ in a natural way, as done for Wasserstein gradient flows. 
    \item As we consider a rotationally invariant setting (indeed, this is assumed for $\sF$ and holds for $\IGW$), the steps of the minimizing movement scheme are not unique. This renders most arguments from the $\W_2$ gradient flows analysis \cite{ambrosio2005gradient} inapplicable. We address this non-uniqueness using the orthogonal PSD transformation step described in our scheme above. 
\end{enumerate}
Given the above, we avoid direct usage of the subdifferential calculus from \cite{ambrosio2005gradient}, which is employed for Wasserstein gradient flows. Rather, we study the convergence and the limiting characterization of the IGW minimizing movement scheme from \eqref{eq:minimizing_movement} using the GMM framework from Definition 2.0.6 of \cite{ambrosio2005gradient}. We note that in general metric spaces, the GMM  coincides with a curve of maximal slope (see \cite[Theorems 2.3.1]{ambrosio2005gradient}) whenever upper gradients exist, which is the case for $\W_2$. \cite[Theorem 11.1.3]{ambrosio2005gradient} further implies that the curve of maximal slope coincides with the 2-Wasserstein gradient flow. 
Thus, the GMM generalizes gradient flows, and their equivalence in 2-Wasserstein case underpins the reason we adopt the GMM framework for the study of gradient flows in the IGW geometry. By the same token, $v_t$ from Part (2) of \cref{thm:main} is regarded as a \textit{generalized subdifferential} of $\sF$ w.r.t. $\IGW$; see \cref{sec:riemann} for a detailed discussion on the Riemannian structure of IGW spaces.
\end{remark}

\begin{remark}[Infinite-time extension]
    \label{rem:infinite_time_extension}
    \cref{thm:main} characterizes the GMM curve on a finite time interval $[0,\delta]$, where $\delta$ depends on the problem parameters. This choice ensures that the covariance matrix of any distribution within some IGW ball around $\mu_0$ remain nonsingular, which is crucial for our convergence analysis (see \cref{prop:existence_of_scheme}). Nevertheless, following standard arguments from ordinary differential equations (ODEs) and dynamical systems, we can show that this procedure can be repeated indefinitely. Specifically, upon running the scheme for time $\delta$ and obtaining the corresponding GMM $(\rho_t)_{t\in[0,\delta]}$, we may initiate another GMM at the end point of the previous run, namely, $\rho_\delta$. This procedure can be repeated so long as $\bsigma_{\rho_t}$ remain nonsingular, which is guaranteed by the existence of density from \cref{assumption:main}. We can thus define a 'finitely assembled' piecewise GMM curve via concatenation. The details of the construction are provided in \cref{appen:rem:infinite_time_extension}, where we prove that, unless the minimum $\sF^\star$ is achieved at a finite time $T$, the piecewise GMM curve can be extended to an interval of any length. 
\end{remark}

\subsection{Existence of minimizing movement sequences and stepwise estimates}\label{subsec:min_exist}

Recall that $\bsigma_{\rho_0}$ is nonsingular, and that $\lambda_{\max}(\bsigma_{\rho_0}),\lambda_{\min}(\bsigma_{\rho_0})>0$ denote, respectively, its largest and smallest eigenvalues.
Given a sequence of distributions $\{\rho_i\}_{i=0}^n\subset\cP_2(\RR^d)$, consider the dual representation of $\IGW(\rho_{i+1},\rho_i)$ from \cref{lem:F2Dual}, and write $\bA^\star_{i+1}$ for an optimizer. The lemma further states that $\bA^\star_{i+1}$ is induced by a coupling $\pi^\star_{i+1}\in\Pi(\rho_{i+1},\rho_i)$. Similar to the definition of $\bar{\rho}_n$, we consider the piecewise constant interpolation $\bar{\bA}_n(t)\coloneqq \bA^\star_i$, for $t\in((i-1)\tau,i\tau]$ and $i=1,\ldots,n$.

\begin{proposition}[Local existence in time]\label{prop:existence_of_scheme}
    Under \cref{assumption:main}(i), define $\bar{\delta} \coloneqq \frac{(1-1/\sqrt{2})\lambda_{\min}(\bsigma_{\mu_0})}{2^{1/4}}$. For any $0<\delta\leq \bar{\delta}^2/\big(2\sF(\mu_0) - 2\sF^\star\big)$ and $n\in\NN$, the discrete sequence $\{\rho_i\}_{i=0}^n$ obtained from the minimizing movement scheme \eqref{eq:minimizing_movement} with time step $\tau=\delta/n$ is well-defined. Furthermore:
    \begin{enumerate}[leftmargin=*]
        \item[(i)] for any $\tau>0$ as above, we have
        \begin{align}
        &\IGW (\rho_{i+1},\rho_i)^2\leq 2\tau\big(\sF(\rho_i) - \sF(\rho_{i+1})\big) \leq 2\tau \big(\sF(\rho_0) - \sF^\star\big),\quad \forall i=1,\ldots,n\label{eq:step_move:prop:existence_of_scheme}\\
        &\max_{i=1,\ldots,n}M_2(\rho_i)\leq \frac{4\sqrt{2} \delta \big(\sF(\rho_0) - \sF^\star\big)}{\lambda_{\min}(\bsigma_{\rho_0)}} + 2M_2(\rho_0) \label{eq:moment_bound:prop:existence_of_scheme}       \\
        &\frac{\lambda_{\min}(\bsigma_{\rho_0})}{2}\mspace{-1mu}\leq\mspace{-4mu} \lambda_{\min}(\bsigma_{\rho_i})\mspace{-4mu}\leq \mspace{-4mu}\lambda_{\max}(\bsigma_{\rho_i}) \mspace{-4mu}\leq \mspace{-6mu}\left(\mspace{-4mu}\sqrt{\mspace{-2mu}\lambda_{\max}(\bsigma_{\rho_0})} \mspace{-2mu}+\mspace{-2mu} 2\sqrt{\frac{\delta\big(\sF(\rho_0) \mspace{-2mu}-\mspace{-2mu} \sF^\star\big)}{\lambda_{\min}(\bsigma_{\rho_0})}} \right)^{\mspace{-5mu}2}\mspace{-5mu}\mspace{-4mu},  \forall i=1,\ldots,n;\label{eq:eigenval:prop:existence_of_scheme}
        \end{align}
        \item[(ii)] if further $\tau\leq \frac{\lambda_{\min}(\bsigma_{\rho_0})^4}{8(\sF(\rho_0)-\sF^\star)}$, then $\bA^\star_1,\ldots,\bA^\star_n$ are all nonsingular and satisfy
        \begin{align}
            \max_{i=1,\ldots,n}\|\bsigma_{\rho_{i+1}}-2\bA^\star_{i+1}\|_\F^2\vee \|\bsigma_{\rho_i}-2\bA^\star_{i+1}\|_\F^2 \lesssim_{\sF(\rho_0),\sF^\star,M_2(\rho_0),\lambda_{\min}(\bsigma_{\rho_0})} \tau \label{eq:cov_var_difference:prop:existence_of_scheme}.
        \end{align}
    \end{enumerate}
\end{proposition}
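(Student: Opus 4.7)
I treat Parts (i) and (ii) separately, since Part (ii) builds on the covariance and eigenvalue control from Part (i).

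\emph{Existence and descent for (i).} The existence of a minimizer at each step hinges on the coercivity of $\IGW(\cdot,\rho_i)$ in the second moment: expanding $\IGW^2 = \sF_1+\sF_2$ and bounding $\sup_\pi\|\int xy^\intercal d\pi\|_\F^2$ by $M_2(\rho)M_2(\rho_i)$ via Cauchy-Schwarz, one finds $\IGW(\rho,\rho_i)^2 \gtrsim M_2(\rho)^2/d - 2M_2(\rho)M_2(\rho_i)$, which diverges as $M_2(\rho)\to\infty$. Combined with $\sF\geq \sF^\star$, this makes any minimizing sequence tight. Weak lower semicontinuity of $\sF$ (\cref{assumption:main}(i)) together with joint continuity of $\IGW^2$ under weak-plus-$M_2$ convergence then delivers $\tilde\rho_{i+1}$, after which \cref{lem:symmetrization} provides the orthogonal $\bO$ defining $\rho_{i+1}$. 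The descent inequality \eqref{eq:step_move:prop:existence_of_scheme} is obtained by testing against $\rho_i$ itself in the minimization and using the $\Od$-invariance of $\sF$ and $\IGW$; summing telescopically yields $\sum_{i} \IGW(\rho_{i+1},\rho_i)^2 \leq 2\delta(\sF(\mu_0)-\sF^\star)$ uniformly in $n$.

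\emph{Joint induction for the moment and eigenvalue bounds in (i).} I would prove \eqref{eq:moment_bound:prop:existence_of_scheme}--\eqref{eq:eigenval:prop:existence_of_scheme} together by induction on $i$ under the hypothesis $\lambda_{\min}(\bsigma_{\rho_j}) \geq \lambda_{\min}(\bsigma_{\rho_0})/2$ for $j\leq i$. Inserting this hypothesis into \cref{lem:equivalence_igw_w} with the scheme's alignment $\bO\in\cO_{\rho_j,\tilde\rho_{j+1}}$ gives $\W_2(\rho_j,\rho_{j+1})^2 \leq 2\IGW(\rho_j,\rho_{j+1})^2/\lambda_{\min}(\bsigma_{\rho_0}) \leq 4\tau(\sF(\rho_j)-\sF(\rho_{j+1}))/\lambda_{\min}(\bsigma_{\rho_0})$. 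Discrete Cauchy-Schwarz on the telescope $\sum_j (\sF(\rho_j)-\sF(\rho_{j+1}))$ then bounds $\sum_j \W_2(\rho_j,\rho_{j+1})$ by $2\sqrt{\delta(\sF(\mu_0)-\sF^\star)/\lambda_{\min}(\bsigma_{\rho_0})}$, independent of $n$. The moment bound follows by squaring the $\W_2$-triangle inequality on $\sqrt{M_2(\rho_i)}$ (noting $M_2$ is rotation-invariant), while the eigenvalue bound follows from Weyl's inequality combined with the standard covariance-Lipschitz estimate $\|\bsigma_{\rho_j}-\bsigma_{\rho_{j+1}}\|_\op \lesssim \sqrt{\max_k M_2(\rho_k)}\,\W_2(\rho_j,\rho_{j+1})$ telescoped. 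The constant $\bar\delta$ is precisely calibrated so that the total drift $\|\bsigma_{\rho_i}-\bsigma_{\rho_0}\|_\op$ does not exceed $\lambda_{\min}(\bsigma_{\rho_0})/2$, closing the induction.

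\emph{Part (ii).} Here $2\bA^\star_{i+1} = \int xy^\intercal d\pi^\star_{i+1}$ with $\pi^\star_{i+1}\in\Pi(\rho_i,\rho_{i+1})$ yielding a PSD cross-covariance via \cref{lem:symmetrization}. The elementary identities
\[
\bsigma_{\rho_i} - 2\bA^\star_{i+1} = \int x(x-y)^\intercal d\pi^\star_{i+1}, \qquad \bsigma_{\rho_{i+1}} - 2\bA^\star_{i+1} = \int (y-x)y^\intercal d\pi^\star_{i+1}
\]
together with Cauchy-Schwarz reduce both Frobenius bounds to controlling $\int\|x-y\|^2 d\pi^\star_{i+1}$. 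Rewriting this quantity as $M_2(\rho_i)+M_2(\rho_{i+1})-4\tr(\bA^\star_{i+1})$ and substituting the identity $8\|\bA^\star_{i+1}\|_\F^2 = \|\bsigma_{\rho_i}\|_\F^2 + \|\bsigma_{\rho_{i+1}}\|_\F^2 - \IGW(\rho_i,\rho_{i+1})^2$ (from expanding $\sF_1+\sF_2$), combined with the $O(\sqrt\tau)$ covariance closeness from Part (i) and the descent inequality $\IGW^2\leq 2\tau(\sF(\rho_i)-\sF(\rho_{i+1}))$, yields \eqref{eq:cov_var_difference:prop:existence_of_scheme}. Nonsingularity of $\bA^\star_{i+1}$ then follows from Weyl: the additional hypothesis $\tau \leq \lambda_{\min}(\bsigma_{\rho_0})^4/(8(\sF(\rho_0)-\sF^\star))$ is exactly what forces $\|\bsigma_{\rho_i}-2\bA^\star_{i+1}\|_\op$ below $\lambda_{\min}(\bsigma_{\rho_i})/2$, making $2\bA^\star_{i+1}$ positive definite.

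\emph{Main obstacle.} The most delicate part is the joint induction in Part (i): the moment and eigenvalue bounds feed into each other through \cref{lem:equivalence_igw_w}, and the numerical constants must be matched to the prescribed $\bar\delta$ without independent control. The second non-routine step is bounding $\int\|x-y\|^2 d\pi^\star_{i+1}$ in Part (ii) despite $\pi^\star_{i+1}$ being an $\IGW$-optimizer rather than a $\W_2$-optimizer; the PSD alignment and the $\sF_1+\sF_2$ identity together supply the structure needed to convert the IGW gap into $O(\tau)$ Frobenius control.
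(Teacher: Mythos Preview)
Your overall architecture is sound and the existence/descent portion matches the paper. The differences, and one genuine gap, are in the eigenvalue control of Part (i) and the $\int\|x-y\|^2 d\pi^\star$ bound in Part (ii).

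\textbf{Part (i), eigenvalue bound.} Your proposal telescopes $\|\bsigma_{\rho_0}-\bsigma_{\rho_i}\|_\op$ along the scheme via the covariance-Lipschitz estimate $\|\bsigma_{\rho_j}-\bsigma_{\rho_{j+1}}\|_\op\lesssim\sqrt{M_2}\,\W_2(\rho_j,\rho_{j+1})$. This works in principle but the resulting drift bound carries a factor of $\sqrt{M_2(\rho_0)}$, so the $\bar\delta$ your argument would require depends on $M_2(\rho_0)$---which the stated $\bar\delta$ does \emph{not}. The paper avoids this by a cleaner one-shot comparison: it first bounds $\IGW(\rho_i,\rho_0)\leq\bar\delta$ via the IGW triangle inequality and Cauchy--Schwarz on the telescope $\sum_j\IGW(\rho_{j+1},\rho_j)$, and then applies \cref{lem:equivalence_igw_w} \emph{directly between $\rho_i$ and $\rho_0$} with a fresh rotation $\bO\in\cO_{\rho_i,\rho_0}$ (not the scheme's rotation). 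For any unit $v$,
\[
\Big(\int(v^\intercal y)^2 d\rho_i\Big)^{1/2}\ \geq\ \sqrt{\lambda_{\min}(\bsigma_{\rho_0})}\ -\ \frac{2^{1/4}\,\IGW(\rho_i,\rho_0)}{\sqrt{\lambda_{\min}(\bsigma_{\rho_0})}},
\]
which depends only on $\IGW(\rho_i,\rho_0)$ and $\lambda_{\min}(\bsigma_{\rho_0})$ and is \emph{exactly} calibrated to the stated $\bar\delta$. Since $\lambda_{\min}(\bsigma)$ is rotation-invariant, no induction on moments is needed, and the moment bound then follows as a corollary rather than being intertwined with the eigenvalue bound.

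\textbf{Part (ii).} Your reduction to $\int\|x-y\|^2 d\pi^\star_{i+1}$ via $\bsigma_{\rho_i}-2\bA^\star_{i+1}=\int x(x-y)^\intercal d\pi^\star$ and Cauchy--Schwarz is a good move and arguably simpler than the paper's route. The gap is in the next step: the identity $8\|\bA^\star\|_\F^2=\|\bsigma_{\rho_i}\|_\F^2+\|\bsigma_{\rho_{i+1}}\|_\F^2-\IGW^2$ controls $\sum_m\lambda_m^2$, not $\tr\bA^\star=\sum_m\lambda_m$, so it does not bound $\int\|x-y\|^2=M_2(\rho_i)+M_2(\rho_{i+1})-4\tr\bA^\star$ as you claim. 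The fix is immediate once you realize that the bound you need, $\int\|x-y\|^2 d\pi^\star_{i+1}\leq 2\IGW(\rho_i,\rho_{i+1})^2/\lambda_{\min}(\bsigma_{\rho_0})$, is precisely what is established inside the proof of \cref{lem:equivalence_igw_w} (this is \eqref{eq:igw_w_comparison_intermediate_bound}); invoking it closes your argument in one line. The paper instead works in the diagonalizing basis $\bP$ of the PSD matrix $2\bA^\star_{i+1}$, uses the componentwise inequality $a_m^2+b_m^2-2\lambda_m^2\leq\IGW^2$, and assembles the Frobenius bound entry by entry---more explicit but heavier.
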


We briefly describe the main ideas of the proof, with the full derivation presented below. Our first step is to identify a radius $\bar{\delta}>0$, such that probability measures inside the IGW ball of that radius centered at $\rho_0$ are guaranteed certain regularity of their covariance matrix. This enables lower bounding the
eigenvalues of $\bsigma_{\rho_i}$ and upper bounding $M_2(\rho_i)$, for all $i=1,\ldots,n$. Having that, we invoke \cref{lem:equivalence_igw_w} and conclude the weak compactness of IGW balls around each $\rho_i$, thereby establishing the well-definedness of the sequence $\{\rho_i\}_{i=0}^n$. We bound the stepwise movement for each proximal step as $\IGW(\rho_{i+1},\rho_i)=O(\sqrt{\tau})$, from which we further conclude that each $2\bA^\star_i$ is close to $\bsigma_{\rho_{i+1}}$ and $\bsigma_{\rho_i}$ for $\tau$ small enough. Note that given our covariance matrix eigenvalue lower bound, \cref{lem:equivalence_igw_w} yields
\begin{align*}
    \W_2(\rho_{i+1},\rho_i)^2\leq\int \|x-y\|^2 d\pi^\star_{i+1}(x,y)\leq\frac{2}{\lambda_{\min}(\bsigma_{\rho_0})}\IGW(\rho_{i+1},\rho_i)^2,\numberthis\label{eq:igw_w_comparison_intermediate_bound}
\end{align*}
which is used repeatedly in the derivation.

\begin{proof}
    We start by deriving the total time length $\delta>0$ and the well-definedness of the discrete sequence. To that end, we present two technical lemmas---see Appendices \ref{appen::weak_topo_lsc_proof} and \ref{appen:bar_delta_proof} for the proofs. The first lemma establishes weak lower semi-continuity of the IGW distance and weak compactness of IGW balls. This result is then used to prove the well-definedness of discrete solution. The argument relies on reducing the IGW compactness to that under $\W_2$, whenever the center is in $\cP_2(\RR^d)$.

\begin{lemma}\label{lem:weak_topo_lsc}
    For any $\mu\in\cP_2(\RR^d)$, we have 
    \begin{enumerate}[leftmargin=*]
        \item[(i)] $\forall r>0$, $\cB_{\IGW}(\mu,r)$ is weakly compact, i.e., any sequence has a weakly convergent subsequence;
        \item[(ii)]   $\IGW(\mu,\nu)$ is weakly l.s.c. in $\nu$.
    \end{enumerate}
\end{lemma}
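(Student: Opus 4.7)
The plan is to prove both parts via a uniform second-moment bound on the IGW ball (for (i)) and a Portmanteau-style argument (for (ii)). The key analytic step is a Cauchy--Schwarz estimate on the bilinear term in $\sF_2$ that transfers IGW control into control of the uncentered second-moment matrix $\int xx^\intercal d\mu$.

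\textbf{Part (i).} First expand $\IGW(\mu,\nu)^2=\sF_1(\mu,\nu)+\sF_2(\mu,\nu)$ and observe that $\sF_1(\mu,\nu)=\|\int xx^\intercal d\mu\|_\F^2+\|\int yy^\intercal d\nu\|_\F^2$. For any $\pi\in\Pi(\mu,\nu)$, Cauchy--Schwarz on $L^2(\pi\otimes\pi)$ applied to the product $\llangle x,x'\rrangle\llangle y,y'\rrangle$ yields
\begin{equation*}
\left|\int \llangle x,x'\rrangle\llangle y,y'\rrangle\,d\pi\otimes\pi\right|\leq \left\|\int xx^\intercal d\mu\right\|_\F \left\|\int yy^\intercal d\nu\right\|_\F,
\end{equation*}
so that $\sF_2(\mu,\nu)\geq -2\|\int xx^\intercal d\mu\|_\F\|\int yy^\intercal d\nu\|_\F$. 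Completing the square delivers $(\|\int xx^\intercal d\mu\|_\F-\|\int yy^\intercal d\nu\|_\F)^2\leq \IGW(\mu,\nu)^2\leq r^2$. Combined with the inequalities $\|\bA\|_\F\leq \tr(\bA)\leq \sqrt{d}\|\bA\|_\F$ valid for PSD matrices (applied to the second-moment matrices, which are PSD), this upgrades to the uniform bound $M_2(\nu)\leq \sqrt{d}(M_2(\mu)+r)$ on the ball. Markov's inequality gives tightness of $\cB_{\IGW}(\mu,r)$, and Prokhorov's theorem supplies the weak sequential compactness.

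\textbf{Part (ii).} Suppose $\nu_n\stackrel{w}{\to}\nu$ with $\liminf_n\IGW(\mu,\nu_n)$ finite (else the claim is trivial), and pass to a subsequence attaining the $\liminf$. Let $\pi_n\in\Pi(\mu,\nu_n)$ be IGW-optimal. Since $\mu$ is trivially tight and $\{\nu_n\}$ is tight by weak convergence, so is $\{\pi_n\}$; extract a further subsequence with $\pi_{n_k}\stackrel{w}{\to}\pi$, and continuity of the marginal projections places $\pi\in\Pi(\mu,\nu)$. Continuity of the product map gives $\pi_{n_k}\otimes\pi_{n_k}\stackrel{w}{\to}\pi\otimes\pi$. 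The IGW integrand $f(x,y,x',y')=|\llangle x,x'\rrangle-\llangle y,y'\rrangle|^2$ is nonnegative and continuous, so approximating from below by the bounded continuous truncations $f\wedge k$ and invoking monotone convergence gives the Portmanteau-type inequality
\begin{equation*}
\int f\,d\pi\otimes\pi\leq \liminf_k\int f\,d\pi_{n_k}\otimes\pi_{n_k}=\liminf_n\IGW(\mu,\nu_n)^2,
\end{equation*}
which forces $\IGW(\mu,\nu)\leq \liminf_n\IGW(\mu,\nu_n)$. As a byproduct, part (ii) applied inside part (i) shows that the weak limit of any sequence in $\cB_{\IGW}(\mu,r)$ again lies in the ball.

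\textbf{Main obstacle.} The only nonroutine step is the $M_2$ control in (i): $\IGW$ is not translation-invariant and not obviously coercive in the second moment, so neither the dual representation in \cref{lem:F2Dual} nor the comparison with $\W_2$ from \cref{lem:equivalence_igw_w} applies directly (the latter requires nonsingular covariances, which need not hold on the ball). What rescues matters is the Cauchy--Schwarz bound on the $\sF_2$ cross-term, combined with the trace--Frobenius comparison for PSD matrices. Once this estimate is in place, part (ii) is standard, since nonnegative continuous integrands interact well with weak convergence via the truncation argument.
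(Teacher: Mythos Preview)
Your proof is correct and follows essentially the same approach as the paper's. For (i), the paper also bounds $\|\int yy^\intercal d\nu\|_\F$ in terms of $\|\int xx^\intercal d\mu\|_\F$ and $\IGW(\mu,\nu)$---using the elementary inequality $a^2\leq 2b^2+2(a-b)^2$ on the integrand rather than your Cauchy--Schwarz plus completing-the-square, but arriving at the same trace--Frobenius conclusion---and for (ii) the paper's argument (tightness of optimal couplings, extraction of a weak limit in $\Pi(\mu,\nu)$, lower semicontinuity of the nonnegative continuous integrand) is identical to yours.
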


The next lemma provides quantitative control over the trace and smallest eigenvalue of $\bsigma_\mu$, for any distribution $\mu$ inside the IGW ball of radius $\bar{\delta}$ around $\rho_0=\mu_0$. To be able to use the convexity along generalized geodesics, the lemma further guarantees nonsingularity of the cross-covariance matrix between distributions inside that ball.

    \begin{lemma}\label{lem:bar_delta}
    For any $\mu,\nu\in\cP_2(\RR^d)\cap \cB_{\IGW}(\rho_0,\bar{\delta})$, we have
    \begin{enumerate}[leftmargin=*]
        \item[(i)] the following bounds:
        \[\lambda_{\min}(\bsigma_\mu)\geq \frac{\lambda_{\min}(\bsigma_{\rho_0})}{2}\quad \mbox{and} \quad M_2(\mu) \leq \frac{2\sqrt{2}}{\lambda_{\min}(\bsigma_{\rho_0})}\IGW (\mu,\rho_0)^2 + 2M_2(\rho_0);\]
        \item[(ii)] there exists an optimal IGW coupling $\pi^\star\in\Pi(\mu,\nu)$, such that $\int xy^\intercal d\pi^\star(x,y)$ is nonsingular with smallest singular value lower bounded by $\frac{\lambda_{\min}(\bsigma_{\rho_0})}{2\sqrt{2}}$.
    \end{enumerate}
    \end{lemma}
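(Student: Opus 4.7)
The plan is to extract both parts from two ingredients: the symmetrization of the optimal cross-covariance (\cref{lem:symmetrization}) and the primal identity $\IGW(\mu,\nu)^2 = \|\bsigma_\mu\|_F^2 + \|\bsigma_\nu\|_F^2 - 2\|\int xy^\intercal d\pi^\star\|_F^2$, valid for any optimal $\pi^\star$. Rotation invariance of $\IGW$ and of singular values ensures that any bound proved in the rotated frame transfers back to the original measures.

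For part~(i), fix $\mu \in \cB_{\IGW}(\rho_0,\bar\delta)$ and use \cref{lem:symmetrization} to produce $\bO\in\Od$ and an optimal $\pi^\star \in \Pi(\mu,\bO_\sharp\rho_0)$ with PSD cross-covariance $\bC^\star$. The pointwise Cauchy--Schwarz estimate $v^\intercal\bC^\star v \le \sqrt{(v^\intercal\bsigma_\mu v)(v^\intercal\bsigma_{\bO_\sharp\rho_0}v)} \le \tfrac{1}{2}(v^\intercal\bsigma_\mu v + v^\intercal\bsigma_{\bO_\sharp\rho_0}v)$ upgrades to the matrix inequality $\bC^\star \preccurlyeq \tfrac{1}{2}(\bsigma_\mu+\bsigma_{\bO_\sharp\rho_0})$, and PSD monotonicity of Frobenius norms then gives $\|\bC^\star\|_F^2 \le \|\tfrac{1}{2}(\bsigma_\mu+\bsigma_{\bO_\sharp\rho_0})\|_F^2$. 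Substituting into the primal identity yields $\IGW(\mu,\rho_0)^2 \ge \tfrac{1}{2}\|\bsigma_\mu-\bsigma_{\bO_\sharp\rho_0}\|_F^2$, and the Hoffman--Wielandt inequality then delivers $|\lambda_{\min}(\bsigma_\mu)-\lambda_{\min}(\bsigma_{\rho_0})| \le \sqrt{2}\,\IGW(\mu,\rho_0) \le \sqrt{2}\,\bar\delta$; a direct computation on the chosen $\bar\delta$ confirms $\sqrt{2}\,\bar\delta \le \lambda_{\min}(\bsigma_{\rho_0})/2$, giving the eigenvalue bound. The moment estimate then drops out by combining this eigenvalue bound with \cref{lem:equivalence_igw_w} (valid once $\bsigma_\mu$ is known nonsingular) and the triangle inequality $M_2(\mu)^{1/2} \le \W_2(\mu,\bO_\sharp\rho_0) + M_2(\rho_0)^{1/2}$.

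For part~(ii), apply \cref{lem:symmetrization} to $(\mu,\nu)$ to obtain $\bO\in\Od$ and optimal $\pi^\star \in \Pi(\mu,\bO_\sharp\nu)$ with PSD cross-covariance $\bC^\star$; pulling $\pi^\star$ back to $\Pi(\mu,\nu)$ via $\bO^\intercal$ produces an optimal coupling whose cross-covariance has singular values matching those of $\bC^\star$, so it suffices to lower bound $\gamma := \lambda_{\min}(\bC^\star)$. Writing $\bD := \int (x-y)(x-y)^\intercal d\pi^\star$ and using the identity $v^\intercal\bC^\star v = \tfrac{1}{2}(v^\intercal\bsigma_\mu v + v^\intercal\bsigma_\nu v - v^\intercal\bD v)$, part~(i) yields $\gamma \ge \tfrac{1}{2}\lambda_{\min}(\bsigma_{\rho_0}) - \tfrac{1}{2}\lambda_{\max}(\bD)$. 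To control $\lambda_{\max}(\bD)$ I use the symmetric decomposition $\langle x,x'\rangle-\langle y,y'\rangle = \tfrac{1}{2}(\langle x-y,x'+y'\rangle+\langle x+y,x'-y'\rangle)$; squaring, integrating against $\pi^\star\otimes\pi^\star$, and exploiting symmetry of $\bC^\star$, the cross term evaluates to $\|\bsigma_\mu-\bsigma_\nu\|_F^2$, producing the identity $\tr(\bE\bD) = 2\IGW(\mu,\nu)^2 - \|\bsigma_\mu-\bsigma_\nu\|_F^2 \le 2\IGW(\mu,\nu)^2$ for $\bE := \bsigma_\mu+\bsigma_\nu+2\bC^\star$. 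The PSD inequality $\tr(\bE\bD) \ge \lambda_{\min}(\bE)\lambda_{\max}(\bD)$ together with Weyl's bound $\lambda_{\min}(\bE) \ge \lambda_{\min}(\bsigma_{\rho_0})+2\gamma$ then produces the implicit quadratic inequality $\gamma^2 \ge \lambda_{\min}(\bsigma_{\rho_0})^2/4 - \IGW(\mu,\nu)^2/2$. The triangle inequality from \cref{prop:IGW_pmetric} gives $\IGW(\mu,\nu) \le 2\bar\delta$, and the arithmetic check $4\bar\delta^2 \le \lambda_{\min}(\bsigma_{\rho_0})^2/4$ (equivalent to $4-2\sqrt{2} \le 2^{1/4}$) closes the argument via $\gamma^2 \ge \lambda_{\min}(\bsigma_{\rho_0})^2/8$, hence $\gamma \ge \lambda_{\min}(\bsigma_{\rho_0})/(2\sqrt{2})$.

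The main obstacle will be part~(ii): the naive bound $\lambda_{\max}(\bD) \le 2\IGW(\mu,\nu)^2/\lambda_{\min}(\bsigma_{\rho_0})$, obtained by dropping the $2\gamma$ term in Weyl's estimate for $\lambda_{\min}(\bE)$, is just barely insufficient to reach the target constant $1/(2\sqrt{2})$. The $\gamma$-dependent part of $\lambda_{\min}(\bE)$ must be retained, and the resulting implicit quadratic in $\gamma$ solved; the specific constants in $\bar\delta$ are calibrated so the arithmetic closes with essentially no slack.
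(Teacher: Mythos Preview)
Your argument is correct and reaches the same final arithmetic as the paper, but by a noticeably different route.

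For part~(i), the paper works through the $\W_2$ comparison of \cref{lem:equivalence_igw_w}: it applies an $L^2$-triangle inequality to $\big(\int (v^\intercal y)^2 d\mu\big)^{1/2}$, bounds the displacement term by $\W_2(\mu,\bO_\sharp\rho_0)\le 2^{1/4}\IGW(\mu,\rho_0)/\lambda_{\min}(\bsigma_{\rho_0})^{1/2}$, and reads off the eigenvalue and moment bounds directly. You instead establish the matrix-level inequality $\IGW(\mu,\rho_0)^2\ge \tfrac{1}{2}\|\bsigma_\mu-\bsigma_{\bO_\sharp\rho_0}\|_F^2$ via $\bC^\star\preccurlyeq\tfrac{1}{2}(\bsigma_\mu+\bsigma_{\bO_\sharp\rho_0})$ and PSD monotonicity of the Frobenius norm, then invoke Hoffman--Wielandt. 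This is arguably cleaner for the eigenvalue bound since it avoids the $\W_2$ detour; the moment bound then recovers the paper's approach anyway.

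For part~(ii), both arguments land on the same inequality $\gamma^2\ge \lambda_{\min}(\bsigma_{\rho_0})^2/4 - \IGW(\mu,\nu)^2/2$ and the same constraint $\bar\delta\le \lambda_{\min}(\bsigma_{\rho_0})/4$. The paper gets there in one line by recycling the diagonalization estimate \eqref{eq:IGW_primal_decomp_lower_bound} from the proof of \cref{lem:equivalence_igw_w}: working in the eigenbasis of $\bC^\star$, it has $a_m^2+b_m^2-2\lambda_m^2\le\IGW^2$ for diagonal entries $a_m,b_m$ of the rotated covariances, and then uses $a_m\ge\lambda_{\min}(\bsigma_\mu)$. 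Your polarization identity $\langle x,x'\rangle-\langle y,y'\rangle=\tfrac{1}{2}(\langle x-y,x'+y'\rangle+\langle x+y,x'-y'\rangle)$ producing $\tr(\bE\bD)=2\IGW^2-\|\bsigma_\mu-\bsigma_{\bO_\sharp\nu}\|_F^2$, followed by $\tr(\bE\bD)\ge\lambda_{\min}(\bE)\lambda_{\max}(\bD)$ and the self-referential Weyl bound $\lambda_{\min}(\bE)\ge\lambda_{\min}(\bsigma_{\rho_0})+2\gamma$, is a genuinely different and more elaborate construction. It has the virtue of being self-contained (no reliance on the internals of \cref{lem:equivalence_igw_w}'s proof) and making the role of the $2\gamma$ term transparent; the paper's route is shorter because the heavy lifting was already done elsewhere.
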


For proofs see \cref{appen::weak_topo_lsc_proof} and \cref{appen:bar_delta_proof}. Given these lemmas, we proceed to establish the existence of minimizers for the minimizing movement scheme \eqref{eq:minimizing_movement}, thereby addressing well-definedness. 
Pick an arbitrary $0<\delta \leq \bar{\delta}^2/\big(2\sF(\rho_0) -2 \sF^\star\big)$ and recall that $\tau=\delta/n$. Suppose, for now, that this choice of $\delta>0$ guarantees all the distributions mentioned below, stemming from the minimizing movement scheme from \eqref{eq:minimizing_movement}, have uniformly lower bound of eigenvalues of their covariance. Afterwards, we will show that this is indeed the case, given our choice of $\bar\delta$. Consider a sequence $\{\nu_k\}_{k\in\NN}$ that converges towards $\inf_\nu \sF(\nu) + \frac{1}{2\tau}\IGW(\nu,\rho_0)^2$. Since $\sF$ is lower bounded, $\sup_{k\in\NN}\IGW(\nu_k,\rho_0)^2$ is upper bounded. By weak compactness of IGW ball around $\rho_0$ and weak lower semi-continuity (follow from \cref{assumption:main}(i) and \cref{lem:weak_topo_lsc}), we conclude that $v_k\stackrel{w}{\to}\tilde\rho_1$, such that $\tilde \rho_1$ minimizes $\sF(\nu) + \frac{1}{2\tau}\IGW (\nu,\rho_0)^2$. As per the update rule from \eqref{eq:minimizing_movement}, we then pick $\bO\in\cO_{\rho_0,\tilde{\rho}_1}$, and define $\rho_1\coloneqq \bO_\sharp\tilde{\rho}_1$. By \cref{lem:symmetrization} there is an optimal IGW coupling $\pi^\star_1$ such that the corresponding IGW dual optimizer $\bA^\star_1\coloneqq\frac{1}{2}\int xy^\intercal d\pi^\star_1(x,y)$ is PSD. This construction is naturally extended to the subsequent steps of the minimizing movement scheme, so long that each iterate $\rho_{i+1}$ is computed from $\rho_i$ with $M_2(\rho_i)<\infty$, which enables using the weak compactness argument. Given $\rho_i$, for $i=1,\ldots,n-1$, we find a minimizer $\tilde{\rho}_{i+1}$ of $\sF(\nu) + \frac{1}{2\tau}\IGW(\nu,\rho_i)^2$ as above, and define $\rho_{i+1}\coloneqq \bO_\sharp\tilde{\rho}_{i+1}$ for some $\bO\in\cO_{\rho_i,\tilde{\rho}_{i+1}}$. Again, there exists a coupling $\pi^\star_{i+1}\in\Pi(\rho_{i+1},\rho_i)$ for which $\bA^\star_{i+1}\coloneqq\frac{1}{2}\int xy^\intercal d\pi^\star_{i+1}(x,y)$ is PSD and optimal for \eqref{eq:F2Decomp}. To conclude the well-defindedness part of the proposition, it remains to show that our choice of $\delta$ guarantees the uniform lower bound of $\lambda_{\min}(\bsigma_{\rho_i})$, we bound the stepwise movement.

Clearly  $\IGW (\rho_{i+1},\rho_i)^2 /(2\tau)\leq \sF(\rho_i) - \sF(\rho_{i+1})$ by minimality of $\rho_{i+1}$ for $\sF(\rho)+\frac{1}{2\tau}\IGW(\rho,\rho_i)^2$. This implies that $\sF(\rho_i)$ is nonincreasing in $i$, and recalling that $\sF^\star=\inf_\rho \sF(\rho)$, we further obtain $\IGW(\rho_{i+1},\rho_i)^2 \leq 2\tau(\sF(\rho_i) - \sF(\rho_{i+1})) \leq 2\tau \big(\sF(\rho_0) -\sF^\star\big)$, which implies~\eqref{eq:step_move:prop:existence_of_scheme}. For any step $j=1,\ldots,n$, summing over the intermediate steps, we obtain
\[\frac{1}{2\tau}\sum_{i=0}^{j-1}\IGW(\rho_{i+1},\rho_i)^2 \leq \sF(\rho_0) - \sF(\rho_{j}) \leq \sF(\rho_0) - \sF^\star,\]
which further implies 
\begin{equation}\label{eq:IGW_total_bound}
\IGW (\rho_j,\rho_0) \mspace{-2mu}\leq\mspace{-2mu} \sum_{i=0}^{j-1}\mspace{-2mu}\IGW (\rho_{i+1},\rho_i) \mspace{-2mu}\leq \mspace{-2mu}\sqrt{j\sum_{i=0}^{j-1}\mspace{-2mu}\IGW(\rho_{i+1},\rho_i)^2} \leq \mspace{-2mu}\sqrt{\mspace{-2mu}2j\mspace{-2mu}\tau\mspace{-2mu}\big(\sF\mspace{-2mu}(\rho_0\mspace{-2mu})\mspace{-4mu}-\mspace{-4mu}\sF^\star\big)} \leq \mspace{-2mu}\sqrt{\mspace{-2mu}2\delta\mspace{-2mu}\big(\mspace{-2mu}\sF\mspace{-2mu}(\rho_0\mspace{-2mu})\mspace{-4mu}-\mspace{-4mu}\sF^\star\big)}.
\end{equation}
Thus, the fact that $\delta \leq \bar{\delta}^2/\big(2\sF(\rho_0) -2 \sF^\star\big)$ ensures that for any $n\in\NN$, the entire sequence $\{\rho_i\}_{i=0}^n$ lies in $\cB_{\IGW}(\rho_0,\bar{\delta})$. By \cref{lem:bar_delta}, this guarantees uniformly lower bounded eigenvalues of all $\bsigma_{\rho_i}$, $i=1,\ldots,n$, as desired. 

\medskip
For Item (i), we have already established \eqref{eq:step_move:prop:existence_of_scheme}, which leaves \eqref{eq:moment_bound:prop:existence_of_scheme} and \eqref{eq:eigenval:prop:existence_of_scheme}. For the former, we plug in the bound from \eqref{eq:IGW_total_bound} into \eqref{eq:eigenval_ULB} and \eqref{eq:M2_bound}, respectively, to obtain
\begin{align*}
\lambda_{\max}(\bsigma_{\rho_i})&\leq \left(\sqrt{\lambda_{\max}(\bsigma_{\rho_0})} +  2^{3/4}\sqrt{\frac{\delta\big(\sF(\rho_0) - \sF^\star\big)}{\lambda_{\min}(\bsigma_{\rho_0})}} \right)^2\\
M_2(\rho_i) &\leq \frac{4\sqrt{2} \delta \big(\sF(\rho_0) - \sF^\star\big)}{\lambda_{\min}(\bsigma_{\rho_0})} + 2M_2(\rho_0),
\end{align*}
for all $i=1,\ldots,n$, which are the bounds in question. 

\medskip
It remains to prove Item (ii). For the nonsingularity of $\bA^\star_{i+1}$, which is PSD by \cref{lem:F2Dual}, we show that $2\bA^\star_{i+1}$ is close to $\bsigma_{\rho_i}$, which we know is nonsingular. Recall that $2\bA^\star_{i+1}=\int xy^\intercal d \pi^\star_{i+1}(x,y)$, where $\pi^\star_{i+1}\in\Pi(\rho_{i+1},\rho_i)$ is an optimal IGW coupling used in construction of $\rho_{i+1}$.
Using the notation from the proof of \cref{lem:equivalence_igw_w}, we write $\bP\blambda^\star\bP^\intercal=\int xy^\intercal d \pi^\star_{i+1}$ for the diagonalization of the cross-covariance matrix (indeed, recall that it is PSD by the choice of rotation) and denote its eigenvalues by $\lambda_1,\ldots,\lambda_d$. Let $\tilde{\pi}=(\bP^\intercal,\bP^\intercal)_\sharp\pi^\star_{i+1}$, and  write $a_1,\ldots,a_d$ and $b_1,\ldots,b_d$ for the diagonal entries of $\bP^\intercal\bsigma_{\rho_{i+1}}\bP$ and $\bP^\intercal\bsigma_{\rho_{i}}\bP$, respectively. Following same upper bounding steps from the proof of \cref{lem:equivalence_igw_w}, for each $m=1,\ldots,d$, we have 
\begin{align*}
   |a_m-b_m|&\leq\sqrt{2\int \|x\|^2 d \rho_{i+1}(x) +2\int \|y\|^2 d\rho_i(y)}\sqrt{\int\|x-y\|^2 d\tilde{\pi}(x,y)}\\
   &\leq \sqrt{2M_2(\rho_{i+1})+2M_2(\rho_i)} \sqrt{\frac{2}{\lambda_{\min}(\bsigma_{\rho_0})}}\IGW(\rho_{i+1},\rho_i)
\end{align*}
where we have used the fact that $\lambda_{\min}(\rho_i)\geq \lambda_{\min}(\bsigma_{\rho_0})/2$,  for all $i=1,
\ldots, n$.

Since $\tau\leq \frac{\lambda_{\min}(\bsigma_{\rho_0})^4}{8\big(\sF(\rho_0)-\sF^\star\big)}$ by assumption, we have $\IGW(\rho_{i+1}\mspace{-2mu},\mspace{-2mu}\rho_i)\mspace{-4mu}\leq\mspace{-4mu} \sqrt{2\tau\big(\sF(\rho_0)-\sF^\star\big)}\mspace{-2mu}\leq\mspace{-2mu} \frac 12\lambda_{\min}(\bsigma_{\rho_0})^2$. By the relation $a_m^2+b_m^2-2\lambda_m^2\leq \IGW^2(\rho_{i+1},\rho_i)$, which holds for all $m=1,\dots,d$ and $i=0,\ldots,n-1$, as shown in \eqref{eq:IGW_primal_decomp_lower_bound},
we further obtain

\begin{equation}\label{eq:eigenval_bound_in_IGW_ball}
\sqrt{\frac{a_m^2+b_m^2-\IGW^2(\rho_{i+1},\rho_i)}{2}} \leq \lambda_m\leq\frac{a_m+b_m}{2}\quad\mbox{and}\quad  |\lambda_m-b_m|\lesssim_{\sF(\rho_0),\sF^\star,M_2(\rho_0),\lambda_{\min}(\bsigma_{\rho_0})}\sqrt{\tau}.
\end{equation}
Thus, for small enough $\tau$, all the $\lambda_m$'s are positive, which implies that $\bA^\star_{i+1}$ is nonsingular.

Having control over the sum of the diagonal entries of $\bP^\intercal\bsigma_{\rho_{i+1}}\bP$ by $\IGW(\rho_{i+1},\rho_i)^2$, we derive~\eqref{eq:cov_var_difference:prop:existence_of_scheme}:
\begin{align*}
    \|\bsigma_{\rho_{i+1}}&-2\bA^\star_{i+1}\|_\F^2\\
    &\leq \IGW (\rho_{i+1},\rho_i)^2 + \sum_{m=1}^d |a_m-\lambda_m|^2 \\
    &\leq \IGW(\rho_{i+1},\rho_i)^2 + \frac{1}{2}\sum_{m=1}^d \left(|a_m + b_m - 2\lambda_m|^2 + |a_m-b_m|^2\right)\\
    &\leq \IGW(\rho_{i+1},\rho_i)^2 + \frac{1}{2}\sum_{m=1}^d |a_m + b_m - 2\lambda_m|^2 + \frac{4M_2(\rho_{i+1})+4M_2(\rho_i)}{\lambda_{\min}(\bsigma_{\rho_0})}\IGW(\rho_{i+1},\rho_i)^2\\
    &\leq \left(2+\frac{2M_2(\rho_{i+1})+2M_2(\rho_i)}{\lambda_{\min}(\bsigma_{\rho_0})}\right)\IGW(\rho_{i+1},\rho_i)^2\\
    &\lesssim_{\sF(\rho_0),\sF^\star,M_2(\rho_0),\lambda_{\min}(\bsigma_{\rho_0})} \tau,
\end{align*}
where we have used $\sum_{m=1}^d|a_m-b_m|^2\leq \big(4M_2(\rho_{i+1})+4M_2(\rho_i)\big)\frac{\IGW(\rho_{i+1},\rho_i)^2}{\lambda_{\min}(\bsigma_{\rho_0})}$, and $|a_m+b_m-2\lambda_m|^2\leq 2\big|\sqrt{a_m^2+b_m^2}-\sqrt{2}\lambda_m\big|^2 \leq 2(a_m^2+b_m^2-2\lambda_m^2)$. Similarly, we obtain $\|\bsigma_{\rho_i}-2\bA^\star_{i+1}\|_\F^2 \lesssim_{\sF,\rho_0} \tau$, which concludes the proof \end{proof}

\subsection{Discrete-time optimality condition}\label{subsec:discrete_optimality}

\cref{thm:main} shows that, in the continuous-time limit, the gradient of the first variation at time $t$ is given by $-\cL_{\Sigma_t,\rho_t}[v_t]$. Towards proving this limiting relationship, we now show that the corresponding transform of the discrete-time vector field $v_i$ (defined below) satisfies a similar first-order optimality condition, namely, it belongs to the Fr\'echet subdifferential of the objective function at $\rho_i$. The next result is a parallel of \cite[Lemma 10.1.2]{ambrosio2005gradient}. 

\begin{proposition}[First-order optimality]\label{prop:discrete_vector_field_eq}
    Under \cref{assumption:main}(i)-(ii) and the condition on $\delta>0$ from \cref{prop:existence_of_scheme}, for each $i=1,\ldots,n$, we have:
    \begin{enumerate}[leftmargin=*]
        \item[(i)] $T^\star_{i}\coloneqq (8\bA^\star_{i})^{-1}T^{\rho_{i}\to(8\bA^\star_{i})_\sharp\rho_{i-1}}$ is a Gromov-Monge map from $\rho_{i}$ to $\rho_{i-1}$, i.e., $(\id,T^\star_{i})_\sharp\rho_{i}=\pi^\star_{i}$ such that $\bA^\star_{i}=\frac{1}{2}\int x\, T^\star(x)^\intercal d\rho_{i}(x)$;
        \item[(ii)] $\sF$ is Fr\'echet differentiable at $\rho_{i}$;
        \item[(iii)] for the discrete-time velocity field $v_i:x\mapsto \tau^{-1}\big(x-T^\star_i(x)\big),\ \RR^d\to\RR^d$, where $i=1,\ldots,n$, and the (uncentered) cross-covariance matrix $\bL_{i}\coloneqq\int x v_i(x)^\intercal d\rho_{i}(x)\in\RR^{d\times d}$, we have that     
    \begin{align*}
        -\cL_{2\bA^\star_i,\rho_i}[v_i] = -2(\bsigma_{\rho_{i}} - \tau \bL_{i})v_i-2\bL_{i} \id \in\partial\sF(\rho_{i})
    \end{align*}
    is a strong subdifferential.
    \end{enumerate}
\end{proposition}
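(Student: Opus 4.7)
I would treat Part (i) as an immediate application of \cref{lem:gromov_monge}, and derive Parts (ii) and (iii) together from a discrete first-order optimality condition obtained by \emph{freezing} the dual variable supplied by \cref{lem:F2Dual}. For Part (i), the required ingredients are all in place: $\rho_i\in\cP_2^{\mathrm{ac}}(\RR^d)$ by \cref{assumption:main}(ii); the minimizing movement scheme, via \cref{lem:symmetrization}, produces an optimal coupling $\pi^\star_i$ with PSD cross-covariance; and \cref{prop:existence_of_scheme}(ii) ensures $\bA^\star_i$ is nonsingular. Applying \cref{lem:gromov_monge} then yields $T^\star_i=(8\bA^\star_i)^{-1}T^{\rho_i\to(8\bA^\star_i)_\sharp\rho_{i-1}}$ as a Gromov--Monge map with $\bA^\star_i=\tfrac12\int x\,T^\star_i(x)^\intercal d\rho_i(x)$. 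Substituting $T^\star_i=\id-\tau v_i$ already gives the key identity $2\bA^\star_i=\bsigma_{\rho_i}-\tau\bL_i$, which is used throughout the remainder of the argument.

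For Parts (ii) and (iii), the idea is to use \cref{lem:F2Dual} to write $\IGW(\rho,\rho_{i-1})^2 = \sF_1(\rho,\rho_{i-1})+\inf_\bA\{8\|\bA\|_\F^2+\IOT_{\bA}(\rho,\rho_{i-1})\}$ and to note that the infimum is attained at $\bA^\star_i$ when $\rho=\rho_i$. Since the right-hand side is a pointwise upper bound with equality at $\rho=\rho_i$, the iterate $\rho_i$ also minimizes the linearized functional
\[
\tilde G(\rho)\coloneqq \sF(\rho)+\tfrac{1}{2\tau}\bigl(\sF_1(\rho,\rho_{i-1})+8\|\bA^\star_i\|_\F^2+\IOT_{\bA^\star_i}(\rho,\rho_{i-1})\bigr).
\]
Testing the optimality $\tilde G(T_\sharp\rho_i)\geq \tilde G(\rho_i)$ against an arbitrary perturbation $\eta\coloneqq T-\id$, I would expand each piece separately. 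A direct Taylor expansion of $\sG(\rho)\coloneqq\iint \llangle x,x'\rrangle^2 d\rho\otimes\rho$ yields $\sG(T_\sharp\rho_i)-\sG(\rho_i)=4\int \llangle \bsigma_{\rho_i}x,\eta(x)\rrangle d\rho_i(x) + O(\|\eta\|_{L^2(\rho_i;\RR^d)}^2)$, with the remainder constant controlled by the moment bounds from \cref{prop:existence_of_scheme}(i). The OT term admits the exact one-sided inequality $\IOT_{\bA^\star_i}(T_\sharp\rho_i,\rho_{i-1})-\IOT_{\bA^\star_i}(\rho_i,\rho_{i-1})\leq -8\int \llangle \bA^\star_i T^\star_i(x),\eta(x)\rrangle d\rho_i(x)$, obtained by using $(T,T^\star_i)_\sharp\rho_i$ as an admissible coupling for $(T_\sharp\rho_i,\rho_{i-1})$. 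Combining these with \cref{definition:frechet_subdifferential} shows that
\[
\xi(x)\coloneqq \tfrac{2}{\tau}\bigl(2\bA^\star_i T^\star_i(x)-\bsigma_{\rho_i}x\bigr) \in \partial\sF(\rho_i),
\]
which establishes the nonemptiness of the subdifferential (Part (ii)) together with the subdifferential claim of Part (iii) once the final algebra is checked.

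It then remains to verify that $\xi = -\cL_{2\bA^\star_i,\rho_i}[v_i]=-2(\bsigma_{\rho_i}-\tau\bL_i)v_i-2\bL_i\id$. A direct computation gives $\int y\,\llangle v_i(y),x\rrangle d\rho_i(y)=\bL_i x$, so the definition of the mobility operator yields $\cL_{2\bA^\star_i,\rho_i}[v_i](x)=4\bA^\star_i v_i(x)+2\bL_i x$, which by the identity $2\bA^\star_i=\bsigma_{\rho_i}-\tau\bL_i$ from Part (i) equals $2(\bsigma_{\rho_i}-\tau\bL_i)v_i(x)+2\bL_i x$; substituting $T^\star_i=\id-\tau v_i$ in the definition of $\xi$ produces the same expression after cancellation. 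The principal obstacle is ensuring that the remainder in the Fréchet expansion is genuinely $o(\|T-\id\|_{L^2(\rho_i;\RR^d)})$: the $\sG$-expansion contributes a quadratic remainder whose constant depends on moments of $\rho_i$, controlled via the uniform bounds of \cref{prop:existence_of_scheme}(i), whereas the OT inequality is exact and thus contributes no remainder. A secondary technicality is justifying the Brenier-type interpretation of the bilinear-cost OT problem with $c_{\bA^\star_i}(x,y)=-8x^\intercal\bA^\star_i y$ that legitimizes identifying $T^\star_i$ as its Monge-optimal map; this reduces to the standard Brenier setting via the linear change of variable $y\mapsto 8\bA^\star_i y$, and is precisely where the nonsingularity of $\bA^\star_i$ is needed.
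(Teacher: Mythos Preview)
Your proposal is correct and follows essentially the same approach as the paper's own proof: freeze the dual variable $\bA=\bA^\star_i$ from \cref{lem:F2Dual}, exploit that $\rho_i$ then minimizes the linearized functional, expand the quadratic piece $\sG(\rho)=\iint\langle x,x'\rangle^2 d\rho\otimes\rho$ to first order in $T-\id$, bound the $\IOT_{\bA^\star_i}$ variation exactly via the admissible coupling $(T,T^\star_i)_\sharp\rho_i$, and conclude $\tau^{-1}(4\bA^\star_i T^\star_i - 2\bsigma_{\rho_i}\id)\in\partial\sF(\rho_i)$; the final algebraic identification with $-\cL_{2\bA^\star_i,\rho_i}[v_i]$ via $2\bA^\star_i=\bsigma_{\rho_i}-\tau\bL_i$ is identical. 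The only cosmetic difference is that the paper presents the swap of infima over $\rho$ and $\bA$ explicitly before freezing, whereas you phrase it as a pointwise upper bound with equality at $\rho_i$; these are the same argument.
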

\begin{proof}
    The first item is a direct consequence of \cref{lem:gromov_monge}, where the nonsingularity of $\bA^\star_{i+1}$ follows from \cref{prop:existence_of_scheme}. 

    \medskip
    For Item (ii), note that $\rho_i\in\cP_2(\RR^d)$ by \eqref{eq:moment_bound:prop:existence_of_scheme} and consider
\begin{align*}
    &\inf_{\rho\in \cP_2(\RR^d)} 2\tau \sF(\rho) + \IGW^2 (\rho,\rho_i)\\
    &=  \inf_{\rho\in \cP_2(\RR^d)} 2\tau \sF(\rho) + \int \llangle x,x'\rrangle^2  d \rho\otimes \rho(x,x') + \int \llangle y,y'\rrangle^2  d \rho_i\otimes \rho_i(y,y'), \\
    &\qquad\qquad\qquad\qquad\qquad\qquad\qquad\qquad+ \inf_{\bA\in \RR^{d\times d}} \left\{8\|\bA\|_\F^2 + \inf_{\pi\in\Pi(\rho,\rho_i)} \int -8x^\intercal \bA y  d \pi(x,y)\right\}\\
    &=\int \llangle y,y'\rrangle^2  d \rho_i\otimes \rho_i(y,y')   + 8\|\bA^\star_{i+1}\|_\F^2\\
    &\quad\quad\quad+\inf_{\rho\in \cP_2(\RR^d)}\left\{ 2\tau \sF(\rho) + \int \llangle x,x'\rrangle^2  d \rho\otimes\rho(x,x') +  \inf_{\pi\in\Pi(\rho,\rho_i)} \int -8x^\intercal \bA^\star_{i+1} y  d \pi(x,y)\right\},    
\end{align*}
where the last step interchanges the order of the infima over $\rho$ and $\bA$ (as the corresponding domains are noniteracting), and $\bA_{i+1}^\star$ is an optimizer of the dual representation (\cref{lem:F2Dual}) of $\IGW(\rho_{i+1},\rho_i)$, as clearly $\rho_{i+1}$ solves the optimization problem in the last line. 
    Using the shorthand $\rho^{\otimes 2}=\rho\otimes\rho$, the minimality of $\rho_{i+1}$ further implies that for any $T\in L^2(\rho_{i+1};\RR^d)$, we have 
    \begin{align*}
        &2\tau\big(\sF(T_\sharp\rho_{i+1})-\sF(\rho_{i+1})\big)\\
        &\geq \int \llangle x,x'\rrangle^2  d \rho_{i+1}^{\otimes 2}(x,x') + \inf_{\pi\in\Pi(\rho_{i+1},\rho_i)} \int -8x^\intercal \bA^\star_{i+1} y  d \pi(x,y) \\
        &\qquad- \int \llangle x,x'\rrangle^2  d\, (T_\sharp\rho_{i+1})^{\otimes 2}(x,x') - \inf_{\pi\in\Pi(T_\sharp\rho_{i+1},\rho_i)} \int -8x^\intercal \bA^\star_{i+1} y  d \pi(x,y)\\
        &\geq  \int \left(\llangle x,x'\rrangle^2 - \llangle T(x),T(x')\rrangle^2  \right)d \rho_{i+1}^{\otimes 2}(x,x') + \int \llangle 8\bA^\star_{i+1}T^\star_{i+1}(x),T(x)-x\rrangle  d\rho_{i+1}(x)\\
        &=\mspace{-5mu}\int\mspace{-5mu}\left(\llangle x,x'\rrangle^2 \mspace{-4mu}-\mspace{-4mu} \big(\llangle T(x)\mspace{-4mu}-\mspace{-4mu}x,T(x')\mspace{-4mu}-\mspace{-4mu}x'\rrangle\mspace{-4mu} +\mspace{-4mu} \llangle T(x)\mspace{-4mu}-\mspace{-4mu}x,x'\rrangle\mspace{-4mu} +\mspace{-4mu} \llangle x,T(x')\mspace{-4mu}-\mspace{-4mu}x'\rrangle\mspace{-4mu} +\mspace{-4mu} \llangle x,x'\rrangle\big)^2\right)  d \rho_{i+1}^{\otimes 2}(x,x') \\
        &\qquad+ \int \llangle 8\bA^\star_{i+1}T^\star_{i+1}(x),T(x)-x\rrangle  d\rho_{i+1}(x)\\
        &=  \int -4\llangle T(x)-x,x'\rrangle \llangle x,x'\rrangle d \rho_{i+1}^{\otimes 2}(x,x') + o\big(\|T(x)-x\|_{L^2(\rho_{i+1};\RR^d)}\big)\\
        &\qquad+ \int \llangle 8\bA^\star_{i+1}T^\star_{i+1}(x),T(x)-x\rrangle  d\rho_{i+1}(x)\\
        &= \int \llangle 8\bA^\star_{i+1}T^\star_{i+1}(x) - 4\bsigma_{\rho_{i+1}}x,T(x)-x\rrangle  d\rho_{i+1}(x) + o\big(\|T(x)-x\|_{L^2(\rho_{i+1};\RR^d)}\big),
    \end{align*}
    where we have used the Cauchy–Schwarz inequality. 
    This concludes the proof of Fr\'echet subdifferentiability, and further implies that $\tau^{-1}\left(4\bA^\star_{i+1}T^\star_{i+1} - 2\bsigma_{\rho_{i+1}} x\right) \in\partial\sF(\rho_{i+1})$ is a strong subdifferential.

\medskip
    To establish the last item, write the operator from \eqref{eq:operator} as $\cL_{2\bA^\star_i,\rho_i}[v_i] = 4\bA^\star_i v_i+2\bL_{i} \id$ and note that, for any $x\in\RR^d$, we have
    \begin{align*}
        -\cL_{2\bA^\star_i,\rho_i}[v_i](x) &= -2(\bsigma_{\rho_{i}}-\tau \bL_{i})v_i(x)-2\bL_{i}x\\
        &=\frac{1}{\tau}\big( 2(\bsigma_{\rho_{i}}-\tau \bL_{i})   \big(x-\tau v_i(x)\big)- 2\bsigma_{\rho_{i}} x \big)\\
        &=\frac{1}{\tau}\left( 2 \int y\big(y-\tau v_i(y)\big)^\intercal d\rho_{i}(y)  (x-\tau v_i(x))- 2\bsigma_{\rho_{i}} x \right) \\
        &=\frac{1}{\tau}\left(4\bA^\star_{i}T^\star_{i}(x) - 2\bsigma_{\rho_{i}} x\right),
    \end{align*}        
        where the latter is indeed a strong subdifferential, as shown before.\end{proof}

To pass this optimality condition to the limit, which we do in \cref{subsec:strong_convergence}, we consider the convergence of the piecewise constant interpolation $\big(\bar{v}_n(t),\bar{\bsigma}_n(t), \bar{\bL}_n(t)\big)\coloneqq 
(v_i,\bsigma_{\rho_i},\bL_{i})$, for $t\in((i-1)\tau,i\tau]$ and $i=1,\ldots,n$. With this definition,  \cref{prop:discrete_vector_field_eq} is equivalently stated~as
\begin{align*}
    -\cL_{2\bar{\bA}_n(t),\bar{\rho}_n(t)} [\bar{v}_n(t,\cdot)]=-2\big(\bar{\bsigma}_n(t)-\tau \bar{\bL}_n(t)\big)\bar{v}_n(t) - 2\bar{\bL}_n(t) \id  \in\partial\sF\big(\bar{\rho}_n(t)\big),\quad \forall t\in[0,\delta].
\end{align*}
Evidently, the term $-\tau \bar{\bL}_n\bar{v}_n$ in the above operator is quadratic in $\bar{v}_n$, and is present due to the quadratic nature of IGW. Nevertheless, when we drive $n\to\infty$, this quadratic term will vanish and the limit of $-\cL_{2\bar{\bA}_n,\bar{\rho}_n} [\bar{v}_n]$ will coincide with $-\cL_{\bsigma_t,\rho_t}[v_t]$ from \eqref{eq:operator}. In particular, note that each $2\bar{\bA}_n(t)$ is the (uncentered) cross-covariance matrix between two consecutive steps of the discrete-time sequence $\{\rho_i\}_{i=0}^n$. As $n\to\infty$, i.e., the step size $\tau\to 0$, $2\bar{\bA}_n(t)$ converges to the auto-covariance matrix $\bsigma_t=\bsigma_{\rho_t}$ at time $t$. 

\medskip

While the formal derivation of the mentioned convergence uses the strong Fr\'echet subdifferential, \cref{prop:discrete_vector_field_eq} enables relating the IGW discrete-time velocity field to the gradient of the first variation of $\sF$, analogously to the continuous-time relation from Item (2) of \cref{thm:main}.

\begin{corollary}[Discrete-time gradient flow equation] 
\label{cor:opt_condition}
    Under Assumptions \ref{assumption:main}(i)-(ii) and \ref{assumption:differentiability}, along with the conditions from \cref{prop:existence_of_scheme}, we have \begin{equation}\label{eq:first_variation_discrete_vector_field_eq}
        \nabla\delta\sF(\rho_i)=-\cL_{2\bA^\star_i,\rho_i}[v_i], \quad \forall i=1,\ldots,n.
    \end{equation}
\end{corollary}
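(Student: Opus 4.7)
The plan is straightforward: this corollary is an essentially immediate consequence of \cref{prop:discrete_vector_field_eq}(iii) combined with the singleton subdifferential assumption. The previous proposition already produces a concrete element, namely $-\cL_{2\bA^\star_i,\rho_i}[v_i]$, lying in the Fr\'echet subdifferential $\partial\sF(\rho_i)$ as a strong subdifferential. On the other hand, \cref{assumption:differentiability} postulates that $\partial\sF(\mu)$ is the singleton $\{\nabla\delta\sF(\mu)\}$ whenever $\mu\in\dom(\sF)$. So once we verify that each iterate $\rho_i$ lies in $\dom(\sF)$, the claimed identity follows by comparing the unique element of the subdifferential against the explicit element produced in \cref{prop:discrete_vector_field_eq}(iii).

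The only point requiring verification is therefore $\rho_i \in \dom(\sF)$ for all $i=1,\ldots,n$. This is immediate: by \cref{assumption:main}(i) we have $\rho_0 = \mu_0 \in \dom(\sF)$, and the inequality \eqref{eq:step_move:prop:existence_of_scheme} from \cref{prop:existence_of_scheme} gives $\sF(\rho_{i+1}) \leq \sF(\rho_i)$, so $\sF(\rho_i) \leq \sF(\mu_0)<\infty$ by induction. (The rotation step $\rho_{i+1}=\bO_\sharp\tilde\rho_{i+1}$ preserves the value of $\sF$ by the rotation invariance in \cref{assumption:main}(i), so this does not affect membership in the domain.) There is no genuine obstacle here; the substantive work has already been done in \cref{prop:discrete_vector_field_eq}, and the present corollary simply repackages it under the stronger differentiability hypothesis so that the discrete-time first-order condition \eqref{eq:first_variation_discrete_vector_field_eq} takes the cleaner form involving $\nabla\delta\sF$. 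This then sets up the limit passage in \cref{subsec:strong_convergence}, where we will want to identify the limit of $-\cL_{2\bar\bA_n(t),\bar\rho_n(t)}[\bar v_n(t)]$ with $\nabla\delta\sF(\rho_t)$ rather than merely an element of a (potentially set-valued) subdifferential.
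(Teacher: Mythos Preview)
Your proof is correct and matches the paper's approach exactly: the paper states that the corollary ``is an immediate consequence of \cref{prop:discrete_vector_field_eq} and thus the proof is omitted,'' and you have simply spelled out that immediate consequence, combining \cref{prop:discrete_vector_field_eq}(iii) with the singleton subdifferential hypothesis from \cref{assumption:differentiability}. The extra verification that $\rho_i\in\dom(\sF)$ is a reasonable detail to include, though the paper leaves it implicit.
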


This is an immediate consequence of \cref{prop:discrete_vector_field_eq} and thus the proof is omitted. It is interesting, however, to comment on a heuristic derivation of the first-order optimality condition from \eqref{eq:first_variation_discrete_vector_field_eq}, via variational analysis.\footnote{Overlooking the various regularity and boundary conditions that are required for making this argument rigorous.} Indeed, the optimality of $\rho_{i+1}$ implies that for some constant $c\in\RR$, we have 
\begin{align*}
    2\tau \delta\sF(\rho_{i+1}) + 2x^\intercal\bsigma_{\rho_{i+1}} x + \varphi^\star = c, \quad x\in\RR^d,
\end{align*}
where $\varphi^\star,\psi^\star$ are the optimal potentials for the OT problem $\inf_{\pi\in\Pi(\rho,(8\bA^\star_{i+1})_\sharp\rho_i)} \int -x^\intercal y  d \pi(x,y)$. 
Note that the Gromov-Monge map from $\rho_{i+1}$ 
to $\rho_i$ is $T^\star_{i+1}(x) = -(8\bA^\star_{i+1})^{-1} \nabla \varphi^\star$ (c.f., e.g., Theorems 9.4 and 10.28 in \cite{villani2008optimal}), and the invertibility of $\bA^\star_{i+1}$ was established in \cref{prop:existence_of_scheme}. 
Taking the gradient of the optimality equation above, we obtain $2\tau \nabla\delta\sF(\rho_{i}) + 4\bsigma_{\rho_{i}} x - 8\bA^\star_{i} T^\star_{i} = 0$. Upon rearranging, this leads to
\[-\cL_{2\bA^\star_{i},\rho_i} [v_i] = \nabla\delta\sF(\rho_i),\quad i=1,\ldots,n,
\]
which coincides with \eqref{eq:first_variation_discrete_vector_field_eq}, as desired.

\subsection{Weak convergence of discrete solutions and velocity fields}\label{subsec:weak_convergence}

We now study the convergence of the piecewise constant interpolation of the discrete-time sequences of measures $\{\bar{\rho}_n\}_{n\in\NN}$ and velocity fields $\{\bar{v}_n\}_{n\in\NN}$ along the minimizing movement scheme from \eqref{eq:minimizing_movement}. For now, we account for weak convergence of each sequence individually, along properly chosen common subsequence. In \cref{subsec:strong_convergence}, we shall strengthen the claim to uniform  $\W_2$ convergence of the discrete-time solution (along a subsequence) and demonstrate that the optimality condition from \cref{cor:opt_condition}, which connects $(\rho_i,v_i)$, also transfers to the continuous-time limit.

\subsubsection{Pointwise weak convergence of $\bar{\rho}_n$}
We show that the piecewise constant interpolation $\bar{\rho}_n:[0,\delta]\to\cP_2(\RR^d)$ converges (up to a subsequence in $n$) to a curve in $\cP_2(\RR^d)$ that satisfies a generalized continuity equation. Combined with lower semi-continuity of $\sF$, this further implies $\rho\subset\dom(\sF)$.

\begin{proposition}[Pointwise weak convergence of $\bar\rho_n$]\label{prop:limit_of_minimizing_movement}
Under \cref{assumption:main}(i)-(ii) and the condition on $\delta>0$ from \cref{prop:existence_of_scheme}, we have that $\{\bar{\rho}_n\}_{n\in\NN}$ converges pointwise weakly, along a subsequence, to a uniformly $\W_2$-continuous curve $\rho:[0,\delta]\to\cP_2(\RR^d)$.
\end{proposition}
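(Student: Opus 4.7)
The plan is to derive a uniform (in $n$) $\W_2$-H\"older estimate for the piecewise constant interpolations $\bar\rho_n$, combine it with tightness to extract a convergent subsequence via a refined Arzel\`a--Ascoli argument, and verify that the limit is uniformly $\W_2$-continuous.

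The first step is to convert the stepwise $\IGW$ bound from \cref{prop:existence_of_scheme} into a $\W_2$ bound. The alignment $\rho_{i+1}=\bO_\sharp \tilde\rho_{i+1}$ with $\bO\in\cO_{\rho_i,\tilde\rho_{i+1}}$ is critical: by \cref{lem:equivalence_igw_w} and the rotation invariance of $\IGW$,
\[
\W_2(\rho_i,\rho_{i+1})\leq \frac{\sqrt{2}}{\sqrt{\lambda_{\min}(\bsigma_{\rho_0})}}\,\IGW(\rho_i,\rho_{i+1}),
\]
using the uniform lower bound $\lambda_{\min}(\bsigma_{\rho_i})\geq \lambda_{\min}(\bsigma_{\rho_0})/2$ from \cref{prop:existence_of_scheme}. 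Combining this with the dissipation estimate $\IGW(\rho_{i+1},\rho_i)^2\leq 2\tau\big(\sF(\rho_i)-\sF(\rho_{i+1})\big)$ and telescoping via Cauchy--Schwarz yields, for $0\leq s\leq t\leq \delta$,
\[
\W_2\big(\bar\rho_n(t),\bar\rho_n(s)\big)^2\leq \frac{4\big(\sF(\mu_0)-\sF^\star\big)}{\lambda_{\min}(\bsigma_{\rho_0})}\,\big(|t-s|+\tau\big),
\]
where the additive $\tau$ comes from the piecewise constant structure of $\bar\rho_n$ and vanishes as $n\to\infty$.

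The second step is compactness. \cref{prop:existence_of_scheme} also supplies the uniform moment bound $\sup_{n,t} M_2(\bar\rho_n(t))\leq C$, whence $\{\bar\rho_n(t)\}_n$ is tight for each $t$. A diagonal argument extracts a subsequence $\{n_k\}$ with $\bar\rho_{n_k}(q)\rightharpoonup \rho(q)$ weakly for every $q$ in a countable dense set $\mathbb{D}\subset[0,\delta]$. Weak lower semicontinuity of $\W_2$ transfers the H\"older estimate to these limit points, so $\rho|_{\mathbb{D}}$ is uniformly $\W_2$-H\"older on $\mathbb{D}$ and extends uniquely to a uniformly $\W_2$-continuous curve $\rho:[0,\delta]\to\cP_2(\RR^d)$ by completeness of $(\cP_2(\RR^d),\W_2)$. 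A standard $3\epsilon$-argument---selecting $q\in\mathbb{D}$ close to $t$ so the H\"older bound controls both $\W_2(\bar\rho_{n_k}(t),\bar\rho_{n_k}(q))$ and $\W_2(\rho(t),\rho(q))$, then invoking weak convergence at $q$---upgrades to pointwise weak convergence $\bar\rho_{n_k}(t)\rightharpoonup \rho(t)$ at every $t\in[0,\delta]$.

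The main obstacle is the $\IGW$-to-$\W_2$ transfer in the first step: without the orthogonal alignment built into the scheme through $\bO\in\cO_{\rho_i,\tilde\rho_{i+1}}$, the $\W_2$ displacement between consecutive iterates could be arbitrarily large despite small $\IGW$ (since $\IGW$ is rotation invariant), and the entire compactness argument would collapse. A secondary technicality is that $\bar\rho_n$ is piecewise constant rather than continuous, but the vanishing offset $\tau\to 0$ ensures the modulus of continuity is asymptotically controlled, so standard Arzel\`a--Ascoli-type compactness applies.
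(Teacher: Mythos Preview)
Your proposal is correct and follows essentially the same approach as the paper: both derive the $\W_2$-H\"older estimate by converting the stepwise $\IGW$ bound via \cref{lem:equivalence_igw_w} (using the orthogonal alignment and the eigenvalue lower bound from \cref{prop:existence_of_scheme}), then telescope with Cauchy--Schwarz, and finally apply an Arzel\`a--Ascoli-type compactness argument. The only cosmetic difference is that the paper invokes the refined Arzel\`a--Ascoli theorem from \cite[Proposition~3.3.1]{ambrosio2005gradient} directly, whereas you spell out the diagonal extraction, dense-set extension, and $3\epsilon$ upgrade by hand---but these amount to the same argument.
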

\begin{proof}
    We first bound the 2-Wasserstein distance between different time steps of the piecewise constant curve and then invoke a generalized Arzel\`a-Ascoli theorem to conclude. Fix $n\in\NN$, $s<t$, and consider
    \begin{align*}
    \W_2\big(\bar{\rho}_n(s),\bar{\rho}_n(t)\big) &= \W_2(\rho_{\lceil s/\tau \rceil}, \rho_{\lceil t/\tau \rceil}) \\
    &\leq \frac{\sqrt{2}\sum_{i=\lceil s/\tau \rceil}^{\lceil t/\tau \rceil-1}\IGW(\rho_{i+1},\rho_i)}{\sqrt{\lambda_{\min}(\bsigma_{\rho_0})}} \\
    &\leq 2\sqrt{\frac{\tau|\lceil s/\tau \rceil-\lceil t/\tau \rceil|(\sF(\rho_0)-\sF^\star)}{\lambda_{\min}(\bsigma_{\rho_0})}},
    \end{align*}
    where we have used the fact that for $k>j$, $\sum_{i=j}^{k-1}\IGW(\rho_{i+1},\rho_i)\leq\sqrt{2\tau(k-j)(\sF(\rho_0)-\sF^\star)}$, which follows a similar argument as \eqref{eq:IGW_total_bound} in proof of \cref{prop:existence_of_scheme}.
    Notice that $\lim_{n\to\infty} |\lceil s/\tau \rceil-\lceil t/\tau \rceil|\tau = |s-t|$, and define the modulus of continuity \[\omega(s,t) = 2\sqrt{\frac{|s-t|(\sF(\rho_0) - \sF^\star)}{\lambda_{\min}(\bsigma_{\rho_0})}}.\]
    By \cite[Proposition 3.3.1]{ambrosio2005gradient}, which provides a refinement of the Arzel\`a-Ascoli theorem, we conclude that there is a $\W_2$-continuous curve $\rho:[0,\delta]\to\cP_2(\RR^d)$ and a subsequence $\bar{\rho}_{n_k}$ such that $\bar{\rho}_{n_k}(t)\stackrel{w}{\to}\rho_t$ and $\W_2\big(\rho_t,\rho_s\big)\leq \omega(s,t)$, for all $t,s\in[0,\delta]$.
\end{proof}

\subsubsection{Convergence of velocity fields and continuity equation}
                
The convergence of the vector fields $\{\bar{v}_n\}_{n\in\NN}$ requires special care. Note that for each $(n,t)\in\NN\times [0,\delta]$, we have $\bar{v}_n(t)\in L^2(\bar{\rho}_n(t);\RR^d)$, which makes it unclear what would be a natural $L^2$ space in which to expect convergence. Instead, we shall arrive at a limiting statement by 'measurizing' the vector fields, and establishing weak convergence. This idea is inspired by \cite[Theorem 11.1.6]{ambrosio2005gradient}. With a slight abuse of notation, we write $\bar\rho_n(t,x)$ for $(\bar\rho_n(t))(x)$ as a measure of variable $x$, and recall that $\rho_t$ denotes the continuous-time limit (derived in \cref{prop:limit_of_minimizing_movement}) at time $t\in[0,\delta]$.

\medskip
For $\delta>0$, denote by $\upsilon_\delta$ the uniform distribution on $[0,\delta]$. Note that the distribution\footnote{Specifically, let $T\sim \upsilon_\delta$ and given $T=t$, let $X_n(t)\sim \bar{\rho}_n(t)$, so that the joint distribution of $\big(T,X_n(T)\big)$ is $\upsilon_\delta\bar{\rho}_n\in \cP([0,\delta]\times\RR^d)$} $\upsilon_\delta\bar{\rho}_{n_k}\in \cP([0,\delta]\times\RR^d)$ converges weakly, along the subsequence, to $\upsilon_\delta\rho$, where $\rho$ is the weak limit. Indeed, for any continuous function $g:[0,\delta]\times \RR^d\to\RR$ with $\|g\|_{\infty}\leq C$, define $f_n(t)\coloneqq \int_{\RR^d} g(t,x) d\bar{\rho}_n(t,x)$. Clearly $|f_n|\leq C$ and $\lim_{k} f_{n_k}(t) = \int_{\RR^d} g(t,x) d\rho_t(x)$. Thus, by the dominated convergence theorem
\begin{align*}
    \lim_k \int_0^\delta \int_{\RR^d} g(t,x) d\bar{\rho}_{n_k}(t,x) dt = \lim_k \int_0^\delta f_{n_k}(t) dt = \int_0^\delta \int_{\RR^d} g(t,x) d\rho_t(x) dt,
\end{align*}
i.e., $\upsilon_\delta\bar{\rho}_{n_k} \stackrel{w}{\to}\upsilon_\delta\rho$.

Define measure $\nu_n \coloneqq \upsilon_\delta \left[(\id,\bar{v}_n)_\sharp\bar{\rho}_n\right]\in\cP([0,\delta]\times\RR^d\times\RR^d)$; in the notation from the footnote below, we have $\big(T,X_n(T),\bar{v}_n(T,X_n(T))\big)\sim\nu_n$. Note that
\begin{align*}
\int\limits_{[0,\delta]\times\RR^d\times\RR^d}\mspace{-25mu} \big(\,t^2 + \|x\|^2 + \|y\|^2\big)  d  \nu_n(t,x,y) = \frac{\delta^2}{3} + \frac{\tau }{\delta}\sum_{i=1}^n M_2(\rho_i) + \frac{1}{\delta}\int_0^\delta \int_{\RR^d} \|\bar{v}_n(t,x)\|^2 d \bar{\rho}_n(t,x)  d  t,
\end{align*}
and
\begin{align*}
    \int_0^\delta \int \|\bar{v}_n(t,x)\|^2 d \bar{\rho}_n(t,x)  d  t &=  \sum_{i=1}^{n} \int \tau\left\|\frac{T^\star_i(x)-x}{\tau}\right\|^2 d \rho_{i}(x)\\
    &\leq \sum_{i=1}^{n} \frac{2}{\tau\lambda_{\min}(\bsigma_{\rho_0})}\IGW(\rho_{i-1},\rho_{i})^2\\
    &\leq \frac{4(\sF(\rho_0)- \sF^\star)}{\lambda_{\min}(\bsigma_{\rho_0})}.\numberthis\label{eq:vector_field_L2_bound}
\end{align*}
Thus $\{\nu_n\}_{n=0}^\infty\subset\cP_2([0,\delta]\times\RR^d\times\RR^d)$ is tight, and we can find a subsequence $\{\nu_{n_k}\}_{k=0}^\infty$ (a further subsequence of the one from \cref{prop:limit_of_minimizing_movement}, not relabeled to simplify notation) and a weak limit $\nu\in\cP_2([0,\delta]\times\RR^d\times\RR^d)$, whose marginal over the first two variables is $\upsilon_\delta\rho$. Let $\nu_{t,x}\in\cP(\RR^d)$ be the
disintegration of $\nu$ w.r.t. the marginal $\upsilon_\delta\rho$ (namely, the conditional distribution of the third coordinate given that the first two take the value $(t,x)\in[0,\delta]\times\RR^d$. Define the velocity field $v:[0,\delta]\times\RR^d\to\RR^d$ as the conditional expectation
\begin{align}
    v_t(x) \coloneqq \int y  d  \nu_{t,x}(y).\label{eq:limit_of_vector_fields}
\end{align}

We conclude this subsection by showing that the velocity field $v$ from \eqref{eq:limit_of_vector_fields} and the path $\rho:[0,\delta]\to \cP_2(\RR^d)$ solves the continuity equation, in the distributional sense. The derivation follows ideas similar to \cite[Proof of Theorem 11.1.6, Step 4]{ambrosio2005gradient}.

\begin{proposition}[Continuity equation]\label{prop:limit_continuity_equation}
Under the conditions from \cref{prop:limit_of_minimizing_movement}, the limiting pair $(\rho,v)_{t\in[0,\delta]}$ above satisfies the continuity equation in the distributional sense, i.e., 
    \[  \int_0^\delta\int_{\RR^d} \partial_t g(t,x) d \rho_t(x)dt=-\int_0^\delta\int_{\RR^d} \llangle\nabla g(t,x), v_t(x)\rrangle d  \rho_t(x)dt,\quad \forall g\in C_c^\infty((0,\delta)\times\RR^d).
    \]
\end{proposition}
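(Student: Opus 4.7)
The plan is to derive a discrete continuity equation by Taylor expanding the test function $g$ along the Gromov--Monge maps $T^\star_i\colon \rho_i \to \rho_{i-1}$ furnished by \cref{prop:discrete_vector_field_eq}, telescope over $i$, and pass to the limit using the weak convergence of $\bar{\rho}_n$ and $\nu_n$ established above.

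Fix $g\in C_c^\infty\big((0,\delta)\times\RR^d\big)$, and take $n$ large enough that $g(t,\cdot)\equiv 0$ on $[0,\tau]\cup[\delta-\tau,\delta]$. For each $i=1,\ldots,n$, I decompose the one-step change as
\begin{align*}
\int g(i\tau,x)d\rho_i(x) - \int g\big((i{-}1)\tau,y\big) d\rho_{i-1}(y) = A_i + B_i,
\end{align*}
where $A_i\coloneqq \int\big[g(i\tau,x)-g((i{-}1)\tau,x)\big] d\rho_i(x) = \int_{(i-1)\tau}^{i\tau}\int\partial_t g(t,x) d\bar\rho_n(t,x) dt$ captures the temporal increment, while $B_i$ captures the spatial displacement via $(T^\star_i)_\sharp\rho_i=\rho_{i-1}$:
\begin{align*}
B_i = \int \big[g\big((i{-}1)\tau,x\big)-g\big((i{-}1)\tau,T^\star_i(x)\big)\big] d\rho_i(x) = \tau \int \llangle \nabla g\big((i{-}1)\tau,x\big),v_i(x)\rrangle d\rho_i(x) + R_i,
\end{align*}
using $x-T^\star_i(x)=\tau v_i(x)$, with a Taylor remainder bounded by $|R_i|\leq \tfrac{\tau^2}{2}\|\nabla^2 g\|_\infty \int \|v_i\|^2 d\rho_i$. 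Summing over $i=1,\ldots,n$, the left-hand side telescopes to $\int g(\delta,x) d\rho_n(x) - \int g(0,x) d\rho_0(x) = 0$ by the compact support of $g$, producing the discrete identity
\begin{align*}
\int_0^\delta\int \partial_t g(t,x) d\bar\rho_n(t,x) dt + \int_0^\delta\int \llangle \nabla\bar g_n(t,x),\bar v_n(t,x)\rrangle d\bar\rho_n(t,x) dt + \sum_{i=1}^n R_i = 0,
\end{align*}
where $\bar g_n(t,x)\coloneqq g\big((i{-}1)\tau,x\big)$ on $t\in((i{-}1)\tau,i\tau]$.

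I then pass to the limit along the subsequence from \cref{prop:limit_of_minimizing_movement}. The remainder obeys $\sum_i|R_i|\leq \tfrac{\tau}{2}\|\nabla^2 g\|_\infty \int_0^\delta\int\|\bar v_n\|^2 d\bar\rho_n dt$, which vanishes as $\tau\to 0$ thanks to the uniform bound \eqref{eq:vector_field_L2_bound}. For the temporal term, $\partial_t g$ is bounded continuous with compact support and $\upsilon_\delta\bar\rho_{n_k}\stackrel{w}{\to}\upsilon_\delta\rho$ (as noted above), so the limit is $\int_0^\delta\int\partial_t g(t,x) d\rho_t(x) dt$. For the velocity term, the smoothness of $g$ gives $\nabla\bar g_n\to\nabla g$ uniformly on compact sets with $\sup_n\|\nabla\bar g_n\|_\infty<\infty$, so a Cauchy--Schwarz estimate together with the $L^2$ control on $\bar v_n$ shows that replacing $\bar g_n$ by $g$ produces a vanishing error. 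The remaining integral equals $\int \llangle \nabla g(t,x),y\rrangle d\nu_{n_k}(t,x,y)$, and invoking the disintegration $\nu=\upsilon_\delta\rho_t\nu_{t,x}$ together with definition \eqref{eq:limit_of_vector_fields} of $v_t$ produces the desired right-hand side.

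The main obstacle is passing the limit in the velocity term, since the integrand $\llangle\nabla g(t,x),y\rrangle$ is linear (hence unbounded) in $y$, so weak convergence $\nu_{n_k}\stackrel{w}{\to}\nu$ does not apply directly. I handle this by a standard truncation argument: the uniform second-moment bound $\sup_n\int\|y\|^2 d\nu_n<\infty$ established above yields uniform integrability of $\|y\|$ with respect to $\{\nu_n\}$, so for any $\eps>0$ one can choose $R$ with $\int \|\nabla g\|_\infty\|y\|\mathbf{1}_{\|y\|>R} d\nu_n\leq \eps$ uniformly in $n$, and the truncated version (after mollifying the indicator) is bounded continuous and passes to the limit by weak convergence. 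All other steps reduce to routine Taylor expansion and telescoping, with the control provided by the a priori estimates from \cref{prop:existence_of_scheme}.
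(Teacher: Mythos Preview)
Your proof is correct and takes essentially the same approach as the paper: both arguments Taylor-expand the test function along the Gromov--Monge displacement $x\mapsto x-\tau v_i(x)$, control the second-order remainder via the uniform $L^2$ bound \eqref{eq:vector_field_L2_bound}, and pass to the limit in the velocity term using the weak convergence $\nu_{n_k}\stackrel{w}{\to}\nu$ combined with the uniform second-moment bound (which is precisely the uniform integrability you spell out). The only difference is organizational---the paper starts from the limiting integral and works backward through a finite-difference approximation of $\partial_t g$, whereas you build the discrete identity first via an explicit $A_i+B_i$ telescoping and then take the limit---but the substance is the same.
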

\begin{proof}
    For any $t<0$, set $\bar{\rho}_n(t,\cdot)=\rho_0$, and write $\tau_k=\frac{\delta}{n_k}$, with respect to the subsequence mentioned after \eqref{eq:vector_field_L2_bound}.
    Pick a smooth and compactly supported test function $g$ on $(0,\delta)\times\RR^d$, and compute
    \begin{align*}
        \iint \partial_t g(t,x) d \rho_t(x)dt &= \lim_{k\to\infty} \iint \partial_t g(t,x) d \bar{\rho}_{n_k}(t,x)dt\\
        &= \lim_{k\to\infty} \frac{1}{\tau_k}\iint \big(g(t+\tau_k,x)-g(t,x)\big) d \bar{\rho}_{n_k}(t,x)dt\\
        &= \lim_{k\to\infty} \frac{1}{\tau_k}\iint g(t,x) d \big(\bar{\rho}_{n_k}(t-\tau_k,x)-\bar{\rho}_{n_k}(t,x)\big)dt\\
        & \stackrel{\mathrm{(a)}}{=}  \lim_{k\to\infty} \frac{1}{\tau_k}\iint \big(g(t,x-\tau_k\bar{v}_{n_k}) - g(t,x)\big) d \bar{\rho}_{n_k}(t,x)dt\\
        & \stackrel{\mathrm{(b)}}{=}  \lim_{k\to\infty} -\iint \llangle\nabla g(t,x),\bar{v}_{n_k}(t,x)\rrangle d \bar{\rho}_{n_k}(t,x)dt\\
        & = \delta\lim_{k\to\infty} -\int \llangle\nabla g(t,x), y\rrangle d  \nu_{n_k}(t,x,y)\\
        & \stackrel{\mathrm{(c)}}{=}  -\delta\int \llangle\nabla g(t,x), y\rrangle d  \nu(t,x,y)\\
        & = -\int \llangle\nabla g(t,x),v_t(x)\rrangle d  \rho_t(x)dt,
    \end{align*}
where (a) follows by the fact that $\rho_{i-1}=(x-\tau_k v_i)_\sharp\rho_i$; (b) is because for any $\tau>0$, we have
\begin{align*}
  \left|\iint\mspace{-3mu}\big( g(t,x-\tau\bar{v}_n) \mspace{-2mu}-\mspace{-2mu} g(t,x)\mspace{-2mu}+\mspace{-2mu} \tau\llangle\nabla g(t,x),\bar{v}_n(t,x)\rrangle\big) d \bar{\rho}_n(t,x)dt \right| &\leq c_g \tau^2 \mspace{-2mu}\int \mspace{-2mu}\|\bar{v}_n(t,x)\|^2 d \bar{\rho}_n(t,x)dt \\
  &\leq c_g \frac{4(\sF(\rho_0)- \sF^\star)}{\lambda_{\min}(\bsigma_{\rho_0})}\tau^2,
\end{align*}
where $c_g>0$ is a constant depending only on second derivatives of $g$; while (c) uses the fact that since $M_2(\nu_n)$ are uniformly bounded, any function $f(t,x,y)$ with at most first-order growth (viz., $|f(t,x,y)|\lesssim |t|+\|x\|+\|y\|$) verifies $\lim_k \int f(t,x,y)d\nu_{n_k}(t,x,y)=\int f(t,x,y)d\nu(t,x,y)$.
\end{proof}

\subsection{2-Wasserstein convergence of discrete solutions}\label{subsec:strong_convergence}
 
From the previous section, we know that the piecewise constant sequences $\bar\rho_n$ and $\bar v_n$ converge weakly to their respective continuous-time limits $\rho,v$ along a subsequence, with the limiting pair satisfying the the continuity equation. From \cref{cor:opt_condition}, we also know that for each $n\in\NN$, the discrete-time solutions satisfy an optimality condition that ties them to one another. Transferring this relationship to the continuous-time limit requires strengthening the notion of convergence of discrete solutions, from weak to $\W_2$ convergence. 

The argument hinges on controlling the 2-Wasserstein distance by IGW using \cref{lem:equivalence_igw_w}, and then deriving the convergence rate of the latter by showing that $\{\bar\rho_n\}_{n\in\NN}$ is a Cauchy sequence in IGW. To that end, we use the local convexity of IGW along generalized geodesics (see \cref{lem:local_convexity_igw}) and the cross-partition comparison method from \cite[Corollary 4.1.5]{ambrosio2005gradient}.

\subsubsection{Cross-partition sequence comparison}

A key technical step towards deriving the 2-Wasserstein convergence of discrete solutions $\bar\rho_n:[0,\delta]\to\cP_2(\RR^d)$ to the continuous-time limit $\rho$, is a Cauchy-type estimate of the uniform (in time) IGW-gap. To emphasize the dependence on $n$, or, equivalently, the time-step $\tau=\frac{\delta}{n}$, write $\{\rho^\tau_i\}_{i=0}^n$ for a solution to \eqref{eq:minimizing_movement} with parameter $\tau$ and denote by $\bar{\rho}_n:[0,\delta]\to\cP_2(\RR^d)$ the corresponding piecewise constant interpolation. For two such discrete-time sequences with parameters $\tau$ and $\eta$, we derive a uniform $O(\sqrt{\tau}+\sqrt{\eta})$ bound on the IGW-gap between them.

To that end, we define a cross-partition `error' function, which will be used to control the IGW-gap. Fix $n\in\NN$, and define the piecewise linear function $\ell_\tau:[0,\delta]\to\RR$ by $\ell_\tau(t)\coloneqq \frac{t-(i-1)\tau}{\tau}$, for $i=1,\ldots,n,\ t\in ((i-1)\tau,i\tau]$. For $\nu\in\cP_2(\RR^d)$, set 
\[d_\tau(t;\nu)^2\coloneqq (1-\ell_\tau(t))\IGW(\rho^\tau_{i-1},\nu)^2 + \ell_\tau(t) \IGW(\rho^\tau_i,\nu)^2, \quad \ i=1,\ldots,n,\ t\in((i-1)\tau,i\tau].\]

\begin{definition}[Cross-partition error function]
    For $n,m\in\NN$ with $\tau=\frac{\delta}{n}$ and $\eta=\frac{\delta}{m}$, define the function $d_{\tau\eta}:[0,\delta]^2\to\RR$ by 
    \begin{equation}
    d_{\tau\eta}(t,s)^2\coloneqq (1-\ell_\eta(s))d_\tau(t;\rho^\eta_{j-1})^2 + \ell_\tau(s)d_\tau(t;\rho^\eta_{j})^2,\label{eq:error_func}
    \end{equation}
    for $(t,s)\in((i-1)\tau,i\tau]\times ((j-1)\eta,j\eta]$ and $(i,j)\in\{1,\ldots,n\}\times \{1,\ldots,m\}$.  
\end{definition}

We shall control the time derivative of the function $d_{\tau\eta}(t,t)^2$, which, together with the Gr\"onwall lemma, leads to a uniform bound in $t$ on $\IGW(\bar{\rho}_n(t),\bar{\rho}_m(t))^2$ that decays to 0 as $n,m\to\infty$. This approach is inspired by \cite[Section 4.1.2]{ambrosio2005gradient} and illustrated in \cref{fig:cross_partition_error_diagram}, which represents $d_{\tau\eta}(t,t)^2$ as a convex combination of the distances noted by red dotted lines. This will imply that the sequence $\{\bar{\rho}_n\}_{n\in\NN}$ is Cauchy in $\IGW$, which enables lifting its weak pointwise convergence from \cref{prop:limit_of_minimizing_movement} to the desired $\W_2$ convergence.

\begin{proposition}[Cauchy-type IGW bound]\label{prop:discrete_solution_comparison}
    Under Assumptions \ref{assumption:main}(i) and \ref{assumption:main}(iii), suppose that $\tau\vee\eta\leq\frac{\lambda_{\min}(\bsigma_{\rho_0})^2}{2304(\sF(\rho_0)-\sF^\star)}$, and if the convexity parameter $\lambda<0$ (see \cref{assumption:main}(iii)), further let $\tau\vee\eta < \frac{1}{4|\lambda|}$. Then, for any $n,m\in\NN$, we~have
    \begin{align*}
    \sup_{t\in[0,\delta]} \IGW\big(\bar{\rho}_n(t),\bar{\rho}_m(t)\big) \lesssim_{\sF(\rho_0),\sF^\star,\lambda_{\min}(\bsigma_{\rho_0}), \lambda} \sqrt{\tau}+\sqrt{\eta},
    \end{align*}
    which implies that 
    $\{\bar{\rho}_n\}_{n\in\NN}$ is a Cauchy sequence of curves in IGW.
\end{proposition}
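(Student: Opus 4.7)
The plan is to adapt the cross-partition comparison argument of \cite[Section 4.1]{ambrosio2005gradient} to the IGW setting, exploiting the $\lambda$-convexity of $\sF$ along generalized IGW geodesics (\cref{assumption:main}(iii)) together with the local convexity of $\IGW^2$ from \cref{lem:local_convexity_igw}. The goal is a discrete Gr\"onwall-type inequality for the cross-partition function $d_{\tau\eta}(t,t)^2$ yielding an $O(\tau+\eta)$ bound uniformly in $t\in[0,\delta]$, from which the claim follows by taking square roots.

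For each cell $(i,j)\in\{1,\dots,n\}\times\{1,\dots,m\}$ I would consider the generalized IGW geodesic $\nu_s^{i,j}$ from $\rho_i^\tau$ to $\rho_j^\eta$ with base $\rho_{i-1}^\tau$; by the rotational invariance of $\sF$ and $\IGW$ I may replace $\rho_j^\eta$ by the rotate supplied by \cref{lem:symmetrization} so that the underlying cross-covariances are PSD, enabling the use of \cref{def:gen_geo_IGW} and \cref{lem:local_convexity_igw}. The stepsize hypothesis combined with \eqref{eq:step_move:prop:existence_of_scheme} and \eqref{eq:eigenval:prop:existence_of_scheme} ensures $\IGW(\rho_{i-1}^\tau,\rho_i^\tau)\lesssim \sqrt{\tau}$ is small relative to $\lambda_{\min}(\bsigma_{\rho_{i-1}^\tau})\geq \lambda_{\min}(\bsigma_{\rho_0})/2$, so \cref{lem:local_convexity_igw} applies with effective negative-squared coefficient $1-c_i^\tau$ where $c_i^\tau=O(\sqrt{\tau})$, and symmetrically $c_j^\eta=O(\sqrt{\eta})$ for the companion direction.

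Next, by the minimality of $\rho_i^\tau$ for $\rho\mapsto \sF(\rho)+(2\tau)^{-1}\IGW(\rho,\rho_{i-1}^\tau)^2$, the right-derivative of this cost at $s=0^+$ along $\nu_s^{i,j}$ must be nonnegative. The $\lambda$-convexity of $\sF$ contributes $\sF(\rho_j^\eta)-\sF(\rho_i^\tau)-\lambda E_{ij}$, where $E_{ij}$ denotes the glued-coupling IGW cost, while \cref{lem:local_convexity_igw} contributes the linear interpolation of the two squared distances to $\rho_{i-1}^\tau$ minus $(1-c_i^\tau)\IGW(\rho_i^\tau,\rho_j^\eta)^2$ plus a cubic $O(\tau^{3/2})$ remainder. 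Swapping the roles of $(\tau,i)$ and $(\eta,j)$ yields a companion inequality for the optimality of $\rho_j^\eta$; weighting the two by $\eta$ and $\tau$ respectively and adding, the $\sF$-difference terms cancel exactly, producing the four-corner bilinear comparison
\begin{align*}
\bigl[(\tau+\eta) - \eta c_i^\tau - \tau c_j^\eta\bigr]\IGW(\rho_i^\tau,\rho_j^\eta)^2 &+ \eta\IGW(\rho_{i-1}^\tau,\rho_i^\tau)^2 + \tau\IGW(\rho_{j-1}^\eta,\rho_j^\eta)^2 \\
&\leq \eta\IGW(\rho_{i-1}^\tau,\rho_j^\eta)^2 + \tau\IGW(\rho_i^\tau,\rho_{j-1}^\eta)^2 + C\tau\eta(\sqrt{\tau}+\sqrt{\eta}),
\end{align*}
where $C$ depends only on $\sF(\rho_0)$, $\sF^\star$, $M_2(\rho_0)$, and $\lambda_{\min}(\bsigma_{\rho_0})$. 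When $\lambda<0$, the hypothesis $\tau\vee\eta<1/(4|\lambda|)$ allows the $-2\tau\eta\lambda(E_{ij}+E_{ji})$ contributions (which are bounded above by a multiple of $\IGW(\rho_i^\tau,\rho_j^\eta)^2$) to be absorbed into the left-hand $\IGW^2$ term while preserving a strictly positive coefficient.

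Finally, reading the above as a discrete recursion at the four corners of each cell of the bilinear interpolation $d_{\tau\eta}(t,s)^2$ and telescoping along the diagonal $s=t$, a discrete Gr\"onwall argument yields $\sup_{t\in[0,\delta]} d_{\tau\eta}(t,t)^2 \leq e^{C\delta}\cdot O(\tau+\eta)$. Since $\IGW(\bar\rho_n(t),\bar\rho_m(t))^2$ coincides with $d_{\tau\eta}(t,t)^2$ at grid points and, by the triangle inequality with \eqref{eq:step_move:prop:existence_of_scheme}, differs from it elsewhere by at most $O(\tau+\eta)$, the desired bound $\sup_t\IGW(\bar\rho_n(t),\bar\rho_m(t))\lesssim \sqrt{\tau}+\sqrt{\eta}$ follows. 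The main obstacle I anticipate is bookkeeping the higher-order errors from \cref{lem:local_convexity_igw}: the cubic $O(\tau^{3/2})$ remainder per cell, when accumulated over $\delta/\tau$ cells along the diagonal, must telescope to no more than $O(\sqrt{\tau})$; the stepsize restrictions in the hypothesis are calibrated precisely so that (i) the $c_i^\tau=O(\sqrt{\tau})$ coefficient can be absorbed into the left-hand side, (ii) the $\lambda E_{ij}$ terms remain controllable when $\lambda<0$, and (iii) the cubic error stays subdominant, recovering the advertised $\sqrt{\tau}+\sqrt{\eta}$ rate. A secondary subtlety is verifying that the PSD rotation step of \cref{lem:symmetrization} can be performed consistently across the two schemes without disrupting the bilinear comparison, which follows from the rotational invariance of $\sF$ and $\IGW$.
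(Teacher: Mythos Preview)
Your proposal is correct and follows essentially the same route as the paper: both derive a variational inequality from the minimality of each proximal step combined with the $\lambda$-convexity of $\sF$ and the local convexity of $\IGW^2$ (the paper packages this as \cref{lem:convexity_of_proximal_functional}), then run the cross-partition comparison of \cite[Section~4.1]{ambrosio2005gradient} on $d_{\tau\eta}$ and close with a Gr\"onwall estimate. The only cosmetic difference is that the paper works with the piecewise-linear function $t\mapsto d_{\tau\eta}(t,t)^2$ and its time derivative (taking convex combinations over $\nu=\rho_{j-1}^\eta,\rho_j^\eta$ and adding the $\tau$- and $\eta$-direction inequalities so that $\sF_\eta(t)-\sF_\tau(t)$ and $\sF_\tau(t)-\sF_\eta(t)$ cancel on the diagonal, then applying a continuous Gr\"onwall lemma), whereas you sketch the fully discrete four-corner recursion; note that the $\sF$-cancellation you invoke requires multiplying both variational inequalities by the common factor $2\tau\eta$ (so the $\sF$ terms carry equal weight), which is consistent with the four-corner inequality you display.
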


\begin{figure}[!t]
 \centering
\includegraphics[width=0.8\textwidth]{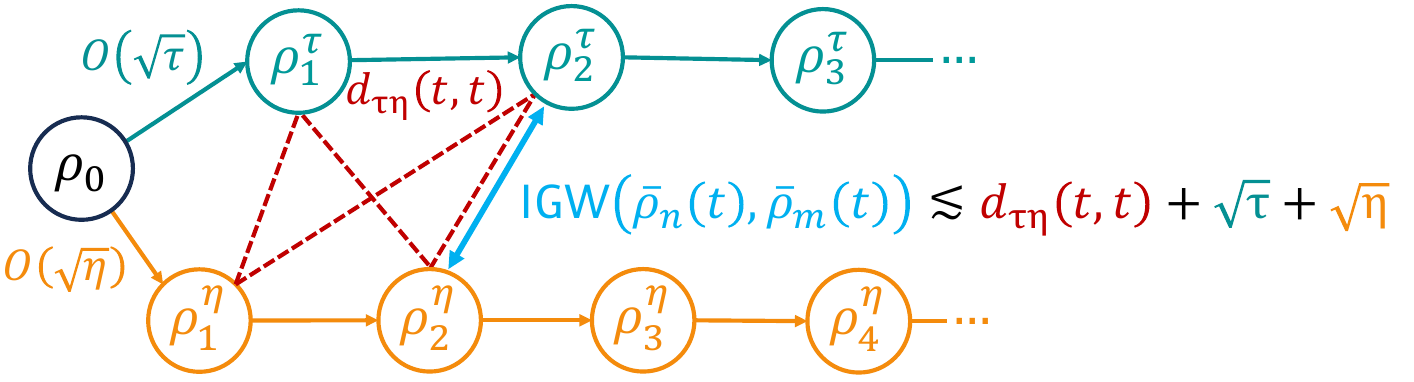} 
\caption{Visualization of cross-partition error function: the IGW distance between the sequences is at most the cross-partition error function, plus an $O(\sqrt{\tau}+\sqrt{\eta})$ term.} 
\label{fig:cross_partition_error_diagram}
\end{figure}

\begin{proof} 
The derivation follows similar lines to that of \cite[Corollary 4.1.7]{ambrosio2005gradient}. To control the uniform IGW gap $ \sup_{t\in[0,\delta]}\IGW\big(\bar{\rho}_n(t),\bar{\rho}_m(t)\big)$, first note that by definition:
\begin{align*}
    d_{\tau\eta}(t,t)^2 =&(1-\ell_\eta(t))(1-\ell_\tau(t))\IGW(\rho^\tau_{i-1},\rho^\eta_{j-1})^2 + (1-\ell_\eta(t))\ell_\tau(t) \IGW(\rho^\tau_i,\rho^\eta_{j-1})^2\\
    &\quad\quad+  \ell_\tau(t)(1-\ell_\tau(t))\IGW(\rho^\tau_{i-1},\rho^\eta_{j})^2 + \ell_\tau(t)\ell_\tau(t) \IGW(\rho^\tau_i,\rho^\eta_{j})^2,
\end{align*}
for $t\in ((i-1)\tau,i\tau]\cap ((j-1)\eta,j\eta]$, whereas $\IGW(\bar{\rho}_n(t),\bar{\rho}_m(t)) = \IGW(\rho^\tau_i,\rho^\eta_{j})$. Consequently, $d_{\tau\eta}(t,t)^2$ is a convex combination of $\IGW^2$ between $\rho^\tau_{i-1},\rho^\tau_i$ and $\rho^\eta_{j-1},\rho^\eta_{j}$; see \cref{fig:cross_partition_error_diagram}. Hence
\begin{align*}
    \big|\IGW\big(\bar{\rho}_n(t),\bar{\rho}_m(t)\big) - d_{\tau\eta}(t,t)\big|&\lesssim_{\sF(\rho_0),\sF^\star,\lambda_{\min}(\bsigma_{\rho_0})}  \IGW(\rho^\tau_{i-1},\rho^\tau_i) + \IGW(\rho^\eta_{j-1},\rho^\eta_{j})\\
    & \lesssim_{\sF(\rho_0),\sF^\star,\lambda_{\min}(\bsigma_{\rho_0})} \sqrt{\tau} +\sqrt{\eta}
\end{align*}
where the second step uses \cref{prop:existence_of_scheme}. Thus, it suffices to establish a uniform upper bound of
\begin{align*}
    d_{\tau\eta}(t,t)\lesssim_{\sF(\rho_0),\sF^\star,\lambda_{\min}(\bsigma_{\rho_0}),\lambda} \sqrt{\tau} +\sqrt{\eta}\numberthis\label{eq:cross_partition_bound}
\end{align*}
to conclude the proof, which is our focus for the rest of the proof. 

\medskip
To control $d_{\tau\eta}$ on the diagonal, we shall employ the variational inequality to bound $\frac{d}{dt}d_{\tau\eta}(t,t)$ and then invoke a Gr\"onwall-type estimate. We define some notation to simplify the subsequent derivation. For $i=1,\ldots,n$ and $t\in((i-1)\tau,i\tau]$, set $\sD_\tau(t) \coloneqq \IGW(\rho^\tau_{i-1},\rho^\tau_i)$ and $\sigma_\tau(t)\coloneqq\lambda-\frac{4\sqrt{2}}{\tau\lambda_{\min}(\bsigma_{\rho_0})}\sD_\tau(t)$. Recalling the weighting function $\ell_\tau$, define the interpolations
\begin{align*}
\sF_\tau(t) &\coloneqq (1-\ell_\tau(t)) \sF(\rho^\tau_{i-1}) + \ell_\tau(t) \sF(\rho^\tau_i)\\
\sR_\tau(t) &\coloneqq \sF_\tau(t) - \sF(\rho^\tau_i),
\end{align*}
where $i=1,\ldots,n$ and $t\in((i-1)\tau,i\tau]$.

\medskip
To proceed, we require the following technical lemma which states a variational inequality on the proximal map from \eqref{eq:minimizing_movement}. See \cref{appen:convexity_of_proximal_functional_proof} for the proof. 

\begin{lemma}[Variational inequality]\label{lem:convexity_of_proximal_functional}
Suppose that $\lambda_{\min}(\bsigma_{\mu_i})\geq c$, $i=0,1$, and $\IGW(\mu_0,\mu_1)\leq\frac{c}{16\sqrt{2}}$, for some $c>0$, and if $\lambda<0$, we further assume $\tau < \frac{1}{4|\lambda|}$. For any  $\mu_1\in\argmin\sF(\mu)+\frac{1}{2\tau}\IGW(\mu,\mu_0)^2$, if there is a generalized geodesic between $\mu_1,\mu_2$ w.r.t. $\mu_0$, then we have
    \begin{align*}
        \sF(\mu_1)+\frac{\IGW(\mu_1,\mu_0)^2}{2\tau} \mspace{-4mu}\leq\mspace{-4mu} & \ \sF(\mu_2) + \frac{\IGW(\mu_2,\mu_0)^2}{2\tau} - \left(\lambda + \frac{1-8\sqrt{2}\IGW(\mu_0,\mu_1)/c}{2\tau}\right) \IGW(\mu_1,\mu_2)^2\\
        & + \frac{6\sqrt{2}}{c\tau} \IGW(\mu_1,\mu_0)^3.
    \end{align*}
\end{lemma}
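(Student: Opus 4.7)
The plan is to invoke the minimality of $\mu_1$ as a proximal point along a suitably chosen perturbation, then combine the $\lambda$-convexity of $\sF$ from \cref{assumption:main}(iii) with the local convexity of $\IGW^2$ from \cref{lem:local_convexity_igw}. Concretely, I would take $\nu_t$ to be the generalized IGW geodesic from $\mu_1$ to $\mu_2$ w.r.t. $\mu_0$, which exists by hypothesis. Since $\mu_1$ minimizes $\sF(\cdot) + \frac{1}{2\tau}\IGW(\cdot,\mu_0)^2$, the standard variational comparison reads
\[
\sF(\mu_1) + \frac{\IGW(\mu_1,\mu_0)^2}{2\tau} \;\leq\; \sF(\nu_t) + \frac{\IGW(\nu_t,\mu_0)^2}{2\tau}, \quad \forall t \in [0,1].
\]
The goal is to bound each term on the right by its $t=0$ value plus a $t$-linear part, divide by $t$, and pass to the limit $t \to 0^+$.

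For the functional term, applying \cref{assumption:main}(iii) along $\nu_t$ and using that the integral $\int|\llangle y,y'\rrangle-\llangle z,z'\rrangle|^2 d\pi\otimes\pi$ is bounded below by $\IGW(\mu_1,\mu_2)^2$ (since the $(y,z)$-marginal of $\pi$ is a coupling of $\mu_1,\mu_2$) yields
\[
\sF(\nu_t) - \sF(\mu_1) \;\leq\; t\big(\sF(\mu_2)-\sF(\mu_1)\big) - \lambda t(1-t)\IGW(\mu_1,\mu_2)^2,
\]
with the $\lambda<0$ case requiring the additional restriction $\tau < 1/(4|\lambda|)$ so that the coefficient of $\IGW(\mu_1,\mu_2)^2$ in the final combination keeps the correct sign. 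For the distance term, \cref{lem:local_convexity_igw} applies since $\lambda_{\min}(\bsigma_{\mu_0}) \geq c$ and $\IGW(\mu_0,\mu_1) \leq c/(16\sqrt{2})$, giving
\[
\IGW(\nu_t,\mu_0)^2 - \IGW(\mu_1,\mu_0)^2 \;\leq\; t\big(\IGW(\mu_2,\mu_0)^2 - \IGW(\mu_1,\mu_0)^2\big) - t\Big(1 - \frac{8\sqrt{2}}{c}\IGW(\mu_0,\mu_1)\Big)\IGW(\mu_1,\mu_2)^2 + \frac{12\sqrt{2}}{c}t\IGW(\mu_0,\mu_1)^3 + O(t^2).
\]
Plugging both estimates into the minimality inequality (weighting the IGW bound by $1/(2\tau)$), dividing by $t>0$, and sending $t \to 0^+$ collapses the $t(1-t)$ factor to $1$ and absorbs the $O(t^2)$ remainder, producing the claimed inequality with cubic coefficient $\frac{1}{2\tau}\cdot\frac{12\sqrt{2}}{c} = \frac{6\sqrt{2}}{c\tau}$.

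The main obstacle is ensuring that the generalized geodesic $\nu_t$ is admissible in the sense of \cref{def:gen_geo_IGW}, i.e. that one can select optimal IGW couplings whose cross-covariance matrices are PSD and nonsingular; this is where the assumptions on $c$ and $\IGW(\mu_0,\mu_1)$ are used (in tandem with the nonsingularity statement of \cref{lem:bar_delta}(ii)), and it is also what pins down the direction in which $\mu_2$ should be rotated relative to $\mu_0$. A secondary subtlety is the handling of the $\lambda<0$ case: the integral in \cref{assumption:main}(iii) is in general strictly larger than $\IGW(\mu_1,\mu_2)^2$, so the naive substitution yields the wrong sign. The constraint $\tau < 1/(4|\lambda|)$ guarantees that the $\frac{1}{2\tau}$ contribution from the distance estimate dominates $|\lambda|$ after combination, so that any slack introduced by replacing the integral with $\IGW(\mu_1,\mu_2)^2$ is absorbed into the quadratic term on the right-hand side. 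The constants hidden in the $O(t^2)$ remainder of \cref{lem:local_convexity_igw} depend only on the second moments of $\mu_0,\mu_1,\mu_2$, so this remainder vanishes cleanly in the limit.
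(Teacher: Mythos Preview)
Your proposal is correct and follows the same route as the paper: compare $\mu_1$ to $\nu_t$ via minimality, apply the $\lambda$-convexity of $\sF$ together with the local convexity estimate for $\IGW^2$, divide by $t$, and send $t\to0^+$. The one refinement needed for the $\lambda<0$ case is that your fix (combine first, then replace the integral by $\IGW(\mu_1,\mu_2)^2$) requires retaining the integral $I=\int|\llangle y,y'\rrangle-\llangle z,z'\rrangle|^2\,d\pi\otimes\pi$ in the $\IGW$ estimate as well, i.e.\ using the penultimate display in the proof of \cref{lem:local_convexity_igw} rather than its final line; the paper does exactly this, substituting $I\geq\IGW(\mu_1,\mu_2)^2$ only after verifying that the combined coefficient $\lambda+\frac{1-8\sqrt{2}\IGW(\mu_0,\mu_1)/c}{2\tau}$ is nonnegative.
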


This result is a consequence of the local convexity of $\IGW$ and convexity of $\sF$ along $\nu_t$ connecting $\mu_1,\mu_2$, instantiated at small $t\in[0,\delta]$. The above lemma essentially yields $\left(\lambda + \frac{1-8\sqrt{2}\IGW(\mu_0,\mu_1)/c}{2\tau}\right)$-'convexity' along the generalized geodesic (compare to \cite[Assumption 4.0.1, Lemma 9.2.7]{ambrosio2005gradient}, where a $\lambda+\tau^{-1}$ convexity was derived). Note that to use this bound, we require that $\IGW(\mu_0,\mu_1)$ is small, which, when applied to $(\mu_0,\mu_1)=(\rho_{i-1},\rho_i)$, will be satisfied through \cref{prop:existence_of_scheme}.

\medskip

The existence of generalized geodesic is guaranteed by choice of $\bar{\delta}$ from \cref{lem:bar_delta}. Under the assumptions on $\tau$ from \cref{prop:discrete_solution_comparison}, applying \cref{lem:convexity_of_proximal_functional} to $(\mu_0,\mu_1,\mu_2)=(\rho_{i-1},\rho_{i},\nu)$, where $\nu\in\cB_{\IGW}(\rho_0,\bar{\delta})$ is arbitrary, yields
\begin{align*}
    \sF(\rho^\tau_i)+\frac{\sD_\tau(t)^2}{2\tau} \leq \sF(\nu) + \frac{1}{2\tau}\IGW(\nu,\rho^\tau_{i-1})^2 - \left(\sigma_\tau(t) + \frac{1}{2\tau}\right) \IGW(\rho^\tau_i,\nu)^2 + \frac{6\sqrt{2}}{\tau \lambda_{\min}(\bsigma_{\rho_0})} \sD_\tau(t)^3.
\end{align*}
Notice that the piecewise linear function $d_\tau(t;\nu)^2$ satisfies 
\[\frac{d}{dt} d_\tau(t;\nu)^2 = \frac{\IGW(\rho^\tau_i,\nu)^2-\IGW(\rho^\tau_{i-1},\nu)^2}{\tau},\quad t\in ((i-1)\tau,i\tau].\]
Combining the above two displays and rearranging, we arrive at
\begin{align*}
    \frac{1}{2}\frac{d}{dt} d_\tau(t;\nu)^2 + \sigma_\tau(t)\IGW(\bar{\rho}_n(t),\nu)^2 &\leq \sF(\nu) - \sF_\tau(t) +  \sR_\tau(t) - \frac{\sD_\tau(t)^2}{2\tau} + \frac{6\sqrt{2}}{\tau\lambda_{\min}(\bsigma_{\rho_0})} \sD_\tau(t)^3\\
    &\leq \sF(\nu) - \sF_\tau(t) +  \sR_\tau(t) - \frac{\sD_\tau(t)^2}{4\tau},\numberthis\label{eq:variational_differential_inequality}
\end{align*}
where we have used that $\tau\leq \frac{1}{2(\sF(\rho_0)-\sF^\star)} \left(\frac{\lambda_{\min}(\bsigma_{\rho_0})}{24\sqrt{2}}\right)^2$ such that $\frac{1}{4}\geq \frac{6\sqrt{2}}{\lambda_{\min}(\bsigma_{\rho_0})} \sD_\tau(t)$, by the condition on $\tau$ and \cref{prop:existence_of_scheme}. Notice that $\big|d_\tau(t;\nu)^2- \IGW\big(\bar{\rho}_n(t),\nu\big)^2\big|\leq 2d_\tau(t;\nu)\sD_\tau(t) + \sD_\tau(t)^2$ and invert into the above to further obtain
\begin{align*}
    \frac{1}{2}\frac{d}{dt} d_\tau(t;\nu)^2 + \sigma_\tau(t) d_\tau(t;\nu)^2 &- 2|\sigma_\tau(t)|d_\tau(t;\nu)\sD_\tau(t)\\& \leq |\sigma_\tau(t)|\sD_\tau(t)^2 + \sF(\nu) - \sF_\tau(t) +  \sR_\tau(t) - \frac{\sD_\tau(t)^2}{4\tau}.
\end{align*}
    
\medskip
Now fix $t$, and consider a linear combination with coefficients $1-\ell_\eta(s),\ell_\eta(s)$ of the above inequality instantiated at $\nu=\rho^\eta_{j-1},\rho^\eta_j$, respectively. By the fact that $(1-\ell_\eta(s))d_\tau(t;\rho^\eta_{j-1}) + \ell_\eta(s)d_\tau(t;\rho^\eta_j)\leq d_{\tau\eta}(t,s)$, we have 
\begin{align*}
    \frac{1}{2}\frac{d}{dt} d_{\tau\eta}(t,s)^2 + \sigma_\tau(t) d_{\tau\eta}(t,s)^2 - 2|\sigma_\tau(t)|&d_{\tau\eta}(t,s)\sD_\tau(t)\\
    &\leq |\sigma_\tau(t)|\sD_\tau(t)^2 + \sF_\eta(s) - \sF_\tau(t) +  \sR_\tau(t) - \frac{\sD_\tau(t)^2}{4\tau};
\end{align*}
switching the roles of $\tau$ and $\eta$, similarly yields
\begin{align*}
    \frac{1}{2}\frac{d}{dt} d_{\eta\tau}(t,s)^2 + \sigma_\eta(t) d_{\eta\tau}(t,s)^2 - 2|\sigma_\eta(t)|&d_{\eta\tau}(t,s)\sD_\eta(t)\\
    &\leq |\sigma_\eta(t)|\sD_\eta(t)^2 + \sF_\tau(s) - \sF_\eta(t) +  \sR_\eta(t) - \frac{\sD_\eta(t)^2}{4\eta}.
\end{align*}
Using the symmetry of the error function, in the sense that $d_{\tau\eta}(t,s)^2 = d_{\eta\tau}(s,t)^2$, we combine the two latter display inequalities to obtain
\begin{align*}
    \frac{1}{2}\frac{d}{dt} d_{\tau\eta}(t,t)^2 + (\sigma_\tau(t)&+\sigma_\eta(t)) d_{\tau\eta}(t,t)^2  - 2\big(|\sigma_\tau(t)|\sD_\tau(t) + |\sigma_\eta(t)|\sD_\eta(t)\big) d_{\tau\eta}(t,t)\\
    &\leq |\sigma_\tau(t)|\sD_\tau(t)^2 +  |\sigma_\eta(t)|\sD_\eta(t)^2 +  \sR_\tau(t)+  \sR_\eta(t) - \frac{\sD_\tau(t)^2}{4\tau} - \frac{\sD_\eta(t)^2}{4\eta}.
\end{align*}

With this at hand, the proof is concluded using the following version of the Gr\"onwall lemma, which we prove in \cref{appen:gronwall_proof}.

\begin{lemma}[Gr\"onwall lemma]\label{lem:gronwall}
    Let $a,b,c:[0,\delta]\to \RR$ be locally integrable functions and $x:[0,\delta]\to\RR$ be continuous, such that
    \begin{align*}
        \frac{d}{dt} x^2(t) + a(t)x^2(t)  \leq c(t)+ b(t)x(t),\quad \forall t\in[0,\delta].
    \end{align*}
    Denoting $A(t)\coloneqq\int_0^t a(s) ds$, for every $T\in[0,\delta]$, we have
    \begin{align*}
        e^{A(T)/2} |x(T)| \leq \left(x^2(0) + \sup_{t\in[0,T]} \int_0^t  e^{A(s)}c(s) ds \right)^{1/2} + 2\int_0^T\big|b(t) e^{A(t)/2}\big| dt. 
    \end{align*}
\end{lemma}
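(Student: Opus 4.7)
The plan is to reduce the differential inequality to an integral one via the integrating factor $e^{A(t)}$, then close the resulting bound by a standard supremum argument followed by completing the square.

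First, I would multiply the hypothesized inequality by $e^{A(t)}$ to rewrite it as $\frac{d}{dt}\bigl(e^{A(t)} x^2(t)\bigr) \leq e^{A(t)} c(t) + e^{A(t)} b(t) x(t)$, and integrate from $0$ to an arbitrary $t\in[0,T]$. After bounding $b(s)x(s)\leq |b(s)|\cdot|x(s)|$ and substituting $y(t)\coloneqq e^{A(t)/2}x(t)$, so that the bilinear term becomes $e^{A(s)/2}|b(s)|\cdot|y(s)|$, this yields
\[ y^2(t) \;\leq\; x^2(0) + \int_0^t e^{A(s)} c(s)\,ds + \int_0^t \bigl|e^{A(s)/2} b(s)\bigr|\cdot |y(s)|\,ds. \]
Since $c$ need not be signed, the map $t\mapsto \int_0^t e^{A(s)}c(s)\,ds$ is not monotone, which explains the supremum appearing in the conclusion.

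Next, I would close the inequality by introducing $M(T)\coloneqq \sup_{t\in[0,T]}|y(t)|$, $K\coloneqq \int_0^T |e^{A(s)/2} b(s)|\,ds$, and $D\coloneqq x^2(0)+\sup_{t\in[0,T]}\int_0^t e^{A(s)}c(s)\,ds$. For every $t\in[0,T]$, the right-hand side of the preceding display is bounded by $D + K\cdot M(T)$; taking the supremum over $t\in[0,T]$ on the left gives the quadratic inequality $M(T)^2 \leq D + K\cdot M(T)$. Completing the square yields $(M(T)-K/2)^2 \leq D + K^2/4$, so $M(T) \leq K/2 + \sqrt{D+K^2/4} \leq K + \sqrt{D}$ by subadditivity of the square root; bounding $|y(T)|\leq M(T)$ and loosening the coefficient of $K$ from $1$ to $2$ produces the stated inequality.

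There is no serious obstacle here. The only subtlety is that $a,b,c$ are merely locally integrable and $x$ only continuous, so the ``derivative'' in the hypothesis must be interpreted via absolute continuity of $t\mapsto x^2(t)$; under that reading the integration step is legitimate, and the sup-closure argument followed by the algebraic quadratic estimate is entirely standard.
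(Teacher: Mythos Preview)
Your proposal is correct and follows essentially the same route as the paper: multiply by the integrating factor $e^{A(t)}$, set $y(t)=e^{A(t)/2}x(t)$, and reduce to a quadratic integral inequality in $|y|$. The only difference is that the paper invokes \cite[Lemma~4.1.8]{ambrosio2005gradient} as a black box for the final step, whereas you spell out that step explicitly via the sup-closure and completing-the-square argument (and in fact obtain the slightly sharper constant $1$ in front of $K$ before relaxing to $2$).
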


To invoke the lemma, set $
    x(t)\coloneqq d_{\tau\eta}(t,t)$, $a(t)\coloneqq 2\sigma_\tau(t)+2\sigma_\eta(t)$, $b(t)\coloneqq 4\big(|\sigma_\tau(t)|\sD_\tau(t) + |\sigma_\eta(t)|D_\eta(t)\big)$,  and 
    \[
    c(t)\coloneqq 2|\sigma_\tau(t)|\sD_\tau(t)^2 +  2|\sigma_\eta(t)|D_\eta(t)^2 +  2\sR_\tau(t)+  2\sR_\eta(t) - \frac{\sD_\tau(t)^2}{2\tau} - \frac{\sD_\eta(t)^2}{2\eta}.
\]

From \cref{prop:existence_of_scheme} and \eqref{eq:IGW_total_bound}, we have 
\begin{align*}
    &\left|\int_0^t \sD_\tau(s) ds\right| = \tau \sum_{i=1}^n \IGW(\rho^\tau_{i-1},\rho^\tau_i) \leq \tau\sqrt{2\delta(\sF(\rho_0)-\sF^\star)}\\
    &\left|\int_0^t \sD_\tau(s)^2 ds\right| =  \tau \sum_{i=1}^n \IGW(\rho^\tau_{i-1},\rho^\tau_i)^2\leq 2\tau^2 (\sF(\rho_0)-\sF^\star)\\
    &\left|\int_0^t \sD_\tau(s)^3 ds\right| \leq \sqrt{2\tau(\sF(\rho_0)-\sF^\star)}\left|\int_0^t \sD_\tau(s)^2 ds\right|\\
    &\left|\int_0^t \sR_\tau(s) ds\right|\leq \frac{\tau}{2}\sum_{i=1}^n \sF(\rho^\tau_{i-1})-\sF(\rho^\tau_i)\leq \frac{\tau}{2} (\sF(\rho_0)-\sF^\star),
\end{align*}
from which it follows that 
\begin{align*}
    &\left|\int_0^t a(s)ds\right|\lesssim_{\sF(\rho_0),\sF^\star,\lambda_{\min}(\bsigma_{\rho_0}), \lambda}  1\\
    &\left|\int_0^t b(s)ds\right| \lesssim_{\sF(\rho_0),\sF^\star,\lambda_{\min}(\bsigma_{\rho_0}), \lambda} \tau+\eta\\
    & \left|\int_0^t c(s)ds\right| \lesssim_{\sF(\rho_0),\sF^\star,\lambda_{\min}(\bsigma_{\rho_0}), \lambda} \tau+\eta.
\end{align*}
This verifies the local integrability assumption and enable invoking \cref{lem:gronwall}. Recalling that 
$x(0)=d_{\tau\eta}(0,0)=0$, we obtain \[d_{\tau\eta}(t,t)\lesssim_{\sF(\rho_0),\sF^\star,\lambda_{\min}(\bsigma_{\rho_0}), \lambda} \sqrt{\tau}+\sqrt{\eta}, \quad \forall t\in[0,\delta],\]
from which we conclude \eqref{eq:cross_partition_bound}. The desired Cauchy-type IGW bound from \cref{prop:discrete_solution_comparison} follows.\end{proof}

\subsubsection{2-Wasserstein convergence} 
Recall that \cref{prop:limit_of_minimizing_movement} gives $\bar{\rho}_{n_k}\stackrel{w}{\to}\rho$ pointwise (in $t$). We now lift this to convergence in $\W_2$, hence concluding $\bar{\bsigma}_n(t)\to\bsigma_t$. The latter convergence is crucial for transferring the first-order optimality condition from \eqref{eq:first_variation_discrete_vector_field_eq} to the limit as $n\to\infty$.

\begin{proposition}($\W_2$ convergence)\label{prop:strong_convergence}
    Let $\{\bar{\rho}_{n_k}\}_{k\in\NN}$ be the pointwise weakly convergent subsequence from \cref{prop:limit_of_minimizing_movement} whose limit $\rho:[0,\delta]\to\cP_2(\RR^d)$ is uniformly $\W_2$-continuous. Under Assumptions \ref{assumption:main}(i), \ref{assumption:main}(iii) and 
    the conditions of \cref{prop:discrete_solution_comparison}, there exists a further subsequence (not relabeled for simplicity) that converges to $\rho$ uniformly in $\W_2$,~i.e., \[\sup_{t\in[0,\delta]} \W_2\big(\bar{\rho}_{n_k}(t),\rho_t\big)\to0.\]
\end{proposition}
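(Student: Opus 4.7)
The plan is to prove the proposition in three steps: first upgrade the Cauchy property in IGW to uniform IGW convergence of $\bar\rho_{n_k}$ to $\rho$; second, convert uniform IGW control into pointwise $\W_2$ convergence by exploiting the rotation alignment in \cref{lem:equivalence_igw_w}; and third, extend pointwise to uniform $\W_2$ convergence by an Arzel\`a-Ascoli-type argument using the $1/2$-H\"older equicontinuity of both the discrete and limiting curves.

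\textbf{Step 1 (Uniform IGW convergence).} From \cref{prop:discrete_solution_comparison}, $\sup_t\IGW(\bar\rho_n(t),\bar\rho_m(t))\lesssim\sqrt\tau+\sqrt\eta$. For each fixed $n$ and $t$, the weak pointwise convergence $\bar\rho_{n_k}(t)\stackrel{w}{\to}\rho_t$ (\cref{prop:limit_of_minimizing_movement}) combined with weak lower semi-continuity of $\IGW(\bar\rho_n(t),\cdot)$ (\cref{lem:weak_topo_lsc}) gives
\[
\IGW\big(\bar\rho_n(t),\rho_t\big)\leq\liminf_{k\to\infty}\IGW\big(\bar\rho_n(t),\bar\rho_{n_k}(t)\big)\lesssim\sqrt{\tau_n},
\]
where the right-hand side is independent of $t$. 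Hence $\sup_{t\in[0,\delta]}\IGW(\bar\rho_n(t),\rho_t)\lesssim\sqrt{\tau_n}\to 0$ along the whole sequence.

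\textbf{Step 2 (Pointwise $\W_2$ convergence).} By lower semi-continuity, $\rho_t$ inherits the bound $\IGW(\rho_t,\rho_0)\leq\bar\delta$ from the iterates, so \cref{lem:bar_delta}(i) yields $\lambda_{\min}(\bsigma_{\rho_t})\geq\lambda_{\min}(\bsigma_{\rho_0})/2$, and the same uniform lower bound holds for $\bar\rho_n(t)$ by \cref{prop:existence_of_scheme}. The converse inequality of \cref{lem:equivalence_igw_w} then produces rotations $\bO_n^t\in\cO_{\bar\rho_n(t),\rho_t}$ with
\[
\W_2\big(\bar\rho_n(t),(\bO_n^t)_\sharp\rho_t\big)\leq C\cdot\IGW\big(\bar\rho_n(t),\rho_t\big)\to 0,
\]
where $C$ depends only on $\lambda_{\min}(\bsigma_{\rho_0})$. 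To remove the rotation, argue by contradiction: if along some subsubsequence $\W_2(\bar\rho_{n_k}(t),\rho_t)\geq\epsilon$, compactness of $\Od$ allows a further extraction with $\bO_{n_k}^t\to\bO^*$, so $(\bO_{n_k}^t)_\sharp\rho_t\to\bO^*_\sharp\rho_t$ in $\W_2$; but weak convergence $\bar\rho_{n_k}(t)\stackrel{w}{\to}\rho_t$ together with the $\W_2$-smallness above forces $\bO^*_\sharp\rho_t=\rho_t$, yielding $\W_2(\bar\rho_{n_k}(t),\rho_t)\leq\W_2(\bar\rho_{n_k}(t),(\bO_{n_k}^t)_\sharp\rho_t)+\W_2((\bO_{n_k}^t)_\sharp\rho_t,\bO^*_\sharp\rho_t)\to 0$, a contradiction.

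\textbf{Step 3 (Uniform upgrade).} The discrete curves satisfy the uniform equicontinuity estimate derived in the proof of \cref{prop:limit_of_minimizing_movement}, namely $\W_2(\bar\rho_n(s),\bar\rho_n(t))\leq 2\sqrt{(|s-t|+\tau_n)(\sF(\rho_0)-\sF^\star)/\lambda_{\min}(\bsigma_{\rho_0})}$, while $\rho$ inherits the analogous modulus. Suppose $\sup_t\W_2(\bar\rho_{n_k}(t),\rho_t)\not\to 0$; extract $t_k\to t^*$ with $\W_2(\bar\rho_{n_k}(t_k),\rho_{t_k})\geq\epsilon$. Equicontinuity gives $\W_2(\bar\rho_{n_k}(t_k),\bar\rho_{n_k}(t^*))\to 0$ and $\W_2(\rho_{t_k},\rho_{t^*})\to 0$, while Step 2 gives $\W_2(\bar\rho_{n_k}(t^*),\rho_{t^*})\to 0$, so the triangle inequality contradicts $\epsilon>0$. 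The main obstacle is Step 2: because $\cO_{\mu,\nu}$ is set-valued and optimal IGW couplings are non-unique, one cannot work directly with $\W_2(\bar\rho_n(t),\rho_t)$ via \cref{lem:equivalence_igw_w}; the crux is to combine the IGW-optimal rotation with the independently known weak pointwise convergence in order to identify the limiting alignment matrix as an element of the stabilizer of $\rho_t$ in $\Od$, which is what permits removing the rotation.
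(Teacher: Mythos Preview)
Your proof is correct and follows the same three-step scaffold as the paper: uniform IGW convergence via the Cauchy bound and lower semicontinuity, then pointwise $\W_2$ convergence, then a uniform upgrade via equicontinuity. Step~1 and Step~3 are essentially identical to the paper's.

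Step~2 is where you diverge. The paper rotates the \emph{discrete} measure: taking $\bO_{n_k}\in\cO_{\rho_t,\bar\rho_{n_k}(t)}$, \cref{lem:equivalence_igw_w} gives $\W_2\big((\bO_{n_k})_\sharp\bar\rho_{n_k}(t),\rho_t\big)\to 0$, hence convergence of second moments; rotation invariance of $M_2$ then yields $M_2(\bar\rho_{n_k}(t))\to M_2(\rho_t)$, and the paper concludes by invoking the standard fact (Villani, Theorem~6.9) that weak convergence plus $M_2$ convergence is equivalent to $\W_2$ convergence. Your route instead rotates the \emph{limit} measure, extracts a convergent subsequence of rotations $\bO_{n_k}^t\to\bO^*$ by compactness of $\Od$, and identifies $\bO^*$ as a stabilizer of $\rho_t$ via uniqueness of weak limits. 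This is slightly more hands-on but fully self-contained: you never need to cite the moment characterization of $\W_2$ convergence. Conversely, the paper's argument is shorter and sidesteps any subsequence extraction in Step~2. Both are valid and of comparable difficulty; yours makes the role of the orthogonal group more explicit, which aligns well with the discussion you include at the end of the proposal.
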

\begin{proof}
    First, by \cref{prop:discrete_solution_comparison} and \cref{lem:weak_topo_lsc}, we have 
    \begin{align*}
        \IGW\big(\bar{\rho}_{n_{k}}(t), \rho_t\big) \leq \liminf_{k'\to\infty} \IGW\big(\bar{\rho}_{n_{k}}(t),\bar{\rho}_{n_{k'}}(t)\big)  \lesssim_{\sF(\rho_0),\sF^\star,\lambda_{\min}(\bsigma_{\rho_0}),\lambda} \sqrt{\frac{\delta}{n_{k}}},
    \end{align*}
    which establishes uniform IGW convergence of the subsequence. A direct decomposition as in \eqref{eq:eigenval_lower_bound_for_convergent_sequence} in the proof of \cref{lem:bar_delta} further yields that if $\bar{\rho}_{n}(t)$ converges to $\rho_t$ in $\IGW$, then since $\lambda_{\min}(\bar{\bsigma}_{n}(t))\geq \lambda_{\min}(\bsigma_{\rho_0})/2$, we conclude $\lambda_{\min}(\bsigma_t)\geq \lambda_{\min}(\bsigma_{\rho_0})/2$. 
    
    \medskip
    Given the above, we next show that IGW convergence together with weak convergence implies convergence in $\W_2$. Fix $t$, and let $\bO_{n}\in\cO_{\rho_t,\bar{\rho}_{n}(t)}$, then by \cref{lem:equivalence_igw_w}
    \[\W_2\big((\bO_{n_k})_\sharp\bar{\rho}_{n_{k}}(t),\rho_t\big)\leq\sqrt{\frac{2}{\lambda_{\min}(\bsigma_{\rho_0})}}\IGW\big(\bar{\rho}_{n_{k}}(t),\rho_t\big),\] 
    whereby $(\bO_{n_k})_\sharp\bar{\rho}_{n_{k}}(t)$ converges to $\rho_t$ in $\W_2$, for any $t\in[0,\delta]$. This further implies convergence of second absolute moments, and by rotation invariance we obtain $M_2(\bar{\rho}_{n_{k}}(t))=M_2\big((\bO_{n_k})_\sharp\bar{\rho}_{n_{k}}(t)\big)\to M_2(\rho_t)$. We now invoke Theorem 6.9 from \cite{villani2008optimal} to conclude that $\lim_{k\to\infty}\W_2(\bar{\rho}_{n_{k}}(t),\rho_t)=0$. Note that by the proof of \cref{prop:limit_of_minimizing_movement},
    \begin{align*}
    \W_2\big(\bar{\rho}_{n_{k}}(t),\bar{\rho}_{n_{k}}(s)\big)&\leq 2\sqrt{\frac{\tau|\lceil s/\tau \rceil-\lceil t/\tau \rceil|(\sF(\rho_0)-\sF^\star)}{\lambda_{\min}(\bsigma_{\rho_0})}}\\
    \W_2\big(\rho_t,\rho_s\big) &\leq 2\sqrt{\frac{|s-t|(\sF(\rho_0) - \sF^\star)}{\lambda_{\min}(\bsigma_{\rho_0})}}.
    \end{align*}
    Consequently, the pointwise $\W_2$-convergence can be lifted to a uniform convergence on a further subsequence by a covering argument for the compact interval. This concludes the proof of \cref{prop:strong_convergence}, but we note that the convergence rate is not implied by our argument.
\end{proof}

\subsection{Convergence of gradient flow equation}

To complete the proof of \cref{thm:main}, it remains to establish the continuous-time gradient flow equation (namely, the last line in the PIDE from Item (2) of \cref{thm:main}, or \cref{cor:grad_flow_eq} below). Recall that the optimal velocity field for the discrete-time solution is characterized by the first-order optimality condition from \cref{prop:discrete_vector_field_eq}; cf. \eqref{eq:first_variation_discrete_vector_field_eq}. Rewriting the conclusion of \cref{prop:discrete_vector_field_eq} in terms of the piecewise constant interpolation, it reads 
\begin{align*}
    -\cL_{2\bar{\bA}_n(t), \bar{\rho}_n(t)}  \big[\bar{v}_n(t,\cdot) \big]\in\partial\sF(\bar{\rho}_n(t)),\quad \forall t\in[0,\delta].
\end{align*}
The goal of this section is to transfer this relation to the continuous-time limit, as $n\to\infty$, thereby deriving the analogous relationship between the limiting $(\rho,v)$ from Propositions \ref{prop:limit_of_minimizing_movement} and \ref{prop:limit_continuity_equation}. The argument hinges on the result of the previous subsection, where we have shown that $\{\bar{\rho}_n\}_{n\in\NN}$ converges in $\W_2$, along a subsequence, to a $\W_2$-continuous curve $\rho$, uniformly in $t$. In addition, we know that the velocity field $v$ defined in \eqref{eq:limit_of_vector_fields} is the weak limit of $\bar{v}_n$. We first show that the transformed velocity field is a strong subdifferential of the objective.

\begin{proposition}[Gradient flow equation]\label{prop:limit_vector_field_eq}
    Under \cref{assumption:main}(i)-(iii),
    we have that
    \begin{align*}
        -\cL_{\bsigma_t, \rho_t} [v_t] \in\partial\sF(\rho_t), \quad t\in[0,\delta] \ \mbox{ a.e.}
    \end{align*}
    is a strong subdifferential.
\end{proposition}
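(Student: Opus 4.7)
The plan is to upgrade the discrete first-order optimality from Proposition~\ref{prop:discrete_vector_field_eq}(iii)---namely $\xi_n(t):=-\cL_{2\bar{\bA}_n(t),\bar{\rho}_n(t)}[\bar{v}_n(t,\cdot)]\in\partial\sF(\bar{\rho}_n(t))$ for every $n$ and $t\in[0,\delta]$---into the limiting inclusion $-\cL_{\bsigma_t,\rho_t}[v_t]\in\partial\sF(\rho_t)$ for a.e.~$t$. The argument has two components: convergence of the discrete transformed fields to $-\cL_{\bsigma_t,\rho_t}[v_t]$ in a dual-pairing sense, and closure of the Fr\'echet inclusion under that convergence.

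For the convergence component I would work with the integrated formulation used in Subsection~\ref{subsec:weak_convergence}. After extracting a further subsequence, (i) $2\bar{\bA}_n(t)\to\bsigma_t$ for a.e.~$t$ by combining \eqref{eq:cov_var_difference:prop:existence_of_scheme} with $\bar{\bsigma}_n(t)\to\bsigma_t$, the latter being a consequence of the uniform $\W_2$-convergence in Proposition~\ref{prop:strong_convergence}; (ii) the quadratic correction $\tau\bar{\bL}_n(t)\bar{v}_n(t,\cdot)$ contributes negligibly in any integrated dual pairing against smooth test fields, since the cumulative $L^2$-mass of $\bar{v}_n$ is controlled by \eqref{eq:vector_field_L2_bound} while the prefactor is $\tau\to 0$; (iii) the measurized weak convergence of $\bar{v}_n$ to $v$ from \eqref{eq:limit_of_vector_fields}, combined with uniform tightness of $\nu_n$ and Fubini, transfers the linear part of $\cL$ to the limit. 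Together these produce pointwise-in-$t$ (a.e.) convergence of the dual pairings $\int\langle\xi_n(t,\cdot),\phi\rangle_{L^2(\bar{\rho}_n(t))}\to\int\langle-\cL_{\bsigma_t,\rho_t}[v_t],\phi\rangle_{L^2(\rho_t)}$ for smooth test fields $\phi$.

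For the closure component, strong subdifferentiality of $\xi_n(t)$ gives, for each transport map $T$,
\[\sF(T_\sharp\bar{\rho}_n(t))-\sF(\bar{\rho}_n(t))\geq\int\langle\xi_n(t,x),T(x)-x\rangle\,d\bar{\rho}_n(t,x)+o\big(\|T-\id\|_{L^2(\bar{\rho}_n(t);\RR^d)}\big).\]
I plan to use Assumption~\ref{assumption:main}(iii) to absorb the $o(\cdot)$ remainder via $\lambda$-convexity along a generalized IGW geodesic from $\bar{\rho}_n(t)$ to $T_\sharp\bar{\rho}_n(t)$, in the spirit of Lemma~\ref{lem:convexity_of_proximal_functional}, producing a quantitative remainder-free lower bound. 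Passing $n\to\infty$ then combines weak lower semi-continuity of $\sF$ at $\rho_t$ (Assumption~\ref{assumption:main}(i)), continuity of $\sF$ along well-behaved target perturbations (using the regularity in Assumption~\ref{assumption:main}(ii)), and the pairing convergence from the preceding paragraph, producing the strong subdifferential inequality defining $-\cL_{\bsigma_t,\rho_t}[v_t]\in\partial\sF(\rho_t)$ for a.e.~$t$.

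The main obstacle is reconciling the geometries: the Fr\'echet subdifferential is defined via Wasserstein perturbations $T_\sharp\mu$, whereas the available convexity is along generalized IGW geodesics, and $\bar{v}_n$ converges to $v$ only in the weak measurized sense, without uniform pointwise-in-$t$ $L^2$ bounds. I expect to address this by combining Mazur-type convex combinations of the $\xi_n(t)$ to extract a strongly convergent $L^2$-subsequence on a modified parameterization (after an Egorov truncation in $t$), to which the $\lambda$-convexity inequality applies cleanly, together with a standard subdifferential closure argument in the spirit of \cite[Lemma 10.3.8]{ambrosio2005gradient}.
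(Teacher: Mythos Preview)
Your convergence component is essentially aligned with the paper's strategy and would work with care; the paper proves the analogues of your (i)--(iii) via \eqref{eq:cov_var_difference:prop:existence_of_scheme}, Proposition~\ref{prop:strong_convergence}, and a distributional-limits lemma (Lemma~\ref{lem:velocity_distributional_limits_equations}).

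The closure component, however, has a genuine gap. The paper does \emph{not} try to pass the pointwise subdifferential inequality $\sF(T_\sharp\bar\rho_n(t))-\sF(\bar\rho_n(t))\geq\int\langle\xi_n,T-\id\rangle\,d\bar\rho_n(t)+o(\cdot)$ to the limit at all. Instead, it measurizes the \emph{subdifferentials} themselves: setting $w_i:=-\cL_{2\bA^\star_i,\rho_i}[v_i]$ and $\omega_n:=\upsilon_\delta[(\id,\bar w_n)_\sharp\bar\rho_n]$, it bounds $\sup_n\int_0^\delta\|\bar w_n\|^2_{L^2(\bar\rho_n(t))}dt$ via the operator norm of $\cL$ together with \eqref{eq:vector_field_L2_bound}, extracts a weak limit $\omega$, and defines $w_t(x):=\int y\,d\omega_{t,x}(y)$. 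Then \cite[Theorem~11.1.6, Step~5]{ambrosio2005gradient} yields $w_t\in\partial\sF(\rho_t)$ a.e.\ directly---this closure result is precisely designed for subdifferentials living in varying $L^2(\bar\rho_n(t);\RR^d)$ spaces and requires only the regularity in Assumption~\ref{assumption:main}(ii), the uniform $L^2$ bound, and $\W_2$-convergence of $\bar\rho_n(t)$. Only afterwards does the paper identify $w_t=-\cL_{\bsigma_t,\rho_t}[v_t]$ via the distributional limits.

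Your plan instead tries to absorb the $o(\cdot)$ remainder using $\lambda$-convexity along generalized IGW geodesics. This is a geometry mismatch you correctly flag but do not resolve: the Fr\'echet inequality quantifies over arbitrary $T$, and Lemma~\ref{lem:convexity_of_proximal_functional} gives control only along the specific IGW-based interpolants of Definition~\ref{def:gen_geo_IGW}, with error terms in IGW (not $\W_2$) distances. There is no mechanism in the paper linking these to a uniform-in-$n$ quantitative subdifferential bound for general $T$. The Mazur/Egorov patch also does not obviously apply, since the $\xi_n(t)$ live in different Hilbert spaces $L^2(\bar\rho_n(t);\RR^d)$ and there is no common space in which to take convex combinations---this is exactly why the measurization device is used. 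The cleaner route is to measurize $\xi_n$ directly and invoke the AGS closure theorem, decoupling closure from identification.
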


\begin{proof}
    The proof consists of two main parts: we first establish that $-\cL_{2\bar{\bA}_n(t),\bar{\rho}_n(t)} [\bar{v}_n(t,\cdot)]$ has a limit (in some proper sense) that belongs to $\partial\sF(\rho_t)$. Afterwards, we show that this limit coincides with $-\cL_{\bsigma_t, \rho_t}  [v_t]$, which concludes the proof. For convenience of presentation, we denote the strong subdifferential from \cref{prop:discrete_vector_field_eq} by $w_i\coloneqq -\cL_{2\bA^\star_i,\rho_i}[v_i]$, for $i=1,\ldots,n$, and define corresponding piecewise constant interpolation as $\bar{w}_n(t)\coloneqq w_i$, $t\in\big((i-1)\tau,i\tau\big]$. Analogously to the construction of $\nu_n$ from $\bar{v}_n$, we further define $\omega_n(t,x,y) \coloneqq \upsilon_\delta \left[(\id,\bar{w}_n)_\sharp\bar{\rho}_n\right]$. 

    \medskip
    
    To find a limit for $\bar{w}_n(t)$, first note that for any $\mu\in\cP_2(\RR^d)$, $\bA\in \RR^{d\times d}$ symmetric and PSD, and $v\in L^2(\mu;\RR^d)$, we have $\|\cL_{\bA,\mu}[v]\|_{L^2(\mu;\RR^d)}^2\leq 8(\|\bA\|_\op^2+M_2(\mu)^2)  \|v\|_{L^2(\mu;\RR^d)}^2$. In particular,
    \begin{align*}
        \|w_i\|_{L^2(\rho_i;\RR^d)}^2 \leq 8\left(\|2\bA^\star_i\|_\op^2+M_2(\rho_i)^2\right)\|v_i\|_{L^2(\rho_i;\RR^d)}^2.\numberthis\label{eq:operator_norm_of_cL}
    \end{align*}
    The bounds from \eqref{eq:moment_bound:prop:existence_of_scheme} and \eqref{eq:cov_var_difference:prop:existence_of_scheme} in \cref{prop:existence_of_scheme} allow controlling $M_2(\rho_i)$ and $\|2\bA^\star_i\|_\F$, with the latter further bounding $\|2\bA^\star_i\|_\op$. Inserting these bound into the above display and utilizing \eqref{eq:vector_field_L2_bound}, we conclude that 
    \begin{equation}
        \sup_{n\in\NN}\int_0^\delta \int \|\bar{w}_n\|^2 d\bar{\rho}_n(t) d t \lesssim_{\sF(\rho_0),\sF^\star,M_2(\rho_0),\lambda_{\min}(\bsigma_{\rho_0})} \sup_{n}\int_0^\delta \int \|\bar{v}_n\|^2 d\bar{\rho}_n(t) d t <\infty.\label{eq:integrability}
    \end{equation}
This implies that $\{\omega_n\}_{n\in\NN}\subset\cP_2([0,\delta]\times\RR^d\times\RR^d)$ is tight, and therefore, there exists a further subsequence of $n_k$ (not relabeled for simplicity), such that $\omega_{n_k}\stackrel{w}{\to}\omega\in\cP_2([0,\delta]\times\RR^d\times\RR^d)$.

Consider the disintegration of $\omega$ w.r.t. the marginal $\upsilon_\delta\rho\in\cP_2([0,\delta]\times\RR^d)$ and denote it by $\omega_{t,x}$. Define the conditional expectation $w: (t,x)\mapsto \int y  d  \omega_{t,x}(y), [0,\delta]\times\RR^d\to\RR^d$. To see that $w_t\in\partial\sF(\rho_t)$ a.e. on $t\in[0,\delta]$, we shall employ Theorem 11.1.6. from \cite{ambrosio2005gradient}. The fact that $w_i\in\partial\sF(\rho_{i})$, for each $i=1,\ldots,n$, is a strong subdifferential, along with \cref{assumption:main} and the integrability property established in \eqref{eq:integrability}, we satisfy the conditions of \cite[Theorem 11.1.6 Step 5]{ambrosio2005gradient} and conclude that $w_t\in\partial\sF(\rho_t)$, for $t\in[0,\delta]$ a.e.

\medskip
We now move to the second part of the proof and work to show that $ -\cL_{\bsigma_{\rho_t}, \rho_t}  [v_t]= w_t$ a.e. on $[0,\delta]$. First note that for any $g\in C_c^\infty((0,\delta)\times\RR^d;\RR^d)$,
\begin{align*}        \lim_{k\to\infty}\frac{1}{\delta}\int_0^\delta \int \llangle g(t,x),\bar{w}_{n_k}(t,x)\rrangle d\bar{\rho}_{n_k}(t,x) dt = \frac{1}{\delta}\int_0^\delta \int\llangle g(t,x),w_t(x)\rrangle d\rho_t(x) dt.
    \end{align*}
    On the other hand, $\bar{w}_{n}(t,x) = -2\big(2\bar{\bA}_n(t) \bar{v}_n(t,x) + \big[\int y\bar{v}_n(t,y)^\intercal d\bar{\rho}_n(t,y)\big] x\big)$. To conclude the proof, we rely on the following technical lemma, proven in \cref{appen:lem:velocity_distributional_limits_equations_proof}. 
    
    \begin{lemma}[Distributional limits]\label{lem:velocity_distributional_limits_equations} For any $g\in C_c^\infty((0,\delta)\times\RR^d;\RR^d)$, the following limits hold:
    \begin{align}
        &\lim_{k\to\infty} \int_0^\delta \int\llangle g(t,x),2\bar{\bA}_{n_k}(t) \bar{v}_{n_k}(t,x) \rrangle d\bar{\rho}_{n_k}(t,x) dt = \int_0^\delta \int\llangle g(t,x),  \bsigma_t  v_t(x) \rrangle d\rho_t(x)d t,\label{eq:technical_eq_1}\\    
        &\lim_{k\to\infty} \int_0^\delta \int \llangle g(t,x),\int y\bar{v}_{n_k}(t,y)^\intercal d\bar{\rho}_{n_k}(t,y) x \rrangle d\bar{\rho}_{n_k}(t,x) dt \nonumber\\
        &\quad\quad\quad\quad\quad\quad\quad\quad\quad\quad\quad\quad\quad\quad\quad = \int_0^\delta \int\llangle g(t,x),\int yv_t(y)^\intercal d\rho_t(y) x \rrangle d\rho_t(x) dt. \numberthis\label{eq:technical_eq_2}
    \end{align}
    
    \end{lemma}

    Plugging \eqref{eq:technical_eq_1}-\eqref{eq:technical_eq_2} back into the preceding display equation, we conclude that 
    \begin{align*}
        \lim_{k\to\infty}\frac{1}{\delta}\int_0^\delta \int&\llangle g(t,x),\bar{w}_{n_k}(t,x)\rrangle  d\bar{\rho}_{n_k}(t,x) dt\\
        &\qquad\qquad= \lim_{k\to\infty}\frac{1}{\delta}\int_0^\delta \int\llangle g(t,x),-\cL_{2\bar{\bA}_n(t), \bar{\rho}_n(t)}\big[\bar{v}_n(t,\cdot)\big](x)\rrangle d\bar{\rho}_{n_k}(t,x) dt\\
        &\qquad\qquad= \frac{1}{\delta}\int_0^\delta \int\llangle g(t,x), -\cL_{\bsigma_t, \rho_t}  [v_t](x)\rrangle d\rho_t(x) dt,
    \end{align*}
    which means that 
    \begin{align*}
        \int_0^\delta \int\llangle g(t,x),w_t(x)\rrangle d\rho_t dt = \int_0^\delta \int\llangle g(t,x), -\cL_{\bsigma_t, \rho_t} [v_t](x)\rrangle d\rho_t(x) dt.
    \end{align*}
    Since $g\in C_c^\infty((0,\delta)\times\RR^d;\RR^d)$ is arbitrary, we have that $-\cL_{\bsigma_t, \rho_t}  [v_t] = w_t$ $\rho_t$-a.s. for a.e.~$t$, concluding the proof. 
\end{proof}
\cref{prop:limit_vector_field_eq} immediately gives rise to the desired continuous-time gradient flow equation, and concludes the proof of \cref{thm:main}. Moreover, by symmetry of $\bar{\bA}_{n_k}$, we have symmetry of $\int x\bar{v}_{n_k}(t,x)^\intercal d\bar{\rho}_{n_k}(t,x)$, and similar to \eqref{eq:technical_eq_2} it is straightforward to show 
\begin{align*}
    \lim_{k\to\infty} \int_0^\delta \tr\left(g(t) \int x\bar{v}_{n_k}(t,x)^\intercal d\bar{\rho}_{n_k}(t,x)\right) dt = \int_0^\delta \tr\left(g(t) \int xv_t(x)^\intercal d\rho_t(x)\right) dt,
\end{align*}
thus by choosing $g\in C_c^\infty((0,\delta);\RR^{d\times d})$ with $g^\intercal=-g$, we conclude that $v_t\in\cI_t$, and by \cref{rem:property_of_operator} we further have $\cL_{\bsigma_t,\rho_t}[v_t]\in\cI_t$, which concludes the last point in \cref{thm:main}.

\begin{corollary}[Gradient flow equation]\label{cor:grad_flow_eq}
    Under Assumptions \ref{assumption:main}(i)-(iii) and \ref{assumption:differentiability}, we have
    \begin{align*}
        \nabla\delta\sF(\rho_t) = -\cL_{\bsigma_t, \rho_t}  [v_t], \quad \rho_t\mbox{-a.s. }t\in[0,\delta]\mbox{ a.e.}
    \end{align*}
\end{corollary}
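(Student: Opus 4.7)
The plan is to obtain \cref{cor:grad_flow_eq} as an essentially immediate consequence of \cref{prop:limit_vector_field_eq}, upgraded through the singleton condition imposed by \cref{assumption:differentiability}. The structural content has already been extracted: \cref{prop:limit_vector_field_eq} shows that $-\cL_{\bsigma_t,\rho_t}[v_t] \in \partial \sF(\rho_t)$ as a strong Fréchet subdifferential, for a.e.\ $t\in[0,\delta]$. What remains is purely algebraic identification.

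Concretely, I would proceed as follows. First, invoke \cref{prop:limit_vector_field_eq} under Assumptions \ref{assumption:main}(i)--(iii) to produce, for a.e.\ $t\in[0,\delta]$, an element $-\cL_{\bsigma_t,\rho_t}[v_t]\in L^2(\rho_t;\RR^d)$ belonging to $\partial\sF(\rho_t)$ in the sense of \cref{definition:frechet_subdifferential}. Second, apply \cref{assumption:differentiability}: since $\rho_t\in\dom(\sF)$ (which follows from the lower semi-continuity of $\sF$ along the $\W_2$-convergent subsequence established in \cref{prop:strong_convergence}, together with the uniform bound $\sF(\bar\rho_n(t))\leq \sF(\rho_0)$ coming from \eqref{eq:step_move:prop:existence_of_scheme}), the Fréchet subdifferential is forced to be the singleton $\partial\sF(\rho_t)=\{\nabla\delta\sF(\rho_t)\}$. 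Combining the two assertions gives the pointwise identification
\[
\nabla\delta\sF(\rho_t) = -\cL_{\bsigma_t,\rho_t}[v_t], \quad \rho_t\text{-a.s., for a.e. } t\in[0,\delta],
\]
which is exactly the claim. The $\rho_t$-a.s.\ qualifier is inherited from \cref{prop:limit_vector_field_eq}, since both sides are only defined as equivalence classes in $L^2(\rho_t;\RR^d)$.

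There is no genuine obstacle here beyond verifying that the hypotheses of both statements are simultaneously in force along the same exceptional $t$-null set; this is automatic because \cref{assumption:differentiability} holds at \emph{every} $\mu\in\dom(\sF)$, and the relation $\rho_t\in\dom(\sF)$ holds for \emph{all} $t\in[0,\delta]$ via the lower semi-continuity argument above, so no null set is added at this stage. The only point worth a brief remark in the write-up is the well-definedness of $\cL_{\bsigma_t,\rho_t}^{-1}$ acting on $\nabla\delta\sF(\rho_t)$, which is mentioned in Item (2) of \cref{thm:main}: the already-established fact (proved at the end of the proof of \cref{prop:limit_vector_field_eq}) that $v_t\in\cI_{\rho_t}$, combined with $\lambda_{\min}(\bsigma_t)\geq \lambda_{\min}(\bsigma_{\rho_0})/2>0$ from \cref{prop:strong_convergence}, places us in the regime of \cref{rem:property_of_operator}(4) where the principal inverse is uniquely defined on $\cI_{\rho_t}$. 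Thus, the corollary can equivalently be rewritten as $v_t = -\cL_{\bsigma_t,\rho_t}^{-1}[\nabla\delta\sF(\rho_t)]$, recovering the PIDE characterization of \cref{thm:main}.
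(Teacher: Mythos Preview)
Your proposal is correct and matches the paper's own treatment: the paper states that \cref{prop:limit_vector_field_eq} ``immediately gives rise to the desired continuous-time gradient flow equation,'' i.e., the corollary follows at once from the inclusion $-\cL_{\bsigma_t,\rho_t}[v_t]\in\partial\sF(\rho_t)$ together with the singleton hypothesis of \cref{assumption:differentiability}. Your additional remarks on $\rho_t\in\dom(\sF)$ and on the invertibility via $v_t\in\cI_{\rho_t}$ are welcome clarifications but do not alter the argument.
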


\section{Riemannian Structure and Dynamical Formulation}\label{sec:riemann}

The celebrated Otto Calculus \cite{otto2001geometry} along with the Benamou-Brenier formula \cite{benamou2000computational} have long been cornerstones for the study of Wasserstein geometry. Otto showed that, formally, one can define a Riemannian structure on $\cP_2(\RR^d)$, such that the tangent space $T_\mu\cP_2(\RR^d)$ at $\mu$ is isomorphic to $L^2(\mu;\RR^d)$, with the inner product structure therein inducing the Riemannian metric tensor. The Benamou-Brenier formula \cite{benamou2000computational} further says that the notion of distance induced by the Riemannian structure is precisely $\sW_2$, which justifies identifying $\cP_2(\RR^d)$ equipped with this Riemannian structure with the 2-Wasserstein space. With this formalism, Otto showed that the Wasserstein gradient of a functional $\sF:\cP_2(\RR^d)\to\RR\cup\{\infty\}$ at $\mu$ is $\nabla\delta\sF(\mu)$, which unlocked gradient flows in Wasserstein spaces, although a rigorous derivation of these ideas came only later in \cite{ambrosio2005gradient}. 

Inspired by the route paved by Otto, Benamou, and Brenier, we next identify the Riemannian structure on $\cP_2(\RR^d)$ that induces the intrinsic geometry of IGW. We start from defining the intrinsic IGW metric and the induced geodesics, then we set up the Riemannian structure that induces this intrinsic metric, giving rise to a Benamou-Brenier-like formula for IGW. Lastly, we trace back to gradient flows and define the IGW gradient, complementing the heuristic argument in \cref{sec:IGW_gradient_flow}.

\subsection{IGW curve length, intrinsic metric, and geodesic}

Geodesics between mm spaces under the GW distance were studied in \cite{sturm2012space,sturm2023space} under the framework of gauged measure spaces (see \cite{chowdhury2020gromov} for an implementation). The $(p,q)$-GW geodesic between two mm spaces $(\cX,\mathsf d_\cX,\mu)$ and $(\cY,\mathsf d_\cY,\nu)$ was identified as $\big(\cX\times\cY,(1-t)\mathsf{d}_\cX+t\mathsf{d}_\cY,\pi^\star\big)$, where $\pi^\star\in\Pi(\mu,\nu)$ is an optimal alignment plan for $\GW_{p,q}(\mu,\nu)$. It follows that intermediate points along the geodesic between points in $\cP_2(\RR^d)$ (identified with their natural mm spaces) are usually no longer Euclidean mm spaces themselves. The same argument applies to IGW. As a simple example, consider two uniform distributions on $n$ points in $\RR^d$ with  $n>d$. The geodesic corresponds to a linear interpolation of the inner product matrices of the two measures, which could have rank higher than $d$, i.e., cannot be realized as an inner product matrix of $n$ points in $\RR^d$.

Although the IGW metric does not give rise to a length space due to the nonexistence of geodesics in $\cP_2(\RR^d)$, we may still define the intrinsic metric based on curve length under IGW. In this section, we consider IGW absolutely continuous curves (also known as IGW-rectifiable), introduce the intrinsic metric, and study geodesics in this context; see \cite[Chapter 4]{ambrosio2004topics} for a detailed account of the intrinsic formulation of geodesics. We start from basic definitions (cf. \cite[Theorem 4.1.6]{ambrosio2004topics}).
\begin{definition}[IGW curve length and metric derivative]
    The IGW length of any Lipschitz curve $\rho\in\lip_{\IGW}\big([0,1];\cP_2(\RR^d)\big)$ is defined as 
    \[
        \ell_\IGW(\rho) \coloneqq \sup_{P=\{0=t_0<t_1<\ldots<t_n=1\}} \sum_{i=1}^n \IGW\big(\rho_{t_{i-1}},\rho_{t_{i}}\big),
    \]
    where the $\sup$ is taken over all partitions of $[0,1]$, i.e., $P=\{0=t_0<t_1<\ldots<t_n=1\}$ of any size $n\in\NN$. For each $t\in(0,1)$, define the corresponding metric derivative, whenever exists, by
    \[
        |\rho'|(t)\coloneqq \lim_{h\to0}\frac{\IGW\big(\rho_{t+h},\rho_{t}\big)}{h}.
    \]
\end{definition}
The class $\lip_{\IGW}\big([0,1];\cP_2(\RR^d)\big)$ is rich enough, as any absolutely continuous curve with finite variation can be reparametrized into the Lipschitz class; see \cite[Lemma 1.1.4]{ambrosio2005gradient}. In particular, \cref{lem:equivalence_igw_w} implies that any $\W_2$-Lipschitz curves are also (locally) $\IGW$-Lipschitz. We thus focus on the curves in $\lip_{\IGW}([0,1];\cP_2(\RR^d))$, each of which clearly has a finite length. Thanks to Theorems 4.1.6 and 4.2.1 from \cite{ambrosio2004topics} the metric derivative exists a.e. and we may assume, without loss of generality, that Lipschitz curves are of constant speed, as restated in the following lemma.
\begin{lemma}[Constant speed reparametrization and metric derivative \cite{ambrosio2004topics}]\label{lem:metric_derivative}
    For any curve $\rho\in\lip_{\IGW}\big([0,1];\cP_2(\RR^d)\big)$, the metric derivative $|\rho'|$ exists a.e. Moreover, we may reparametrize $\rho$ into $\tilde{\rho}\in\lip_{\IGW}\big([0,1];\cP_2(\RR^d)\big)$, such that $\tilde{\rho}([0,1])=\rho([0,1])$, and $|\tilde{\rho}'|(t)=\ell_\IGW(\rho)$ a.e., and $\ell_\IGW(\tilde{\rho}) = \int_0^1 |\tilde{\rho}'|(t) dt =\ell_\IGW(\rho)$.
\end{lemma}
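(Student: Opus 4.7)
The plan is to apply the classical theory of Lipschitz curves in (pseudo)metric spaces to the pseudometric space $\big(\cP_2(\RR^d),\IGW\big)$ established in \cref{prop:IGW_pmetric}. Both claims are essentially restatements of Theorems 4.1.6 and 4.2.1 of \cite{ambrosio2004topics}, whose arguments rely only on symmetry, the triangle inequality, and IGW-continuity of $\rho$, all of which hold in the pseudometric setting with no modification.

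For the existence of the metric derivative, the standard route is to fix a countable dense subset $\{s_n\}_{n\in\NN}\subset[0,1]$; each map $t \mapsto \IGW(\rho_t,\rho_{s_n})$ is Lipschitz on $[0,1]$ and hence differentiable a.e. by Rademacher's theorem. I would then set
\[
m(t) \coloneqq \sup_{n\in\NN}\left|\frac{d}{dt}\IGW(\rho_t,\rho_{s_n})\right|,
\]
and verify that $m$ is a measurable, integrable upper gradient of $\rho$ via the triangle inequality, so that $\IGW(\rho_s,\rho_t)\leq\int_s^t m(r)\, dr$ for all $s<t$. A density argument exploiting the IGW-continuity of $\rho$ then identifies $m$ with the pointwise metric derivative $|\rho'|$ a.e., establishing both its existence and the formula $\ell_{\IGW}(\rho)=\int_0^1 |\rho'|(t)\, dt$.

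For the constant-speed reparametrization, define the arc-length function $\phi:[0,1]\to[0,\ell_{\IGW}(\rho)]$ by $\phi(t) \coloneqq \ell_{\IGW}\big(\rho|_{[0,t]}\big)$. The Lipschitz hypothesis renders $\phi$ continuous and non-decreasing, with $\phi(0)=0$ and $\phi(1)=\ell_{\IGW}(\rho)$. Let $\tau:[0,1]\to[0,1]$ be the canonical selection $\tau(t)\coloneqq \min\{s\in[0,1]:\phi(s) = t\,\ell_{\IGW}(\rho)\}$, which is well-defined by continuity and surjectivity of $\phi$, and set $\tilde{\rho}_t\coloneqq \rho_{\tau(t)}$. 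On any level set of $\phi$, the IGW distance between distinct elements of $\rho([0,1])$ vanishes, since $\IGW(\rho_s,\rho_{s'})\leq \phi(s')-\phi(s)=0$ whenever $s\leq s'$ lie in the same level set. Hence the choice of pseudo-inverse does not alter $\tilde{\rho}$ in any IGW-meaningful way, and a direct calculation yields $\IGW(\tilde{\rho}_s,\tilde{\rho}_t) = (t-s)\,\ell_{\IGW}(\rho)$ for $s\leq t$, which gives $|\tilde{\rho}'|(t) = \ell_{\IGW}(\rho)$ a.e., $\tilde{\rho}([0,1])=\rho([0,1])$, and $\ell_{\IGW}(\tilde{\rho}) = \int_0^1 |\tilde{\rho}'|(t)\, dt = \ell_{\IGW}(\rho)$.

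The main subtlety is not any individual technical step---each is standard---but rather that $\IGW$ is merely a pseudometric, so the classical metric-space results must be checked to survive identifications by the equivalence relation ``IGW-distance zero.'' Concretely, this amounts to verifying that the length functional, the metric derivative, and the reparametrization are all invariant under such identifications, which is immediate from their definitions. An alternative route would be to pass to the quotient space (modulo unitary isomorphism, as in \cref{prop:IGW_pmetric}), where $\IGW$ is a genuine metric, apply the classical results, and lift back; this is conceptually cleaner but adds notational overhead without substantive gain.
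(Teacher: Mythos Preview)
Your proposal is correct and takes the same approach as the paper: both invoke Theorems 4.1.6 and 4.2.1 of \cite{ambrosio2004topics} directly, with the paper offering no further argument beyond the citation. Your additional sketch of the classical Rademacher-and-arc-length arguments and your care about the pseudometric-versus-metric distinction are sound elaborations, but the paper treats the lemma as a known result and does not spell these out.
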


Given those basic facts, we next define the induced intrinsic metric.
\begin{definition}[Intrinsic IGW metric]
    The intrinsic IGW metric is defined as 
    \begin{align*}
    \mathsf{d}_{\IGW}(\mu_0,\mu_1)\coloneqq \inf_{\substack{\rho\in\lip_{\IGW}([0,1];\cP_2(\RR^d))\\ \rho_0=\mu_0,\rho_1=\mu_1}}\ell_\IGW(\rho),
    \end{align*}
    where the $\inf$ is taken over all possible Lipschitz curves joining $\mu_0$ with $\mu_1$.
\end{definition}

The following theorem states that $\big(\cP_2(\RR^d),\mathsf{d}_{\IGW})$ is a pseudometric and, in fact, a geodesic space, i.e., $\mathsf{d}_{\IGW}$ is achieved by the length of a connecting curve $\rho$; see \cref{appen:thm:intrinsic_metric_is_geodesic} for the proof.

\begin{theorem}[Pseudometric, geodesics, and continuity]\label{thm:intrinsic_metric_is_geodesic}
    Let $\mu_0,\mu_1\in\cP_2(\RR^d)$ be arbitrary. The following statements hold:
        \begin{enumerate}[leftmargin=*]
        \item $\big(\cP_2(\RR^d),\mathsf{d}_{\IGW}\big)$ is a pseudometric space such that $\mathsf{d}_{\IGW}(\mu_0,\mu_1)=0$ if and only if $\IGW(\mu_0,\mu_1)=0$.
        \item There exists a $\rho\in\lip_{\IGW}\big([0,1];\cP_2(\RR^d)\big)$ connecting $\mu_0,\mu_1$ such that $\ell_\IGW(\rho)=\mathsf{d}_{\IGW}(\mu_0,\mu_1)$, i.e., $\big(\cP_2(\RR^d),\mathsf{d}_{\IGW}\big)$ is a geodesic space.
        \item $\mathsf{d}_{\IGW}$ is continuous in $\IGW$ around distributions with a nonsingular covariance matrix, i.e., if $\lambda_{\min}(\bsigma_{\mu_0}) >0$, then
    \begin{align*}
        \lim_{\IGW(\mu,\mu_0)\to0} \mathsf{d}_{\IGW}(\mu,\mu_0)=0.
    \end{align*}
    \end{enumerate}
\end{theorem}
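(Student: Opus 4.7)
I will address the three statements in order, each leveraging tools already established in the paper.

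\emph{Part (1).} The pseudometric axioms are verified by standard curve manipulations: symmetry follows from time-reversal (sending $\rho_t\mapsto\rho_{1-t}$ preserves length); the triangle inequality follows by concatenating, on $[0,1/2]$ and $[1/2,1]$, curves realizing near-infima between $(\mu_0,\mu_1)$ and $(\mu_1,\mu_2)$ and checking that the join is IGW-Lipschitz with length equal to the sum of the pieces. The substantive claim is the zero-set identity. The inequality $\mathsf d_\IGW(\mu_0,\mu_1)\ge \IGW(\mu_0,\mu_1)$ drops out of the sup defining $\ell_\IGW$ by taking the trivial partition $\{0,1\}$. The converse exploits the fact that $\IGW$ is merely a pseudometric: when $\IGW(\mu_0,\mu_1)=0$, the ``discontinuous'' curve
\[\rho_t=\mu_0 \text{ for } t\in[0,1/2], \qquad \rho_t=\mu_1 \text{ for } t\in(1/2,1]\]
satisfies $\IGW(\rho_s,\rho_t)=0$ for all $s,t$, hence belongs to $\lip_\IGW\big([0,1];\cP_2(\RR^d)\big)$ with Lipschitz constant $0$ and has zero length; this yields $\mathsf d_\IGW(\mu_0,\mu_1)=0$.

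\emph{Part (2).} I obtain a geodesic through a compactness/lower-semicontinuity argument on a minimizing sequence. Take $\rho^{(k)}\in\lip_\IGW\big([0,1];\cP_2(\RR^d)\big)$ with $\ell_\IGW(\rho^{(k)})\to\mathsf d_\IGW(\mu_0,\mu_1)$; by \cref{lem:metric_derivative} I reparametrize each to have constant IGW speed $L_k=\ell_\IGW(\rho^{(k)})$, so they are $L_k$-Lipschitz with $\sup_k L_k<\infty$. All values then lie in a common ball $\cB_\IGW(\mu_0,R)$, which is weakly compact by \cref{lem:weak_topo_lsc}(i). A refined Arzel\`a--Ascoli theorem (\cite[Prop.~3.3.1]{ambrosio2005gradient}), applied with weak-IGW lower semicontinuity and uniform modulus $\omega(s,t)=L|s-t|$, extracts a subsequence converging pointwise-weakly to a limit $\rho$ with endpoints $\mu_0,\mu_1$ and IGW-Lipschitz constant $L=\lim_k L_k$. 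For any partition $P=\{0=t_0<\cdots<t_n=1\}$, the weak-IGW l.s.c.\ inequality from \cref{lem:weak_topo_lsc}(ii) gives $\sum_{i=1}^n\IGW(\rho_{t_{i-1}},\rho_{t_i})\le\liminf_k\sum_{i=1}^n\IGW(\rho^{(k)}_{t_{i-1}},\rho^{(k)}_{t_i})\le\liminf_k\ell_\IGW(\rho^{(k)})=\mathsf d_\IGW(\mu_0,\mu_1)$; taking the supremum over $P$ yields $\ell_\IGW(\rho)\le\mathsf d_\IGW(\mu_0,\mu_1)$, and since $\rho$ is itself a competitor, equality holds.

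\emph{Part (3).} I construct explicit short curves. Given $\IGW(\mu,\mu_0)\to 0$ with $\lambda_{\min}(\bsigma_{\mu_0})>0$, the diagonalization estimate underlying \cref{lem:bar_delta} gives $\lambda_{\min}(\bsigma_\mu)\ge \lambda_{\min}(\bsigma_{\mu_0})/2$ and a uniform bound on $M_2(\mu)$ once $\IGW(\mu,\mu_0)$ is small. The converse inequality in \cref{lem:equivalence_igw_w} then supplies $\bO\in\cO_{\mu_0,\mu}$ with $\W_2(\mu_0,\bO_\sharp\mu)\le C\,\IGW(\mu_0,\mu)\to 0$. I take the $2$-Wasserstein displacement interpolation $\sigma_t=\big((1-t)x+ty\big)_\sharp\pi^\star$ between $\mu_0$ and $\bO_\sharp\mu$, where $\pi^\star$ is an optimal $\W_2$-coupling. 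This curve is $\W_2$-Lipschitz with constant $\W_2(\mu_0,\bO_\sharp\mu)$ and has uniformly bounded second moments (by convexity of $M_2$ along affine interpolations of couplings); the forward inequality in \cref{lem:equivalence_igw_w} then upgrades it to an IGW-Lipschitz curve with $\ell_\IGW(\sigma)\le C'\W_2(\mu_0,\bO_\sharp\mu)\to 0$. Finally, I concatenate this $\sigma$ with the zero-length piecewise-constant ``jump'' from $\bO_\sharp\mu$ to $\mu$ (admissible by Part~(1), since $\IGW(\bO_\sharp\mu,\mu)=0$ by orthogonal invariance), producing a curve from $\mu_0$ to $\mu$ whose IGW length vanishes.

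The main technical difficulty sits in the Arzel\`a--Ascoli step of Part~(2): the ambient topology is weak, and $\IGW$ is only weakly lower semicontinuous (not continuous), so every length estimate must be routed through l.s.c.\ inequalities rather than through convergence of $\IGW$ values; the constant-speed reparametrization is essential to secure equicontinuity in the IGW sense. A secondary but important point in Part~(3) is the uniform boundedness of the second moments along the displacement interpolation, needed to invoke the forward IGW--$\W_2$ comparison with a controlled constant.
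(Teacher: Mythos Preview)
Your proposal is correct, and Parts~(1) and~(2) align closely with the paper's argument. For Part~(2) the paper carries out the Arzel\`a--Ascoli step by hand (dense subset, diagonal extraction via weak compactness of the IGW ball, extension to all $t$ via IGW-Cauchy sequences), whereas you invoke \cite[Proposition~3.3.1]{ambrosio2005gradient} directly; both are valid, and your citation is legitimate since that proposition is stated for a Hausdorff space with a sequentially l.s.c.\ distance, which covers the pair (weak topology, $\IGW$).

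The one genuine difference is in Part~(3). The paper builds its short connecting curve from the \emph{IGW-optimal} coupling $\pi^\star\in\Pi(\mu_0,\mu)$ (after rotating $\mu$), takes the displacement interpolation $\rho_t=((1-t)x+ty)_\sharp\pi^\star$, and bounds $\IGW(\rho_s,\rho_t)^2$ directly by expanding the inner-product distortion and using $\int\|x-y\|^2d\pi^\star\lesssim\IGW(\mu_0,\mu)^2/\lambda_{\min}(\bsigma_{\mu_0})$. This yields the quantitative estimate $\mathsf d_\IGW(\mu_0,\mu)\lesssim_{\lambda_{\min}(\bsigma_{\mu_0}),M_2(\mu_0)}\IGW(\mu_0,\mu)$. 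You instead interpolate along the $\W_2$-optimal coupling between $\mu_0$ and $\bO_\sharp\mu$, then transfer to IGW via the forward comparison in \cref{lem:equivalence_igw_w}; the concatenation with the zero-length jump to $\mu$ is handled by your Part~(1). Your route is more modular (it reuses both directions of \cref{lem:equivalence_igw_w} as black boxes), while the paper's direct computation gives a slightly cleaner quantitative dependence on $\IGW(\mu_0,\mu)$. Both establish the claimed continuity.
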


\begin{example}[Length of IGW gradient flow]\label{ex:grad_flow_length}
    Recall from \cref{thm:main} that the IGW gradient flow curve for an objective $\sF:\cP_2(\RR^d)\to\RR\cup\{\infty\}$ is described by a velocity field $v_t$ that satisfies~the~continuity equation and is related to the local variational behavior of $\sF$ through $v_t=-\cL_{\bsigma_t,\rho_t}^{-1}\left[\nabla\delta \sF(\rho_t)\right]$. We now  observe that the stepwise movement is $\IGW$-rectifiable and provide an estimate for its length. 
    
    For $i=1,\ldots, n$, define the covariance matrix $\bK_i\coloneqq \int v_i(x)v_i(x)^\intercal d\rho_i(x)\in\RR^{d\times d}$, and let $\bar{\bK}_n$ be its piecewise constant interpolation. Also recall from \cref{prop:discrete_vector_field_eq} the definition of the cross-covariance matrix $\bL_{i}\coloneqq\int x v_i(x)^\intercal d\rho_{i}(x)\in\RR^{d\times d}$. We start by expressing the IGW-gap between any two consecutive steps in terms of these matrices
\begin{align*}
    \IGW(\rho_{i},\rho_{i-1})^2&= \int \left|\llangle x,x'\rrangle-\llangle x-\tau v_i(x), x'-\tau v_i(x')\rrangle\right|^2 d \rho_{i}(x)d\rho_{i}(x')\\
    &= \int \left|\tau\llangle v_i(x),x'\rrangle + \tau\llangle x,v_i(x')\rrangle -\tau^2\llangle  v_i(x),  v_i(x')\rrangle\right|^2 d \rho_{i}(x)d\rho_{i}(x')\\
    & = \int \Big(2\tau^2 \llangle v_i(x),x'\rrangle^2 + 2\tau^2\llangle v_i(x),x'\rrangle \llangle x,v_i(x')\rrangle\\
    &\qquad\qquad\quad-4\tau^3\llangle v_i(x),x'\rrangle\llangle  v_i(x),  v_i(x')\rrangle+ \tau^4\llangle  v_i(x),  v_i(x')\rrangle^2\Big) d \rho_{i}\otimes \rho_{i}(x,x')\\
    & = 2\tau^2 \tr(\bK_i\bsigma_{\rho_i}) + 2\tau^2 \tr(\bL_i^2) - 4\tau^3 \tr(\bL_i\bK_i) + \tau^4\tr(\bK_i^2).
\end{align*}
From \eqref{eq:vector_field_L2_bound}, we have $\sum_{i=1}^n \tau\|v_i\|_{L^2(\rho_i;\RR^d)}^2 \leq \frac{2(\sF(\rho_0)- \sF^\star)}{\lambda_{\min}(\bsigma_{\rho_0})}$, from which it follows that $\|v_i\|_{L^2(\rho_i;\RR^d)}^2=O(1/\tau)$, for all $i=1,\ldots,n$. This further yields the estimates
\begin{align*}
    \sum_{i=1}^n \tau^3 \tr(\bL_i\bK_i)&\leq \sum_{i=1}^n \tau^3 \sqrt{M_2(\rho_i)}\|v_i\|_{L^2(\rho_i;\RR^d)}^3  = O(\tau^{3/2})\\
    \sum_{i=1}^n \tau^4\|\bK_i\|_\F^2&\leq \sum_{i=1}^n \tau^4\|v_i\|_{L^2(\rho_i;\RR^d)}^4 = O(\tau^2),
\end{align*}
using which we conclude that for small $\tau>0$ values
\begin{align*}
    \sum_{i=1}^n \IGW(\rho_{i},\rho_{i-1}) &\approx \sum_{i=1}^n\tau \sqrt{2\tr(\bK_i\bsigma_{\rho_i}) + 2\tr(\bL_i^2)} \\
    & = \int_0^\delta \sqrt{2\tr\big(\bar{\bK}_n(t)\bar{\bsigma}_n(t)\big) + 2\tr\big(\bar{\bL}_n(t)^2\big)} dt\\
    & = \int_0^\delta \sqrt{\llangle \bar{v}_n(t),  \cL_{\bar{\bsigma}_n(t),{\bar{\rho}_n(t)}}[\bar{v}_n(t)]\rrangle_{L^2(\bar{\rho}_n(t);\RR^d)} }dt,
\end{align*}
where the operator $\cL$ on the right-hand side (RHS) is given in \eqref{eq:operator}.

With a slight abuse of the notation $\ell_\IGW$, we may compute the length of the piecewise constant curve via  $\ell_{\IGW}(\bar{\rho}_n) = \sum_{i=1}^n \IGW(\rho_{i},\rho_{i-1})$. Indeed, so long that the partition $P=\{0=t_0<\ldots<t_k=1\}$ has a point inside each of the intervals $((i-1)\tau,i\tau]$, $i=1,\ldots, n$, we have that $\sum_{i=1}^k \IGW\big(\bar\rho_n(t_i),\bar\rho_n(t_{i-1})\big) = \sum_{i=1}^n \IGW(\rho_{i},\rho_{i-1})$; otherwise, the value is smaller by the triangle inequality. We conclude that
\begin{align*}
    \ell_{\IGW}(\bar{\rho}_n) \approx \int_0^\delta \sqrt{\llangle \bar{v}_n(t),  \cL_{\bar{\bsigma}_n(t),{\bar{\rho}_n(t)}}[\bar{v}_n(t)]\rrangle_{L^2(\bar{\rho}_n(t);\RR^d)} }dt.
\end{align*}
We expect this approximating to become an equality in the limit as $n\to \infty$, with $(\rho_t,v_t)$ replacing their piecewise counterparts above. A formal derivation requires resolving some technical details, which we leave for future work.

\end{example}

\subsection{Riemannian structure}

\cref{ex:grad_flow_length} shows that when the gradient flow step size $\tau$ is small, we retrieve a local first-order approximation of IGW as a modified inner product in an appropriate $L^2$ space. This hints at a Riemannian structure arising from this local behavior, which we identify next.

\subsubsection{Metric tensor and IGW}

Consider the Riemannian structure on $\cP_2(\RR^d)$ defined for any $v,w\in L^2(\mu;\RR^d)$ by
\begin{subequations}
\begin{align}
    g_\mu(v,w)&\coloneqq \int \Big(\llangle v(x), x'\rrangle + \llangle x, v(x')\rrangle\Big)\Big( \llangle w(x), x'\rrangle + \llangle x, w(x')\rrangle\Big) d \mu\otimes \mu(x,x')\label{eq:metric_tensor_integral_form}\\
    & = 2\int v(x)^\intercal \bsigma_\mu w(x) d\mu(x) + 2\tr\left(\int xv(x)^\intercal d\mu(x)\int xw(x)^\intercal d\mu(x)\right)\label{eq:metric_tensor_trace_form}\\
    & = \llangle v,  \cL_{\bsigma_\mu,\mu}[w]\rrangle_{L^2(\mu;\RR^d)},\label{eq:metric_tensor}
\end{align}\label{eq:metric_tensor_collect}
\end{subequations}
where $\cL_{\bsigma_\mu,\mu}$ is given in \eqref{eq:operator}. By \cref{rem:property_of_operator}, $g_\mu$ is a positive semi-definite bilinear form. Without ambiguity we will also refer to $g_\mu(v,v)$ for $v\in L^2(\mu;\RR^d)$ as the (instantaneous) kinetic energy derived from IGW. We start from a simple observation through differentiation.
Following \cite[Chapter 7]{villani2008optimal} we will call the integration $\int g_{\rho_t}(v_t,v_t) dt$ over proper interval the \emph{action} of the IGW kinetic energy, though the reader should note that our action does not correspond to structure of the cost function in Wasserstein case, due to the global nature of $g$. 
\begin{lemma}[Action and IGW]\label{lem:IGW_flow_upperbound}
    For any $\mu_0,\mu_1\in\cP_2(\RR^d)$, we have
    \begin{align*}
        \IGW(\mu_0,\mu_1)^2 \leq \inf_{\substack{(\rho_t,v_t):\\\partial_t \rho_t + \nabla\cdot  (\rho_t v_t)=0\\
        \rho_0=\mu_0, \rho_1=\mu_1}} \int_0^1 g_{\rho_t}(v_t,v_t) dt,
    \end{align*}
    where the infimum is over all $(\rho_t,v_t)_{t\in[0,1]}$, such that $\rho$ is a weakly continuous curve that joins $\mu_0,\mu_1$, the velocity field $v_t$ has $\int_0^1\|v_t\|^2_{L^2(\rho_t;\RR^d)}dt<\infty$, and the pair satisfies the continuity equation. 
    
\end{lemma}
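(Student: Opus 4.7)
The plan is to turn any admissible pair $(\rho_t,v_t)_{t\in[0,1]}$ into a specific coupling $\pi\in\Pi(\mu_0,\mu_1)$, bound the IGW cost under this coupling by the kinetic-energy action, and then take the infimum. For smooth, Lipschitz $v_t$, the natural coupling is $\pi\coloneqq (X_0,X_1)_\sharp\mu_0=(\id,X_1)_\sharp\mu_0$, where $(X_t)_{t\in[0,1]}$ is the flow solving $\dot X_t=v_t\circ X_t$, $X_0=\id$. Since $(X_t)_\sharp\mu_0=\rho_t$ (by uniqueness for the continuity equation), this $\pi$ indeed couples $\rho_0=\mu_0$ to $\rho_1=\mu_1$. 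For general measurable velocity fields with $\int_0^1\|v_t\|_{L^2(\rho_t)}^2\,dt<\infty$, the flow may fail to exist, so I would instead invoke Ambrosio's superposition principle (see \cite[Theorem 8.2.1]{ambrosio2005gradient}) to obtain a probability measure $\eta$ on $C([0,1];\RR^d)$, concentrated on absolutely continuous curves $\gamma$ satisfying $\dot\gamma(t)=v_t(\gamma(t))$ for a.e.\ $t$, with $(e_t)_\sharp\eta=\rho_t$ (where $e_t$ is the evaluation at time $t$), and set $\pi\coloneqq (e_0,e_1)_\sharp\eta$.

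The central computation is the derivative of the inner product along two independent trajectories. For $\eta\otimes\eta$-a.e.\ pair $(\gamma,\gamma')$, the product rule gives
\begin{align*}
\frac{d}{dt}\llangle\gamma(t),\gamma'(t)\rrangle=\llangle v_t(\gamma(t)),\gamma'(t)\rrangle+\llangle\gamma(t),v_t(\gamma'(t))\rrangle.
\end{align*}
Integrating from $0$ to $1$ and applying Cauchy--Schwarz on the time integral yields
\begin{align*}
\big|\llangle\gamma(0),\gamma'(0)\rrangle-\llangle\gamma(1),\gamma'(1)\rrangle\big|^2\leq\int_0^1\big(\llangle v_t(\gamma(t)),\gamma'(t)\rrangle+\llangle\gamma(t),v_t(\gamma'(t))\rrangle\big)^2 dt.
\end{align*}

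Now I would bound $\IGW(\mu_0,\mu_1)^2$ by plugging in the coupling $\pi$ and integrating the pointwise bound above against $\eta\otimes\eta$, then swap the order of integration via Fubini and change variables using $(e_t,e_t)_\sharp(\eta\otimes\eta)=\rho_t\otimes\rho_t$:
\begin{align*}
\IGW(\mu_0,\mu_1)^2 &\leq\int\big|\llangle x,x'\rrangle-\llangle y,y'\rrangle\big|^2 d\pi\otimes\pi(x,y,x',y')\\
&\leq\int_0^1\int\big(\llangle v_t(z),z'\rrangle+\llangle z,v_t(z')\rrangle\big)^2 d\rho_t\otimes\rho_t(z,z')\,dt\\
&=\int_0^1 g_{\rho_t}(v_t,v_t)\,dt,
\end{align*}
recognizing the integrand via the integral form \eqref{eq:metric_tensor_integral_form} of $g_{\rho_t}$. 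Taking the infimum over admissible pairs delivers the claim.

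The main obstacle is the regularity gap between the classical flow argument and arbitrary $L^2$ velocity fields; this is precisely what the superposition principle resolves, provided one verifies the moment/integrability hypotheses needed to apply it (finiteness of $\int_0^1\|v_t\|_{L^2(\rho_t)}\,dt$, which follows from the assumed finiteness of the action by Cauchy--Schwarz in $t$). A secondary technical point is justifying Fubini's theorem and the change-of-variables step; both are standard once the superposition measure is in hand, because the integrand is nonnegative and the product measure $\eta\otimes\eta$ is well defined. Note that the argument does not require any rotation or PSD-alignment of $\mu_1$, since the RHS is manifestly invariant under reparametrization of the curve and any specific choice of $\pi$ only improves the inequality.
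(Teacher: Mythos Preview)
Your argument is correct, and it takes a genuinely different route from the paper's proof. The paper does not invoke the superposition principle; instead, it mollifies the pair $(\rho_t,v_t)$ by Gaussian convolution to $(\rho_t^\epsilon,v_t^\epsilon)$, uses \cite[Proposition 8.1.8]{ambrosio2005gradient} to obtain a \emph{classical} flow map $X_t$ for the smooth velocity, runs exactly your Jensen/Cauchy--Schwarz computation at the level of the flow map to get $\IGW(\rho_0^\epsilon,\rho_1^\epsilon)^2\leq\int_0^1 g_{\rho_t^\epsilon}(v_t^\epsilon,v_t^\epsilon)\,dt$, and then passes $\epsilon\to0$ using lower semicontinuity of $\IGW$ on the left and a separate argument that $\limsup_{\epsilon\to0}\int_0^1 g_{\rho_t^\epsilon}(v_t^\epsilon,v_t^\epsilon)\,dt\leq\int_0^1 g_{\rho_t}(v_t,v_t)\,dt$ on the right. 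That last step requires controlling the difference $\bsigma_{\rho_t^\epsilon}-\bsigma_{\rho_t}$ uniformly in $t$ and checking that the cross term $\int x(v_t^\epsilon)^\intercal d\rho_t^\epsilon$ equals $\int x v_t^\intercal d\rho_t$ exactly.

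Your approach is more direct: the superposition principle absorbs the regularity gap in one stroke, and you never need the $\epsilon$-limit or the covariance comparison. The paper's approach, on the other hand, relies only on mollification---a tool it reuses later (e.g., in the approximation argument of \cref{lem:approximating_curve_with_density})---and stays closer to the classical ODE picture. Both are valid; yours is shorter for this lemma specifically.
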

The full derivation requires various regularity arguments and is deferred to \cref{appen:lem:IGW_flow_upperbound_proof}. Nevertheless, assuming sufficient regularity, the inequality is quite straightforward. Consider the flow map $X_t$ associated with the velocity field $v_t \in L^2(\rho_t; \RR^d)$, taking an initial position $x_0\in\RR^d$ and mapping it to a new position $x(t)=X_t(x_0)\in\RR^d$. This map is determined by solving the ordinary differential equation (ODE):
\[ \frac{dx(t)}{dt} = v_t\big(x(t)\big). \]
We have
\begin{align*}
    \IGW(\mu_0,\mu_1)^2 &\leq \int \left|\llangle x,x'\rrangle-\llangle X_1(x), X_1(x')\rrangle\right|^2 d \mu_0\otimes\mu_0(x,x')\\
    &= \int \left|\int_0^1 \frac{d}{dt}\llangle X_t(x), X_t(x')\rrangle dt \right|^2 d \mu_0\otimes\mu_0(x,x')\\
    & = \int \left| \int_0^1 \llangle v_t\big(X_t(x)\big), X_t(x')\rrangle + \llangle X_t(x), v_t\big(X_t(x')\big)\rrangle dt \right|^2 d \mu_0\otimes\mu_0(x,x')\\
    & \leq  \int \int_0^1 \left|\llangle v_t\big(X_t(x)\big), X_t(x')\rrangle + \llangle X_t(x), v_t\big(X_t(x')\big)\rrangle\right|^2 dt \, d \mu_0\otimes\mu_0(x,x')\\
    & \leq  \int \int_0^1 \left|\llangle v_t(y), y'\rrangle + \llangle y, v_t(y')\rrangle\right|^2 dt\,  d \rho_t\otimes \rho_t(y,y')\\
    & = \int_0^1 g_{\rho_t}(v_t,v_t) dt,
\end{align*}
where we have used Jensen's inequality, and notice that we are using the first one of the equivalent definitions of $g_\mu$ \eqref{eq:metric_tensor_integral_form}. While Jensen's inequality is not tight (indeed, we generally do not expect equality since IGW geodesics need not be realizable in Euclidean space), this result establishes a simple one-sided connection between the IGW distance and the metric tensor. We next show that with proper modification, equality is achieved for the intrinsic IGW metric, resulting in a formula akin to the celebrated dynamical formulation for the Wasserstein distance by Benamou and Brenier \cite{benamou2000computational}.

\subsubsection{Benamou-Brenier-like formula for IGW}

The Benamou-Brenier formula \cite{benamou2000computational} identifies the 2-Wasserstein distance with the smallest kinetic energy among all connecting velocity fields for mass transportation:
\[
    \W_2(\mu_0,\mu_1)^2 = \inf_{\substack{(\rho_t,v_t):\\\partial_t \rho_t + \nabla\cdot  (\rho_t v_t)=0\\
        \rho_0=\mu_0, \rho_1=\mu_1}}  \int_0^1 \|v_t\|^2_{L^2(\rho_t;\RR^d)} dt,
\]
where the infimum is over the same domain as in \cref{lem:IGW_flow_upperbound}. 
As the Riemannian structure for the Wasserstein space is given by $\langle v,w\rangle_{L^2(\mu;\RR^d)}$, the above shows that $\W_2$ (which also coincides with the intrinsic Wasserstein metric) exactly captures the induced notion of distance. 

\medskip
By the same token, the next theorem shows that the intrinsic IGW metric $\mathsf{d}_{\IGW}$ is the distance induced by the Riemannian structure $g_\mu(v,w)$ from \eqref{eq:metric_tensor_collect}. This yields a Benamou-Brenier-like formula~for~IGW.

\begin{theorem}[IGW Benamou-Brenier formula]\label{thm:BB_IGW}
    Let $\mu_0,\mu_1\in \cP_2(\RR^d)$ be such that there exists a minimizing curve $(\rho_t)_{t\in[0,1]}\subset\cP_2(\RR^d)$ for $\mathsf{d}_{\IGW}(\mu_0,\mu_1)$ with $\inf_t\lambda_{\min}(\bsigma_{\rho_t})>0$. Then
    \begin{equation}
        \mathsf{d}_{\IGW}(\mu_0,\mu_1)^2 = \min_{\mu\in\{\mu_1,\bI^-_\sharp\mu_1\}} \inf_{\substack{(\rho_t,v_t):\\\partial_t \rho_t + \nabla\cdot  (\rho_t v_t)=0\\
        \rho_0=\mu_0, \rho_1=\mu}} \int_0^1  g_{\rho_t}(v_t,v_t) dt,\label{eq:IGW_BB}
    \end{equation}
    where $\bI^-$ is any fixed reflection matrix, and the inner infimum is over the same domain as in \cref{lem:IGW_flow_upperbound}. 
\end{theorem}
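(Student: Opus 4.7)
The plan is to prove the two inequalities $\mathsf{d}_\IGW(\mu_0,\mu_1)^2 \leq \mathrm{RHS}$ and $\mathsf{d}_\IGW(\mu_0,\mu_1)^2 \geq \mathrm{RHS}$ separately. The role of the reflection $\bI^-$ is dictated by the kernel of the metric tensor (\cref{rem:property_of_operator}): a velocity field $v(x)=\bS x$ with $\bS$ skew-symmetric satisfies $g_\mu(v,v)=0$, so deformations along $\SOd$ can be performed at zero action cost, while reflections—lying in the other connected component of $\Od$—cannot. Since $\IGW(\mu_1,\bI^-_\sharp\mu_1)=0$ by orthogonal invariance, Item~(1) of \cref{thm:intrinsic_metric_is_geodesic} yields $\mathsf{d}_\IGW(\mu_0,\mu_1)=\mathsf{d}_\IGW(\mu_0,\bI^-_\sharp\mu_1)$, so both sides of the formula are intrinsically invariant under the reflection choice.

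\textbf{Upper bound.} Given an admissible pair $(\rho_t,v_t)_{t\in[0,1]}$ with $\rho_0=\mu_0$ and $\rho_1=\mu\in\{\mu_1,\bI^-_\sharp\mu_1\}$, I will time-rescale the argument of \cref{lem:IGW_flow_upperbound} to an arbitrary subinterval $[s,s+h]\subset[0,1]$, obtaining $\IGW(\rho_s,\rho_{s+h})^2 \leq h\int_s^{s+h} g_{\rho_u}(v_u,v_u)\,du$. Dividing by $h^2$ and invoking Lebesgue differentiation yields $|\rho'|(s)^2 \leq g_{\rho_s}(v_s,v_s)$ for a.e.\ $s$, where the metric derivative exists by \cref{lem:metric_derivative}. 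Integrating and applying Cauchy--Schwarz on $[0,1]$ then gives $\ell_\IGW(\rho) \leq \bigl(\int_0^1 g_{\rho_t}(v_t,v_t)\,dt\bigr)^{1/2}$, so that $\mathsf{d}_\IGW(\mu_0,\mu_1)^2 = \mathsf{d}_\IGW(\mu_0,\mu)^2 \leq \ell_\IGW(\rho)^2 \leq \int_0^1 g_{\rho_t}(v_t,v_t)\,dt$. Taking the infimum over admissible pairs and the minimum over $\mu$ closes this direction.

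\textbf{Lower bound construction.} Let $(\rho_t)$ be the $\mathsf{d}_\IGW$-minimizing curve provided by hypothesis, with $c\coloneqq\inf_t\lambda_{\min}(\bsigma_{\rho_t})>0$, and reparametrize it so that $|\rho'|(t)\equiv\mathsf{d}_\IGW(\mu_0,\mu_1)$. \cref{prop:IGW_curve_realization} then produces a $\W_2$-Lipschitz companion $\tilde\rho_t$ with $\sup_t\IGW(\rho_t,\tilde\rho_t)=0$, so that $\tilde\rho_1=\bU_\sharp\mu_1$ for some $\bU\in\Od$ by \cref{prop:IGW_pmetric}. Writing $\bU=\bV$ or $\bU=\bV\bI^-$ with $\bV\in\SOd$, I concatenate $\tilde\rho$ with a smooth $\SOd$-path from $\bI$ to $\bV^{-1}$ pushed forward onto $\tilde\rho_1$, yielding a $\W_2$-continuous curve $\hat\rho_t$ ending at $\mu_1$ or $\bI^-_\sharp\mu_1$ respectively; the appended rotation contributes a skew-symmetric velocity $\bS_tx$ annihilated by $g$ and thus adds no action. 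On $\tilde\rho$ itself, define $v_t$ through infinitesimal Gromov--Monge maps: applying \cref{lem:gromov_monge} to $(\tilde\rho_t,\tilde\rho_{t+h})$ for small $h$ (invoking continuity of $\bsigma_{\tilde\rho_t}$ and arguments in the spirit of \cref{lem:bar_delta} to ensure nonsingular cross-covariance) gives a transport map $T_{t,t+h}$, and I set $v_t(x)\coloneqq\lim_{h\downarrow 0}h^{-1}(T_{t,t+h}(x)-x)$. The continuity equation for $(\tilde\rho_t,v_t)$ follows from the $\W_2$-Lipschitz regularity via the standard characterization of absolutely continuous Wasserstein curves.

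\textbf{Action computation and main obstacle.} Expanding $\IGW(\tilde\rho_t,\tilde\rho_{t+h})^2$ exactly as in \cref{ex:grad_flow_length}, the leading-order term in $h$ is $h^2\bigl(2\tr(\bK_t\bsigma_{\tilde\rho_t})+2\tr(\bL_t^2)\bigr)=h^2\,g_{\tilde\rho_t}(v_t,v_t)$, where $\bK_t=\int v_tv_t^\intercal\,d\tilde\rho_t$ and $\bL_t=\int xv_t^\intercal\,d\tilde\rho_t$, with remainder $O(h^3)$. Dividing by $h^2$ and sending $h\downarrow 0$ yields $|\tilde\rho'|(t)^2 = g_{\tilde\rho_t}(v_t,v_t)$ a.e., and integrating against the constant-speed parametrization gives $\int_0^1 g_{\tilde\rho_t}(v_t,v_t)\,dt = \mathsf{d}_\IGW(\mu_0,\mu_1)^2$, closing the argument. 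The \emph{main obstacle} is making this infinitesimal expansion fully rigorous: one must produce a measurable-in-$t$ family of Gromov--Monge maps $T_{t,t+h}$, establish uniform integrable bounds on the cubic and quartic remainders (requiring control of $\|v_t\|_{L^2(\tilde\rho_t)}$ and the cross-covariance matrices along the curve), and verify that the constructed $v_t$ actually \emph{saturates}—not merely dominates—the inequality $|\tilde\rho'|(t)^2\leq g_{\tilde\rho_t}(v_t,v_t)$ from the upper bound. The saturation is expected to hold only after projecting $v_t$ onto an ``IGW-tangent'' subspace analogous to $\cI_\mu$ from \cref{rem:property_of_operator}; adjusting $v_t$ by skew-symmetric tangent fields (which carry zero $g$-cost) together with a smooth approximation of the minimizing curve is likely required to justify the full construction.
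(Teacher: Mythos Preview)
Your overall strategy matches the paper's: both directions are split as you describe, and your upper bound is exactly the paper's argument (Part~(2) of \cref{lem:IGW_curve_length_and_flow}). For the lower bound, your use of \cref{prop:IGW_curve_realization} to obtain a $\W_2$-Lipschitz companion $\tilde\rho$ and the concatenation with an $\SOd$-path to absorb the orthogonal ambiguity also mirror the paper.

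There are, however, two genuine gaps in your lower-bound construction that the paper handles differently. First, your invocation of \cref{lem:gromov_monge} on $(\tilde\rho_t,\tilde\rho_{t+h})$ requires $\tilde\rho_t\in\cP_2^{\ac}(\RR^d)$, which is nowhere guaranteed by the hypothesis; the paper resolves this via a Gaussian smoothing step (\cref{lem:approximating_curve_with_density}), sandwiching the geodesic between short segments $\rho_i\to\rho_i*\cN_\epsilon$ whose action is $O(\epsilon)$. Second, and more seriously, your definition $v_t(x)=\lim_{h\downarrow 0}h^{-1}(T_{t,t+h}(x)-x)$ is precisely the obstacle you flag, and the paper does \emph{not} attempt to make this pointwise limit rigorous. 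Instead it discretizes: for a uniform partition $\{t_i\}$ it builds $w_i=(x-T^\star_i(x))/\tau$ from Gromov--Monge maps between consecutive (rotated) snapshots, forms the joint measures $\nu_n=\upsilon_1[(\id,\bar w_n)_\sharp\bar\gamma_n]$, and extracts the limiting velocity $w_t$ as the barycenter of the disintegration of a weak subsequential limit $\nu$. The inequality $\int_0^1 g_{\gamma_t}(w_t,w_t)\,dt\leq \ell_\IGW(\gamma)^2$ then follows from the discrete expansion you cite from \cref{ex:grad_flow_length} together with lower semicontinuity under weak convergence and Jensen's inequality---not from a termwise limit of $h^{-2}\IGW(\tilde\rho_t,\tilde\rho_{t+h})^2$. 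Your proposed saturation $|\tilde\rho'|(t)^2=g_{\tilde\rho_t}(v_t,v_t)$ is neither proved nor needed; the paper only establishes (and only needs) the one-sided inequality.
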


\begin{remark}[Fused IGW]
The lower bound on the eigenvalues of $\bsigma_{\rho_t}$ is crucial for our construction of the minimizing flow, as it guarantees compactness to ensure existence of velocity $v$ for the minimizing flow, as was done in \eqref{eq:vector_field_L2_bound}. While removing this condition seems hard in general, one can circumvent it by considering the fused IGW distance \cite{vayer2020fused}:
    \begin{align*}
    \IGW_\lambda(\mu,\nu)^2\coloneqq \inf_{\pi\in\Pi(\mu,\nu)}\int \left(\left|\llangle x,x'\rrangle-\llangle y,y'\rrangle\right|^2 + \lambda\|x-y\|^2\right)d\pi\otimes\pi(x,y,x',y'),
    \end{align*}
    where $\lambda>0$ and $\mu,\nu$ are assumed to be supported in the same Euclidean space. While fused IGW is no longer invariant to orthogonal transformations, it still admits the existence of optimal alignment-transport maps via similar arguments to those from \cref{lem:F2Dual} and \cref{lem:gromov_monge}
    when the optimal dual variable $\bA^\star$ satisfies that $4\bA^\star+\lambda\bI$ is nonsingular. This enables repeating the steps from the proof of \cref{prop:discrete_vector_field_eq} to show that the strong subdifferential arising from construction of the GMM steps with fused IGW is \[\tau^{-1}\left(4\bA^\star_{i+1}T^\star_{i+1} - 2\bsigma_{\rho_{i+1}} x + \lambda (T^\star_{i+1}(x)-x)\right) \in\partial\sF(\rho_{i+1}).\]
    The associated operator is then
    $\cL_{\bA,\mu}^\lambda \coloneqq \cL_{\bA,\mu} + \lambda \id = \cL_{\bA+\lambda\bI/2,\mu}$, which alleviates the invertibility issue of $\bA$ as is required for $\cL^{-1}$ in \cref{rem:property_of_operator}. Furthermore, by repeating the steps in \cref{sec:riemann}, we may define a new Riemannian metric tensor
    \[
        g_\mu^\lambda(v,w)\coloneqq g_\mu(v,w) + \lambda\llangle v,w \rrangle_{L^2(\mu;\RR^d)}.
    \]
    A direct computation verifies that a Benamou-Brenier-type formula holds for the corresponding intrinsic metric and that a minimizing flow exists (follows by existence of an infimizing sequence with bounded $\W_2$ action). Overall, this setting can be viewed as interpolating between $\W_2$ and $\IGW$ geometries. Similar ideas of fusing different metrics and studying its dynamical form and gradient flows were introduced, for instance, for the Wasserstein-Fisher-Rao metric \cite{kondratyev2016new,chizat2018interpolating,liero2018optimal}.
\end{remark}

\begin{remark}[Necessity of reflection]
    
The reflection of $\mu_1$ cannot be dropped in general, as there might not be an IGW-minimizing curve from $\mu_0$ to $\mu_1$ that solves the continuity equation. This is consistent with the invariance of IGW under transformations from $\Od$, which has two connected components corresponding to matrices with determinant $+1$ or $-1$. For instance, for an asymmetric measure $\mu\in\cP_2^{\mathrm{ac}}(\RR^d)$, we have $\mathsf{d}_{\IGW}(\mu, \bI^-_\sharp \mu) = 0$, but there is no flow joining them with IGW length $\ell_{\IGW}=0$. 
    We resolve this issue by considering curves $\rho_t$ from $\mu_0$ to the closer one of $\mu_1$ or $\bI^- \mu_1$ in the IGW sense. An alternative correction for this issue is to consider curves over $\RR^{d+1}$, as $\mu$ and $\bI^-_\sharp\mu$ can be connected by a curve of transformations in $\mathrm{SO}(d+1)$, utilizing the one additional dimension. Also note that depending on the symmetry of $\mu_0,\mu_1$, the minimum could be attained by both $\mu_1$ and $\bI^-_\sharp\mu_1$ (e.g., if $\mu_0=\bI^-_\sharp\mu_0$).

We stress that the restriction to curves that satisfy the continuity equation is inherit to our theory, which, from the outset, instantiated IGW gradient flows in Wasserstein space (see \cref{fig:IGW_steps} and discussion after Eq. \eqref{eq:minimizing_movement}). While IGW flows in the quotient space of $\cP_2(\RR^d)$ is another natural variant to consider, we chose to impose the continuity equation for practical purposes. This enables flowing between specific distributions/shapes/objects rather than equivalent classes thereof, which is desirable since we do not a priori know the underlying rotation. On a related note, reflective symmetry also arise in the counterexample to the optimality of the identity or anti-identity permutations for the one-dimensional GW problem \cite{beinert2022assignment,dumont2024existence}.

\end{remark}

\begin{proof}
The proof of \cref{thm:BB_IGW} strongly relies on the following lemma, which relates IGW curves to the continuity equation, similarly to  \cite[Theorem 8.3.1]{ambrosio2005gradient}. To enable the reparametrization from \cref{lem:metric_derivative}, we introduce the following definition: two curves $\alpha,\beta\in \lip_{\IGW}\big([0,1];\cP_2(\RR^d)\big)$ are said to be IGW equivalent, if there is a nondecreasing function $h:[0,1]\to[0,1]$, such that $\sup_{t\in[0,1]}\IGW(\alpha_t, \beta_{h(t)})=0$.

\begin{lemma}[IGW curves and continuity equation]\label{lem:IGW_curve_length_and_flow}
    Let $\rho:[0,1]\to\cP_2(\RR^d)$ be  an IGW-continuous curve with $\ell_\IGW(\rho)<\infty$. The following statements hold:
    \begin{enumerate}[leftmargin=*]
        \item If $\rho_t\in\cP_2^{\mathrm{ac}}(\RR^d)$ and $\lambda_{\min}(\bsigma_t)>c>0$ for all $t\in[0,1]$, then there exists an IGW equivalent curve $\tilde{\rho}$ that is $\W_2$-Lipschitz, such that $\tilde{\rho}_0= \rho_0$ and either $\tilde{\rho}_1=\rho_1$ or $\tilde{\rho}_1=\bI^-_\sharp\rho_1$, where $\tilde{\rho}$ solves the continuity equation $\partial_t\tilde{\rho}_t + \nabla\cdot \tilde{\rho}_t v_t =0$ for some $v_t\in L^2(\bar\rho_t;\RR^d)$, with
        \[
        \ell_\IGW(\rho)^2 = \ell_\IGW(\tilde{\rho})^2\geq\int_0^1 g_{\rho_t}(v_t,v_t) d t;\]
        \item Conversely, for any weakly continuous curve $\tilde{\rho}$ that is IGW equivalent to $\rho$, satisfying the continuity equation with $v_t\in L^2(\tilde\rho_t;\RR^d)$ such that $\int_0^1\|v_t\|_{L^2(\tilde\rho_t;\RR^d)}^2 dt<\infty$, we have \[
        \ell_\IGW(\rho)^2 = \ell_\IGW(\tilde{\rho})^2\leq \int_0^1 g_{\rho_t}(v_t,v_t) d t.\]
    \end{enumerate}
\end{lemma}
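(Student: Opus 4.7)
The two parts of the lemma together amount to an IGW analogue of the Ambrosio--Gigli--Savar\'e characterization of absolutely continuous curves via the continuity equation (Theorem 8.3.1 in \cite{ambrosio2005gradient}): a lower bound on curve length in terms of kinetic energy (Part 2) and a construction achieving the equality in the limit (Part 1). Together they will underpin the Benamou--Brenier-like formula in \cref{thm:BB_IGW}.

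I would prove Part 2 first, since it is the easier direction and follows the template already used in \cref{lem:IGW_flow_upperbound}. Given a solution $(\tilde\rho,v)$ of the continuity equation, let $X^t_s$ denote the associated flow map, so that $\tilde\rho_t=(X^t_s)_\sharp\tilde\rho_s$. Using the primal form of IGW, together with $\frac{d}{d\tau}\llangle X^\tau_s(x),X^\tau_s(x')\rrangle = \llangle v_\tau(X^\tau_s(x)),X^\tau_s(x')\rrangle + \llangle X^\tau_s(x),v_\tau(X^\tau_s(x'))\rrangle$, and Jensen's inequality in $\tau$, I would obtain
\begin{align*}
\IGW(\tilde\rho_s,\tilde\rho_t)^2 \leq (t-s)\int_s^t g_{\tilde\rho_\tau}(v_\tau,v_\tau)\,d\tau,
\end{align*}
after a change of variables induced by the flow. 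Dividing by $(t-s)^2$ and letting $t\downarrow s$ yields $|\tilde\rho'|_{\IGW}^2(s)\leq g_{\tilde\rho_s}(v_s,v_s)$ a.e., and a further Cauchy--Schwarz in time gives $\ell_{\IGW}(\tilde\rho)^2\leq\int_0^1 g_{\tilde\rho_t}(v_t,v_t)\,dt$. The equality $\ell_\IGW(\rho)=\ell_\IGW(\tilde\rho)$ follows from IGW-equivalence and the definition of length as a supremum over partitions.

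For Part 1, the plan is constructive. First, reparametrize $\rho$ to constant speed via \cref{lem:metric_derivative}, so that $|\rho'|_{\IGW}(t)=\ell_{\IGW}(\rho)$ a.e. Using the covariance lower bound, apply \cref{prop:IGW_curve_realization} to obtain a $\W_2$-Lipschitz curve $\bar\rho$ with $\sup_t\IGW(\rho_t,\bar\rho_t)=0$. The curves $\rho_t$ and $\bar\rho_t$ differ at each time by a unitary isomorphism; since $\Od$ has exactly two connected components, I can adjust $\bar\rho$ by a left-composition with a continuous $\Od$-path (which contributes zero IGW-length by invariance) to ensure $\bar\rho_0=\rho_0$ and $\bar\rho_1\in\{\rho_1,\bI^-_\sharp\rho_1\}$. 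By Theorem 8.3.1 of \cite{ambrosio2005gradient} applied to the $\W_2$-absolutely continuous curve $\bar\rho$, there is a velocity field $v_t\in L^2(\bar\rho_t;\RR^d)$ that satisfies the continuity equation.

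The main obstacle is establishing $\int_0^1 g_{\bar\rho_t}(v_t,v_t)\,dt\leq\ell_\IGW(\rho)^2$. The strategy is to realize $v_t$ as the infinitesimal generator of Gromov--Monge maps rather than the minimal-norm Wasserstein velocity. Concretely, fix a partition $0=t_0<\dots<t_n=1$ and, after aligning via \cref{lem:symmetrization} and invoking \cref{lem:gromov_monge} (both applicable thanks to the covariance lower bound), take $T_i^n$ to be the Gromov--Monge map from $\bar\rho_{t_{i-1}}$ to $\bar\rho_{t_i}$. Set $v_i^n\coloneqq (T_i^n-\id)/\tau_i^n$. The calculation from \cref{ex:grad_flow_length} gives the key identity
\begin{align*}
\IGW\big(\bar\rho_{t_{i-1}},\bar\rho_{t_i}\big)^2 = \tau_i^2\,g_{\bar\rho_{t_i}}(v_i^n,v_i^n) + O(\tau_i^3),
\end{align*}
so that the discrete action $\sum_i \tau_i\,g_{\bar\rho_{t_i}}(v_i^n,v_i^n)$ equals $\sum_i \IGW(\bar\rho_{t_{i-1}},\bar\rho_{t_i})^2/\tau_i + O(\tau_i)$, which, under constant-speed reparametrization, telescopes to $\ell_\IGW(\rho)^2+o(1)$. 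As in \cref{subsec:weak_convergence}, the piecewise-constant measure-velocity pairs have uniformly bounded kinetic energy, so by tightness and the weak convergence/lower semicontinuity argument there, they converge along a subsequence to a limit $(\bar\rho_t,v_t^\star)$ satisfying the continuity equation with $\int_0^1 g_{\bar\rho_t}(v_t^\star,v_t^\star)\,dt\leq\ell_\IGW(\rho)^2$. The hard part is verifying that this limiting $v_t^\star$ is genuinely the velocity field for $\bar\rho$ (up to a $\cL_{\bsigma_t,\bar\rho_t}$-null correction, which preserves $g(v,v)$), so that the inequality can be stated for \emph{the} velocity $v_t$ of $\bar\rho$; this will use the uniqueness structure on the invariant subspace $\cI_{\bar\rho_t}$ described in \cref{rem:property_of_operator}.
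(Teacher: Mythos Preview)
Your strategy for both parts is essentially the paper's. Part 2 matches exactly: the paper applies \cref{lem:IGW_flow_upperbound} on subintervals $[r,r+h]$, divides by $h^2$, passes to the metric derivative, and integrates. Part 1 also uses the same ingredients: constant-speed reparametrization, a discretize-rotate-limit construction to obtain a $\W_2$-Lipschitz IGW-equivalent curve, Gromov--Monge maps to define discrete velocities, the expansion $\IGW(\gamma_i,\gamma_{i-1})^2=\tau^2 g_{\gamma_i}(w_i,w_i)+O(\tau^3)$, and a compactness/lower-semicontinuity passage to the limit.

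Where you diverge from the paper is in introducing an unnecessary detour that creates the ``hard part'' you flag at the end. You first invoke AGS Theorem~8.3.1 to obtain \emph{a} velocity $v_t$, then separately build $v_t^\star$ as the limit of Gromov--Monge velocities, and finally propose to identify the two via the structure of $\cI_{\bar\rho_t}$. This identification is not needed: the lemma only asks for \emph{some} velocity satisfying the continuity equation with the stated action bound. The paper simply shows---by the same argument as \cref{prop:limit_continuity_equation}---that the Gromov--Monge limit $w_t$ itself solves the continuity equation with $\gamma_t$, and uses that directly. Drop your Step~4 and the identification issue disappears.

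Two technical points you underweight. First, the genuine work in passing to the limit is the lower semicontinuity $\liminf_k \int_0^1 g_{\bar\gamma_{n_k}}(\bar w_{n_k},\bar w_{n_k})\,dt \geq \int_0^1 g_{\gamma_t}(w_t,w_t)\,dt$; the paper splits $g_\mu(v,v)=2\int v^\intercal\bsigma_\mu v\,d\mu + 2\|\int xv^\intercal d\mu\|_\F^2$ and handles the two terms separately (the first via uniform convergence of $\bsigma_{\bar\gamma_n}$ plus standard lsc, the second via a tightness argument on the cross-covariance matrices $\bL_n$ followed by Jensen). Second, your claim that composing with a continuous $\Od$-path ``contributes zero IGW-length'' is correct for length but does not immediately give that the \emph{action} is unchanged. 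The paper computes the new velocity explicitly as $v_t(x)=\bO(t)w_t(\bO(t)^\intercal x)+\bO'(t)\bO(t)^\intercal x$ and verifies $g_{\tilde\rho_t}(v_t,v_t)=g_{\gamma_t}(w_t,w_t)$ by observing that the skew-symmetric contribution $\bO'\bO^\intercal$ is annihilated (cf.\ \cref{rem:property_of_operator}, item~3).
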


The lemma is proven in \cref{appen:lem:IGW_curve_length_and_flow_proof}. Given this result, the fact that $\mathsf{d}_{\IGW}^2$ is upper bounded by the action (namely, the right-hand side of \eqref{eq:IGW_BB}) is straightforward from the second part of \cref{lem:IGW_curve_length_and_flow}. To prove the opposite inequality, consider the $\mathsf{d}_{\IGW}$ constant-speed geodesic $\rho:[0,1]\to\cP_2(\RR^d)$ connecting $\mu_0,\mu_1$, which is $\mathsf{d}_{\IGW}(\mu_0,\mu_1)$-Lipschitz (see \cref{thm:intrinsic_metric_is_geodesic}). If $\rho\subset\cP_2^{\mathrm{ac}}(\RR^d)$ with $\inf_t\lambda_{\min}(\bsigma_{\rho_t})>0$, then by first part of \cref{lem:IGW_curve_length_and_flow}, we achieves the minimum.

The density condition in Part (1) of \cref{lem:IGW_curve_length_and_flow} guarantees that the IGW equivalent curve has a velocity field that satisfies the continuity equation with it. \cref{thm:BB_IGW}, on the other hand, does not impose the density requirement, which we remove using the following lemma.

\begin{lemma}[Approximation]\label{lem:approximating_curve_with_density}
    For any curve $\rho\in\lip_{\IGW}\big([0,1];\cP_2(\RR^d)\big)$ with $\lambda_{\min}(\bsigma_{\rho_t})\geq c>0$ and any $\epsilon\in(0,1)$, there exists a pair $\curve{\gamma_t^\epsilon,v_t^\epsilon}$ that solves the continuity equation with $\gamma_0^\epsilon=\rho_0,\gamma_1^\epsilon\in\{\rho_1,\bI^-_\sharp\rho_1\}$, such that
    \begin{align*}
        \int_0^1 g_{\gamma_t^\epsilon}(v_t^\epsilon,v_t^\epsilon) dt\leq \ell_{\IGW}(\rho)^2 + O(\epsilon).
    \end{align*}
\end{lemma}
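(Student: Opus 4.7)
The plan is to regularize $\rho$ by Gaussian convolution so that Part (1) of \cref{lem:IGW_curve_length_and_flow} can be applied, and then glue short Wasserstein connectors at the endpoints to bring the resulting flow back to $\rho_0$ and $\rho_1$ (or $\bI^-_\sharp\rho_1$), paying only a vanishing correction in the IGW action.

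To begin, set $\gamma_\epsilon=\cN(0,\epsilon\bI)$ and $\rho_t^\epsilon\coloneqq\rho_t*\gamma_\epsilon$; then $\rho_t^\epsilon\in\cP_2^{\ac}(\RR^d)$ and $\bsigma_{\rho_t^\epsilon}=\bsigma_{\rho_t}+\epsilon\bI\succcurlyeq c\bI$. The key quantitative step is the IGW contraction
\begin{equation*}
\IGW(\rho_s^\epsilon,\rho_t^\epsilon)^2\leq(1+4\epsilon/c)\IGW(\rho_s,\rho_t)^2,
\end{equation*}
which, summed over partitions, yields $\ell_\IGW(\rho^\epsilon)^2\leq(1+4\epsilon/c)\ell_\IGW(\rho)^2$. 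To prove it, pick $\bO\in\cO_{\rho_s,\rho_t}$ and an IGW-optimal $\pi^\star\in\Pi(\rho_s,\bO_\sharp\rho_t)$ with PSD cross-covariance (via \cref{lem:symmetrization}), and couple $\rho_s^\epsilon$ with $\bO_\sharp\rho_t^\epsilon=(\bO_\sharp\rho_t)*\gamma_\epsilon$ by the law of $(X+Z,Y+Z)$ with $(X,Y)\sim\pi^\star$ and $Z\sim\gamma_\epsilon$ independent; a second independent sample is $(X'+Z',Y'+Z')$. Expanding $|\langle X+Z,X'+Z'\rangle-\langle Y+Z,Y'+Z'\rangle|^2$, the $\langle Z,Z'\rangle$ terms cancel across the two inner products, the mean-zero cross terms vanish in expectation, and only $\IGW(\rho_s,\rho_t)^2+2\epsilon\,\EE_{\pi^\star}\|X-Y\|^2$ survives. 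The PSD-coupling estimate \eqref{eq:igw_w_comparison_intermediate_bound} gives $\EE_{\pi^\star}\|X-Y\|^2\leq(2/c)\IGW(\rho_s,\rho_t)^2$, while rotational invariance of both $\IGW$ and $\gamma_\epsilon$ identifies $\IGW(\rho_s^\epsilon,\rho_t^\epsilon)=\IGW(\rho_s^\epsilon,\bO_\sharp\rho_t^\epsilon)$.

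Next, apply Part (1) of \cref{lem:IGW_curve_length_and_flow} to $\rho^\epsilon$ to extract an IGW-equivalent, $\W_2$-Lipschitz curve $\tilde\rho^\epsilon$ together with a velocity $v^\epsilon$ solving the continuity equation, with $\tilde\rho_0^\epsilon=\rho_0^\epsilon$, $\tilde\rho_1^\epsilon\in\{\rho_1^\epsilon,\bI^-_\sharp\rho_1^\epsilon\}$, and $\int_0^1 g_{\tilde\rho_t^\epsilon}(v_t^\epsilon,v_t^\epsilon)\,dt\leq\ell_\IGW(\rho^\epsilon)^2\leq\ell_\IGW(\rho)^2+O(\epsilon)$. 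To correct the endpoints, fix $\delta\in(0,1/2)$ and concatenate: a constant-speed $\W_2$-geodesic from $\rho_0$ to $\rho_0^\epsilon$ on $[0,\delta]$ (existing via \cref{thm:brenier} since $\rho_0^\epsilon$ has density), the time-rescaled $\tilde\rho^\epsilon$ on $[\delta,1-\delta]$, and a $\W_2$-geodesic from $\tilde\rho_1^\epsilon$ to the matching endpoint in $\{\rho_1,\bI^-_\sharp\rho_1\}$ on $[1-\delta,1]$. Because $\bO_\sharp\gamma_\epsilon=\gamma_\epsilon$ for any $\bO\in\Od$, one has $\bI^-_\sharp\rho_1^\epsilon=(\bI^-_\sharp\rho_1)*\gamma_\epsilon$, so each connector has Wasserstein distance at most $\sqrt{d\epsilon}$ via the coupling $(X,X+Z)$, and the joint velocity satisfies the continuity equation distributionally across the seams. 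From the trace form \eqref{eq:metric_tensor_trace_form} and Cauchy--Schwarz, one obtains the pointwise bound $g_\mu(v,v)\leq 4M_2(\mu)\|v\|_{L^2(\mu;\RR^d)}^2$, so each connector contributes $O(\epsilon/\delta)$ to the IGW action, while the rescaled middle segment contributes at most $(1-2\delta)^{-1}\ell_\IGW(\rho^\epsilon)^2=\ell_\IGW(\rho)^2+O(\delta+\epsilon)$. Taking $\delta=\sqrt\epsilon$ gives a total action of $\ell_\IGW(\rho)^2+O(\sqrt\epsilon)$; relabeling $\epsilon\to\epsilon^2$ in the final statement produces the claimed $O(\epsilon)$ bound.

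The main obstacle is keeping the IGW contraction under convolution tied to $\IGW(\rho_s,\rho_t)$ itself, so that it vanishes in the small-displacement limit and survives summation over arbitrary partitions. The naive independent-Gaussian coupling produces additional terms of order $\epsilon M_2$ and $\epsilon^2 d$ that do not decay with $\IGW(\rho_s,\rho_t)$; using the same $Z$ on both coordinates cancels the $\langle Z,Z'\rangle$ cross-product and ties the residual to $\|X-Y\|^2$, which is then controlled via the PSD-coupling bound. A secondary technical point is the reflected endpoint case at $t=1$, which is handled by $\bI^-_\sharp\rho_1^\epsilon=(\bI^-_\sharp\rho_1)*\gamma_\epsilon$, so that the Wasserstein connector remains short.
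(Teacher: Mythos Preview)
Your proof is correct and follows essentially the same strategy as the paper: regularize by Gaussian convolution, establish the IGW contraction via the shared-noise coupling $(X+Z,Y+Z)$ combined with the PSD-coupling Wasserstein bound, invoke Part~(1) of \cref{lem:IGW_curve_length_and_flow} on the smoothed curve, and then glue short endpoint connectors. The minor variations---your observation that the cross term vanishes exactly in expectation (rather than bounding it by Cauchy--Schwarz as the paper does), your use of $\W_2$-geodesics as connectors in place of the paper's explicit curves $t\mapsto\rho_i*\cN_{t\epsilon}$, and the $\delta=\sqrt{\epsilon}$ time allocation followed by relabeling---are all valid implementation choices that do not change the argument's substance.
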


The proof of this lemma, given in \cref{appen:lem:approximating_curve_with_density_proof}, constructs $\curve{\gamma_t^\epsilon,v_t^\epsilon}$ by employing Gaussian smoothing. Namely, the curve is assembled by connecting
    \[
        \rho_0\to \rho_0*\cN_\epsilon \to \rho_1*\cN_\epsilon\to \rho_1,
    \]
and showing that each piece is $\W_2$-Lipschitz with a corresponding velocity field. The intermediate piece, from $\rho_0*\cN_\epsilon$ to $\rho_1*\cN_\epsilon$ is accounted for by Item (1) of \cref{lem:IGW_curve_length_and_flow}. For the external pieces, connecting $\rho_0$ and $\rho_1$ with their Gaussian smoothed versions, one can readily verify $\W_2$-Lipschitz continuity and then invoke \cite[Theorem 8.3.1]{ambrosio2005gradient} to obtain the associated velocity fields. We then combine the curves (as well as their velocity fields) via a time rescaling argument to obtain $\curve{\gamma_t^\epsilon,v_t^\epsilon}$, and bound the overall action as stated in the lemma. In particular, the intermediate piece $\ell_\IGW(\rho)^2+O(\epsilon)$ to the action, while the two external pieces contribute another $O(\epsilon)$ each.

Applying \cref{lem:approximating_curve_with_density} to the $\mathsf{d}_{\IGW}$ constant-speed geodesic $\rho$, we obtain a pair $\curve{\gamma_t^\epsilon,v_t^\epsilon}$ satisfying the continuity equation and connecting $\mu_0,\mu_1$, with
\begin{align*}
    \int_0^1 g_{\gamma_t^\epsilon}(v_t^\epsilon,v_t^\epsilon) dt \leq \ell_\IGW(\rho)^2 + O(\epsilon) = \mathsf{d}_{\IGW}(\mu_0,\mu_1)^2 + O(\epsilon).
\end{align*}
As $\epsilon$ is arbitrary, the proof IGW Benamou-Brenier formula from \eqref{eq:IGW_BB} is concluded.

\end{proof}

\begin{remark}[Tangent space]
    The proof of Part (1) of \cref{lem:IGW_curve_length_and_flow}, found in \cref{appen:lem:IGW_curve_length_and_flow_proof}, provides an interesting insight into the structure of the IGW tangent space. While in the 2-Wasserstein case, the tangent space at $\mu$ is identified as the $L_2(\mu;\RR^d)$ closure of $\{v:\,v=\nabla\phi,\, \phi\in C_c^\infty(\RR^d)\}$, our proof suggests that the only nontrivial elements for the IGW tangent space are those $v$ with $\int xv(x)^\intercal d\mu(x)$ being PSD, or the invariant space $\cI_\mu$ of $\cL_{\bsigma_\mu,\mu}$ as we defined in \cref{rem:property_of_operator}. In fact, for any suitable IGW curve $\rho_t$ that satisfies the continuity equation with $w_t$, $\rho_t$ could be rotated pointwise to $(\bar{\rho}_t,v_t)$, where $v_t\in \cI_{\bar{\rho}_t}$, without changing the IGW length (see proof of Part (1) 1 of \cref{lem:IGW_curve_length_and_flow}), and $g_{\bar{\rho}_t}(v_t,v_t) = g_{\rho_t}(w_t,w_t)$ since the length is unchanged. Hence, any direction $w\in L^2(\mu;\RR^d)$ can be replaced with some $v\in\cI_\mu$, while the associated flow remains IGW equivalent and the length under Riemannian metric tensor $g$ is unchanged. 
    It is known that the 2-Wasserstein tangent space adheres to a variational selection criterion, i.e., tangent vectors are selected to have the minimal $L_2$ norm among all of their divergence-free permutations, see \cite[Lemma 8.4.2]{ambrosio2005gradient}. For IGW, in addition to the minimal selection w.r.t. $g_{\mu}(v,v)$, one also must account for the aforementioned PSD property, which can be viewed as performing minimal selection inside $\cI_\mu$.  
\end{remark}

\subsection{IGW gradient}\label{subsec:IGW_grad}

We conclude this section with a formal derivation of the IGW gradient. Consider a functional $\sF:\cP_2(\RR^d)\to\RR\cup\{\infty\}$ and proper curve $(\rho_t)_{t\in(-\epsilon,\epsilon)}\subset\cP_2(\RR^d)$ that passes through $\rho_0=\mu$, with $\partial_t\rho_t\big|_{t=0}=-\nabla\cdot\mu v$, for some $v\in L^2(\mu;\RR^d)$. Following Otto's formalism \cite{otto2001geometry}, we identify $v$ as a tangent direction at $\mu$ (see also \cite[Chapter 8]{ambrosio2005gradient}). We next identify the gradient of $\sF$ at $\mu$ under the Riemannian structure $g$, denoted by $\grad_{\mathsf{d}_{\IGW}}\sF(\mu)$, as $\mathsf{d}_{\IGW}$ is the metric induced by $g$.
Namely, we look for an element from the cotangent space that satisfies

\[
\partial_t\sF(\rho_t)\big|_{t=0}=g_\mu\big(\grad_{\mathsf{d}_{\IGW}}\sF(\mu),v\big).
\]
Overlooking regularity issues, consider the following steps
\begin{align*}
    \partial_t \sF(\rho_t)\big|_{t=0} &= \int \delta\sF(\mu) \partial_t\rho_t\big|_{t=0}\\
    & = -\int \delta\sF(\mu) \nabla\cdot \mu v \\
    & = \llangle \nabla\delta\sF(\mu), v\rrangle_{L^2(\mu;\RR^d)}\\
    &=g_\mu\big(\cL_{\bsigma_\mu,\mu}^{-1}[\nabla\delta\sF(\mu)],v\big),
\end{align*}
where the first step is the chain rule, the second one uses the fact that $\partial_t\rho_t\big|_{t=0}=-\nabla\cdot\mu v$, the third step comes from integration by parts, while the last step follows by definition of our metric tensor from \eqref{eq:metric_tensor}. We thus conclude that 
\begin{equation}
\grad_{\mathsf{d}_{\IGW}}\sF(\mu)=\cL_{\bsigma_\mu,\mu}^{-1}[\nabla\delta\sF(\mu)].\label{eq:IGW_grad}
\end{equation}
This essentially recovers our main PIDE for the gradient flow from \cref{thm:main}, whereby $v=-\grad_{\mathsf{d}_{\IGW}}\sF(\mu)$ is the direction of the steepest descent of $\sF$ in the IGW intrinsic geometry. Note, however, that while \eqref{eq:IGW_grad} is the gradient w.r.t. the intrinsic IGW metric $\mathsf{d}_{\IGW}$, \cref{thm:main} corresponds to an implicit scheme for the gradient flow w.r.t. the IGW distance itself. These two notions coincide for the Wasserstein distance, but that may not be so for IGW. Nevertheless, we abuse notation and define the IGW gradient as $\grad_\IGW \coloneqq \grad_{\mathsf{d}_{\IGW}}$, despite IGW not possessing a tangent structure. The IGW gradient flow equation from \cref{thm:main} thus reads
\begin{align*}
    v_t=-\grad_{\IGW}\sF(\rho_t).
\end{align*}

\medskip
We conclude this section by fleshing out the relationship between IGW and Wasserstein gradients. Writing $\grad_\W \sF(\mu)$ for the 2-Wasserstein gradient and recalling that $\grad_\W \sF(\mu)=\nabla\delta\sF(\mu)$ \cite{otto2001geometry}, we see that 
\begin{equation}
\grad_\IGW\sF(\mu)=\cL_{\bsigma_\mu,\mu}^{-1}[\grad_\W \sF(\mu)].\label{eq:W_IGW_grad}
\end{equation}
The IGW gradient is thus obtained by transforming the Wasserstein gradient using the inverse of the mobility operator. As discussed after its definition in \eqref{eq:operator}, the action of $\cL^{-1}$ serves to align the velocity field to encourage particles to move along similar directions (see also \cref{fig:L_example}). 

To further compare the induced gradient flows, i.e., IGW versus Wasserstein, recall that by \cref{rem:property_of_operator}, the velocity field from the IGW gradient flow equation can be written as 
\begin{align}
    v_t(x) = -\frac{1}{2}\bsigma_t^{-1} \nabla\delta\sF(\rho_t)(x)+ \frac{1}{2} x^\intercal\otimes \bI (\bI\otimes\bsigma_t^2+\bsigma_t\otimes \bsigma_t)^{-1} \int (y\otimes \bI) \nabla\delta\sF(\rho_t)(y) d \rho_t(y).
    \label{eq:gradient_flow_velocity}
\end{align}
We have seen above that $\partial_t \sF(\rho_t) = \llangle \nabla\delta\sF(\rho_t), v_t\rrangle_{L^2(\rho_t;\RR^d)}$, and by plugging \eqref{eq:gradient_flow_velocity} in, we obtain
\begin{align*}
     \partial_t \sF(\rho_t) 
    &=-g_{\rho_t}\big(\cL_{\bsigma_t,\rho_t}^{-1}[\nabla\delta\sF(\rho_t)],\cL_{\bsigma_t,\rho_t}^{-1}[\nabla\delta\sF(\rho_t)]\big)\\
    &= \underbrace{- \llangle \nabla\delta\sF(\rho_t), \frac{1}{2}\bsigma_t^{-1} \nabla\delta\sF(\rho_t)\rrangle_{L^2(\rho_t;\RR^d)}}_{\text{Descent}} \\
    &\underbrace{+ \frac{1}{2} \left(\int (x\otimes \bI) \nabla\delta\sF(\rho_t)(x) d \rho_t(x) \right)^\intercal \mspace{-3mu}(\bI\otimes\bsigma_t^2+\bsigma_t\otimes \bsigma_t)^{-1}\mspace{-3mu} \int (y\otimes \bI) \nabla\delta\sF(\rho_t)(y) d \rho_t(y)}_{\text{Damping}}.\numberthis\label{eq:gradient_flow_functional_derivative_on_trajectory}
\end{align*}
This decomposes the IGW gradient flow into two components: a linear transformation of the Wasserstein gradient, termed the \emph{descent term}, and an integral transformation, termed \emph{damping}. The names arise from the characteristics of these terms, as the descent term is negative and aligned with the Wasserstein flow structure, while the damping term is positive and slows down the flow to encourage interparticle alignment. This decomposition, along with the distinct effects of descent and damping, is further explored and illustrated in the numerical experiment in the next section.

\begin{example}[IGW gradient computation]
    As a simple example, we compute the IGW gradient for potential functional $\sV(\mu) = \frac{1}{2}\int \|x\|^2 d\mu(x)$. Clearly $\grad_\W \sV(\mu)=\nabla\delta\sV(\mu) = x$, and by invoking the inverse formula from \cref{prop:solution_of_integral_system}, we obtain
    \[
        \grad_\IGW \sV(\mu)=\cL_{\bsigma_\mu,\mu}^{-1}[\grad_\W \sV(\mu)] = \cL_{\bsigma_\mu,\mu}^{-1}[x]= \frac{1}{2}\bsigma_\mu^{-1} x - \bsigma_\mu^{-1} \bB x,
    \]
    where $\bB$ is the solution to the Sylvester equation $\bsigma_\mu\bB + \bB\bsigma_\mu = \frac{1}{2}\bsigma_\mu$. Assuming that $\bsigma_\mu$ is nonsingular, the latter can be obtained via $\vectorize(\bB)=\bM^{-1}\bL$, where $\bM=\bI\otimes\bsigma_\mu+\bsigma_\mu\otimes\bI$ and $\bL=\vectorize(\frac{1}{2}\bsigma_\mu)$. Note that when $\mu$ is isotropic, i.e., $\bsigma_\mu=\bI$, we have $\bB=\frac{1}{4}\bI$ and therefore $\grad_\IGW\sV(\mu)=\frac{1}{4}x$. The latter coincides with the Wasserstein gradient, up to a constant factor. This is a consequence of the symmetry of both $\sV$ and $\mu$, as rotations are no longer useful for preserving the shape. Consequently, the particles along both the IGW and Wasserstein flows follow the same trajectory, as the gradient directions coincide. To compute $\grad_\IGW\sF$ for other functionals, one should first evaluate the Wasserstein gradient and then apply the inverse mobility operator to it to obtain $\grad_\IGW\sF(\mu)=\cL_{\bsigma_\mu,\mu}^{-1}[\grad_\W \sF(\mu)]$, as per \cref{rem:property_of_operator}.
\end{example}

\section{Numerical experiments}\label{sec:experiment}

We present numerical experiments for the IGW gradient flow and dynamical formulation from Theorems \ref{thm:main} and \ref{thm:BB_IGW}, respectively.\footnote{Demo code for all experiments, as well as additional ones not featuring in the text, is available at \url{https://github.com/ZhengxinZh/IGW}} For the gradient flow, we directly compute the IGW gradient of some functionals of interest, and apply the forward Euler scheme. For the dynamical formula, we parametrize the velocity field by a neural network and utilize the neural ODE framework \cite{chen2018neural} to obtain the flow trajectory between source and target distributions. The distributions considered throughout this section are given as point clouds, i.e., discrete uniform distributions over points in Euclidean space. Additional numerical results are provided in \cref{appen:additional_experiments}. 

\subsection{Gradient flow}
We compute the forward Euler scheme for the IGW gradient flow initiated at different point clouds. Consider three functionals: the potential energy, two-dimensional Coulomb interaction energy, and entropy. These functionals are respectively defined as:
\begin{align*}
    \sV(\mu)&\coloneqq \int \|x\|^2/2 d\mu(x)\\
    \sC(\mu)&\coloneqq -\int \log(\|x-x'\|) d\mu\otimes\mu(x,x')\\
    \sH(\mu)&\coloneqq\int_{\RR^d} \frac{d\mu}{dx} \log\left(\frac{d\mu}{dx}\right) dx.
\end{align*}
We maintain the notation $\sF$ for a generic functional, when describing the computational pipeline.

\medskip
We start by evaluating the Wasserstein gradient of these functionals, and then obtain the IGW gradient by applying the inverse mobility operator, as per \eqref{eq:W_IGW_grad}. For the potential energy, we have $\grad_\W\sV(\mu)(x)=\nabla\delta\sV(\mu)(x)=x$. For the other two functionals, write $\mu=\frac{1}{n}\sum_{i=1}^n\delta_{x_i}$ for the point cloud supported on $\{x_i\}_{i=1}^n$, and consider the following surrogates. We approximate the Coulomb interaction by adding a smoothing parameter $\epsilon=0.2$ and ignoring the diagonal of the distance matrix:
\begin{align*}
    \widetilde{\sC}(\mu)\coloneqq  -\frac{\sum_{i\neq j}\log(\epsilon +\|x_i-x_j\|^2)}{2n(n-1)}.
\end{align*}
To approximate the Wasserstein gradient of $\sC$, we consider the derivative of $\widetilde{\sC}$ w.r.t. the norm of the point difference, yielding
\begin{align*}
    \widetilde\grad_\W\sC(\mu)(x_i)\coloneqq -\frac{1}{n-1}\sum_{1\leq j\leq n, j\neq i}\frac{x_i-x_j}{\epsilon+\|x_i-x_j\|^2}.
\end{align*}

For the entropy, inspired by the classical Kozachenko-Leonenko $k$-nearest neighbor estimator \cite{kozachenko1987sample}, we approximate the functional and its Wasserstein gradient via 
\begin{align*}
    \widetilde{\sH}(\mu)&\coloneqq \frac{1}{n} \sum_{i=1}^n \log(\epsilon + \min_{j\neq i}\|x_i-x_j\|^2)/2,\\
    \widetilde\grad_\W\sH(\mu)(x_i) &\coloneqq \frac{x_i-x_{\argmin_{j\neq i}\|x_i-x_j\|}}{\epsilon + \min_{j\neq i}\|x_i-x_j\|^2},
\end{align*}
where the latter expression is obtained like in the Coulomb interaction case. With these estimates of the Wasserstein gradients, we obtain the IGW gradient via $\grad_\IGW\sF(\mu) = \cL_{\bsigma_\mu,\mu}^{-1}[\grad_\W\sF(\mu)]$ using the expression for the inverse mobility operator from \cref{rem:property_of_operator}.

\medskip
For initialization, we consider several shapes of different geometric features and symmetries (see also \cref{appen:additional_experiments}). \cref{fig:gradient_flow_ellipse} illustrates the gradient flow trajectories initiated at a ellipse, and the associated velocity fields, both under IGW and the  2-Wasserstein distance. To remain compliant with the continuity equation $\partial_t\rho_t+\nabla\cdot\rho_t v_t=0$, we use the flow ODE $    \frac{d}{dt} x_t = v_t(x_t)$, for $v_t=-\grad_\IGW\sF(\rho_t)$ or $v_t=-\grad_\W\sF(\rho_t)$. For $\tau=\frac{T}{k}$ and $j=0,\ldots,k-1$, the forward explicit Euler scheme reads
\begin{align*}
    x_i^{t_{j+1}} = x_i^{t_{j}} + \tau v_{t_j}(x_i^{t_{j}}),\quad t_j=j\tau.
\end{align*}
We set $\tau=0.01$ and choose $T$ to ensure numerical stability and prevent the algorithm from diverging. The trajectories for $\sV$, $\sC$, and $\sH$ are illustrated in \cref{fig:gradient_flow_ellipse}, while the functional decay is shown in \cref{fig:functional_decay}.

\begin{figure}[h!]
    \centering
    \begin{subfigure}[t]{1\textwidth}
    \centering
    \includegraphics[width=1\textwidth]{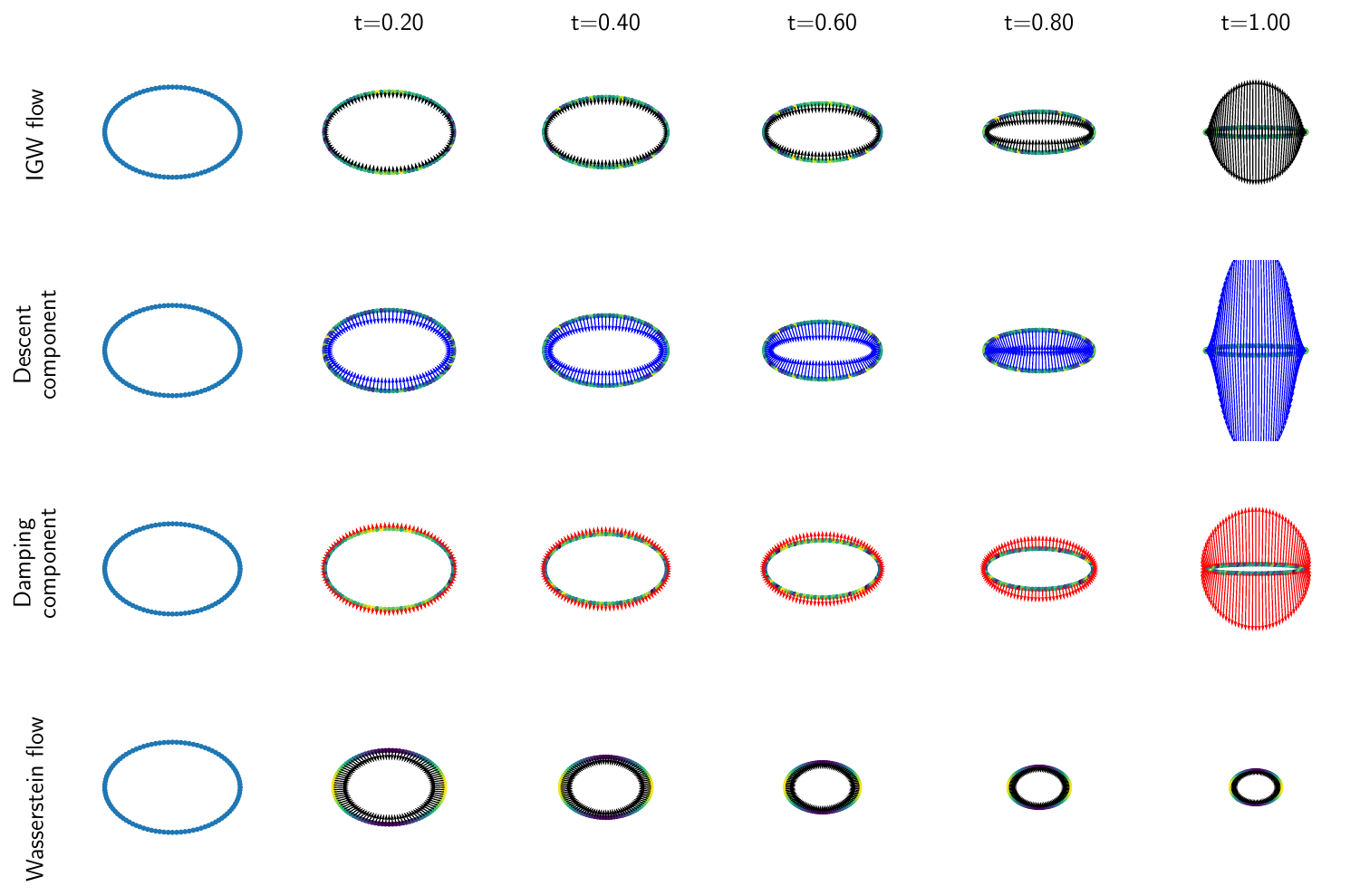}
    \vspace{-2em}
    \caption{Potential}
    \end{subfigure}
\end{figure}

\begin{figure}[H]\ContinuedFloat
    \begin{subfigure}[t]{1\textwidth}
    \centering
    \includegraphics[width=\textwidth]{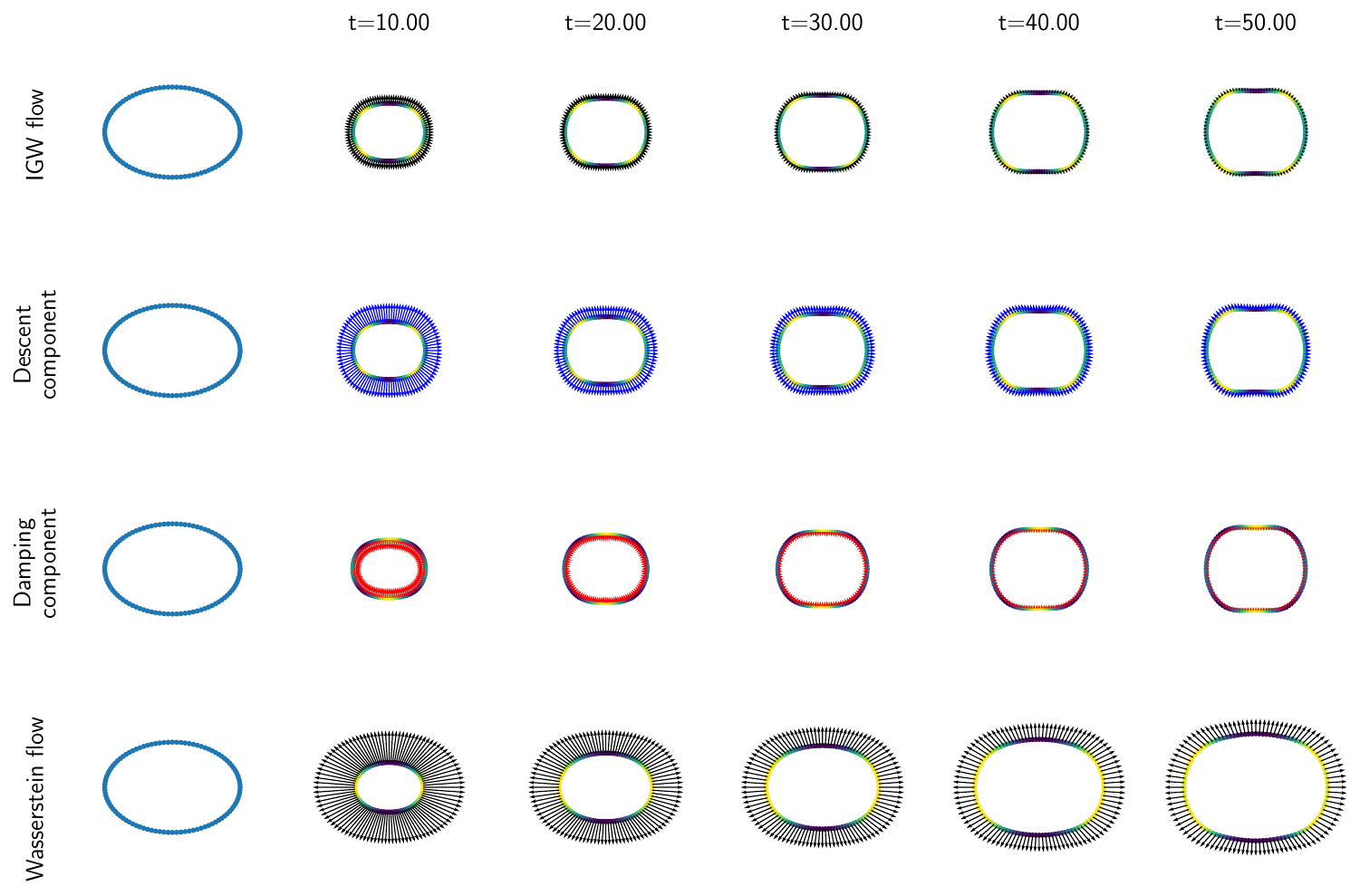}
    \vspace{-0.8em}
    \caption{Interaction}
    \end{subfigure}
    \end{figure}

     \begin{figure}[H]\ContinuedFloat
    \begin{subfigure}[t]{1\textwidth}
    \centering
    \includegraphics[width=\textwidth]{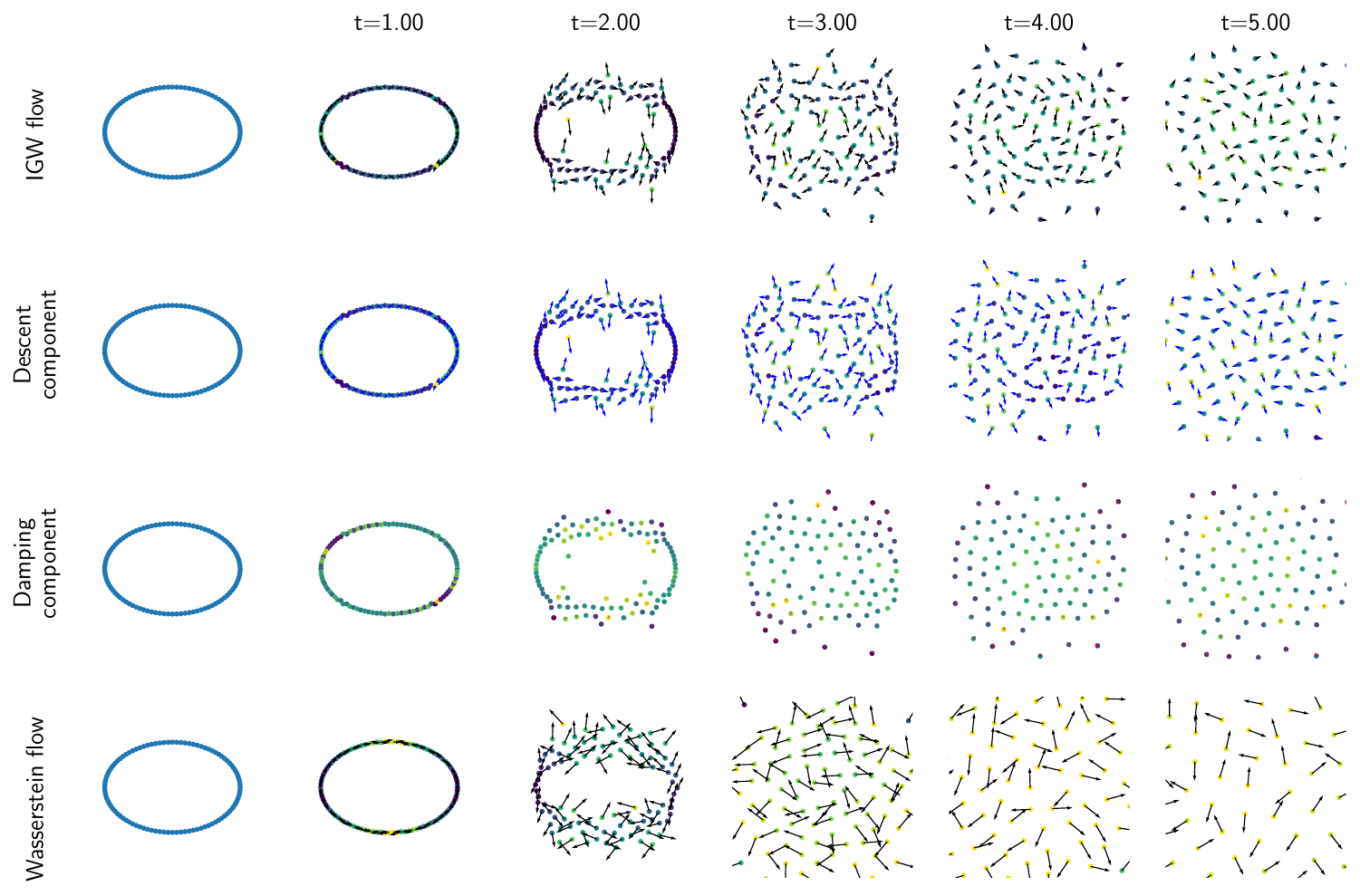} 
    \caption{Entropy}
    \end{subfigure}
     \caption{Gradient flows, starting from an initial distribution given by an ellipse point cloud. For each functional, the first three rows plot 5 snapshots of the vector fields of the overall gradient descent trajectory $\grad_\IGW\sF(\rho)$ (first row), as well as the descent part and the damping components (second and third rows, respectively). For comparison, the Wasserstein gradient flow is shown in the fourth row.}\label{fig:gradient_flow_ellipse}

\end{figure}

\begin{figure}[H]
    \begin{subfigure}[b]{0.33\textwidth}
   \includegraphics[height=1.4in]{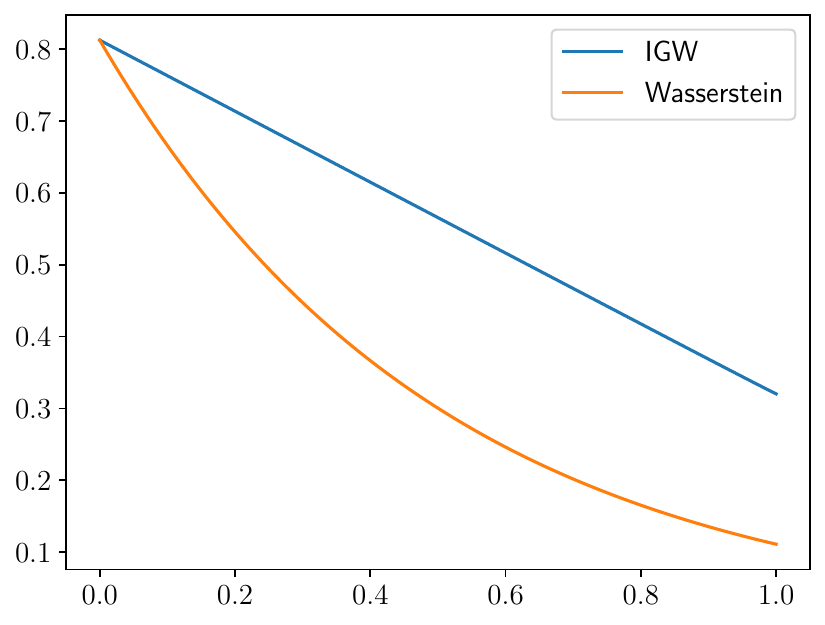}
    \caption{Potential}
    \end{subfigure}
    ~
    \begin{subfigure}[b]{0.33\textwidth}
    \centering
    \includegraphics[height=1.4in]{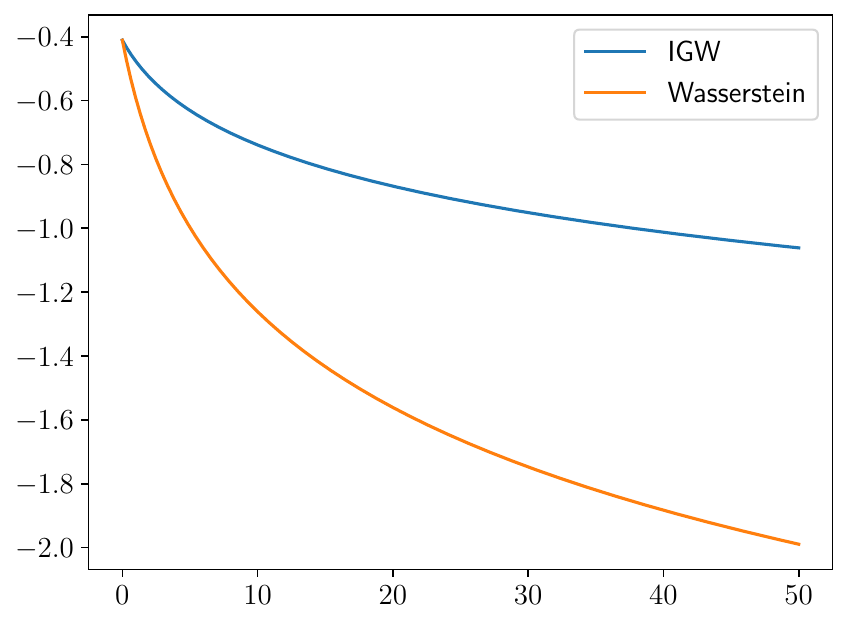}
    \caption{Interaction}
    \end{subfigure}
    ~
    \begin{subfigure}[b]{0.33\textwidth}
    \centering
    \includegraphics[height=1.4in]{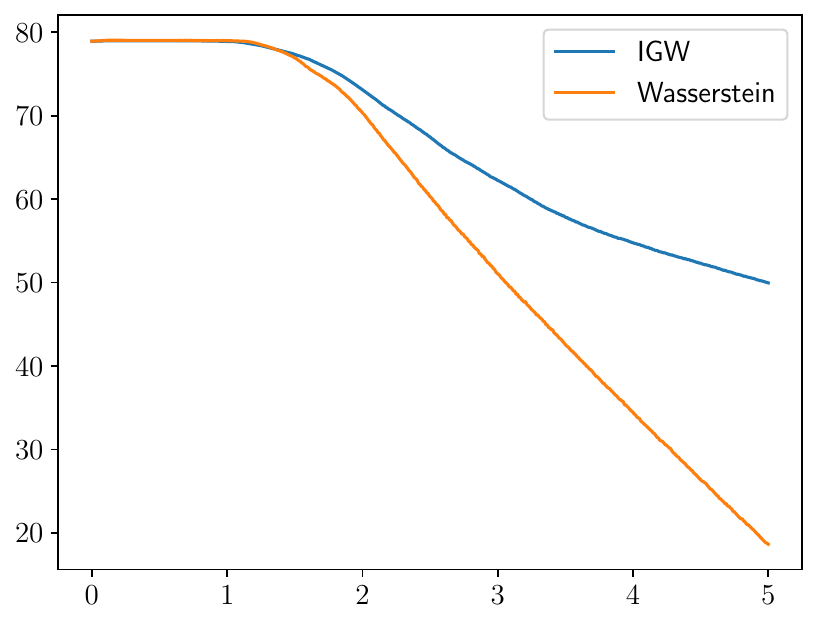}
    \caption{Entropy}
    \end{subfigure}
    \caption{Decay of the functional value. The Wasserstein gradient flow exhibits faster decay, in particular, at an exponential rate for strongly convex functional like the potential energy. The IGW flow usually decays slower, as was discussed in \cref{subsec:IGW_grad}, due to the damping term.}\label{fig:functional_decay}
\end{figure}

Each subfigure in \cref{fig:gradient_flow_ellipse} corresponds to a different functional out of $\sV$, $\sW$, and $\sH$, where the first three rows illustrate different components of the IGW gradient flow, while the fourth row is the Wasserstein flow. For IGW, we present the trajectory of the full gradient flow (which comprises both descent and damping; see \eqref{eq:gradient_flow_functional_derivative_on_trajectory}), as well as those of the descent and damping components separately. The particles themselves are color-coded according to the value of $\llangle \nabla\delta\sF(\rho_t), v_t\rrangle_{L^2(\rho_t;\RR^d)}$. The corresponding velocity fields are shown using arrows pointing out of the particle, colored in black, blue, and red for the full flow, descent, and damping, respectively.

Notice that the damping component of $v_t$ typically points to the opposite direction of the descent components. This echoes the observation from \eqref{eq:gradient_flow_functional_derivative_on_trajectory} that the damping part slows down the decay of the functional in favor of preserving the shape and ensuring alignment of the particles. For potential energy $\sV$ in Subfigure (A), the nonuniform deformation of the shape leads to near-singular covariance matrix, which is likely to cause the PIDE formulation to diverge. This is in line with the observation from \cref{rem:infinite_time_extension}, that preserving nonsingularity of the covariance is necessary for the gradient flow equation from \cref{thm:main} to be defined. For the Coulomb interaction, we note the nonuniform deformation that leads to an initially convex boundary evolving into a nonconvex one. Finally, for entropy we again observe the slower diffusion of particles, with the final (max-entropy) configuration maintaining relative proximity between particles, compared to the Wasserstein gradient flow. 

\cref{fig:functional_decay} shows that the Wasserstein gradient flow shrinks the functional value faster. This is expected since the IGW flow can be viewed as a constrained version, that seeks not only to minimize the functional, but also to avoid distorting the shape. The figure also shows the well-known exponential decay of the Wasserstein gradient flow \cite{ambrosio2005gradient}. The functional decay under the IGW gradient flow presents different profiles and its rate of convergence is left as an appealing question for future research.

\subsection{Flow matching examples}

We next visualize the IGW dynamical formulation from \cref{thm:BB_IGW}:
\begin{equation*}
    \mathsf{d}_{\IGW}(\mu_0,\mu_1)^2 = \min_{\mu\in\{\mu_1,\bI^-_\sharp\mu_1\}} \inf_{\substack{(\rho_t,v_t):\\\partial_t \rho_t + \nabla\cdot  (\rho_t v_t)=0\\
    \rho_0=\mu_0, \rho_1=\mu}} \int_0^1  g_{\rho_t}(v_t,v_t) dt.
\end{equation*}
To identify an IGW minimal particle flow between two point clouds $\mu_0$ and $\mu_1$, we optimize the action over connecting flow fields. For simplicity, we either use symmetric shapes or ones that have a clear minimal path not requiring a reflection. We solve the variational problem above by parametrizing the flow field $v:[0,1]\times\RR^d\to\RR^d$ by a neural network $v^{\mathrm{NN}}$, which has two hidden layers with 50 neurons each and $\tanh$ activations. The flow of $v^{\mathrm{NN}}$ is then computed using the neural ODE framework \cite{chen2018neural}, with $k+1$ steps $\mu_0=\rho_0,\rho_{t_1},\ldots,\rho_{t_k}$. To enforce the boundary condition, we add a maximum mean discrepancy (MMD) \cite{gretton2012kernel} penalty between $\rho_{t_k},\mu_1$ to the cost. The overall cost reads:
\begin{align*}
    \frac{1}{k} \sum_{j=1}^k g_{\rho_{t_j}}(v^{\mathrm{NN}}_{t_j},v^{\mathrm{NN}}_{t_j}) + \lambda \MMD(\rho_{t_k},\mu_1),
\end{align*}
where $\MMD(\mu,\nu)\coloneqq\int k(x,x')d(\mu-\nu)\otimes(\mu-\nu)(x,x')$ and $k$ is sum of Gaussian kernels of bandwidths $\sigma\times\{.0001,.001,.01,.05,.25,1,4,20,100,1000\}$ with $\sigma=0.03$. We set the default regularization parameter to $\lambda=100$, and take $k=10$, epoch number $2000$, and learning rate 0.01, where $\lambda$, epoch and learning rate are then adjusted accordingly for different cases. When $v^{\mathrm{NN}}$ is trained to achieve a small MMD value between $\rho_{t_k},\mu_1$, we view it as a connecting flow field. For comparison, in addition to the IGW action, we also train for the Wasserstein action $\int_0^1\|v_t\|_{L^2(\rho_t;\RR^d)}^2dt$ and present the obtained trajectory in the second row of each example.

\cref{fig:flow_matching}  shows that the IGW flow captures and preserves the shape information along the deforming trajectory, while the Wasserstein flow only seeks to minimize the transportation cost. In particular, for the first experiment, between a cat shape and its rotation, the IGW flow identifies a continuous curve of rotations. Furthermore, the IGW distance value between any point along the trajectory and the target shape remains small, suggesting that the shape is preserved throughout (recall the invariance of IGW to orthogonal transformation). In contrast, the Wasserstein flow significantly distorts the shape; for example, in the first example, part of the cat's tail is transported to its head.

\begin{figure}[H]
    \centering
    \includegraphics[width=0.87\textwidth]{fig/flow_matching/cat.pdf}

\includegraphics[width=0.87\textwidth]{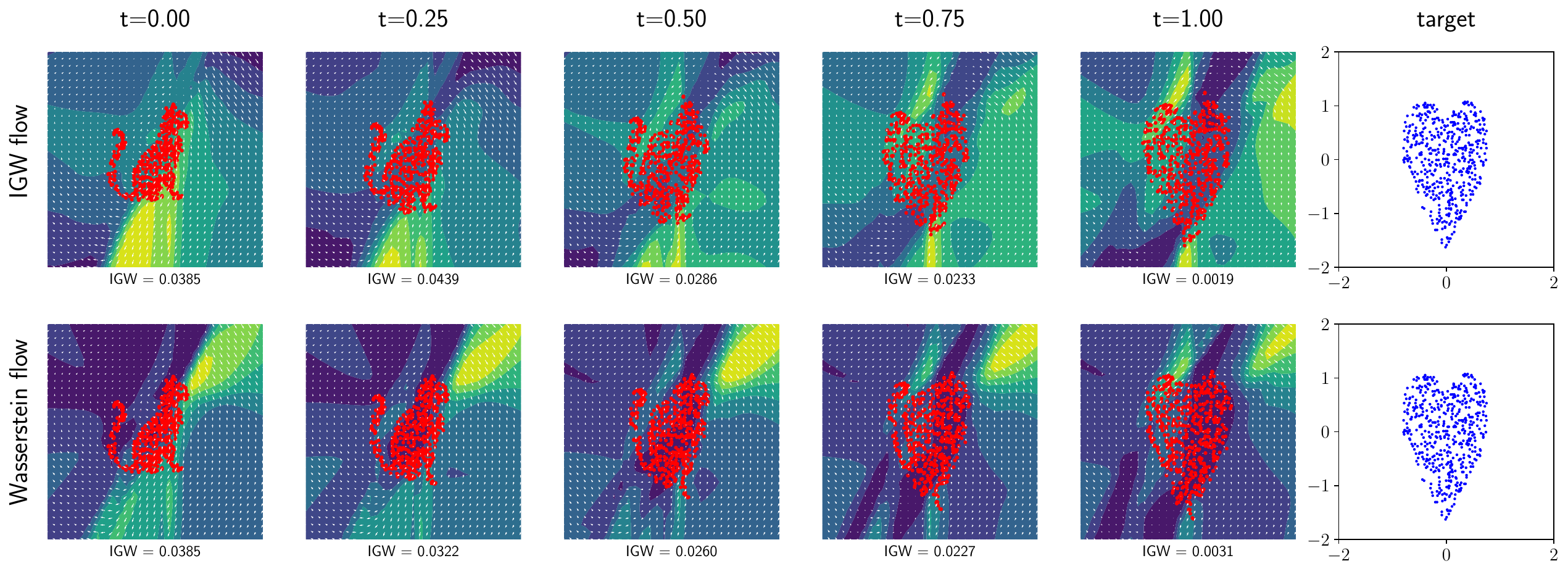}

\includegraphics[width=0.87\textwidth]{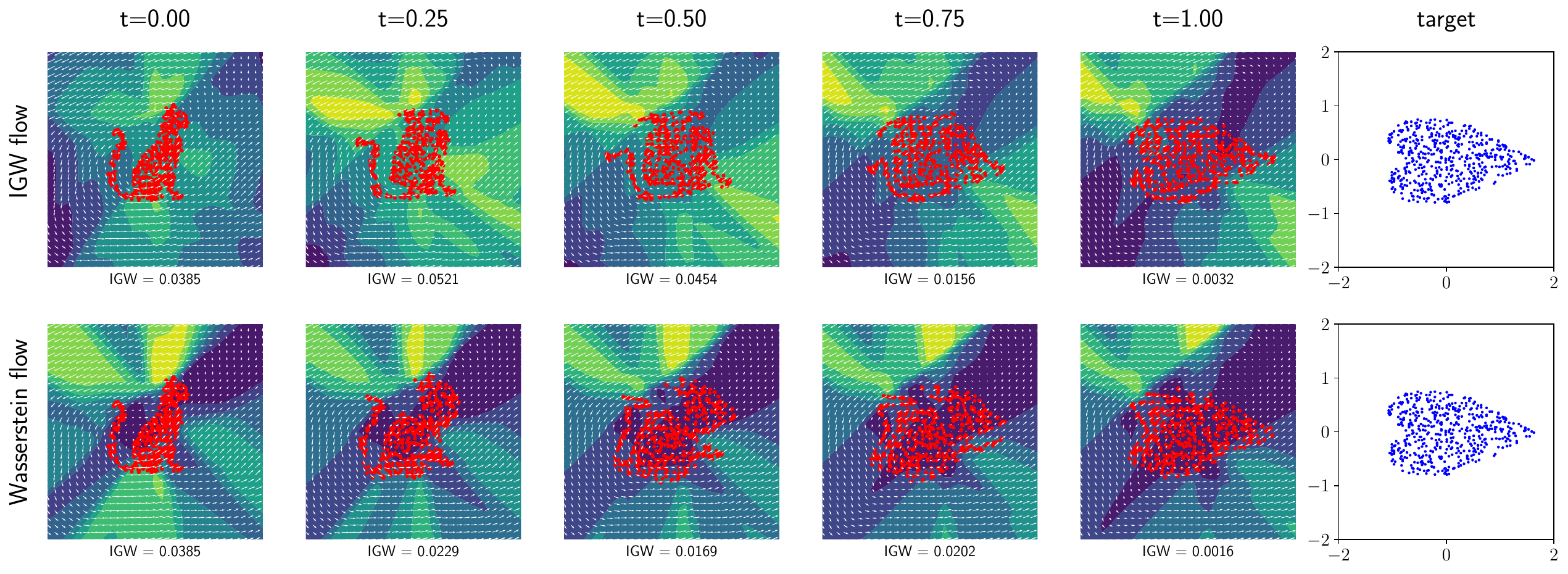}
    \caption{Flow matching between: (i) a cat shape and its rotation, (ii) a cat and a heart, (iii) a cat and a rotated heart. Each example comprises two rows, presenting the IGW and Wasserstein flows, respectively. In Example (i), the IGW flow identifies the rotation while the Wasserstein flow distorts the shape. Both flows present a similar behavior in Example (ii). In Example (iii), the IGW flow seems to follow a similar path to that of Example (ii), where the heart is not rotated, plus an additional rotation operation. The flow fields are plotted as vector field (white arrows), and the contour plot is the extrapolated local cost, i.e., we evaluate $v^{\mathrm{NN}}$ on a surrounding grid, and compute $\llangle v^{\mathrm{NN}}_{t_j}(x)), \cL_{\bsigma_{\rho_{t_j}},\rho_{t_j}}[v^{\mathrm{NN}}](x)\rrangle$. From the contour plot we see that the flow tends to transport the distribution to the low energy part.}\label{fig:flow_matching}
\end{figure}

To further illustrate the ability of the IGW flow to identify rotation paths, we present in \cref{fig:flow_matching_planes} a more complex matching between 3D shapes. We consider two different biplane models from the Princeton Shape benchmark \cite{shilane2004princeton}, represented by point clouds with $n=800$ samples. The IGW flow maintains low distortion along the trajectory and identifies a near-rotation path, as the source and target shapes are similar. The Wasserstein flow, on the other hand, again significantly distorts the shape. This suggests that the IGW flow can find low distortion paths between shapes that are similar up to orthogonal transformations, even when the orthogonal group is not amenable for simple parametrization as in 2D.

\begin{figure}[H]
    \centering
    \includegraphics[width=0.99\textwidth]{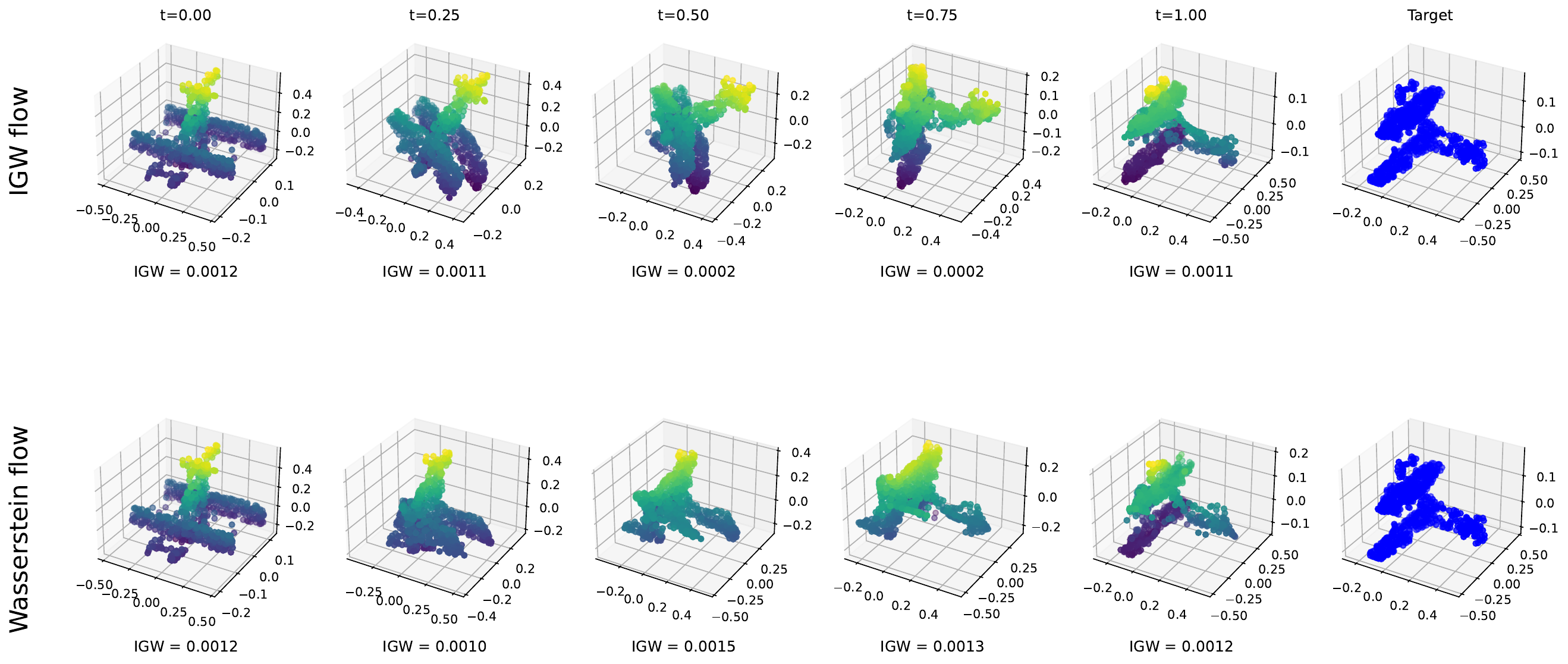}
    \caption{Flow matching between 3D biplanes of similar (yet nonidentical) shape but different orientation.}\label{fig:flow_matching_planes}
\end{figure}

\section{Concluding Remarks}
This work presented a detailed study of gradient flows and Riemannian structure in the IGW geometry. Our main result for the gradient flow addressed the convergence and limiting characterization of the IGW GMM. The analytical challenge arises from the lack of convexity of IGW along geodesics, compared to classical OT. To overcome this, we utilize the duality formula to linearize the distance and calculate the Fr\'echet subdifferential, which enables an explicit construction of the GMM sequence. Our construction results in a continuous-time limit that follows the continuity equation, thereby instantiating the IGW flow in $\cP_2(\RR^d)$ (as opposed to a quotient space). The flow follows a velocity field obtained from a transformation of the Wasserstein gradient via the inverse mobility operator. The transformed velocity introduces a damping component to the descent movement, whose role is to preserve the global structure of the distribution. 
We then leverage the identified differential structure to derive the Riemannian metric tensor that gives rise to the intrinsic IGW geometry. Specifically, the proposed Riemannian structure induces the (globally defined) intrinsic IGW metric, resulting in a Benamou-Brenier-like dynamical formulation. Overall, our results open the door to viewing IGW as a transport problem from the flow perspective. We believe that this new geometry on $\cP_2(\RR^d)$, which exhibits marked differences from the Wasserstein case, may be of wider interest.

Future research directions stemming from this work are abundant. Firstly, deriving convergence rates of the functional along the IGW gradient flow is an appealing endeavor. It would also be interesting to consider other similarity functions $c_\cX(x,x')$, in addition to the $\llangle x,x'\rrangle$ used for IGW. In particular, treating the full $(p,q)$-GW distance (see \eqref{eq:GW_intro}) from the flow and Riemannian geometry perspective is an important next step, which seems to be beyond the reach of our current analysis techniques. Even for the quadratic GW distance, where $p=q=2$, our analysis does not directly apply due to the absence of Gromov-Monge maps. We also ask what would be the geometry that other GW discrepancies will induce? For similarity functions with different invariance properties to the inner product cost considered herein, e.g., if $c_\cX(f(x),f(x'))=c_\cX(x,x')$ for any $f$ in an $\cX$ automorphism group $G$, deriving the differential and Riemannian structure seems like a fascinating task. We highlight that the linearity of the inner product kernel enabled many aspects of our analysis, with properties of its invariance group $\Od$, such as connectivity and smoothness, playing a crucial role (e.g., $\bI^-$ in \cref{thm:BB_IGW}). New analysis techniques will be needed to treat more general cases. 

Another interesting research direction involves the mobility operator and a better understanding of its properties. We have observed that the inverse mobility serves to align the particles during the movement and preserve the global structure. A deeper understanding involves studying the operator for its fixed points, eigenvalues, eigenfunctions, etc. Indeed, an eigen-decomposition of the mobility operator can shed new light on its dominant components and reinforce our understanding of the discovered descent and damping terms in the IGW gradient. It would also be interesting to see how the mobility operator generalizes to other GW distances, as discussed above.

\section{Proofs for Section \ref{sec:background}}\label{appen:background_proofs}

\subsection{Proof of \cref{lem:F2Dual}}\label{appen:F2Dual_proof}
    The variational form \eqref{eq:F2Decomp} was established in \cite[Lemma 2]{rioux2023entropic}, so we only prove the correspondence between $\bA^\star$ and $\pi^\star$. The sufficient part is straightforward. For necessary part, we first put the $\inf$ over $\Pi(\mu,\nu)$ in $\mathsf{IOT}$ outside, which gives a joint minimizing problem
    \begin{align*}\numberthis\label{eq:igw_joint_minimization}
        \inf_{\bA\in\RR^{d_x}\times \RR^{d_y}} \inf_{\pi\in\Pi(\mu,\nu)}8\|\mathbf{A}\|_\F^2+ \int -8x^{\intercal}\mathbf{A}y d\pi(x,y),
    \end{align*}
    where the functional is strongly convex in $\bA$. Thus for any optimal pair $(\bA^\star,\pi^\star_{\bA^\star})$, $\bA^\star$ minimizes $8\|\mathbf{A}\|_\F^2+ \int -8x^{\intercal}\mathbf{A}y d\pi^\star_{\bA^\star}(x,y)$, which by convexity is achieved at $\bA^\star=\frac{1}{2}\int xy^\intercal\pi^\star_{\bA^\star}(x,y)$. Plugging this form back, we obtain the minimum of the joint minimization problem as
    \begin{align*}
        &8\left\|\frac{1}{2}\int xy^\intercal\pi^\star_{\bA^\star}(x,y)\right\|_\F^2+ \int -8x^{\intercal}\left(\frac{1}{2}\int xy^\intercal\pi^\star_{\bA^\star}(x,y)\right)y d\pi^\star_{\bA^\star}(x,y)\\
        &=-2 \left\|\int xy^\intercal\pi^\star_{\bA^\star}(x,y)\right\|_\F^2\\
        &=-2 \int \llangle x,x'\rrangle\llangle y,y'\rrangle d\pi^\star_{\bA^\star}\otimes \pi^\star_{\bA^\star}(x,y,x',y').
    \end{align*}
    Since the minimum equals $\sF_2(\mu,\nu)$, comparing the form of the functional we conclude that $\pi^\star_{\bA^\star}$ is optimal for $\IGW(\mu,\nu)$.\qed

\subsection{Proof of \cref{lem:gromov_monge}}\label{appen:gromov_monge_proof}
    By assumption, we have an optimizer $(\bA^\star,\pi^\star)$ of the joint optimization
    \[
        \sF_2(\mu,\nu)=\inf_{\bA\in\RR^{d_x}\times \RR^{d_y}} \inf_{\pi\in\Pi(\mu,\nu)}8\|\mathbf{A}\|_\F^2+ \int -8x^{\intercal}\mathbf{A}y d\pi(x,y),
    \]
    and by \cref{lem:F2Dual}, $\pi^\star$ is optimal for $\mathsf{IOT}_{\bA^\star}(\mu,\nu)$. We have
    \begin{align*}
        \mathsf{IOT}_{\bA^\star}(\mu,\nu) &  = \inf_{\pi\in\Pi(\mu,\nu)}\int -8x^\intercal\bA^\star y d\pi(x,y)\\
        &\stackrel{(a)}= \inf_{\pi'\in\Pi(\mu,(8\bA^\star)_\sharp\nu)}\int -x^\intercal z d\pi'(x,z)\\
        &\stackrel{(b)}= \int -x^\intercal T^{\mu\to{(8\bA^\star)_\sharp\nu}}(x) d\mu(x)\\
        &\stackrel{(c)}= \int -8x^\intercal \bA^\star T^\star(x) d\mu(x)
    \end{align*}
    where (a) uses the bijection of the coupling set, (b) follows directly from Brenier's theorem (\cref{thm:brenier}), and (c) is by the definition of $T^\star$, while noting that $T^\star_\sharp\mu=\nu$. Observe that $(\id,8\bA^\star)_\sharp\pi^\star$ minimizes the RHS of (a), which by \cref{thm:brenier} also equals $(\id,T^{\mu\to(8\bA^\star)_\sharp\nu})_\sharp\mu$. Thus, we conclude that $\pi^\star = (\id,T^\star)_\sharp\mu$ and $\bA^\star = \frac{1}{2}\int x\,T^\star(x)^\intercal d\mu(x)$.\qed

\section{Proofs for Section \ref{sec:IGW_structure}}\label{appen:IGW_structure_proofs}

\subsection{Proof of \cref{prop:IGW_pmetric}}\label{appen:prop:IGW_pmetric_proof}
 Symmetry, non-negativity, and the sufficient condition for nullification are straightforward. For the triangle inequality, take $\mu,\nu,\rho\in\cP_2(\cH)$, let $\pi_1$ and $\pi_2$ be optimal for $\IGW(\mu,\rho)$ and $\IGW(\nu,\rho)$, respectively, and invoke the glueing lemma \cite[Lemma 7.6]{villani2003topics} to obtain $\pi\in\cP(\cH^3)$ with $\pi(\cdot,\cdot,\cH)=\pi_1$ and $\pi(\cH,\cdot,\cdot)=\pi_2$. Identifying the IGW distance as an $L^2$ norm, we use Minkowski inequality to deduce:
    \[
    \IGW(\mu,\nu)\leq \left(\int |\llangle x,x'\rrangle-\llangle z,z'\rrangle+\llangle z,z'\rrangle-\llangle y,y'\rrangle|^2 d\pi\otimes\pi\right)^{\frac{1}{2}}\leq \IGW(\mu,\rho)+\IGW(\nu,\rho).
    \]
    Lastly, for the necessary condition for nullification, suppose $\IGW(\mu,\nu)=0$ and let $\pi\in\Pi(\mu,\nu)$ be an optimal coupling. Our argument relies on viewing $\supp(\pi)$ as a correspondence set---see Definition 2.1 in \cite{memoli2011gromov}. For any point $x\in\supp(\mu)$, we show that there is a unique point  $y\in\supp(\nu)$ such that $(x,y)\in\supp(\pi)$, which concludes the proof, as it directly implies that $\pi$ is supported on a graph of a unitary isomorphism. Suppose there is more than one such point, and denote them by $y_1\neq y_2$. Since IGW cost nullifies, $\llangle x,x'\rrangle = \llangle y,y'\rrangle$ for $\pi\otimes\pi$-a.s. $(x,y),(x',y')$, and hence by continuity of distortion function $|\llangle x,x'\rrangle-\llangle y,y'\rrangle|$, we may extend this to all $(x,y),(x',y')\in\supp(\pi)$.
    Consequently, 
    \[|\llangle x,x\rrangle-\llangle y_1,y_1\rrangle|^2 = |\llangle x,x\rrangle-\llangle y_2,y_2\rrangle|^2 = |\llangle x,x\rrangle-\llangle y_1,y_2\rrangle|^2 =0,\]
    and by the Cauchy-Schwarz inequality, we obtain $y_1=y_2$. Hence $\pi$ is supported on the graph of a mapping, denoted by $T$, and $\llangle x,x'\rrangle = \llangle T(x),T(x')\rrangle$. \qed
    
\subsection{Proof of \cref{lem:symmetrization}}\label{appen:lem:symmetrization_proof}

    By definition, every $\bO\in\cO_{\mu,\nu}$ corresponds to an optimal IGW coupling for $(\mu,\nu)$. For $\bO$ and the corresponding coupling $\tilde\pi$, denote its SVD as $\int xy^\intercal d\tilde\pi=\bP\blambda\bQ^\intercal$. Note that $\pi^\star\coloneqq(\id,\bO)_\sharp\tilde\pi$ is an optimal IGW coupling for $(\mu,\bO_\sharp\nu)$, and $\int xy^\intercal d\pi^\star = \int xy^\intercal \bO^\intercal d\tilde\pi = \bP\blambda\bQ^\intercal\bQ\bP^\intercal=\bP\blambda\bP^\intercal$, which is symmetric, and the eigenvalues are the same as the singular values of $\int xy^\intercal d\pi^\star$, hence nonnegative. By proof of \eqref{eq:F2Decomp}, the matrix $\bA^\star= \frac{1}{2}\big(\int xy^\intercal d\pi^\star\big) $ achieves optimality in the dual form in \eqref{eq:F2Decomp}. \qed

\subsection{Proof of \cref{lem:equivalence_igw_w}}\label{appen:lem:equivalence_igw_w_proof}
    The first inequality follows from a straightforward application of Cauchy-Schwarz inequality. For any coupling $\pi\in\Pi(\mu,\nu)$, we have
    \begin{align*}
        &\int \left|\llangle x,x'\rrangle-\llangle y,y'\rrangle\right|^2d\pi\otimes\pi(x,y,x',y')\\
        &= \int \left|\llangle x,x'-y'\rrangle+\llangle x-y,y'\rrangle\right|^2 d\pi\otimes\pi(x,y,x',y')\\
        &\leq \int \left(2\llangle x,x'-y'\rrangle^2+2\llangle x-y,y'\rrangle^2\right) d\pi\otimes\pi(x,y,x',y')\\
        &\leq \int \left(2 \|x\|^2\|x'-y'\|^2+2\|x-y\|^2\|y'\|^2 \right)d\pi\otimes\pi(x,y,x',y')\\
        & = \int \left(2 \|x\|^2+2\|y\|^2\right)d\pi(x,y)\int\|x-y\|^2 d\pi(x,y).
    \end{align*}

\medskip

    We now move to establish the second inequality. By \cref{lem:symmetrization} and the invariance of IGW to orthogonal transformations, it is enough to prove the claim for $\mu,\nu$, such that there is an optimal IGW coupling $\pi^\star$ with $\int xy^\intercal d\pi^\star$ being PSD. The argument largely mirrors the steps from the proof of \cite[Lemma 4.2]{zhang2024gromov}, but is presented in full for completeness. Let $\int xy^\intercal d\pi^\star(x,y)=\bP\blambda^\star\bP^\intercal$ is the diagonalization of the covariance matrix, with eigenvalues $\lambda_1,\ldots,\lambda_d\geq0$. Set $\tilde{\pi} = (\bP^\intercal,\bP^\intercal)\pi^\star$, and denote by $a_1,\ldots,a_d$ and $b_1,\ldots,d_b$ the diagonal entries of $\bP^\intercal\bsigma_{\mu}\bP$ and $\bP^\intercal\bsigma_\nu\bP$, respectively, all of which are positive as $\bsigma_{\mu}$ and $\bsigma_\nu$ are both full rank. Note that the IGW problem can be reformulated and lower bounded as
    \begin{align*}
        \IGW(\mu,\nu)^2 &= \inf_{\pi\in \Pi(\mu,\nu)} \|\bsigma_{\mu}\|_\F^2+\|\bsigma_\nu\|_\F^2-2\left\|\int xy^\intercal d\pi(x,y)\right\|_\F^2\\
        & = \|\bP^\intercal\bsigma_{\mu}\bP\|_\F^2+\|\bP^\intercal\bsigma_\nu\bP\|_\F^2-2\|\blambda^\star\|_\F^2\\
        &\geq \sum_i a_i^2 +b_i^2-2\lambda_i^2.\numberthis\label{eq:IGW_primal_decomp_lower_bound}
    \end{align*}
    Observing that $a_i+b_i-2\lambda_i\geq 0$, as $\int\left( x_i^2+y_i^2-2x_iy_i \right)d \tilde\pi \geq0$, we further have
    \begin{align*}
    \sqrt{\frac{a_i^2+b_i^2}{2}}\geq \frac{a_i+b_i}{2}\geq \lambda_i,\quad\forall i=1,\ldots, d,
    \end{align*}
    which implies
    \begin{align*}
    \IGW(\mu,\nu)^2&\geq  2\sum_{i=1 }^d\left(\sqrt{\frac{a_i^2+b_i^2}{2}}+\lambda_i\right)\left(\sqrt{\frac{a_i^2+b_i^2}{2}}-\lambda_i\right)\\
    &\geq  2\min_{i=1,\ldots,d} \sqrt{\frac{a_i^2+b_i^2}{2}} \cdot \sum_{i=1}^d \left(\sqrt{\frac{a_i^2+b_i^2}{2}}-\lambda_i\right).
    \end{align*} 
    Having that, we compute
    \begin{align*}
    \W_2(\mu,\nu)^2 &= \inf_{\pi\in\Pi(\mu,\nu)} \int \|x- y\|^2 d\pi(x,y)\\
    &\leq \int \| x-y\|^2 d\pi^\star(x,y)\\
    & = \int \| x-y\|^2 d\tilde{\pi}(x,y)\\
    & = \sum_{i=1 }^d a_i + b_i -2\lambda_i\\
    &\leq 2\sum_{i=1 }^d\left(\sqrt{\frac{a_i^2+b_i^2}{2}}-\lambda_i\right)\\
    &\leq \frac{\IGW^2(\mu,\nu)}{\min_i \sqrt{\frac{a_i^2+b_i^2}{2}}}.
    \end{align*}
    Noting that $\lambda_{\mathrm{min}}(\bsigma_\mu)\leq a_i$ and $\lambda_{\mathrm{min}}(\bsigma_\nu)\leq b_i$, for all $i=1,\ldots,d$, concludes the proof.\qed

\subsection{Proof of \cref{prop:IGW_curve_realization}}\label{appen:prop:IGW_curve_realization_proof}
    For simplicity we suppose $L=1$. Fix $\bar{\rho}_n(0)=\rho_0$. We construct a sequence of curves $\bar{\rho}_n$, $n=0,1,\ldots$, as follows. For $t^n_k=k/2^n$, where $k=1,\cdots,2^n$, recursively set $\bar{\rho}_n(t^n_k)\coloneqq\bO_\sharp\rho_{t^n_k}$ for some $\bO\in\cO_{\bar{\rho}_n(t^n_{k-1}),\rho_{t^n_k}}$, so that by \cref{lem:equivalence_igw_w}
    \[\W_2\big(\bar{\rho}_n(t^n_{k-1}),\bar{\rho}_n(t^n_k)\big)\leq \frac{1}{\sqrt{c}} \IGW\big(\bar{\rho}_n(t^n_{k-1}),\bar{\rho}_n(t^n_k)\big).
    \]
    Having that, we set $\bar{\rho}_n(t)$, for all $t\in (t^n_{k-1},t^n_k)$ , as the $\W_2$-displacement interpolation between the edge points $\bar{\rho}_n(t^n_{k-1})$ and $\bar{\rho}_n(t^n_k)$. As the above bound implies $\W_2\big(\bar{\rho}_n(t^n_{k-1}),\bar{\rho}_n(t^n_k)\big)\leq \frac{1}{\sqrt{c}2^n}$, we see that the piecewise geodesic $\bar\rho_n$ is $\frac{1}{\sqrt{c}}$-Lipschitz.
    
    Now we use a version of Arzel\`a–Ascoli theorem for possibly noncompact spaces, which is derived as part of the following argument, for completeness. First note that for any dyadic number $t$, $\{\bar{\rho}_n(t)\}_{n=0}^\infty$ belongs to a compact (w.r.t. $\W_2$) family $\Od_\sharp\rho_t$, up to finitely many $n$ values. Number all dyadic numbers into a sequence $\{s_m\}_{m\in\NN}$. Since for any $s_m$, $\{\bar{\rho}_n(s_m)\}_n$ all belong to $\Od_\sharp\rho_{s_m}$, we may pick a subsequence of $\bar{\rho}_n$ that is convergent at $s_1$, and from this subsequence we draw a further subsequence that converges at $s_2$, and so on. Using the usual diagonal trick, i.e. picking the $m$-th term from the subsequence of $s_m$ as above, we find a subsequence $n_k$ that converges at all dyadic numbers.
    Denote by $\bar{\rho}$ the limit, which is currently defined only over dyadic numbers, and satisfies 
    \[\W_2\big(\bar{\rho}(s_{m_1}), \bar{\rho}(s_{m_2})\big) = \lim_{n_k}\W_2\big(\bar{\rho}_{n_k}(s_{m_1}), \bar{\rho}_{n_k}(s_{m_2})\big)\leq \frac{|s_{m_1}-s_{m_2}|}{\sqrt{c}}.\]
    To extend it to $[0,1]$, for each $t\in[0,1]$ we fix a sequence $\{s^t_m\}_m$ of dyadic numbers with $s^t_m\to t$. Clearly $\bar{\rho}(s^t_m)$ is a $\W_2$-Cauchy sequence, hence we have a $\W_2$ limit, denoted as $\bar{\rho}(t)$, with 
    \[\W_2\big(\bar{\rho}(t_1),\bar{\rho}(t_2)\big) = \lim_{m} \W_2\big(\bar{\rho}(s^{t_1}_m),\bar{\rho}(s^{t_2}_m)\big) \leq \lim_{m} \frac{|s^{t_1}_m-s^{t_2}_m|}{\sqrt{c}} = \frac{|t_1-t_2|}{\sqrt{c}},\quad \forall t_1,t_2\in[0,1].\]
    This implies that $\bar{\rho}$ is $\frac{1}{\sqrt{c}}$-Lipschitz. We next show that $\bar{\rho}_{n_k}$ converges uniformly to $\bar{\rho}$. Fix $\epsilon>0$. For any dyadic number $s$, there is positive integer $K_s$ such that $\W_2\big(\bar{\rho}(s),\bar{\rho}_{n_k}(s)\big)<\epsilon/3$ for all $k\geq K_s$. Since $\bar{\rho}$ and $\bar{\rho}_{n_k}$ are $\frac{1}{\sqrt{c}}$-equicontinuous, for $t\in(s-\epsilon\sqrt{c}/3,s+\epsilon\sqrt{c}/3)\cap[0,1]$, $\W_2\big(\bar{\rho}(t),\bar{\rho}_{n_k}(t)\big)\leq \epsilon$. Clearly, $\{(s-\epsilon\sqrt{c}/3,s+\epsilon\sqrt{c}/3):\,s\mbox{ is dyadic}\}$ is an open cover of $[0,1]$, which has a finite subcover. Since each open interval is associated with an integer $K_s$, we may pick the largest among the finite covers, denoted as $K$, and conclude that $\W_2\big(\bar{\rho}(t),\bar{\rho}_{n_k}(t)\big)\leq \epsilon$ for all $t\in[0,1]$ when $k\geq K$, which yields uniform convergence.
    With this, it only remains show that $\bar{\rho}$ and $\rho$ are equivalent in $\IGW$.

    For a fixed $t\in[0,1]$, denote $\mu=\rho_t$, and let $t_k$ be a sequence of dyadic numbers that converges to $t$. Consider $\mu_k\coloneqq\bar{\rho}_{t_k}$, which is Cauchy in $\W_2$ and converges to $\bar{\rho}_{t}$. Take $\bO_k\in\cO_{\mu_k,\mu}$. Since $\Od$ is compact under the operator norm, we pick a subsequence $t_{k_\ell}$ such that $\bO_{k_\ell}$ converges in the operator norm to a limit, which we denote by $\bar{\bO}$. Now we show that $\bar{\bO}_\sharp\mu$ is limit of this subsequence $\mu_{k_\ell}$ in $\W_2$, which implies that $\bar{\rho}_{t}=\bar{\bO}_\sharp\rho_{t}$. Compute
    \begin{align*}
        \W_2(\bar{\bO}_\sharp\mu,\mu_{k_\ell}) &\leq \W_2\big(\bar{\bO}_\sharp\mu, (\bO_{k_\ell})_\sharp\mu\big)  +\W_2\big((\bO_{k_\ell})_\sharp\mu,\mu_{k_\ell}\big) \\
        &\leq \|\bar{\bO}- \bO_{k_\ell}\|_\op \sqrt{M_2(\mu)}  + \frac{1}{\sqrt{c}}\IGW(\mu,\mu_{k_\ell}) \\
        & = \|\bar{\bO}- \bO_{k_\ell}\|_\op \sqrt{M_2(\mu)}  + \frac{1}{\sqrt{c}}\IGW\big(\rho_t,\bar{\rho}_{t_{k_\ell}}\big)\\
        & = \|\bar{\bO}- \bO_{k_\ell}\|_\op \sqrt{M_2(\mu)}  + \frac{1}{\sqrt{c}}\IGW\big(\rho_t,\rho_{t_{k_\ell}}\big)
    \end{align*}
    which converges to 0. Thus $\bar{\bO}_\sharp \mu = \bar{\rho}_t$, so $\bar{\rho}$ and $\rho$ are equivalent in $\IGW$.\qed

\subsection{Proof of \cref{lem:local_convexity_igw}}\label{appen:lem:local_convexity_igw_proof}
We first glue the optimal couplings for $(\mu_0,\mu_1)$ and $(\mu_0,\mu_2)$ into a 3-coupling $\pi\in\Pi(\mu_0,\mu_1,\mu_2)$.
Now consider the generalized geodesic $\nu_t$ from $\mu_1$ to $\mu_2$ w.r.t. $\mu_0$, we have
    \begin{align*}
        &\IGW(\nu_t,\mu_0)^2 \\
        &\leq\int \left|\llangle (1-t)y+tz,(1-t)y'+tz'\rrangle-\llangle x,x'\rrangle\right|^2d\pi\otimes\pi(x,y,z,x',y',z')\\
        & = \int \left|(1-t)\llangle y,y'\rrangle
        +t\llangle z,z'\rrangle + t(t-1)\llangle y-z,y'-z'\rrangle
         -\llangle x,x'\rrangle \right|^2d\pi\otimes\pi(x,y,z,x',y',z')\\
         & = \int \left|(1-t)\llangle y,y'\rrangle
        +t\llangle z,z'\rrangle-\llangle x,x'\rrangle\right|^2 \\
        & \qquad+ 2t\llangle y-z,y'-z'\rrangle (\llangle x,x'\rrangle
        -\llangle y,y'\rrangle) d\pi\otimes\pi(x,y,z,x',y',z') +O(t^2)\\
        & = \int (1-t)\left|\llangle y,y'\rrangle-\llangle x,x'\rrangle\right|^2
        +t\left|\llangle z,z'\rrangle-\llangle x,x'\rrangle\right|^2 - t(1-t)\left|\llangle y,y'\rrangle-\llangle z,z'\rrangle\right|^2 \\
        &\qquad+ 2t\llangle y-z,y'-z'\rrangle (\llangle x,x'\rrangle
        -\llangle y,y'\rrangle) d\pi\otimes\pi(x,y,z,x',y',z') +O(t^2)\\
        & = (1-t)\IGW(\mu_1,\mu_0)^2 + t\IGW(\mu_2,\mu_0)^2 + \int -t \left|\llangle y,y'\rrangle-\llangle z,z'\rrangle\right|^2 \\
        &\qquad+ 2t\llangle y-z,y'-z'\rrangle (\llangle x,x'\rrangle
        -\llangle y,y'\rrangle) d\pi\otimes\pi(x,y,z,x',y',z') +O(t^2)\\
        & \leq (1-t)\IGW(\mu_1,\mu_0)^2 + t\IGW(\mu_2,\mu_0)^2 - t\int \left|\llangle y,y'\rrangle-\llangle z,z'\rrangle\right|^2d\pi\otimes\pi(x,y,z,x',y',z')  \\
        &\qquad+ 2t\sqrt{\int (\llangle y-z,y'-z'\rrangle)^2d\pi\otimes\pi}\sqrt{\int (\llangle x,x'\rrangle
        -\llangle y,y'\rrangle)^2 d\pi\otimes\pi} +O(t^2)\\
        &\leq (1-t)\IGW(\mu_1,\mu_0)^2 + t\IGW(\mu_2,\mu_0)^2 - t\int \left|\llangle y,y'\rrangle-\llangle z,z'\rrangle\right|^2d\pi\otimes\pi(x,y,z,x',y',z')\\
        &\qquad+ 2t \IGW(\mu_0,\mu_1)\int \|y-z\|^2 d\pi(x,y,z) + O(t^2).
    \end{align*}
    Now we expand $\int \|y-z\|^2 d\pi(x,y,z)\leq \int 2\|x-y\|^2 + 2\|x-z\|^2 d\pi(x,y,z)$. Note that by the same argument from proof of \cref{lem:equivalence_igw_w}, since $\int xy^\intercal d\pi(x,y,z)$, $\int xz^\intercal d\pi(x,y,z)$ are PSD and $\lambda_{\min}(\bsigma_{\mu_0})\geq c$, we have $\frac{c}{\sqrt{2}}\int \|x-y\|^2 d\pi(x,y,z) \leq \int \left|\llangle x,x'\rrangle-\llangle y,y'\rrangle\right|^2d\pi\otimes\pi(x,y,z,x',y',z')$, as well as $\frac{c}{\sqrt{2}}\int \|x-z\|^2 d\pi(x,y,z) \leq \int \left|\llangle x,x'\rrangle-\llangle z,z'\rrangle\right|^2 d\pi\otimes\pi(x,y,z,x',y',z')$. Plugging back we have 
    \begin{align*}
        &\IGW(\nu_t,\mu_0)^2 \\
        &\leq (1-t)\IGW(\mu_1,\mu_0)^2 + t\IGW(\mu_2,\mu_0)^2 - t\int \left|\llangle y,y'\rrangle-\llangle z,z'\rrangle\right|^2d\pi\otimes\pi(x,y,z,x',y',z')\\
        &\qquad+ \frac{4\sqrt{2}}{c}t \IGW(\mu_0,\mu_1)\int   \left|\llangle x,x'\rrangle-\llangle z,z'\rrangle\right|^2 + \left|\llangle x,x'\rrangle-\llangle y,y'\rrangle\right|^2 d\pi\otimes\pi(x,y,z,x',y',z')\\
        &\qquad+ O(t^2)\\
        &\leq (1-t)\IGW(\mu_1,\mu_0)^2 + t\IGW(\mu_2,\mu_0)^2 - t\int \left|\llangle y,y'\rrangle-\llangle z,z'\rrangle\right|^2d\pi\otimes\pi(x,y,z,x',y',z')\\
        &\qquad+ \frac{4\sqrt{2}}{c}t \IGW(\mu_0,\mu_1)\left(\int   2\left|\llangle y,y'\rrangle-\llangle z,z'\rrangle\right|^2 d\pi\otimes\pi(x,y,z,x',y',z') + 3\IGW(\mu_0,\mu_1)^2\right)\\
        &\qquad+ O(t^2)\\
        &\leq (1-t)\IGW(\mu_1,\mu_0)^2 + t\IGW(\mu_2,\mu_0)^2 \\
        &\qquad- t\left(1- \frac{8\sqrt{2}}{c}\IGW(\mu_0,\mu_1)\right) \int \left|\llangle y,y'\rrangle-\llangle z,z'\rrangle\right|^2d\pi\otimes\pi(x,y,z,x',y',z')\\
        &\qquad + \frac{12\sqrt{2}}{c}t\IGW(\mu_0,\mu_1)^3 + O(t^2)\\
        &\leq (1-t)\IGW(\mu_1,\mu_0)^2 + t\IGW(\mu_2,\mu_0)^2 - t\left(1- \frac{8\sqrt{2}}{c}\IGW(\mu_0,\mu_1)\right) \IGW(\mu_1,\mu_2)^2 \\
        &\qquad + \frac{12\sqrt{2}}{c}t\IGW(\mu_0,\mu_1)^3 + O(t^2)\numberthis\label{eq:distance_on_generalized_geodesic_intermediate_step}
    \end{align*}
    where we have used that $\IGW(\mu_0,\mu_1)\leq \frac{c}{16\sqrt{2}}$ such that $1- \frac{8\sqrt{2}}{c}\IGW(\mu_0,\mu_1)\geq1/2>0$.\qed

\section{Auxiliary Proofs for \cref{sec:IGW_gradient_flow}}\label{appen:IGW_gradientflow_aux_proofs}

\subsection{Local convexity of the entropy functional from \cref{rem:func_examples}}\label{appen:entropy}

Convexity (i.e., with $\lambda=0$) of the potential and interaction energy functionals $\sV$ and $\sW$ is straightforward whenever the functions $V$ and $W$, respectively, are convex; see \cite[Example 9.3.1, 9.3.4]{ambrosio2005gradient}. We next present the proof of the generalized geodesic convexity (again, with $\lambda=0$) for the entropy functional
\begin{align*}
    \sH(\mu)\coloneqq 
    \begin{cases}
        \int_{\RR^d} \frac{d\mu}{dx} \log\left(\frac{d\mu}{dx}\right) dx &, \mathrm{ if }\mu\in\cP_2^{\mathrm{ac}}(\RR^d)\\
        +\infty&, \mathrm{ otherwise}
    \end{cases}.
\end{align*}
However, we can only establish this convexity with a slightly modified definition of generalized geodesics, as described below. Namely, the modified geodesics have a multiplicative structure that lends itself better for analysis under the entropy functional. Throughout this derivation, we slightly abuse notation and denote a probability measure and its Lebesgue density by the same symbol, e.g., identifying $\mu\in\cP_2^{\mathrm{ac}}(\RR^d)$ with $\frac{d\mu}{dx}$.

Recall the definition of generalized geodesic $\nu_t$ of $\mu_1,\mu_2$ w.r.t. $\mu_0$ (\cref{def:gen_geo_IGW}), where we instantiate $\bA_1\coloneqq\frac{1}{2}\int xy^\intercal d\pi_{01}(x,y)$ and $\bA_2\coloneqq\frac{1}{2}\int xz^\intercal d\pi_{02}(x,z)$ as PSD and nonsingular. Supposing that $\mu_0,\mu_1,\mu_2\in\dom(\sH)\subset\cP_2^{\mathrm{ac}}(\RR^d)$, we modify the generalized geodesics as follows. By Lemmas \ref{thm:brenier} and \ref{lem:gromov_monge}, there exist two Gromov-Monge maps $T_i\coloneqq (8\bA_i)^{-1}\nabla\varphi_i$ from $\mu_0$ to $\mu_i$, $i=1,2$, where $\varphi_1,\varphi_2$ are convex functions. Define 
\[T_t\coloneqq \big((1-t)(8\bA_1)^{-1} + t (8\bA_2)^{-1}\big)\big((1-t)\nabla\varphi_1 + t\nabla\varphi_2\big),\]
and consider the new generalized geodesic $\nu_t\coloneqq (T_t)_\sharp \mu_0$. Note that by \cite[Proposition 6.2.12]{ambrosio2005gradient} $(1-t)\nabla\varphi_1 + t \nabla\varphi_2$ is $\mu$-essentially injective, and therefore, so it $T_t$. Following the same idea as in \cite[Proposition 9.3.9]{ambrosio2005gradient}, we note that $\nabla \varphi_i$, for $i=0,1$, is (approximately) differentiable in the sense of \cite[Definition 5.5.1]{ambrosio2005gradient}, and by \cite[ Lemma 5.5.3]{ambrosio2005gradient}, the Hessian matrices $\bH\varphi_i \coloneqq (\partial_{x_j x_k}\varphi_i)_{j,k=1}^d$ exist and are nonsingular for $\mu_0$-a.e. $x\in\RR^d$. Thus we invoke \cite[Lemma 5.5.3]{ambrosio2005gradient} and compute the entropy along $\nu_t$ as
\begin{align*}
    \sH(\nu_t) = \int \mu_0(x) \log\frac{\mu_0(x)}{\det\big(\nabla T_t(x)\big)} dx,
\end{align*}
where we have used positivity of determinants of $\bA_1,\bA_2$ and the Hessians $\bH\varphi_1,\bH\varphi_2$. The convexity over $t$ now follows from the concavity of 
\[\log\det\big(\nabla T_t(x)\big) = \log\det\big((1-t)(8\bA_1)^{-1} + t (8\bA_2)^{-1}\big)+ \log\det\big((1-t)\bH\varphi_1 + t \bH\varphi_2\big),\]
since $(8\bA_1)^{-1}, (8\bA_2)^{-1},\bH\varphi_1,\bH\varphi_2$ are all PSD; c.f. \cite[Proposition 9.3.9]{ambrosio2005gradient}. We obtain
\begin{align*}
    \sH(\nu_t) \leq (1-t)\sH(\mu_1) + t \sH(\mu_2),
\end{align*}
which establishes convexity of $\sH$ along the modified generalized geodesics.

\medskip

As we have modified the definition of generalized geodesics, a priori, it is unclear whether our gradient flow theory (which is derived under the notion from \cref{def:gen_geo_IGW}) still holds under the new definition. We next show that this is indeed the case by rederiving the key intermediate results from the proof of \cref{thm:main}, specifically, Lemmas \ref{lem:local_convexity_igw}  \ref{lem:convexity_of_proximal_functional}, under the new definition.

We start from the local convexity of IGW from \cref{lem:local_convexity_igw}, under the additional assumption that there exists $c_1>0$ with $\lambda_{\min(\bsigma_{\mu_0})}\geq c_1$, and $c_2>0$ with $1/c_2\leq \lambda_{\min}(\bA_i)\leq \lambda_{\max}(\bA_i)\leq c_2$. We write $\pi\in\Pi(\mu_1,\mu_1,\mu_2)$ for the joint distribution obtained by gluing $\pi_{01}$ and $\pi_{02}$, and note that $T_t = \big((1-t)\bA_1^{-1} + t \bA_2^{-1}\big)\big((1-t)\bA_1 T_1 + t\bA_2 T_2\big)$. Thus we may expand 
\[\big((1-t)\bA_1^{-1} + t \bA_2^{-1}\big)\big((1-t)\bA_1 y+t\bA_2 z\big) = y + t(\bA_1^{-1}\bA_2 z + \bA_2^{-1}\bA_1 y - 2y) +O(t^2),\]
and for simplicity denote $\bX\coloneqq \bA_1^{-1}\bA_2$. Using the shorthand $d\pi\otimes\pi$ for $d\pi\otimes\pi(x,y,z,x',y',z')$ under the integral sign below, consider
\begin{align*}
    &\IGW(\nu_t,\mu_0)^2 \\
        &\leq\int\mspace{-2mu} \big|\mspace{-2mu}\llangle \big((1\mspace{-1mu}-\mspace{-1mu}t)\bA_1^{-1} \mspace{-1mu}+\mspace{-1mu} t \bA_2^{-1}\big)\big((1\mspace{-1mu}-\mspace{-1mu}t)\bA_1 y\mspace{-1mu}+\mspace{-1mu}t\bA_2 z\big),\big((1\mspace{-1mu}-\mspace{-1mu}t)\bA_1^{-1} \mspace{-1mu}+\mspace{-1mu} t \bA_2^{-1}\big)\big((1\mspace{-1mu}-\mspace{-1mu}t)\bA_1 y'\mspace{-1mu}+\mspace{-1mu}t\bA_2 z'\big)\rrangle\\
        &\qquad\qquad\qquad\qquad\qquad\qquad\qquad\qquad\qquad\qquad\qquad\qquad\qquad\qquad\qquad\qquad-\llangle x,x'\rrangle\big|^2d\pi\otimes\pi\\
        & = \int \big|\llangle y + t(\bX z + \bX^{-1} y - 2y) , y' + t(\bX z' + \bX^{-1} y' - 2y')\rrangle-\llangle x,x'\rrangle\big|^2d\pi\otimes\pi+O(t^2)\\
        & = \int \big|\llangle y  , y' \rrangle+ t \left(\llangle \bX z + \bX^{-1} y - 2y, y' \rrangle+\llangle y, \bX z' + \bX^{-1} y' - 2y' \rrangle\right) -\llangle x,x'\rrangle\big|^2d\pi\otimes\pi\mspace{-4mu}+\mspace{-4mu}O(t^2)\\
        & = \int \Big(\big|\llangle y  , y' \rrangle-\llangle x,x'\rrangle\big|^2+ 2t \left(\llangle y  , y' \rrangle-\llangle x,x'\rrangle\right)\big(\llangle \bX z + \bX^{-1} y - 2y, y' \rrangle\\
        &\qquad\qquad\qquad\qquad\qquad\qquad\qquad\qquad\qquad\qquad+\llangle y, \bX z' + \bX^{-1} y' - 2y' \rrangle\Big)  d\pi\otimes\pi+O(t^2)\\
        & = \int \Big(\big|\llangle y  , y' \rrangle-\llangle x,x'\rrangle\big|^2+ 2t (\llangle y  , y' \rrangle-\llangle x,x'\rrangle)\left(\llangle z-y, y' \rrangle + \llangle y, z'-y' \rrangle\right)\Big)  d\pi\otimes\pi\\
        &\quad  +  \int 2t \big(\llangle y  , y' \rrangle-\llangle x,x'\rrangle\big)\big(\llangle \bX y + \bX^{-1} y - 2y, y' \rrangle+\llangle y, \bX y' + \bX^{-1} y' - 2y' \rrangle\big)d\pi\otimes\pi\\
        &\qquad - \int 2t \big(\llangle y  , y' \rrangle-\llangle x,x'\rrangle\big)\big(\llangle (\bI-\bX)(z-y), y' \rrangle+ \llangle y, (\bI-\bX)(z'-y') \rrangle\big) d\pi\otimes\pi+O(t^2).\numberthis\label{eq:entropy_rederive1}
\end{align*}
From the proof of \cref{lem:local_convexity_igw}, we have that the first line in the last expression is bounded by 
\begin{align*}
    (1-t)\IGW(\mu_1,\mu_0)^2 + t\IGW(\mu_2,\mu_0)^2 - t&\left(1- \frac{8\sqrt{2}}{c_1}\IGW(\mu_0,\mu_1)\right) \int \left|\llangle y,y'\rrangle-\llangle z,z'\rrangle\right|^2d\pi\otimes\pi\\
    &\qquad\qquad\qquad+ \frac{12\sqrt{2}}{c_1}t\IGW(\mu_0,\mu_1)^3 + O(t^2),
\end{align*}
for $\lambda_{\min(\bsigma_{\mu_0})}\geq c_1>0$. We now bound the next two terms. Noting that $\bX  + \bX^{-1}  - 2\bI = \bX^{-1}(\bX-\bI)^2$, for the second line we obtain
\begin{align*}
        \int \big(\llangle y  , y' \rrangle&-\llangle x,x'\rrangle\big)\left(\llangle \bX y + \bX^{-1} y - 2y, y' \rrangle+\llangle y, \bX y' + \bX^{-1} y' - 2y' \rrangle\right)d\pi\otimes\pi\\
        & = \int \big(\llangle y  , y' \rrangle-\llangle x,x'\rrangle\big)\left(\llangle \bX^{-1}(\bX-\bI)^2 y, y' \rrangle+\llangle y, \bX^{-1}(\bX-\bI)^2 y' \rrangle\right)d\pi\otimes\pi\\
        & \leq 2\IGW(\mu_1,\mu_0)\left\|\bX^{-1} (\bX-\bI)^2\right\|_\op M_2(\mu_1)\\
        & \leq 2\IGW(\mu_1,\mu_0)\left\|\bX^{-1} \right\|_\op\left\|\bX-\bI\right\|_\op^2 M_2(\mu_1),
\end{align*}
where the last step uses sub-multiplicativity of the operator norm. For the third line in \eqref{eq:entropy_rederive1}, we similarly have
\begin{align*}
    &\int (\llangle y  , y' \rrangle-\llangle x,x'\rrangle)\left(\llangle (\bI-\bX)(z-y), y' \rrangle + \llangle y, (\bI-\bX)(z'-y') \rrangle\right) d\pi\otimes\pi(x,y,z,x',y',z')\\
    &\leq 2\sqrt{M_2(\mu_1)}\IGW(\mu_1,\mu_0)\|\bI-\bX\|_\op \sqrt{\int \|z-y\|^2 d\pi(x,y,z)}.\numberthis\label{eq:rederive1}
\end{align*}
To control the right-hand sides of the two display equations above, first note that $\|\bX^{-1}\|_\op,\|\bX\|_\op$ are both upper bounded so long as there is a $c_2>0$ with $1/c_2\leq \lambda_{\min}(\bA_i)\leq \lambda_{\max}(\bA_i)\leq c_2$, whence \[\|\bA_1\|_\op\vee\|\bA_1^{-1}\|_\op\vee\|\bA_2\|_\op\vee\|\bA_2^{-1}\|_\op\leq c_2 \quad \mathrm{and}\quad  \|\bX^{-1}\|_\op\vee\|\bX\|_\op\leq c_2^2.\]
Then, write $\bX-\bI = \bA_1^{-1}(\bA_2-\bA_1) = \frac{1}{2}\bA_1^{-1}\int x(z-y)^\intercal d\pi(x,y,z)$ and bound
\begin{equation}
\|\bX-\bI\|_\op\leq \frac{c_2}{2}\left\|\int x(z-y)^\intercal d\pi(x,y,z)\right\|_\op \leq \frac{c_2}{2}\sqrt{M_2(\mu_0)\int \|z-y\|^2 d\pi(x,y,z)},\label{eq:rederive2}
\end{equation}
using the sub-multiplicative property and Cauchy-Schwarz inequality. Lastly, note that 
\begin{align*}
    \int \|z-y\|^2 d\pi(x,y,z)\leq \frac{2\sqrt{2}}{c_1} \left( 2\int \left|\llangle y,y'\rrangle-\llangle z,z'\rrangle\right|^2d\pi\otimes\pi + 3\IGW(\mu_1,\mu_0)^2 \right),\numberthis\label{eq:rederive3}
\end{align*}
for $\lambda_{\min(\bsigma_{\mu_0})}\geq c_1>0$, where the last inequality follows similarly to step (a) in \eqref{eq:distance_on_generalized_geodesic_intermediate_step} from the proof of \cref{lem:local_convexity_igw}.
The latter is inserted to the right-hand sides of \eqref{eq:rederive1} and \eqref{eq:rederive2}.

Combining above, we obtain 
\begin{align*}
    &\IGW(\nu_t,\mu_0)^2 \\
    &\leq (1-t)\IGW(\mu_1,\mu_0)^2 \mspace{-4mu}+\mspace{-4mu} t\IGW(\mu_2,\mu_0)^2\mspace{-4mu} -\mspace{-4mu} t\mspace{-2mu}\left(\mspace{-2mu}1- \frac{8\sqrt{2}}{c_1}\IGW(\mu_0,\mu_1)\mspace{-2mu}\right)\mspace{-5mu} \int \mspace{-5mu}\left|\llangle y,y'\rrangle-\llangle z,z'\rrangle\right|^2\mspace{-2mu}d\pi\otimes\pi \\
    & +\mspace{-3mu} 4t \IGW(\mu_1,\mu_0)\Bigg(\mspace{-2mu} M_2(\mu_1)\|\bX^{-1}\|_\op \|\bX\mspace{-2mu}-\mspace{-2mu}\bI\|_\op^2\mspace{-3mu}+\mspace{-3mu}   \sqrt{M_2(\mu_1)} \|\bI\mspace{-2mu}-\mspace{-2mu}\bX\|_\op \sqrt{\mspace{-4mu}\int \|z\mspace{-2mu}-\mspace{-2mu}y\|^2 d\pi(x,y,z)}\Bigg)\\
    &+ t\frac{12\sqrt{2}}{c_1}\IGW(\mu_0,\mu_1)^3 + O(t^2)\\
    &\stackrel{\mathrm{(a)}}{\leq} (1-t)\IGW(\mu_1,\mu_0)^2 \mspace{-4mu}+\mspace{-4mu} t\IGW(\mu_2,\mu_0)^2\mspace{-4mu} -\mspace{-4mu} t\mspace{-2mu}\left(\mspace{-2mu}1- \frac{8\sqrt{2}}{c_1}\IGW(\mu_0,\mu_1)\mspace{-2mu}\right)\mspace{-5mu} \int \mspace{-5mu}\left|\llangle y,y'\rrangle-\llangle z,z'\rrangle\right|^2\mspace{-2mu}d\pi\otimes\pi \\
    & + 4t \IGW(\mu_1,\mu_0)\left(\frac{c_2^4}{4} M_2(\mu_1)M_2(\mu_0)+\frac{c_2}{2} M_2(\mu_1) \right)\int \|z-y\|^2 d\pi(x,y,z)\\
    &+ t\frac{12\sqrt{2}}{c_1}\IGW(\mu_0,\mu_1)^3 + O(t^2)\\
    &\stackrel{\mathrm{(b)}}{\leq} (1-t)\IGW(\mu_1,\mu_0)^2 \mspace{-4mu}+\mspace{-4mu} t\IGW(\mu_2,\mu_0)^2\mspace{-4mu} -\mspace{-4mu} t\mspace{-2mu}\left(\mspace{-2mu}1- \frac{8\sqrt{2}}{c_1}\IGW(\mu_0,\mu_1)\mspace{-2mu}\right)\mspace{-5mu} \int \mspace{-5mu}\left|\llangle y,y'\rrangle-\llangle z,z'\rrangle\right|^2\mspace{-2mu}d\pi\otimes\pi \\
    &+ t\frac{8\sqrt{2}}{c_1} \IGW(\mu_1,\mu_0)\left(\frac{c_2^4}{4} M_2(\mu_1)M_2(\mu_0)+\frac{c_2}{2} M_2(\mu_1) \right) \\
    &\qquad\qquad\qquad\qquad\qquad\qquad\qquad\qquad\times\Bigg( 2\int \left|\llangle y,y'\rrangle-\llangle z,z'\rrangle\right|^2d\pi\otimes\pi + 3\IGW(\mu_1,\mu_0)^2 \Bigg)\\
    &+ t\frac{12\sqrt{2}}{c_1}\IGW(\mu_0,\mu_1)^3 + O(t^2)\\
    &\stackrel{\mathrm{(c)}}{=} (1-t)\IGW(\mu_1,\mu_0)^2 + t\IGW(\mu_2,\mu_0)^2  \\
    &- t\left(1- \frac{8\sqrt{2}}{c_1}\IGW(\mu_0,\mu_1) \left(1+ \frac{c_2^4}{2} M_2(\mu_1)M_2(\mu_0)+ c_2 M_2(\mu_1) \right) \right) \IGW(\mu_1,\mu_2)^2\\
    &+  t\frac{12\sqrt{2}}{c_1}\left(1+ \frac{c_2^4}{2} M_2(\mu_1)M_2(\mu_0)+ c_2 M_2(\mu_1) \right) \IGW(\mu_0,\mu_1)^3  + O(t^2),\numberthis\label{eq:entropy_modified_generalized_geodesic}
\end{align*}
where for step (a) we used \eqref{eq:rederive1} \eqref{eq:rederive2} and merge into a single term of $\int\|z-y\|^2d\pi(x,y,z)$, and for step (b) we used \eqref{eq:rederive3}. Also for step (c) we further require $\tau$ to be small enough so that $1- \frac{8\sqrt{2}}{c_1}\IGW(\mu_0,\mu_1) \left(1+ \frac{c_2^4}{2} M_2(\mu_1)M_2(\mu_0)+ c_2 M_2(\mu_1) \right)>0$, which, as we will see later in the application to variational inequality, only depends on $\lambda_{\min}(\bsigma_{\rho_0})$, $M_2(\rho_0)$, $\sH(\rho_0),\sH^\star$. This recovers the generalized geodesic convexity of IGW fro \cref{lem:local_convexity_igw} w.r.t. the modified definition of geodesics, with slightly different coefficients. 

\medskip

We proceed to derive the variational inequality from \cref{lem:convexity_of_proximal_functional}, recalling that $\lambda=0$ for the entropy functional. Under the same conditions as \cref{lem:convexity_of_proximal_functional}, we now have
\begin{align*}
&\sH(\mu_1)+\frac{\IGW(\mu_1,\mu_0)^2}{2\tau}\\
        &\leq \sH(\nu_t)+\frac{\IGW(\nu_t,\mu_0)^2}{2\tau}\\
        &\leq (1-t)\sH(\mu_1)+t\sH(\mu_2) + (1-t)\frac{\IGW(\mu_1,\mu_0)^2}{2\tau} + t\frac{\IGW(\mu_2,\mu_0)^2}{2\tau} \\
        &\qquad - t\frac{1}{2\tau}\left(1- \frac{8\sqrt{2}}{c_1}\IGW(\mu_0,\mu_1) \left(1+ \frac{c_2^4}{2} M_2(\mu_1)M_2(\mu_0)+ c_2 M_2(\mu_1) \right) \right) \IGW(\mu_1,\mu_2)^2\\
        &\qquad\qquad + t\frac{1}{2\tau}\frac{12\sqrt{2}}{c_1}\left(1+ \frac{c_2^4}{2} M_2(\mu_1)M_2(\mu_0)+ c_2 M_2(\mu_1) \right) \IGW(\mu_0,\mu_1)^3  + O(t^2).
\end{align*}
Cancelling the same terms on both sides and letting $t\to 0$, we conclude that
\begin{align*}
        &\sH(\mu_1)+\frac{\IGW(\mu_1,\mu_0)^2}{2\tau}\\
        &\leq  \sH(\mu_2) + \frac{\IGW(\mu_2,\mu_0)^2}{2\tau} \\
        &\qquad - \frac{1}{2\tau}\left(1- \frac{8\sqrt{2}}{c_1}\IGW(\mu_0,\mu_1) \left(1+ \frac{c_2^4}{2} M_2(\mu_1)M_2(\mu_0)+ c_2 M_2(\mu_1) \right) \right) \IGW(\mu_1,\mu_2)^2\\
        &\qquad\qquad + \frac{1}{2\tau}\frac{12\sqrt{2}}{c_1}\left(1+ \frac{c_2^4}{2} M_2(\mu_1)M_2(\mu_0)+ c_2 M_2(\mu_1) \right) \IGW(\mu_0,\mu_1)^3 .
\end{align*}
This is in parallel to \cref{lem:convexity_of_proximal_functional}, where we note that the last two terms now have different coefficients, though the order of each term in terms of $\tau$ remains the same. By \cref{prop:existence_of_scheme} and \cref{lem:bar_delta}, we we can pick $c_1=\frac{\lambda_{\min}(\bsigma_{\rho_0})}{2}$ and $c_2>0$ that only depends on $M_2(\rho_0)$ and $\lambda_{\min}(\bsigma_{\rho_0})$. Now take $t\in((i-1)\tau,i\tau]$, $(\mu_0,\mu_1,\mu_2)=(\rho_{i-1},\rho_{i},\nu)$, where $\nu\in\cB_{\IGW}(\rho_0,\bar{\delta})$ is arbitrary, and the existence of generalized geodesic is guaranteed by choice of $\bar{\delta}$ from \cref{lem:bar_delta}. Using the notation from \cref{prop:discrete_solution_comparison}, with the only exception being a revised definition of
\begin{align*}
    \bar{\sigma}_\tau(t)\coloneqq - \frac{1}{2\tau} \frac{8\sqrt{2}}{c_1}\sD_\tau(t) \left(1+ \frac{c_2^4}{2} M_2(\rho_i)M_2(\rho_{i-1})+ c_2 M_2(\rho_i) \right),
\end{align*}
we have
\begin{align*}
    \sH(\rho_i)+\frac{\IGW(\rho_i,\rho_{i-1})^2}{2\tau} &\leq  \sH(\nu) + \frac{\IGW(\nu,\rho_{i-1})^2}{2\tau} - \left(\frac{1}{2\tau} + \bar{\sigma}_\tau(t) \right) \IGW(\rho_i,\nu)^2\\
        &\qquad + \frac{1}{2\tau}\frac{12\sqrt{2}}{c_1}\left(1+ \frac{c_2^4}{2} M_2(\rho_i)M_2(\rho_{i-1})+ c_2 M_2(\rho_i) \right) \sD_\tau(t)^3.
\end{align*}
For simplicity we further let $\tau$ to be small enough such that $$\frac{12\sqrt{2}}{c_1}\left(1+ \frac{c_2^4}{2} M_2(\rho_i)M_2(\rho_{i-1})+ c_2 M_2(\rho_i) \right) \sD_\tau(t)\leq \frac{1}{2},$$ which only depends on $\lambda_{\min}(\bsigma_{\rho_0}), M_2(\rho_0), \sH(\rho_0),\sH^\star$. Note that as we don't have a global lower bound for $\sH$, here we use instead a lower bound of $\sH(\mu)$ in the designated IGW ball $\cB_{\IGW}(\rho_0,\bar{\delta})$ from \cref{lem:bar_delta}.

Rearranging we have
\begin{align*}
    \frac{1}{2} \frac{d}{dt} d_\tau(t;\nu) +  \bar{\sigma}_\tau(t) \IGW(\bar{\rho}_n(t),\nu)^2 &\leq \sH(\nu) - \sH_\tau(t) +  \sR_\tau(t) - \frac{\sD_\tau(t)^2}{4\tau},
\end{align*}
which is essentially \eqref{eq:variational_differential_inequality}. To invoke Gr\"onwall type bound in \cref{lem:gronwall}, we note that 
\begin{align*}
    \|\bar{\sigma}_\tau(t)\|\lesssim_{\lambda_{\min}(\bsigma_{\rho_0}), M_2(\rho_0)} \frac{\sD_\tau(t)}{\tau},
\end{align*}
which enables the application of integral bounds in proof of \cref{prop:discrete_solution_comparison}, which lead to the same result $d_{\tau\eta}(t,t)\lesssim_{\sH(\rho_0),\sH^\star,\lambda_{\min}(\bsigma_{\rho_0}), M_2(\rho_0)} \sqrt{\tau}+\sqrt{\eta}$.

\subsection{Derivations for \cref{rem:property_of_operator}}\label{appen:rem:property_of_operator}

We solve equation $\cL_{\bA,\mu}[v]=w$, and characterize its inverse $\cL^{-1}_{\bA,\mu}$, which appears in the IGW gradient flow PIDE from \cref{thm:main}. 
Leveraging symmetry, we derive the inverse over the invariant space by solving the Sylvester equation, as shown below. We note that the general solution of the inverse over $L^2(\mu;\RR^d)$ would require a detailed study of the T-Sylvester equation \cite{ikramov2012matrix}, which we leave for future work.

\begin{proposition}[Inverse operator]\label{prop:solution_of_integral_system}
    Let $\bA\in\RR^{d\times d}$ be nonsingular PSD, $\mu\in\cP(\RR^d)$ have a nonsingular covariance $\bsigma_\mu$, $v\in\cI_\mu\coloneqq\{v\in L^2(\mu;\RR^d): \int xv(x)^\intercal d\mu(x) \text{ is symmetric}\}$, and $w\in L^2(\mu;\RR^d)$. Then the integral system 
    \begin{align*}
        \cL_{\bA,\mu}[v](x) = w(x),\quad x\in\RR^d,
    \end{align*}
    has a unique solution $v\in L^2(\mu;\RR^d)$, given by
    \begin{align*}
        v(x)=\cL_{\bA,\mu}^{-1}[w](x)
        = \frac{1}{2}\bA^{-1}w(x) - \frac{1}{2}(x^\intercal\otimes \bI) (\bI\otimes\bA^2+\bsigma_\mu\otimes \bA)^{-1} \int (y\otimes \bI) w(y) d \mu(y),
    \end{align*}
    where $\bI\in\RR^{d\times d}$ is the identity matrix and $\otimes$ denotes the Kronecker product between matrices. If $\bA=\bsigma_\mu$, then $\cL_{\bA,\mu}^{-1}[\cI_\mu]\subset\cI_\mu$.
\end{proposition}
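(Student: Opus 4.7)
The plan is to reduce the integral equation $\cL_{\bA,\mu}[v] = w$ to a finite-dimensional matrix equation for the auxiliary cross-covariance $\bC \coloneqq \int y v(y)^\intercal d\mu(y) \in \RR^{d\times d}$, which under the symmetry constraint encoded in $\cI_\mu$ becomes a standard Sylvester equation invertible in closed form via Kronecker-product calculus.

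First I would observe that $\int y\llangle v(y), x\rrangle d\mu(y) = \bC x$, so the equation $\cL_{\bA,\mu}[v](x) = w(x)$ reduces to the pointwise identity $2\bA v(x) + 2\bC x = w(x)$. Since $\bA$ is nonsingular, this yields the representation $v(x) = \tfrac{1}{2}\bA^{-1} w(x) - \bA^{-1}\bC x$. Substituting this ansatz back into the definition of $\bC$ and writing $\bE \coloneqq \int y w(y)^\intercal d\mu(y)$ produces $\bC = \tfrac{1}{2}\bE\bA^{-1} - \bsigma_\mu \bC^\intercal \bA^{-1}$, equivalently the T-Sylvester relation $\bC\bA + \bsigma_\mu \bC^\intercal = \tfrac{1}{2}\bE$. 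The key observation is that the hypothesis $v\in\cI_\mu$ is precisely the condition $\bC = \bC^\intercal$, which collapses the T-Sylvester equation to the standard Sylvester equation
\[
\bC\bA + \bsigma_\mu \bC = \tfrac{1}{2}\bE.
\]

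Column-major vectorization rewrites this as $(\bA\otimes \bI + \bI\otimes \bsigma_\mu)\vectorize(\bC) = \tfrac{1}{2}\vectorize(\bE)$. Since $\bA$ and $\bsigma_\mu$ are symmetric nonsingular PSD, the commuting operators $\bA\otimes\bI$ and $\bI\otimes\bsigma_\mu$ admit a common eigenbasis, so the Kronecker sum has spectrum $\{\lambda_i(\bA)+\lambda_j(\bsigma_\mu)\}\subset(0,\infty)$ and is invertible. This provides existence and uniqueness of $\bC$, hence of $v$. To match the closed form stated in the proposition, I would use the factorization $\bI\otimes\bA^2 + \bsigma_\mu\otimes\bA = (\bI\otimes\bA)(\bI\otimes\bA + \bsigma_\mu\otimes\bI)$ (both factors invertible), the vectorization identities $\bM x = (x^\intercal\otimes \bI)\vectorize(\bM)$ and $\int (y\otimes\bI) w(y) d\mu(y) = \vectorize(\bE^\intercal)$, together with the transpose of the Sylvester equation (legitimate since $\bC$ is symmetric), to verify that $2\bA^{-1}\bC$ satisfies $\bA\bX + \bX\bsigma_\mu = \bA^{-1}\bE^\intercal$, which is the equation characterizing the stated Kronecker expression.

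Finally, for the invariance assertion, when $\bA = \bsigma_\mu$ and $w \in \cI_\mu$ (so $\bE = \bE^\intercal$), transposing the Sylvester equation $\bC\bsigma_\mu + \bsigma_\mu \bC = \tfrac{1}{2}\bE$ shows that $\bC^\intercal$ also solves it, and uniqueness forces $\bC = \bC^\intercal$, i.e., $v\in\cI_\mu$. The main obstacle is the bookkeeping required to reconcile the natural Sylvester form with the specific Kronecker expression in the statement, since the proposition's formula uses the factor $(\bI\otimes\bA^2 + \bsigma_\mu\otimes\bA)^{-1}$ acting on $\vectorize(\bE^\intercal)$ rather than the more directly derived $(\bA\otimes\bI + \bI\otimes\bsigma_\mu)^{-1}$ acting on $\vectorize(\bE)$; conceptually, however, everything reduces to routine linear algebra once the crucial reduction of the integral equation to a standard finite-dimensional Sylvester equation via the $\cI_\mu$ symmetry is in place.
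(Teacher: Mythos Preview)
Your proposal is correct and follows essentially the same route as the paper: reduce the integral equation to a pointwise identity $v(x)=\tfrac{1}{2}\bA^{-1}w(x)-\bA^{-1}\bC x$, derive a Sylvester equation for the cross-covariance matrix, and solve it via Kronecker vectorization. The only cosmetic difference is that the paper invokes the $\cI_\mu$ symmetry at the very first line (replacing $\int y v(y)^\intercal$ by $\int v(y)y^\intercal$ inside $\cL_{\bA,\mu}$) so that the Sylvester equation appears directly, whereas you first write the T-Sylvester form and then collapse it; the invariance argument via uniqueness of the Sylvester solution is identical.
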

\begin{proof}
    Recall that $\cL_{\bA,\mu}[v](x) = 2\bA v(x) + 2\int  y v(y)^\intercal x  d \mu(y) =  2\bA v(x) + 2\int v(y) y^\intercal x  d \mu(y)$ by assumption.
    Clearly if the solution exists, it has to have form $v(x)=\frac{1}{2}\bA^{-1}w(x)-\bA^{-1}\bB x$, where $\bB\coloneqq \int v(y) y^\intercal d \mu(y)$. Inserting the expression for $v$ into $\bB$, we obtain
    \[\bB = \int \left(\frac{1}{2}\bA^{-1}w(y)-\bA^{-1}\bB y\right) y^\intercal d \mu(y),\]
    from which it follows that $\bA\bB +\bB\bsigma_\mu  =  \frac{1}{2}\int  w(y)y^\intercal d \mu(y)$. The latter is known as the Sylvester equation, whose \emph{unique} and symmetric solution is given in terms of Kronecker products as
    \[
        (\bI\otimes\bA+\bsigma_\mu^\intercal\otimes \bI)\vectorize(\bB) = \frac{1}{2}\vectorize\left(\int  w(y)y^\intercal d \mu(y)\right),\]
    where $\vectorize(\bM)$, for a matrix $\bM\in\RR^{d\times d}$, is the vector of length $d^2$ composed by listing the elements of $\bM$ in column-major order. Solving the above, we obtain 
    \[\vectorize(\bB) = \frac{1}{2}(\bI\otimes\bA+\bsigma_\mu\otimes \bI)^{-1}\int (y\otimes \bI) w(y) d \mu(y),\]
    where we have used the nonsingularity of $\bA,\bsigma_\mu$, as well as the symmetry of the latter. Inserting this back into our expression for $v$, we have
    \begin{align*}
        v(x) & =\frac{1}{2}\bA^{-1}w(x)-\bA^{-1}\bB x\\
        & = \frac{1}{2}\bA^{-1}w(x) - x^\intercal \otimes \bI(\bI\otimes\bA^{-1})\vectorize(\bB)\\
        & = \frac{1}{2}\bA^{-1}w(x) - \frac{1}{2}x^\intercal\otimes \bI(\bI\otimes\bA^{-1})(\bI\otimes\bA+\bsigma_\mu\otimes \bI)^{-1} \int (y\otimes \bI) w(y) d \mu(y)\\
        & = \frac{1}{2}\bA^{-1}w(x) - \frac{1}{2}x^\intercal\otimes \bI (\bI\otimes\bA^2+\bsigma_\mu\otimes \bA)^{-1} \int (y\otimes \bI) w(y) d \mu(y),
    \end{align*}
    where the last step uses the inverse and the mixed-product properties of Kronecker products. By the construction of $\bB$, we conclude the existence and uniqueness of the solution $v$. Lastly, notice that if $\bsigma_\mu\bB +\bB\bsigma_\mu  =  \frac{1}{2}\int  w(y)y^\intercal d \mu(y)$, then $\bsigma_\mu(\bB-\bB^\intercal) + (\bB-\bB^\intercal)\bsigma_\mu  =  0$, and by uniqueness of the solution to Sylvester equation, $\bB$ has to be symmetric, i.e. $\cL_{\bsigma_\mu,\mu}^{-1}[\cI_\mu]\subset\cI_\mu$.
\end{proof}

Next, we address the spectrum of the mobility operator $\cL_{\bsigma_\mu,\mu}$, which was commented on in Item (5) of \cref{rem:property_of_operator}. Considering the spectrum over the whole space $L^2(\mu;\RR^d)$ would, again, require studying the T-Sylvester equation, but matters are much simpler when restricting to the invariant space $\cI_{\mu}$. Writing $\spec(\cL)$ for the spectrum of an operator $\cL$, we have the following proposition.

\begin{proposition}[Spectrum]
Suppose that $\bsigma_\mu$ is nonsingular with eigenvalues $\Lambda_\mu=\{\lambda_1,\ldots,\lambda_d\}$, and set $\Gamma_\mu \coloneqq \Lambda_\mu+\Lambda_\mu=\{\lambda_i+\lambda_j\}_{i,j=1}^d$. We have $\spec\big(\cL_{\bsigma_\mu,\mu}\big|_{\cI_\mu}\big) = 2(\Lambda_\mu \cup\Gamma_\mu)$. Furthermore, for each $\lambda\in\spec\big(\cL_{\bsigma_\mu,\mu}\big|_{\cI_\mu}\big)$, each nontrivial solution to $(\cL_{\bsigma_\mu,\mu}-\lambda)[v]=0$ in $\cI_\mu$ belongs to one of the following cases, all of which are necessary and sufficient:
\begin{enumerate}[leftmargin=*]
    \item if $\lambda/2=\lambda_i\in \Lambda_\mu\setminus \Gamma_\mu$, then $v$ takes values in the eigenspace of $\bsigma_\mu$ corresponding to $\lambda_i$ and $\int v(x)x^\intercal d\mu(x)=0$;
    \item if $\lambda/2\in \Gamma_\mu\setminus \Lambda_\mu$, then $v=-(\bsigma_\mu-\lambda/2)^{-1}\bB \id$ for a nontrivial solution $\bB$ to the Sylvester equation $(\bsigma_\mu-\lambda/2)\bB + \bB\bsigma_\mu = 0$;
    \item if $\lambda/2=\lambda_i\in\Lambda_\mu\cap \Gamma_\mu$, then $v = -(\bsigma_\mu-\lambda/2)^+ \bB \id + e$, where $\bB$ is a solution to the Sylvester equation $(\bsigma_\mu-\lambda/2)\bB + \bB\bsigma_\mu = 0$, $(\bsigma_\mu-\lambda/2)^+$ is the matrix pseudoinverse, and $e$ is a vector field taking values in eigenspace of $\lambda_i$, with $\int e(x)x^\intercal d\mu(x)=0$.
\end{enumerate}
\end{proposition}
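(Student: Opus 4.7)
The plan is to reduce the infinite-dimensional eigenvalue problem $\cL_{\bsigma_\mu,\mu}[v] = \lambda v$ to a matrix Sylvester equation by exploiting the finite-rank structure of the global piece of $\cL_{\bsigma_\mu,\mu}$. Introducing the cross-covariance matrix $\bB \coloneqq \int y v(y)^\intercal d\mu(y)$, which must be symmetric precisely when $v \in \cI_\mu$, the eigenvalue equation rewrites as
\[(2\bsigma_\mu - \lambda\bI)\,v(x) = -2\bB x \quad \text{for $\mu$-a.e.\ }x,\]
and substituting the resulting $v$ back into the defining formula for $\bB$ yields the matrix Sylvester equation $(\bsigma_\mu - \lambda/2)\bB + \bB\bsigma_\mu = 0$.

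First I would handle the regime $\lambda/2 \notin \Lambda_\mu$, where $\bsigma_\mu - \lambda/2$ is invertible and the eigenvalue equation gives $v(x) = -(\bsigma_\mu - \lambda/2)^{-1}\bB x$. By the classical disjoint-spectra criterion for Sylvester equations, a nontrivial $\bB$ exists iff $\spec(\bsigma_\mu - \lambda/2) \cap \spec(-\bsigma_\mu) \neq \emptyset$, i.e.\ iff $\lambda/2 \in \Gamma_\mu$. Expanding $\bB = \sum_{i,j} b_{ij} u_i u_j^\intercal$ in an eigenbasis $\{u_k\}$ of $\bsigma_\mu$ shows that $b_{ij}=0$ unless $\lambda_i + \lambda_j = \lambda/2$, and symmetrizing pairs $(i,j)\leftrightarrow(j,i)$ produces symmetric nontrivial solutions, establishing case 2. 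When $\lambda/2 = \lambda_i \in \Lambda_\mu$, applying the spectral projector $P_{\lambda/2}$ of $\bsigma_\mu$ to the eigenvalue equation forces $P_{\lambda/2}\bB = 0$; this actually follows automatically from the Sylvester equation plus nonsingularity of $\bsigma_\mu$, since any surviving entry $b_{ij}$ with $\lambda_i = \lambda/2$ would require $\lambda_j = 0$. The general solution then reads $v = -(\bsigma_\mu - \lambda/2)^+\bB\,\id + e$ with $e$ valued in the eigenspace $E_{\lambda/2}$, and plugging back into $\bB = \int y v(y)^\intercal d\mu$ gives, after invoking the Sylvester-based simplification $\bsigma_\mu\bB(\bsigma_\mu-\lambda/2)^+ = -\bB$ (verified by the same eigenbasis expansion), the orthogonality constraint $\int e(x)x^\intercal d\mu(x) = 0$. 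If additionally $\lambda/2 \notin \Gamma_\mu$ the Sylvester equation forces $\bB = 0$, recovering case 1; otherwise both contributions survive and we obtain case 3.

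For the reverse containment, I would verify that each $\lambda \in 2(\Lambda_\mu \cup \Gamma_\mu)$ actually admits a nontrivial eigenvector in $\cI_\mu$. Cases 2 and 3 follow from the explicit formulas together with the existence of symmetric Sylvester solutions obtained above. Case 1 reduces to producing a nonzero $e \in L^2(\mu; E_{\lambda/2})$ with $\int e(x) x^\intercal d\mu(x)=0$, which is immediate because $L^2(\mu; E_{\lambda/2})$ is infinite-dimensional while only finitely many linear constraints are imposed. The hard part will be case 3, where one must disentangle the Sylvester/pseudoinverse contribution from the eigenspace contribution without double-counting: the symmetry requirement on $\int v(x) x^\intercal d\mu(x) = \bB + \int e(x) x^\intercal d\mu(x)$ must decouple cleanly into ``$\bB$ symmetric'' and ``$\int e(x)\, x^\intercal d\mu(x) = 0$'', and this rests on the pseudoinverse identity above and on the observation that $\bB P_{\lambda/2} = 0$ is forced. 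The cardinality bound $|\spec(\cL_{\bsigma_\mu,\mu}|_{\cI_\mu})| \leq d^2 + d$ in Remark~\ref{rem:property_of_operator} then follows from $|\Lambda_\mu| \leq d$ and $|\Gamma_\mu| \leq d^2$.
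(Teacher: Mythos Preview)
Your approach is essentially the same as the paper's: both reduce the eigenvalue problem to the coupled system $(\bsigma_\mu-\lambda/2)v(x)+\bB x=0$ and the Sylvester equation $(\bsigma_\mu-\lambda/2)\bB+\bB\bsigma_\mu=0$ via the cross-covariance $\bB=\int yv(y)^\intercal d\mu(y)$, then split into cases according to whether $\lambda/2$ lies in $\Lambda_\mu$, $\Gamma_\mu$, or both. Your write-up is in fact more careful than the paper's on two points it leaves implicit: the reverse containment (explicitly producing eigenvectors in $\cI_\mu$, including the dimension-count for case~1 and the symmetrized Sylvester solutions for cases~2--3) and the bookkeeping that the $\cI_\mu$ constraint forces $\bB$ symmetric and decouples from the condition $\int e(x)x^\intercal d\mu(x)=0$.
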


\begin{proof}
For brevity, denote $\cL\coloneqq\cL_{\bsigma_\mu,\mu}|_{\cI_\mu}$ and consider the operator $\cL-\lambda\id$. Suppose that $\lambda/2\in \RR\setminus (\Lambda_\mu \cup\Gamma_\mu)$ and consider the equation $(\cL-\lambda\id)[v]=w$. Following the same approach as in the above proof, we arrive at 
\[(\bsigma_\mu-\lambda/2)\bB + \bB\bsigma_\mu = \frac{1}{2}\int w(y)y^\intercal d\mu(y).\]
Since $\lambda/2\not\in \Gamma_\mu$, we see that $(\bsigma_\mu-\lambda/2)$ and $-\bsigma_\mu$ do not have the same eigenvalues. Consequently, the Sylvester equation has a unique solution $\bB$, which yields a unique inverse, as $v(x) = (\bsigma_\mu-\lambda/2)^{-1}(w(x)-\bB x)$. We conclude that $\lambda/2$ cannot belong to $\spec(\cL)$.

It remains to  characterize the kernel of $\cL-\lambda\id$ for each $\lambda\in2(\Lambda_\mu \cup\Gamma_\mu)$, i.e., nontrivial solutions $v$ to the equation $(\cL-\lambda\id)[v]=0$. Note that we again arrive at the reduced system
\begin{align*}
    (\bsigma_\mu-\lambda/2)\bB + \bB\bsigma_\mu &= 0\\
    (\bsigma_\mu-\lambda/2)v(x)+\bB x&=0,\quad \forall x\in\RR^d.
\end{align*}
When $\lambda/2=\lambda_i\in \Lambda_\mu\setminus \Gamma_\mu$, the Sylvester equation has a unique solution $\bB=0$, hence $(\bsigma_\mu-\lambda/2) v(x)=0$ and $v(x)$ takes values in the eigenspace of $\lambda_i$, which we note could be \emph{nonlinear}. Plugging back we further obtain a sufficient condition of $\int v(x) x^\intercal d\mu(x)=0$. On the other hand, if $\lambda/2\in \Gamma_\mu\setminus \Lambda_\mu$, then the Sylvester equation has nontrivial solutions, and the eigenfunction corresponding to each nontrivial $\bB$ must satisfy $v(x)=-(\bsigma_\mu-\lambda/2)^{-1}\bB x$, which has to be \emph{linear}. Lastly, when $\lambda/2\in\Lambda_\mu\cap \Gamma_\mu$, the Sylvester equation again has nontrivial solutions~$\bB$, and any $v$ in the kernel has to satisfy $(\bsigma_\mu-\lambda/2) v(x) = -\bB x$ for such a $\bB$. Each such $v$ has to abide the form
\begin{align*}
    v = -(\bsigma_\mu-\lambda/2)^+ \bB \id + e,
\end{align*}
where $(\bsigma_\mu-\lambda/2)^+$ is the matrix pseudoinverse, and $e$ is a vector field taking values in eigenspace of $\lambda/2=\lambda_i$. Plugging back, we obtain the sufficient condition for this form to solve $(\cL-\lambda\id)[v]=0$:
\begin{align*}
    \int e(x)x^\intercal d\mu(x) = ((\bsigma_\mu-\lambda/2)(\bsigma_\mu-\lambda/2)^+-(\bsigma_\mu-\lambda/2)^+(\bsigma_\mu-\lambda/2))\bB = 0,
\end{align*}
and $\bB$ solves the Sylvester equation.
\end{proof}

\subsection{Extension of the GMM curve}\label{appen:rem:infinite_time_extension}
Despite the lack of long time control and global convergence for our minimizing movement scheme, we may still prove the follow argument of long time 'existence' of the GMM curve. We note here that this proof uses several arguments and observations from later sections, although placed here to adhere to the order of the main text. Suppose WLOG that the minimum $\sF^\star$ is not attained in finite time.

Suppose again the Assumptions \ref{assumption:main}, \ref{assumption:differentiability}. For $\mu_0\in\dom(\sF)$, by \cref{thm:main} we know that there is an interval $\sI_0=[0,\delta_0]$ with $\delta_0$ depending only on $\mu_0$ and $\sF$ (as is given in \cref{prop:existence_of_scheme}), where there is a GMM curve $\rho^0_t$ starting from $\mu_0$ existing on $\cJ_0 $. Furthermore $\rho^0_t$ satisfies the continuity equation with a velocity field $v^0_t$, and $v^0_t$ satisfies the gradient flow equation $$\cL_{\bsigma_{\rho^0_t},\rho^0_t}[v^0_t]\in\partial\sF(\rho^0_t)$$ for a.e. $t$. Denote $\mu_1\coloneqq\rho^0_{\delta_0}$, and note that since $\mu_1\in\dom(\sF)$, thus having a density, we again satisfies the assumptions, and can thus find a GMM curve  starting from $\mu_1$, defined on a nonempty interval $\cJ_1=[\delta_0,\delta_0+\delta_1]$. Now define
\begin{align*}
    (\check{\rho}^n_t,\check{v}^n_t) = (\rho^i_t, v^i_t), t\in\sI_i, i=0,\ldots,n.
\end{align*}
Since $\check{\rho}^n$, defined on $\cup_{i=0}^n \sI_i$, is Wasserstein continuous, by \cite[Lemma 8.1.2]{ambrosio2005gradient} we conclude that $\check{\rho}^n_t,\check{v}^n_t$ satisfies the continuity equation, and the gradient flow equation a.e. on $\cup_{i=0}^n \sI_i$. Now we show indefinite extension of the piecewise GMM curve, i.e. we show that $\lim_{n\to\infty}\sum_{i=0}^n \delta_i = +\infty$. Suppose by contradiction that it is bounded, i.e. there is $\lim_{n\to\infty}\sum_{i=0}^n \delta_i = A>0$, which implies that $\delta_n\to0$. By \cref{prop:existence_of_scheme}, since the minimum $\sF^\star$ is not attained, the choice of $\delta_n$ implies that $\lambda_{\min}(\bsigma_{\mu_n})\to0$. To derive contradiction, we will show that $\lambda_{\min}(\bsigma_{\mu_n})$ is uniformly bounded away from 0.

By construction, for any $t\in[0,A)$, there is a uniquely defined $\check{\rho}_t$ with $v_t$ satisfying the continuity equation and gradient flow equation. Now we show that $\IGW(\check{\rho}_t,\rho_0)$ is uniformly bounded for all $t\in[0,A)$: recall from \cref{ex:grad_flow_length} that on $\sI_0$, the discrete solution $\bar{\rho}^0_k,\bar{v}^0_k$ with time step $\tau=\delta_0/k$ satisfies that 
\begin{align*}
    &\sum_{i=1}^k \IGW(\rho_i,\rho_{i-1})^2/\tau\\
    &= \sum 2\tau \tr(\bK_i\bsigma_{\rho_i}) + 2\tau \tr(\bL_i^2) + O(\sqrt{\tau}) \\
    & = \int_0^{\delta_0} g_{\bar{\rho}^0_k(t)}(\bar{v}^0_k(t),\bar{v}^0_k(t)) dt + O(\sqrt{\tau})\\
    & = \int_0^ {\delta_0} 2\int \bar{v}^0_k(t,x)^\intercal \bsigma_{\bar{\rho}^0_k(t)} \bar{v}^0_k(t,x) d\bar{\rho}^0_k(t,x) + 2\left\|\int x\bar{v}^0_k(t,x)^\intercal d\bar{\rho}^0_k(t,x)\right\|_\F^2 dt  + O(\sqrt{\tau}).
\end{align*}
Taking $\liminf$ (along proper subsequence) and by Jensen's inequality, we have
\begin{align*}
    &\liminf_k \int_0^{\delta_0} 2\int \bar{v}^0_k(t,x)^\intercal \bsigma_{\bar{\rho}^0_k(t)} \bar{v}^0_k(t,x) d\bar{\rho}^0_k(t,x) + 2\left\|\int x\bar{v}^0_k(t,x)^\intercal d\bar{\rho}^0_k(t,x)\right\|_\F^2 dt\\
    &\geq \int_0^{\delta_0} 2\int v^0_t(x)^\intercal \bsigma_{\rho^0_t} v^0_t(x) d\rho^0_t(x) + 2\left\|\int xv^0_t(x)^\intercal d\rho^0_t(x)\right\|_\F^2 dt\\
    & = \int_0^{\delta_0} g_{\rho^0_t}(v^0_t,v^0_t) dt,
\end{align*}
where note that we need to also invoke similar approach as of \eqref{eq:part1}. Also recall that by definition of the discrete solution, $\IGW (\rho_{i+1},\rho_i)^2\leq 2\tau\big(\sF(\rho_i) - \sF(\rho_{i+1})\big)$, and note that by lower semi-continuity, $\liminf_k \sF(\bar{\rho}^0_k(\delta_0))\geq\sF(\rho^0_{\delta_0})$, hence
\begin{align*}
    &2(\sF(\rho^0_0)-\sF(\rho^0_{\delta_0}) )  \\
    & \geq \liminf_k 2(\sF(\bar{\rho}^0_k(0))-\sF(\bar{\rho}^0_k(\delta_0)) \\
    & \geq \liminf_k \sum_{i=1}^k \IGW (\rho_{i+1},\rho_i)^2/\tau \\
    & =  \liminf_k \int_0^{\delta_0} 2\int \bar{v}^0_k(t,x)^\intercal \bsigma_{\bar{\rho}^0_k(t)} \bar{v}^0_k(t,x) d\bar{\rho}^0_k(t,x) + 2\left\|\int x\bar{v}^0_k(t,x)^\intercal d\bar{\rho}^0_k(t,x)\right\|_\F^2 dt\\
    &\geq \int_0^{\delta_0} g_{\rho^0_t}(v^0_t,v^0_t) dt.
\end{align*}
Clearly this holds true for all intervals $\sI_0,\ldots,\sI_n,\ldots$, and we thus have that for any $T\in[0,A)$,
\begin{align*}
    \int_0^{T} g_{\check{\rho}_t}(\check{v}_t,\check{v}_t) dt \leq \limsup_n 2(\sF(\rho^0_0)-\sF(\rho^0_{\delta_n}) )\leq 2(\sF(\rho^0_0)-\sF^\star)<\infty.
\end{align*}
Note that this is essentially the energy identity \cite[Theorem 2.3.3, Eq (11.2.4)]{ambrosio2005gradient}, though for simplicity we only show an inequality here. 

Next by \cref{lem:IGW_flow_upperbound}, we conclude that for all $t\in[0,A)$, $\check{\rho}_t\in\cB_{\IGW}(\mu_0, \sqrt{2(\sF(\rho^0_0)-\sF^\star)})$. In fact, by the stronger upper bound from \cref{lem:IGW_curve_length_and_flow}, the curve is absolute continuous and of finite length, hence by compactness from \cref{lem:weak_topo_lsc}, for a sequence of numbers $t_n\to A$, $\check{\rho}_{t_n}$ is an IGW Cauchy sequence and has a weak limit $\nu$. Now consider $\lambda_{\min}(\bsigma_{\check{\rho}_{t_n}})$. We next show that this sequence is uniformly lower bounded by $c>0$, otherwise suppose there's a sequence of unit vectors $v_n$ s.t. $\int \llangle v_n, x\rrangle^2 d\check{\rho}_{t_n}(x) \to 0$, and a subsequence, not relabeled for simplicity, $v_n\to v$. Note that
\begin{align*}
    \int \llangle v,x\rrangle^2 d\nu(x) &\leq \liminf \int \llangle v,x\rrangle^2 d\check{\rho}_{t_n}(x)\\
    & = \liminf \int \llangle v_n,x\rrangle^2 d\check{\rho}_{t_n}(x)\\
    &=0
\end{align*}
by the uniform boundedness of the second moments, see \eqref{eq:second_moment_igw_only}. We thus conclude that $\nu\not\in\cP^{\mathrm{ac}}(\RR^d)$, and by \cref{assumption:main} $\infty = \sF(\nu)\leq \liminf \sF(\check{\rho}_{t_n}) \leq \sF(\mu_0)$, a contradiction!

Now suppose $\lambda_{\min}(\bsigma_{\check{\rho}_{t_n}})$ is uniformly lower bounded by $c>0$. Clearly this lower bound has to hold for sequence $t_n\coloneqq\sum_{i=0}^n\delta_n$. Thus for a sufficiently large $n$, $\mu_n=\check{\rho}_{\sum_{i=1}^{n-1}\delta_n}$ has $\lambda_{\min}(\bsigma_{\mu_n})\geq c>0$, which gives the desired contradiction.

Combining above, we conclude that the piecewise GMM curve can be extended to unbounded interval.\qed

\subsection{Proof of \cref{lem:weak_topo_lsc}}\label{appen::weak_topo_lsc_proof}

For the first fact, notice that for any $\nu\in\cB_{\IGW}(\mu,r)$ and the optimal $\pi^\star$ for $\IGW(\nu,\mu)$,
    \begin{align*}
        \|\bsigma_\nu\|_\F^2 &= \int \llangle x,x'\rrangle^2 d\nu\otimes\nu(x,x')\\
        &\leq \int 2\llangle y,y'\rrangle^2 + 2 |\llangle x,x'\rrangle - \llangle y,y'\rrangle |^2d\pi^\star\otimes\pi^\star(x,y,x',y') \\
        &\leq \int 2\llangle y,y'\rrangle^2 d\mu\otimes\mu(x,x') + 2\IGW(\nu,\mu)^2\\
        &\leq 2M_2(\mu)^2 + 2r^2,
    \end{align*}
    by which we conclude that
    \begin{align*}
        &M_2(\nu) = \tr(\bsigma_\nu)\leq \sqrt{d}\|\bsigma_\mu\|_\F \leq \sqrt{d(2M_2(\mu)^2 + 2r^2)}\numberthis\label{eq:second_moment_igw_only}\\
        &\W_2(\nu,\mu)^2\leq 2M_2(\nu)+2M_2(\mu)\leq 2\sqrt{d(2M_2(\mu)^2 + 2r^2)} + 2M_2(\mu).
    \end{align*}
    The weak compactness of $\IGW$ ball now follows from the weak compactness of $\W_2$ balls. 
    
    For the second statement, suppose a sequence $(\mu_,\nu_n)\stackrel{w}{\to}(\mu,\nu)$ in $\cP_2(\RR^d)$, and let $\pi^\star_n\in\Pi(\mu_n,\nu_n)$ be an optimal IGW coupling for the said pair. Denote by $\{(\mu_{n_k},\nu_{n_k})\}_{k\in\NN}$ a subsequence that converges to infimum of $\inf_{n\in\NN}\IGW(\mu_n,\nu_n)$. The sequence $\{\pi^\star_{n_k}\}_k$ is tight by \cite[Lemma 4.4]{villani2008optimal}, thus we have a further subsequence that converges weakly, and for simplicity we still denote the new subsequence $\{\pi^\star_{n_k}\}_k$. Clearly $\pi^\star_{n_k}$ has a weak limit, denoted by $\pi$, and $\pi\in\Pi(\mu,\nu)$. As $\pi_{n_k}\otimes\pi_{n_k}$ also converges weakly to $\pi\otimes\pi$, we have \begin{align*}\liminf_{n\to\infty} \IGW^2(\mu_n,\nu_n) &=  \lim_{k\to\infty} \int |\llangle x,x'\rrangle-\llangle y,y'\rrangle|^2 d\pi_{n_k}\otimes\pi_{n_k}(x,y,x',y')\\
    &\geq \int |\llangle x,x'\rrangle-\llangle y,y'\rrangle|^2 d\pi\otimes\pi(x,y,x',y')\\
    &\geq\IGW^2(\mu,\nu).
    \end{align*}
    A similar derivation for the Wasserstein distance can be found in \cite[Section 5.1.1]{ambrosio2005gradient}.\qed

\subsection{Proof of \cref{lem:bar_delta}}\label{appen:bar_delta_proof}
For item (i) we bound the eigenvalue and moment through \cref{lem:equivalence_igw_w}.
For any $\mu\in\cP_2(\RR^d)$ and unit vector $v\in\RR^d$, we have
\begin{equation}
    \left(\int (v^\intercal y)^2 d \mu(y)\right)^{1/2} \geq \left(\int (v^\intercal x)^2 d \bar\rho_0(y) \right)^{1/2} - \left(\int \big(v^\intercal (x-y)\big)^2 d \pi(y)\right)^{1/2},\label{eq:eigenval_lower_bound_for_convergent_sequence}
\end{equation}
where $\bar{\rho}_0=\bO_\sharp\rho_0$, for $\bO\in\cO_{\mu,\rho_0}$ (namely, the rotated version of $\rho_0$ w.r.t. $\mu$, as in \cref{lem:symmetrization} ), and $\pi$ is taken as the optimal $\W_2$ coupling between $(\mu,\bar{\rho}_0)$. Notice that $\int (v^\intercal x)^2 d \bar{\rho}_0(y)\geq \lambda_{\min}(\bsigma_{\rho_0})$, and \[\int \big(v^\intercal (x-y)\big)^2 d \pi(y) \leq \int \|x-y\|^2 d \pi(y) = \W_2^2(\mu,\bar{\rho}_0) \leq \frac{\sqrt{2}}{\lambda_{\min}(\bsigma_{\rho_0})}\IGW (\mu,\rho_0)^2,\]
where the last inequality follows by \cref{lem:equivalence_igw_w}. Note that the right-hand side (RHS) above only depends on the smallest eigenvalue of the covariance matrix of $\rho_0$. Such bounds are used repeatedly in our analysis since under the proximal mapping, we have access to $\rho_i$ but not $\rho_{i+1}$. This implies
\begin{equation}            \left(\sqrt{\lambda_{\max}(\bsigma_{\rho_0})} +  \frac{2^{1/4}\IGW (\mu,\rho_0)}{\sqrt{\lambda_{\min}(\bsigma_{\rho_0})}} \right)^2 \geq \int (v^\intercal y)^2 d \mu(y) \geq \left(\sqrt{\lambda_{\min}(\bsigma_{\rho_0})} -  \frac{2^{1/4}\IGW (\mu,\rho_0)}{\sqrt{\lambda_{\min}(\bsigma_{\rho_0})}} \right)^2.\label{eq:eigenval_ULB}
\end{equation}
Setting $\bar{\delta} \coloneqq \frac{(1-1/\sqrt{2})\lambda_{\min}(\bsigma_{\rho_0})}{2^{1/4}}$, any $\mu\in\cP_2(\RR^d)\cap \cB_{\IGW}(\rho_0,\bar\delta)$ satisfies $\lambda_{\min}(\bsigma_\mu)\geq \frac{\lambda_{\min}(\bsigma_{\rho_0})}{2}$. Also~observe
\begin{equation}
M_2(\mu) = \int \|y\|^2 d \mu(y)\leq \int \big( 2\|y-x\|^2+2\|x\|^2  d\big) \pi(x,y) \leq \frac{2\sqrt{2}\,\IGW^2 (\mu,\rho_0)}{\lambda_{\min}(\bsigma_{\rho_0})} + 2M_2(\rho_0).\label{eq:M2_bound}
    \end{equation}

Now we proceed to item (ii) and seek to ensure the nonsingularity of $\bA^\star$, which concludes the existence of Gromov-Monge map. Recall that from \eqref{eq:eigenval_bound_in_IGW_ball}, it suffices to have 
\begin{align*}
    \lambda_{\min}(\bsigma_{\mu})^2 + \lambda_{\min}(\bsigma_{\nu})^2 - 4\bar{\delta}^2 \geq \frac{\lambda_{\min}(\bsigma_{\rho_0})^2}{4}>0,
\end{align*}
which follows directly as $\bar{\delta}\leq \frac{\lambda_{\min}(\bsigma_{\rho_0})}{4}$ and $\lambda_{\min}(\bsigma_{\mu})\wedge\lambda_{\min}(\bsigma_{\nu})\geq \frac{\lambda_{\min}(\bsigma_{\rho_0})}{2}$.

\qed

\subsection{Proof of \cref{lem:convexity_of_proximal_functional}}\label{appen:convexity_of_proximal_functional_proof}
    By definition,
    \begin{align*}
        &\sF(\mu_1)+\frac{\IGW(\mu_1,\mu_0)^2}{2\tau}\\
        &\leq \sF(\nu_t)+\frac{\IGW(\nu_t,\mu_0)^2}{2\tau}\\
        &\leq (1-t)\sF(\mu_1)+t\sF(\mu_2) \\
        &\qquad- t(\lambda + \frac{1-8\sqrt{2}\IGW(\mu_0,\mu_1)/c}{2\tau}) \int \left|\llangle y,y'\rrangle-\llangle z,z'\rrangle\right|^2 d\pi\otimes\pi(x,y,z,x',y',z')\\
        &\qquad + (1-t)\frac{\IGW(\mu_1,\mu_0)^2}{2\tau} + t\frac{\IGW(\mu_2,\mu_0)^2}{2\tau} + \frac{6\sqrt{2}}{c\tau}t\IGW(\mu_0,\mu_1)^3 + O(t^2)\\
        &\leq (1-t)\sF(\mu_1)+t\sF(\mu_2) - t\left(\lambda + \frac{1-8\sqrt{2}\IGW(\mu_0,\mu_1)/c}{2\tau}\right) \IGW(\mu_1,\mu_2)^2\\
        &\qquad + (1-t)\frac{\IGW(\mu_1,\mu_0)^2}{2\tau} + t\frac{\IGW(\mu_2,\mu_0)^2}{2\tau} + \frac{6\sqrt{2}}{c\tau}t\IGW(\mu_0,\mu_1)^3 + O(t^2)
    \end{align*}
    where we have used that $\lambda + \frac{1-8\sqrt{2}\IGW(\mu_0,\mu_1)/c}{2\tau}\geq0$ since $1-8\sqrt{2}\IGW(\mu_0,\mu_1)/c\geq1/2$ and $\tau<\frac{1}{4|\lambda|}$. Cancelling same terms on both sides and letting $t\to0$, we have
    \begin{align*}
        \sF(\mu_1)+\frac{\IGW(\mu_1,\mu_0)^2}{2\tau} \leq & \sF(\mu_2) + \frac{\IGW(\mu_2,\mu_0)^2}{2\tau} - \left(\lambda + \frac{1-8\sqrt{2}\IGW(\mu_0,\mu_1)/c}{2\tau}\right) \IGW(\mu_1,\mu_2)^2\\
        & + \frac{6\sqrt{2}}{c\tau} \IGW(\mu_0,\mu_1)^3.
    \end{align*}
\qed

\subsection{Proof of \cref{lem:gronwall}}\label{appen:gronwall_proof}

    Starting from the assumed inequality and multiplying both sides by $e^{A(t)}$, we have
    \begin{align*}
        \frac{d}{dt} \big(e^{A(t)/2}x(t)\big)^2\leq e^{A(t)}c(t) + b(t) e^{A(t)}x(t).
    \end{align*}
    Now denote $y(t) \coloneqq e^{A(t)/2}x(t)$. By Lemma 4.1.8 from \cite{ambrosio2005gradient}, we obtain
    \begin{align*}
        |y(T)|\leq \left(y^2(0) + \sup_{t\in[0,T]} \int_0^t  e^{A(s)}c(s) ds \right)^{1/2} + 2\int_0^T\big|b(t) e^{A(t)/2}\big|dt.
    \end{align*}\qed

\subsection{Proof of \cref{lem:velocity_distributional_limits_equations}} \label{appen:lem:velocity_distributional_limits_equations_proof}

    We now proceed to show \eqref{eq:technical_eq_1} and \eqref{eq:technical_eq_2}. Note that $\bsigma_t$ is the uniform limit of $\bar{\bsigma}_{n_k}(t)$ in $\|\cdot\|_\F$ by \cref{prop:strong_convergence}, which, combined with \cref{prop:existence_of_scheme}, implies that $2\bar{\bA}_{n_k}(t)$ converges to $\bsigma_t$ uniformly. Also note $M_2(\nu_n)$ is bounded, thus \eqref{eq:technical_eq_1} follows directly. For \eqref{eq:technical_eq_2}, we first denote $\phi_n(t)\coloneqq \int xg(t,x)^\intercal d\bar{\rho}_n(t,x)$ and $\phi(t)\coloneqq \int xg(t,x)^\intercal d\rho_t(x)$, and note that by uniform convergence of $\bar{\rho}_{n_k}$ to $\rho$, we have uniform convergence of $\phi_{n_k}$ to $\phi$ in $\|\cdot\|_\F$. Also we have a constant $C>0$ depending on upper bound of $g$ and $\sup_{k}M_2(\bar{\rho}_{n_k}(t))$ (finite by \cref{prop:existence_of_scheme}), such that $\sup_{k}\|\phi_{n_k}\|_\F\vee\|\phi\|_\F\leq C$. For any $\epsilon>0$, compute
    \begin{align*}
        &\liminf_{k\to\infty} \int_0^\delta \int   \bar{v}_{n_k}(t,x)^\intercal \phi_{n_k}(t)x   d\bar{\rho}_{n_k}(t,x) dt + \epsilon \frac{4(\sF(\rho_0)- \sF^\star)}{\lambda_{\min}(\bsigma_{\rho_0})} +  \frac{C^2}{4\epsilon}\int_0^\delta M_2(\bar{\rho}_{n_k}(t,x)) dt \\
        &\geq \liminf_{k\to\infty} \int_0^\delta \int   \bar{v}_{n_k}(t,x)^\intercal \phi_{n_k}(t)x + \epsilon\|\bar{v}_{n_k}(t,x)\|^2 + \frac{C^2}{4\epsilon}\|x\|^2   d\bar{\rho}_{n_k}(t,x) dt\\
        &=\liminf_{k\to\infty} \delta\int_0^\delta  \int   y^\intercal \phi_{n_k}(t)x + \epsilon\|y\|^2 + \frac{C^2}{4\epsilon}\|x\|^2   d\nu_{n_k}(t,x,y) \\
        &= \liminf_{k\to\infty} \delta\int_0^\delta \int   y^\intercal \phi(t)x + \epsilon\|y\|^2 + \frac{C^2}{4\epsilon}\|x\|^2   d\nu_{n_k}(t,x,y) \\
        &\stackrel{\mathrm{(a)}}{\geq} \delta\int_0^\delta \int   y^\intercal \phi(t)x + \epsilon\|y\|^2 + \frac{C^2}{4\epsilon}\|x\|^2   d\nu(t,x,y)  \\
        & \geq \int_0^\delta \int   v_t(x)^\intercal \phi(t)x  d\rho_t(x)dt + \frac{C^2}{4\epsilon} \int_0^\delta M_2(\rho_t) dt,
    \end{align*}
    where in step (a) we have used weakly convergent sequence integrated over lower bounded function, see \cite[Lemma 5.1.7]{ambrosio2005gradient}. Again by uniform convergence of $\bar{\rho}_{n_k}(t,x)$, we have 
    \begin{align*}
        \lim_{k\to\infty}\int_0^\delta M_2(\bar{\rho}_{n_k}(t,x)) dt = \int_0^\delta M_2(\rho_t) dt,
    \end{align*}
    hence we may cancel the convergent term. Driving $\epsilon\to0$ we obtain
    \begin{align*}
        \liminf_{k\to\infty} \int_0^\delta \int   \bar{v}_{n_k}(t,x)^\intercal \phi_{n_k}(t)x   d\bar{\rho}_{n_k}(t,x) dt \geq \int_0^\delta  \int   v_t(x)^\intercal \phi(t)x  d\rho_t(x)dt,
    \end{align*}
    or equivalently, 
    \begin{align*}
        &\liminf_{k\to\infty} \int_0^\delta \int \llangle g(t,x),\int y\bar{v}_{n_k}(t,y)^\intercal d\bar{\rho}_{n_k}(t,y) x \rrangle d\bar{\rho}_{n_k}(t,x) dt \\
        &\quad\quad\quad\quad\quad\quad\quad\quad\quad\quad\quad\quad\quad\quad \geq \int_0^\delta \int\llangle g(t,x),\int yv_t(y)^\intercal d\rho_t(y) x \rrangle d\rho_t(x) dt.
    \end{align*}
    Plugging in $-\xi$, we obtain \eqref{eq:technical_eq_2}, which concludes the proof.\qed

\section{Auxiliary Proofs for \cref{sec:riemann}}\label{appen:riemann_aux_proofs}

\subsection{Proof of \cref{thm:intrinsic_metric_is_geodesic}}\label{appen:thm:intrinsic_metric_is_geodesic}

    To prove Items (1) and (2) it suffices to show the existence of minimizing curve. First note that by \cref{lem:equivalence_igw_w}, the $\W_2$ geodesic is a valid $\IGW$-Lipschitz curve connecting $\mu_0,\mu_1$, hence $\lip_{\IGW}([0,1];\cP_2(\RR^d))\neq\emptyset$ and $\mathsf{d}_{\IGW}<\infty$ always. Consider now a sequence of curves $\{\rho_n\}_{n\in\NN}$, reparametrized to be uniformly Lipschitz with constant $\lip(\rho_n)=\ell_\IGW(\rho_n)<2\mathsf{d}_{\IGW}(\mu_0,\mu_1)$, such that $\ell_\IGW(\rho_n)\to \mathsf{d}_{\IGW}(\mu_0,\mu_1)$. By weak compactness of the IGW ball, we may pick a dense subset $\{q_m\}_{m\in\NN}$ of $[0,1]$ and find a subsequence $\{\rho_{n_k}\}_{k\in\NN}$ that converges weakly at each $q_n$. Denoting the limit by $\rho$, we have $\rho_{n_k}(q_m)\stackrel{w}{\to}\rho_{q_m}$, for each $m\in\NN$ as $k\to\infty$. By \cref{lem:weak_topo_lsc}, IGW is l.s.c., and thus 
    \begin{align*}
        \IGW\big(\rho_{q_m},\rho_{q_l}\big)&\leq \liminf_k \IGW\big(\rho_{n_k}(q_m),\rho_{n_k}(q_l)\big)\\
        &\leq \liminf_k \lip(\rho_{n_k})|q_m-q_l|\\
        &= \mathsf{d}_{\IGW}(\mu_0,\mu_1)|q_m-q_l|.
    \end{align*}
    For any $t\in[0,1]$, fix a subsequence of $\{q_m\}_{m\in\NN}$ with $q_{m_k}\to t$, and since $\rho_{q_{m_k}}$ is a Cauchy sequence in IGW, by \cref{lem:weak_topo_lsc} we may find a weak limit, which we assign to $\rho_t$. Again, by the l.s.c. property, $\rho$ is $\mathsf{d}_{\IGW}(\rho_0,\rho_1)$-Lipschitz, and hence $\ell_\IGW(\rho)\leq \lip(\rho) \leq \mathsf{d}_{\IGW}(\mu_0,\mu_1)$, proving Items (1) and (2).
    
    For the continuity claim from Item (3), suppose, without loss of generality, that $\mu$ is already rotated by $\bO\in\cO_{\mu_0,\mu}$. Denote a connecting curve from $\mu_0$ to $\mu$ by 
    $\rho_t\coloneqq(g_t)_\sharp \pi^\star$, for $g_t(x,y)=(1-t)x+ty$ and an IGW optimal $\pi^\star\in\Pi(\mu_0,\mu)$. We now have 
    \begin{align*}
        \IGW(\rho_t,\rho_s)^2&\leq \int \left| \llangle g_t(x,y), g_t(x',y')\rrangle - \llangle g_s(x,y),g_s(x',y')\rrangle \right|^2d\pi^\star\otimes\pi^\star\\
        &= \int \left|  (t-s)\Big(\llangle y-x  ,  x' \rrangle +  \llangle x ,  y'-x'  \rrangle + (t+s) \llangle  y-x , y' - x'\rrangle \Big)\right|^2d\pi^\star\otimes\pi^\star \\
        &\lesssim  (t-s)^2 \int \| y-x\|^2  \|x'\|^2 d\pi^\star\otimes\pi^\star + (t^2-s^2)^2 \int   \|y-x\|^2 \|y' - x'\|^2 d\pi^\star\otimes\pi^\star \\
        &\lesssim_{M_2(\mu_0)} |t-s|^2\frac{\IGW(\mu_0,\mu)^2}{\lambda_{\min}(\bsigma_{\mu_0})}\left(1 + \frac{\IGW(\mu_0,\mu)^2}{\lambda_{\min}(\bsigma_{\mu_0})}\right).\numberthis\label{eq:proof:thm:IGW_Lipshitz_on_displacement}
    \end{align*}
    Now use the fact that $M_2(\mu)$ is bounded for $\mu\to\mu_0$, whereby \[\mathsf{d}_{\IGW}(\mu_0,\mu)\leq \ell_\IGW(\rho)\lesssim_{\lambda_{\min}(\bsigma_{\mu_0}),M_2(\mu_0)} \IGW(\mu,\mu_0)\to0.\]

\subsection{Proof of \cref{lem:IGW_flow_upperbound}}\label{appen:lem:IGW_flow_upperbound_proof}

    The result essentially follows from Jensen's inequality directly, up to a few regularity arguments. Denote by $\cN(x)$ the standard normal density and, for $\epsilon\in(0,1)$, set $\cN_\epsilon(x)\coloneqq \epsilon^{-d}\cN(x/\epsilon)$. Define $\rho^\epsilon_t\coloneqq \rho_t*\cN_\epsilon$ and $v^\epsilon\coloneqq  (v_t\rho_t)*\cN_\epsilon/\rho^\epsilon_t$, where, slightly abusing notation, we identify a measure with its Lebesgue density. Note that $(\rho^\epsilon,v^\epsilon)$ also solves the continuity equation, and by \cite[Proposition 8.1.8]{ambrosio2005gradient} we have a flow map $X_t:\RR^d\to\RR^d$ that solves the ODE
    \begin{align*}
        \begin{cases}
            \frac{d}{dt} X_t(x) = v^\epsilon_t(X_t(x))\\
            X_0(x) = x
        \end{cases}
    \end{align*}
    $\rho^\epsilon_0$-a.s. such that $\rho^\epsilon_t = (X_t)_\sharp\rho^\epsilon_0$ for all $t\in[0,1]$. With that, consider
    \begin{align*}
    &\IGW(\rho^\epsilon_0,\rho^\epsilon_1)^2 \\
    &\leq \int \left|\llangle x,x'\rrangle-\llangle X_1(x), X_1(x')\rrangle\right|^2 d \rho^\epsilon_0\otimes \rho^\epsilon_0(x,x')\\
    &= \int \left|\int_0^1 \frac{d}{dt}\llangle X_t(x), X_t(x')\rrangle dt \right|^2  d \rho^\epsilon_0\otimes \rho^\epsilon_0(x,x')\\
    & = \int \left| \int_0^1 \llangle v^\epsilon_t(X_t(x)), X_t(x')\rrangle + \llangle X_t(x), v^\epsilon_t(X_t(x'))\rrangle dt \right|^2 d \rho^\epsilon_0\otimes \rho^\epsilon_0(x,x')\\
    & \leq  \int \int_0^1 \left|\llangle v^\epsilon_t(X_t(x)), X_t(x')\rrangle + \llangle X_t(x), v^\epsilon_t(X_t(x'))\rrangle\right|^2 dt   d \rho^\epsilon_0\otimes \rho^\epsilon_0(x,x')\\
    & = \int_0^1 \int  2\Big(\llangle v^\epsilon_t(X_t(x)), X_t(x')\rrangle^2 + \llangle v^\epsilon_t(X_t(x)), X_t(x')\rrangle\llangle X_t(x), v^\epsilon_t(X_t(x'))\rrangle\Big)  d \rho^\epsilon_0\otimes \rho^\epsilon_0(x,x') dt\\
    & = \int_0^1 \int  2\Big(\llangle v^\epsilon_t(y), y'\rrangle^2 + \llangle v^\epsilon_t(y), y'\rrangle \llangle y, v^\epsilon_t(y')\rrangle \Big) d \rho^\epsilon_t\otimes\rho^\epsilon_t(y,y') dt\\
    & = \int_0^1 g_{\rho^\epsilon_t}(v^\epsilon_t,v^\epsilon_t) dt,\numberthis\label{eq:Jensen_lemma_proof1}
    \end{align*}
    where the first inequality is by specifying a coupling, while the latter follows by Jensen's. To conclude the proof, we next show that $\limsup_{\epsilon\to0} \int_0^1 g_{\rho^\epsilon_t}(v^\epsilon_t,v^\epsilon_t) dt \leq \int_0^1 g_{\rho_t}(v_t,v_t) dt$. Combined with l.s.c. of IGW, this will yield the desired result, since $\rho^\epsilon_i\stackrel{w}{\to},\rho_i$, $i=0,1$, as $\epsilon\to0$. 
    
    \medskip
    Suppose $\int_0^1 g_{\rho_t}(v_t,v_t) dt<\infty$. Note that by \cite[Theorem 8.3.1]{ambrosio2005gradient}, the curve $(\rho_t,v_t)_{t\in[0,1]}$ is $\W_2$-absolutely continuous and $\sup_{t\in[0,1]}\W_2(\rho^\epsilon_t,\rho_t)\lesssim_{d}\epsilon$ (see also \cite[Lemma 17.5]{ambrosio2021lectures}), whereby $\sup_{t\in[0,1]}\|\bsigma_{\rho^\epsilon_t}-\bsigma_{\rho_t}\|_\F\lesssim_{d,\sup_t M_2(\rho_t)} \epsilon$. Compute
    \begin{align*}
        \limsup_{\epsilon\to0}&\int_0^1 \mspace{-5mu}g_{\rho^\epsilon_t}(v^\epsilon_t ,v^\epsilon_t ) dt\\
        &=  \limsup_{\epsilon\to0}\mspace{2mu} 2\int_0^1 \mspace{-6mu}\int \llangle v^\epsilon_t(x), \bsigma_{\rho^\epsilon_t } v^\epsilon_t(x) \rrangle d\rho^\epsilon_t(x) \mspace{-1mu}+\mspace{-1mu} 2\tr\left(\int xv^\epsilon_t(x)^\intercal d\rho^\epsilon_t(x)\right)^2\mspace{-5mu} dt\\
        & \stackrel{\mathrm{(a)}}{=} \limsup_{\epsilon\to0} \mspace{2mu}2\int_0^1 \mspace{-6mu}\int \llangle v^\epsilon_t(x), \bsigma_{\rho_t} v^\epsilon_t(x) \rrangle d\rho^\epsilon_t(x) \mspace{-1mu}+\mspace{-1mu} 2\tr\left(\int xv^\epsilon_t(x)^\intercal d\rho^\epsilon_t(x)\right)^2\mspace{-5mu} dt\\
        &\stackrel{\mathrm{(b)}}{\leq} \mspace{2mu}2\int_0^1 \int \llangle v_t(x), \bsigma_{\rho_t} v_t(x) \rrangle d\rho_t(x) + 2\tr\left(\int xv_t(x)^\intercal d\rho_t(x)\right)^2 dt.\numberthis\label{eq:Jensen_lemma_proof2}
    \end{align*}
    First note that for any fixed PSD matrix $\bsigma$, $\int \llangle v^\epsilon_t(x), \bsigma v^\epsilon_t(x) \rrangle d\rho^\epsilon_t(x) \leq \int \llangle v_t(x), \bsigma v_t(x) \rrangle d\rho_t(x)$ by Jensen's inequality (c.f. \cite[Lemma 8.1.10]{ambrosio2005gradient}). For Step (a), we have used the fact that 
    \begin{align*}
        \left|\int_0^1 \int \llangle v^\epsilon_t(x), (\bsigma_{\rho^\epsilon_t }-\bsigma_{\rho_t}) v^\epsilon_t(x) \rrangle d\rho^\epsilon_t(x) dt\right| &\lesssim_{d,\sup_t M_2(\rho_t)} \epsilon \int_0^1 \int\|v^\epsilon_t(x)\|^2 d\rho^\epsilon_t(x) dt \\
        &\leq \epsilon \int_0^1 \int\|v_t(x)\|^2 d\rho_t(x) dt.
    \end{align*}
    For Step (b), in addition to Jensen's inequality, we note that
    \begin{align*}
        \int xv^\epsilon_t(x)^\intercal d\rho^\epsilon_t(x) = \int x  v_t(y)^\intercal \cN_\epsilon(x-y) d\rho_t(y) dx = \int y  v_t(y)^\intercal d\rho_t(y)
    \end{align*}
    as $\int x\cN_\epsilon(x-y)dx=y$. Combining \eqref{eq:Jensen_lemma_proof1} and \eqref{eq:Jensen_lemma_proof2}, we arrive at
    \begin{align*}
        \IGW(\mu_0,\mu_1)^2 &\leq \liminf_{\epsilon\to0} \IGW(\rho^\epsilon_0,\rho^\epsilon_1)^2\\
        & \leq \liminf_{\epsilon\to0}  \int_0^1 g_{\rho^\epsilon_t }(v^\epsilon_t ,v^\epsilon_t ) dt\\
        & \leq \limsup_{\epsilon\to0}\int_0^1 g_{\rho^\epsilon_t }(v^\epsilon_t ,v^\epsilon_t ) dt\\
        &\leq \int_0^1 2\int \llangle v_t(x), \bsigma_{\rho_t} v_t(x) \rrangle d\rho_t(x) + 2\tr\left(\int xv_t(x)^\intercal d\rho_t(x)\right)^2 dt\\
        & = \int_0^1 g_{\rho_t}(v_t,v_t) dt,
    \end{align*}
    which concludes the proof.\qed

\subsection{Proof of \cref{lem:IGW_curve_length_and_flow}}\label{appen:lem:IGW_curve_length_and_flow_proof}

    Without loss of generality we suppose that the curve is parametrized to be $L$-Lipschitz with $L = \ell_\IGW(\rho)$. 
    
    \medskip
    \noindent{\underline{Item (1) -- Lower bound:}} We start from the lower bound from Item (1), which requires most of the work. Given the IGW-continuous curve $(\rho_t)_{t\in[0,1]}$, we will construct an IGW-equivalent curve $(\tilde\rho_t)_{t\in[0,1]}$, i.e., such that $\ell_\IGW(\rho)=\ell_\IGW(\tilde\rho)$, which is also $\W_2$-continuous. Consequently, the latter satisfies the continuity equation together with an appropriate velocity field, and its IGW length will be lower bounded by the action, as desired. 

    The construction employs an auxiliary curve $(\gamma_t)_{t\in[0,1]}$ as an intermediate step, which we describe next. Consider the uniform partition $0=t_0<\ldots<t_n=1$ with step size $\tau=1/n$, and for each $t_i = i/n$, define $\gamma_i\coloneqq \bO_\sharp\rho_{t_i}$ for $\bO\in\cO(\gamma_{i-1},\rho_{t_i})$ for $i=1,\ldots,n$ and $\gamma_0=\rho_0$. Also define $\bar{\gamma}_n(t)\coloneqq \gamma_i$ for $t\in((i-1)\tau,i\tau]$. Thanks to the rotations, the piecewise constant curve $(\bar\gamma_n(t))_{t\in[0,1]}$ has bounded $\W_2$ gap, namely, for $s\leq t$ we have (see derivation of \cref{prop:limit_of_minimizing_movement} for a similar bound) 
    \begin{align*}
    \W_2\big(\bar{\gamma}_n(s),\bar{\gamma}_n(t)\big) &= \W_2\big(\gamma_{\lceil s/\tau \rceil}, \gamma_{\lceil t/\tau \rceil}\big)\\
        &\leq \frac{\sum_{i=\lceil s/\tau \rceil}^{\lceil t/\tau \rceil-1} \IGW(\rho_{t_{i+1}},\rho_{t_i})}{\sqrt{c}}\\
        &\leq \frac{L|\lceil s/\tau \rceil - \lceil t/\tau \rceil|\tau}{\sqrt{c}}
    \end{align*}
    Having that, we invoke \cite[Proposition 3.3.1]{ambrosio2005gradient} to conclude that $\bar{\gamma}_n$ converges weakly to a $\W_2$-Lipschitz limit $(\gamma_t)_{t\in[0,1]}$, along a subsequence $n_k$, with $\gamma_0=\rho_0$ and $\gamma_1=\bO_\sharp\rho_1$ for some $\bO\in\Od$. 
    It's immediate to see that $\bar{\rho}_n$ is an IGW-Cauchy sequence, thus by a same argument as \cref{prop:strong_convergence}, we lift this to uniform $\W_2$ convergence. Notably, while $\gamma$ initiates at the right distribution $\rho_0$, its endpoint is a (possibly) rotated version of $\rho_1$. As we seek a curve that interpolates between $\rho_0$ and $\rho_1$ exactly, we next correct for that rotation in a manner that maintains $\W_2$-Lipschitzness.

    We treat the cases of whether $\bO\in\mathrm{SO}(d)$ or not separately. If $\bO\in\SOd$, then we may find a smooth curve in $\SOd$ that joins $\bI$ and $\bO^{-1}$; otherwise we find a curve joining $\bI$ and $\bI^-\bO^{-1}$. Denoting this curve by $(\bO(t))_{t\in[0,1]}$ and define $\tilde{\rho}_t = \bO(t)_\sharp\gamma_t$, which clearly satisfies the boundary conditions at $t=0,1$. 

\medskip
    To lower bound $\ell_\IGW(\tilde\rho)$ to the action, our next step is to construct the velocity field for $\curve{\tilde\rho_t}$. We start from the standard construction of the velocity field for the $\W_2$-continuous curve $\curve{\gamma_t}$ using transport maps, and then extract the velocity for $\curve{\tilde\rho_t}$ from it, as the curve are related through rotations. Let $T^\star_i$ be the Gromov-Monge map from $\gamma_i$ to $\gamma_{i-1}$, and set $w_i\coloneqq \frac{x-T^\star_i(x)}{\tau}$ and let $\bar{w}_n(t)\coloneqq  w_i$, for $t\in((i-1)\tau,i\tau]$ and $i=1,\ldots,n$, be the corresponding piecewise constant interpolation. Recall that $c>0$ lower bounds the smallest eigenvalue of the covariance matrix along the trajectory, whereby 
    \begin{align*}
        \int\|w_i(x)\|^2 d\gamma_i &= \int \|x-T^\star_i(x)\|^2/\tau^2 d\gamma_i \\
        &\leq \frac{\IGW(\gamma_i,\gamma_{i-1})^2}{c\tau^2}\\
        &\leq \frac{L^2}{c}.
    \end{align*}
    Hence the sequence of joint distributions $\nu_n \coloneqq \upsilon_1[(\id,\bar{w}_n)_\sharp\bar{\gamma}_n]$ is tight and has a weak limit $\nu$ along the further subsequence (again not relabeled). Define $w_t\coloneqq \int y d\nu_{t,x}(y)$ where $\nu_{t,x}$ is the disintegration of $\nu$ w.r.t. its first two marginal $\upsilon_1\gamma$. Similar to \cref{prop:limit_continuity_equation} we conclude that $(\gamma_t,w_t)_{t\in[0,1]}$ solves the continuity equation, i.e.,
    \begin{align*}
        \iint \partial_t g(t,x) d\gamma_t(x)dt = -\iint \llangle\nabla g(t,x),w_t(x)\rrangle d\gamma_t(x)dt,\quad \forall g\in C^\infty_c((0,1)\times \RR^d).
    \end{align*}
    To identify the appropriate velocity field for $\curve{\tilde\rho_t}$, compute
    \begin{align*}
        \iint &\partial_t g(t,x) d\tilde{\rho}_t(x)dt\\
        &\stackrel{\mathrm{(a)}}{=} \iint \Big(\partial_t (g(t,\bO(t)x)) - \llangle(\nabla g)(t,\bO(t)x) , \bO(t)'x \rrangle \Big)d\gamma_t(x)dt  \\
        & \stackrel{\mathrm{(b)}}{=} -\iint \llangle \bO(t)^\intercal(\nabla g)(t,\bO(t)x),w_t(x)\rrangle d\gamma_t(x)dt  - \iint \llangle(\nabla g)(t,\bO(t)x) , \bO(t)'x \rrangle d\gamma_t(x)dt \\
        & = -\iint \llangle \nabla g(t,\bO(t)x),\bO(t) w_t(x) + \bO(t)'x\rrangle d\gamma_t(x)dt \\
        & = -\iint \llangle \nabla g(t,x),\bO(t) w_t(\bO(t)^\intercal x) + \bO(t)'\bO(t)^\intercal x\rrangle d\tilde{\rho}_t(x)dt ,
    \end{align*}
    where note that in step (a) we treat $g(t,\bO(t)x)$ as a function of $t,x$ and takes its partial derivative, and in step (b) we write $(\nabla g)$ as the gradient function w.r.t. the space slot. We conclude that $\curve{\tilde{\rho}_t, v_t}$ satisfies the continuity equation for $v_t(x)\coloneqq \bO(t) w_t(\bO(t)^\intercal x) + \bO(t)'\bO(t)^\intercal x$. We also observe that $g_{\tilde{\rho}_t}(v_t,v_t)=g_{\gamma_t}(w_t,w_t)$, for all $t\in[0,1]$. Indeed:
    \begin{align*}
        &g_{\tilde{\rho}_t}(v_t,v_t)\\
        &=  \int \Big(\llangle v_t(x), x'\rrangle + \llangle x, v_t(x')\rrangle\Big)^2 d \tilde{\rho}_t\otimes \tilde{\rho}_t(x,x')\\
        & \stackrel{\mathrm{(a)}}{=} \int \Big(\llangle \bO(t) w_t(\bO(t)^\intercal x) + \bO(t)'\bO(t)^\intercal x, x'\rrangle \\
        &\qquad\qquad\qquad\qquad\qquad\qquad+ \llangle x, \bO(t) w_t(\bO(t)^\intercal x') + \bO(t)'\bO(t)^\intercal x'\rrangle\Big)^2 d \tilde{\rho}_t\otimes \tilde{\rho}_t(x,x')\\
        & = \int \Big(\llangle \bO(t) w_t(x) + \bO(t)'x, \bO(t)x'\rrangle + \llangle \bO(t)x, \bO(t) w_t(x') + \bO(t)' x'\rrangle\Big)^2 d \gamma_t\otimes\gamma_t(x,x')\\
        & = \int \Big(\llangle w_t(x) , x'\rrangle + \llangle \bO(t)' x , \bO(t)x'\rrangle + \llangle  x, w_t(x') \rrangle + \llangle \bO(t)x, \bO(t)' x'\rrangle\Big)^2 d \gamma_t\otimes\gamma_t(x,x')\\
        & = \int \Big(\llangle w_t(x) , x'\rrangle + \llangle  x, w_t(x') \rrangle + \partial_t\llangle \bO(t)x, \bO(t) x'\rrangle\Big)^2 d \gamma_t\otimes\gamma_t(x,x')\\
        & \stackrel{\mathrm{(b)}}{=} \int \Big(\llangle w_t(x) , x'\rrangle + \llangle  x, w_t(x') \rrangle \Big)^2 d \gamma_t\otimes\gamma_t(x,x')\\
        & = g_{\gamma_t}(w_t,w_t),
    \end{align*}
    where we plugged in the definition of $v_t$ in step (a), and in step (b) we have used that $\llangle \bO(t)x, \bO(t) x'\rrangle = \llangle x, x'\rrangle$ since $\bO(t)\in\Od$.

    \medskip
    With this equality at hand, to conclude the proof of the lower bound it suffices to show that 
    \begin{align*}
        \int_0^1 g_{\gamma_t}(w_t,w_t) dt \leq \ell_\IGW(\gamma)^2. 
    \end{align*}
    First expand
    \begin{align*}
    &\IGW(\gamma_{i},\gamma_{i-1})^2\\
    &= \int \left|\llangle x,x'\rrangle-\llangle x-\tau w_i(x), x'-\tau w_i(x')\rrangle\right|^2 d \gamma_{i}\otimes \gamma_{i}(x,x')\\
    &= \int \left|\tau\llangle w_i(x),x'\rrangle + \tau\llangle x,w_i(x')\rrangle -\tau^2\llangle w_i(x),  w_i(x')\rrangle\right|^2 d \gamma_{i}\otimes \gamma_{i}(x,x')\\
    & = \int \Big(2\tau^2 \llangle w_i(x),x'\rrangle^2 + 2\tau^2\llangle w_i(x),x'\rrangle \llangle x,w_i(x')\rrangle -4\tau^3\llangle w_i(x),x'\rrangle\llangle  w_i(x),  w_i(x')\rrangle\\
    &\qquad\qquad\qquad\qquad\qquad\qquad\qquad\qquad\qquad\qquad\qquad\qquad+ \tau^4\llangle  w_i(x),  w_i(x')\rrangle^2 \Big)d \gamma_{i}\otimes \gamma_{i}(x,x')\\
    & = \tau^2 g_{\gamma_i}(w_i,w_i) + R_i. 
    \end{align*}
    where $R_i\coloneqq\int \big(- 4\tau^3 \llangle w_i(x),x'\rrangle\llangle  w_i(x),  w_i(x')\rrangle + \tau^4\llangle  w_i(x),  w_i(x')\rrangle^2 \big)d \gamma_{i}\otimes \gamma_{i}(x,x') = O(\tau^3)$. Recalling that $\IGW(\gamma_i,\gamma_{i-1})=\IGW(\rho_i,\rho_{i-1})\leq L\tau$ with $L=\ell_\IGW(\rho)=\ell_\IGW(\gamma)$, the above implies
    \begin{align*}
         \int_0^1 g_{\bar{\gamma}_n(t)}(\bar{w}_n(t),\bar{w}_n(t)) dt &= \tau\sum_{i=1}^n g_{\gamma_i}(w_i,w_i)\\
         & = \frac{1}{\tau}\sum_{i=1}^n \IGW(\gamma_{i},\gamma_{i-1})^2 -\frac{R_i}{\tau^2}\\
         & \leq \ell_\IGW(\gamma) + O(\tau).\numberthis\label{eq:UB1}
    \end{align*}
    Thus, it remains to establish the continuous-time flow associated with $\curve{\gamma_t,w_t}$ as a lower bound on that of the piecewise constant interpolation, from the left-hand side of \eqref{eq:UB1}. Specifically, we will show that
    \[
             \liminf_{k\to\infty} \int_0^1 g_{\bar{\gamma}_{n_k}(t)}(\bar{w}_{n_k}(t),\bar{w}_{n_k}(t)) dt 
         \geq \int_0^1 g_{\gamma_t}(w_t,w_t)dt,
    \]
    where recall that $n_k$ is the subsequence along which $\bar{\gamma}_n$ converges uniformly in $\W_2$ to $\gamma$.
    To that end, consider the decomposition
    \begin{align*}
        \int_0^1 g_{\bar{\gamma}_n(t)}(\bar{w}_n(t),\bar{w}_n(t)) dt & = \tau \sum_{i=1}^n g_{\gamma_i}(w_i,w_i)\\
        &= \int_0^1 \int \left|\llangle \bar{w}_n(t,x),x'\rrangle + \llangle x,\bar{w}_n(t,x')\rrangle\right|^2 d\bar{\gamma}_n(t,x)d\bar{\gamma}_n(t,x') dt\\
        & = \int_0^1 2\int \bar{w}_n(t,x')^\intercal\bsigma_{\bar{\gamma}_n(t)}\bar{w}_n(t,x') d\bar{\gamma}_n(t,x)dt + \int_0^1 2\tr(\bar{\bL}_n(t)^2) dt.\numberthis\label{eq:target}
    \end{align*} 
    For the first term on the right-hand side, as $\bsigma_{\bar{\gamma}_{n_k}(t)}$ converges uniformly to $\bsigma_{\gamma_t}$, we have
    \begin{align*}
        R_k&\coloneqq\left|\int_0^1 2\int \bar{w}_{n_k}(t,x')^\intercal (\bsigma_{\bar{\gamma}_{n_k}(t)} - \bsigma_{\gamma_t}) \bar{w}_{n_k}(t,x') d\bar{\gamma}_{n_k}(t,x) dt \right|\\
        &\lesssim_{\sup_t M_2(\gamma_t)} \W_2(\bar{\gamma}_{n_k}(t),\gamma_t) \int_0^1 \|\bar{w}_{n_k}(t,x')\|_{L^2(\bar{\gamma}_{n_k}(t);\RR^d)}^2 dt\\
        &\to0
    \end{align*}
    as $k\to\infty$. Consequently
    \begin{align*}
        \liminf_{k\to\infty} \int_0^1 2\int \bar{w}_{n_k}(t,x')^\intercal&\bsigma_{\bar{\gamma}_{n_k}(t)}\bar{w}_{n_k}(t,x') d\bar{\gamma}_{n_k}(t,x) dt \\
        &\geq \liminf_{k\to\infty} \int_0^1 2\int \bar{w}_{n_k}(t,x')^\intercal\bsigma_{\gamma_t}\bar{w}_{n_k}(t,x') d\bar{\gamma}_{n_k}(t,x) dt-R_k\\
        & = \liminf_{k\to\infty} \int_0^1 2\int y^\intercal\bsigma_{\gamma_t} y\, d\nu_{n_k}(t,x,y) \\
        & \geq \int_0^1 2\int y^\intercal\bsigma_{\gamma_t} y\, d\nu(t,x,y) \\
        & \geq \int_0^1 2\int w_t(x)^\intercal\bsigma_{\gamma_t} w_t(x) d\gamma_t(x) dt, \numberthis\label{eq:part1}
    \end{align*}
    where the penultimate step uses the weak convergence of $\nu_{n_k}$, while the last one is Jensen's~inequality. 
    
    To deal with the second term, set $\bL_{i}\coloneqq\int x w_i(x)^\intercal d\gamma_{i}(x)\in\RR^{d\times d}$, $\bar{\bL}_n(t)\coloneqq  \bL_{i}$, for $t\in((i-1)\tau,i\tau]$ and $i=1,\ldots,n$, and $\bL(t)\coloneqq \int x w_t(x)^\intercal d\gamma_t(x)$, noticing that $\|\bL(t)\|_\F<\infty$ for a.e. $t$. Since $\bL_i$ is symmetric by construction, for any matrix-valued function of time $g \in C_c^\infty((0,1);\RR^{d\times d})$, we have
    \begin{align*}
        \lim_{k\to\infty} \int \tr\big(g(t)\bar{\bL}_{n_k}(t)\big) dt &= \lim_{k\to\infty} \int y^\intercal g(t) x\, d\nu_{n_k}(t,x,y)\\
        & = \int y^\intercal g(t) x\, d\nu(t,x,y)\\
        & = \int \tr\big(g(t)\bL(t)\big) dt,
    \end{align*}
    where the limit follows similarly to \eqref{eq:technical_eq_2}. Taking $g$ such that $g(t)^\intercal = -g(t)$, we obtain 
    \[0=\lim_{k\to\infty} \int \tr\big(g(t)\bar{\bL}_{n_k}(t)\big)=\int \tr\big(g(t)\bL(t)\big) dt,\]
    which implies that $\bL(t)$ is symmetric for a.e. $t$. Since $\|\bar{\bL}_{n_k}(t)\|_\F^2 \leq M_2(\bar{\gamma}_n(t))   L^2/c$, the measure $\eta_n\coloneqq (\id,\bar{\bL}_{n})_\sharp \upsilon_1$ is tight and has a weak limit $\eta(t,\bL)$ along a further subsequence (again not relabeled). Clearly the marginal over variable $t$ is $\upsilon_1$, and we write $\eta_t(\bL)$ for the disintegration of $\eta(t,\bL)$ w.r.t. $t$. Since
    \begin{align*}
        \lim_{k\to\infty} \int \tr\big(g(t)\bar{\bL}_{n_k}(t)\big) dt &= \lim_{k\to\infty} \int \tr\big(g(t) \bL \big) d\eta_{n_k}(t,\bL)\\
        & = \int \tr\big(g(t) \bL \big) d\eta(t,\bL)\\
        & = \int \tr\left(g(t) \int\bL d\eta_t(\bL)  \right) dt,
    \end{align*}
    where the limit again follows from the uniformly bounded second moment, compared to the earlier limit $\lim_{k\to\infty} \int \tr\big(g(t)\bar{\bL}_{n_k}(t)\big) dt  = \int \tr\big(g(t)\bL(t)\big) dt$, we have $\int\bL d\eta_t(\bL) = \bL(t)$ for a.e. $t$. Consequently, we obtain
    \begin{align*}
        \liminf_{k\to\infty} \int_0^1 2\tr\big(\bar{\bL}_{n_k}(t)^2\big) dt & = \liminf_{k\to\infty}\int_0^1 2  \|\bar{\bL}_{n_k}(t)\|_\F^2 dt\\
        & = \liminf_{k\to\infty}\int 2  \|\bL\|_\F^2 d\eta_{n_k}(t,\bL)\\
        & \geq \int 2  \|\bL\|_\F^2 d\eta(t,\bL)\\
        & \geq \int_0^1 2  \left\|\int \bL d\eta_t(\bL)\right\|_\F^2 dt\\
        & = \int_0^1 2  \|\bL(t)\|_\F^2 dt\\
        & = \int_0^1 2  \tr\big(\bL(t)^2\big) dt.\numberthis\label{eq:part2}
    \end{align*}
    Plugging \eqref{eq:part1} and \eqref{eq:part2} back into \eqref{eq:target}, we obtain the desired limit
    \begin{align*}
        \liminf_{k\to\infty} \int_0^1 g_{\bar{\gamma}_{n_k}(t)}\big(\bar{w}_{n_k}(t),\bar{w}_{n_k}(t)\big) dt & \geq \int_0^1 2\int w_t(x)^\intercal\bsigma_{\gamma_t} w_t(x) d\gamma_t(x) dt + \int_0^1 2  \tr(\bL(t)^2) dt\\
        & = \int_0^1 g_{\gamma_t}(w_t,w_t)dt,
    \end{align*}
    which, together with \eqref{eq:UB1}, concludes the proof of the lower bound as $\tau\to 0$ when $k\to\infty$.

    \medskip
    \noindent{\underline{Item (2) -- Upper bound:}} The upper bound essentially follows from \cref{lem:IGW_flow_upperbound}. Let $\curve{\tilde\rho_t,v_t}$ with $\sup_{t\in[0,1]}\IGW(\tilde\rho_t,\rho_t)=0$ satisfy the continuity equation $\partial_t\tilde{\rho}_t + \nabla\cdot \tilde{\rho}_t v_t =0$. For an interval $[r,r+h]\subset(0,1)$ with $h>0$, consider the reparametrized curve $\gamma_s = \tilde{\rho}_{r+ sh}$, where $\gamma_s$ connects $\tilde{\rho}_r, \tilde{\rho}_{r+h}$ for $s\in[0,1]$ and solves $\partial_s\gamma + \nabla\cdot \gamma w=0$ with $w_s(x)= h v_{r+sh}(x)$. Compute 
    \begin{align*}
        \IGW(\rho_r, \rho_{r+h})^2 &\leq \int_0^1 g_{\gamma_s}(w_s,w_s) ds\\
        & = h^2 \int_0^1 g_{\tilde{\rho}_{r+ sh}}(v_{r+ sh},v_{r+ sh}) ds\\
        & = h \int_{r}^{r+h} g_{\tilde{\rho}_t}(v_t,v_t) d t,
    \end{align*}
    where the inequality comes from \cref{lem:IGW_flow_upperbound}.

    By \cref{lem:metric_derivative}, $\lim_{h\to0}\frac{\IGW(\rho_r, \rho_{r+h})}{h} = |\rho'|(r)$ for a.e. $t$. Suppose $\int_{0}^{1} g_{\tilde{\rho}_t}(v_t,v_t) d t <\infty$, as otherwise the inequality trivializes, and define $G(r)\coloneqq\int_0^r g_{\tilde{\rho}_t}(v_t,v_t) d t$. Clearly, $G$ is absolutely continuous and $G'(r)$ exists for a.e. $r$ with $G'(r)=g_{\tilde{\rho}_t}(v_t,v_t)$ a.e. Furthermore, 
    \[
        |\rho'|(r)^2 = \lim_{h\to0} \frac{\IGW(\rho_r, \rho_{r+h})^2}{h^2} \leq \lim_{h\to0}\frac{G(r+h)-G(r)}{h}=G'(r)
        \]
    for a.e. $r$, and thus \[
        \ell_\IGW(\rho)^2 = \left(\int_0^1 |\rho'|(t) dt\right)^2 \leq \int_0^1 |\rho'|(t)^2 dt= \int_0^1 G'(t)dt= \int_0^1 g_{\tilde{\rho}_t}(v_t,v_t) d t,
    \]
    which concludes the proof.\qed

    \subsection{Proof of \cref{lem:approximating_curve_with_density}}\label{appen:lem:approximating_curve_with_density_proof}
    Suppose $\curve{\rho_t}$ is parametrized to be $L$-Lipschitz with $L=\ell_{\IGW}(\rho)$. Fixing $\epsilon>0$, we will construct a new curve $\curve{\gamma_t^\epsilon}$ with a corresponding velocity field $\curve{v_t^\epsilon}$ connecting
    \begin{equation}
        \rho_0\to \rho_0*\cN_\epsilon \to \rho_1*\cN_\epsilon\to \rho_1,\label{eq:convolved_curve}
    \end{equation}
    and control the action along each of the three pieces. The middle piece will be instantiated as the convolved curve $\curve{\rho_t*\cN_{\epsilon}}$, which satisfies the assumption of the first part of \cref{lem:IGW_curve_length_and_flow}, and therefore, has a velocity field associated with it (rather, an IGW-equivalent version thereof). Will show that the convolution only elongates the curve by a negligible amount, yielding an squared IGW-length of at most $\ell_\IGW(\rho)^2+O(\epsilon)$. The two remaining pieces, connecting $\rho_0,\rho_1$ and their Gaussian-smoothed versions, also have corresponding velocities and only contribute another $O(\epsilon)$ to the overall~length.

\medskip
    We start by analyzing the convolved curve $\curve{\rho_t*\cN_\epsilon}$, which accounts for the intermediate piece in \eqref{eq:convolved_curve}. By Item (1) of \cref{lem:IGW_curve_length_and_flow}, there exists an IGW equivalent curve $\tilde{\gamma}$ that is $\W_2$-Lipschitz, with $\tilde{v}_t$ that satisfies the continuity equation, such that
    \begin{align*}
        \int_0^1 g_{\tilde{\gamma}_t}(\tilde{v}_t,\tilde{v}_t) dt \leq \ell_\IGW(\tilde{\gamma})^2,
    \end{align*}
    and $\tilde{\gamma}_1\in\{\rho_1*\cN_\epsilon,(\bI^-_\sharp\rho_1)*\cN_\epsilon\}$. We provide the proof for when $\tilde{\gamma}_1 = \rho_1*\cN_\epsilon$; the derivation for the other case is similar. We next show that $\ell_\IGW(\tilde{\gamma})^2\leq \ell_\IGW(\rho)^2+O(\epsilon)$. For $s,t\in[0,1]$ and IGW plan $\pi\in\Pi(\rho_s,\bO_\sharp\rho_t)$ for $\bO\in\cO_{\rho_s,\rho_t}$, note that $\bO_\sharp (\rho_t*\cN_\epsilon) = (\bO_\sharp\rho_t)*\cN_\epsilon$ and compute
    \begin{align*}
        &\IGW(\rho_s*\cN_\epsilon, \rho_t*\cN_\epsilon)^2 \\
        &=\IGW(\rho_s*\cN_\epsilon,(\bO_\sharp\rho_t)*\cN_\epsilon)^2 \\
        & \stackrel{\mathrm{(a)}}{\leq}  \int  \big|\llangle x+ \epsilon z,x'+\epsilon z'\rrangle-\llangle y+\epsilon z,y'+\epsilon z'\rrangle \big|^2 d\pi\otimes\pi(x,y,x',y') d\cN_1\otimes\cN_1(z,z')\\
        & = \int  \big|\llangle x ,x' \rrangle-\llangle y ,y' \rrangle   + \epsilon \big(\llangle x-y , z'\rrangle +\llangle   z,x'-y' \rrangle\big) \big|^2 d\pi\otimes\pi(x,y,x',y') d\cN_1\otimes\cN_1(z,z')\\
        & = \int  \big|\llangle x ,x' \rrangle-\llangle y ,y' \rrangle\big|^2 d\pi\otimes\pi(x,y,x',y')\\
        &\qquad+ 2\epsilon\int \big(\llangle x ,x' \rrangle-\llangle y ,y' \rrangle\big)\big(\llangle x-y , z'\rrangle +\llangle   z,x'-y' \rrangle\big)  d\pi\otimes\pi(x,y,x',y') d\cN_1\otimes\cN_1(z,z')\\
        &\qquad +  \epsilon^2 \int \big(\llangle x-y , z'\rrangle +\llangle   z,x'-y' \rrangle\big)^2  d\pi\otimes\pi(x,y,x',y') d\cN_1\otimes\cN_1(z,z') \\   
        &\stackrel{\mathrm{(b)}}{\leq} \IGW(\rho_s,\rho_t)^2 \\
        &\qquad+ 2 \epsilon\IGW(\rho_s,\rho_t) \sqrt{\int \big(\llangle x-y , z'\rrangle +\llangle   z,x'-y' \rrangle\big)^2  d\pi\otimes\pi(x,y,x',y') d\cN_1\otimes\cN_1(z,z')}\\
        &\qquad +  \epsilon^2 \int \big(\llangle x-y , z'\rrangle +\llangle   z,x'-y' \rrangle\big)^2  d\pi\otimes\pi(x,y,x',y') d\cN_1\otimes\cN_1(z,z') \\   
        &\stackrel{\mathrm{(c)}}{\leq} \IGW(\rho_s,\rho_t)^2 \\
        &\qquad+ 2 \epsilon\IGW(\rho_s,\rho_t) \sqrt{\mspace{-4mu}\int\mspace{-4mu}  2\big(\|x-y\|^2\|z'\|^2 + \|   z\|^2\|x'-y'\|^2\big) d\pi\otimes\pi(x,y,x',y') d\cN_1\otimes\cN_1(z,z')}\\
        &\qquad +  \epsilon^2 \int  2\big(\|x-y\|^2\|z'\|^2 + \|   z\|^2\|x'-y'\|^2\big) d\pi\otimes\pi(x,y,x',y') d\cN_1\otimes\cN_1(z,z') \\   
        &= \IGW(\rho_s,\rho_t)^2 + 4\sqrt{d} \epsilon\IGW(\rho_s,\rho_t) \sqrt{\int \|x-y\|^2 d\pi\otimes\pi(x,y,x',y') } + 4d\epsilon^2 \int \|x-y\|^2 d\pi(x,y) \\
        &\stackrel{\mathrm{(d)}}{\leq} \left(1+4\sqrt{\frac{d}{c}} \epsilon + 4\frac{d}{c}\epsilon^2\right)\IGW(\rho_s,\rho_t)^2  \\
        &\leq \left(1+4\sqrt{\frac{d}{c}} \epsilon + 4\frac{d}{c}\epsilon^2\right)L^2|s-t|^2,  
    \end{align*}
    where (a) is by specifying the coupling of $\rho_s*\cN_\epsilon,(\bO_\sharp\rho_t)*\cN_\epsilon$ given by $(X+\epsilon Z, Y+\epsilon Z)$, where $(X,Y)\sim\pi$ is independent to $Z\sim\cN_1$ steps (b) and (c) use the used Cauchy–Schwarz inequality, and (d) comes from \eqref{eq:igw_w_comparison_intermediate_bound}. Conclude that 
    \begin{equation}
    \ell_\IGW(\tilde{\gamma})^2 = \ell_\IGW(\rho^\epsilon)^2 \leq \left(1+4\sqrt{\frac{d}{c}} \epsilon + 4\frac{d}{c}\epsilon^2\right)\ell_\IGW(\rho)^2\label{eq:flow01_UB}.
    \end{equation}

\medskip
    We next consider the curves connecting $\rho_i$ and their convolved versions $\rho_i*\cN_{\epsilon}$, $i=0,1$, accounting for the start and end segments in \eqref{eq:convolved_curve}. Starting from $\rho_0\to \rho_0*\cN_{\epsilon}$, consider the curve $\curve{\rho_0*\cN_{t\epsilon}}$ (later, we shall rescale time to ensure that the overall curve is parameterized by $t\in[0,1]$). Clearly, the curve is $\W_2$-Lipschitz with constant $\sqrt{d}\epsilon$: 
    \begin{align*}
\W_2(\rho_0*\cN_{s\epsilon},\rho_0*\cN_{t\epsilon})^2 \leq \int \|(x+s\epsilon z) -(x+t\epsilon z)\|^2 d\rho_0(x)d\cN_1(z) = d\epsilon^2|s-z|^2.
    \end{align*}
    By \cite[Theorem 8.3.1]{ambrosio2005gradient}, there is $\curve{v_{0,t}}$ such that $\curve{\rho_0*\cN_{t\epsilon},v_{0,t}}$ satisfies the continuity equation, and $\int_0^1 \|v_{0,t}\|_{L^2(\rho_0*\cN_{t\epsilon};\RR^d)}^2 dt \leq d\epsilon^2$. The corresponding action is small with $\epsilon$ via
    \begin{align*}
        \int_0^1 g_{\rho_0*\cN_{t\epsilon}}(v_{0,t},v_{0,t}) dt &= \int_0^1 \llangle v_{0,t},  \cL_{\bsigma_{\rho_0*\cN_{t\epsilon}},\rho_0*\cN_{t\epsilon}}[v_{0,t}]\rrangle_{L^2(\rho_0*\cN_{t\epsilon};\RR^d)} dt\\
        &\leq \frac{1}{2}\int_0^1 \Big(\|v_{0,t}\|_{L^2(\rho_0*\cN_{t\epsilon};\RR^d)}^2 + \|\cL_{\bsigma_{\rho_0*\cN_{t\epsilon}},\rho_0*\cN_{t\epsilon}}[v_{0,t}]\|_{L^2(\rho_0*\cN_{t\epsilon};\RR^d)}^2 \Big)dt\\
        &\leq \frac{1}{2}\int_0^1 \Big(1+8\big(\|\bsigma_{\rho_0*\cN_{t\epsilon}}\|_\op^2+M_2(\rho_0*\cN_{t\epsilon})^2\big)\Big)\|v_{0,t}\|_{L^2(\rho_0*\cN_{t\epsilon};\RR^d)}^2 dt\\
        &\leq \frac{1}{2}\big(1+16 (M_2(\rho_0)+\epsilon^2 d)^2\big) \int_0^1  \|v_{0,t}\|_{L^2(\rho_0*\cN_{t\epsilon};\RR^d)}^2 dt\\
        &\leq \frac{1}{2}\big(1+16 (M_2(\rho_0)+\epsilon^2 d)^2\big) d\epsilon^2,\numberthis\label{eq:flow0_UB}
    \end{align*}
    where the second inequality uses the bound \eqref{eq:operator_norm_of_cL} to control $\|\cL_{\bsigma_{\rho_0*\cN_{t\epsilon}},\rho_0*\cN_{t\epsilon}}[v_{0,t}]\|_{L^2(\rho_0*\cN_{t\epsilon};\RR^d)}^2$. By a similar argument, we have the pair $\curve{\rho_1*\cN_{t\epsilon}, v_{1,t}}$ satisfying the continuity equation and
    \begin{equation}
        \int_0^1 g_{\rho_1*\cN_{t\epsilon}}(v_{1,t},v_{1,t}) dt \leq \frac{1}{2}\big(1+16 (M_2(\rho_1)+\epsilon^2 d)^2\big) d\epsilon^2.\label{eq:flow1_UB}
    \end{equation}

    \medskip
    To conclude, we assemble the overall curve and its velocity field from the above pieces. For each $t\in[0,1]$, define
    \begin{align*}
        (\gamma_t^\epsilon,v_t^\epsilon) \coloneqq \begin{cases}
            \left(\rho_0*\cN_{t}, \frac{1}{\epsilon}v_{0,\frac{t}{\epsilon}}\right) &, t\in[0,\epsilon)\\
            \left(\tilde{\gamma}_{\frac{t-\epsilon}{1-2\epsilon}} , \frac{1}{1-2\epsilon}\tilde{v}_{\frac{t-\epsilon}{1-2\epsilon}}\right) &, t\in[\epsilon,1-\epsilon)\\
            \left(\rho_1*\cN_{1-t}, -\frac{1}{\epsilon}v_{1,\frac{1-t}{\epsilon}}\right) &, t\in[1-\epsilon,1]
        \end{cases}.
    \end{align*}
    It is straightforward to verify that the time rescaling preserves the continuity equation on the intervals $(0,\epsilon),(\epsilon,1-\epsilon),(1-\epsilon,1)$, respectively. Moreover, since the three curves are all $\W_2$-Lipschitz, we have that $\curve{\gamma_t^\epsilon}$ is also $\W_2$-Lipschitz. By \cite[Lemma 8.1.2]{ambrosio2005gradient}, we may extend the continuity equation to $(0,1)$, i.e. $\curve{\gamma_t^\epsilon,v_t^\epsilon}$ solves the continuity equation on $\RR^d\times(0,1)$. Employing the bounds from \eqref{eq:flow01_UB}-\eqref{eq:flow1_UB}, we lastly show that the action of the combined curve satisfies the desired~bound:
    \begin{align*}
        &\int_0^1 g_{\gamma_t^\epsilon}(v_t^\epsilon, v_t^\epsilon) dt\\
        &= \int_0^\epsilon g_{\gamma_t^\epsilon}(v_t^\epsilon,v_t^\epsilon) dt+ \int_\epsilon^{1-\epsilon} g_{\gamma_t^\epsilon}(v_t^\epsilon,v_t^\epsilon) dt+ \int_{1-\epsilon}^1 g_{\gamma_t^\epsilon}(v_t^\epsilon,v_t^\epsilon) dt \\
        & = \int_0^1 \epsilon g_{\rho_0*\cN_{t\epsilon}}\left(\frac{v_{0,t}}{\epsilon},\frac{v_{0,t}}{\epsilon}\right) dt \mspace{-4mu}+\mspace{-4mu} \int_0^1 (1-2\epsilon) g_{\tilde{\gamma}_t}\left(\frac{\tilde{v}_t}{1-2\epsilon},\frac{\tilde{v}_t}{1-2\epsilon}\right) dt + \int_0^1 \epsilon g_{\rho_1*\cN_{t\epsilon}}\left(\frac{v_{1,t}}{\epsilon},\frac{v_{1,t}}{\epsilon}\right) dt\\
        & \leq \frac{1}{2}\big(1+16 (M_2(\rho_0)+\epsilon^2 d)^2\big) d\epsilon + \frac{1}{1-2\epsilon} \ell_\IGW(\tilde{\gamma})^2+\frac{1}{2}\big(1+16 (M_2(\rho_1)+\epsilon^2 d)^2\big) d\epsilon   \\
        &\leq \frac{1}{1-2\epsilon} \left(1+4\sqrt{\frac{d}{c}} \epsilon + 4\frac{d}{c}\epsilon^2\right)\ell_\IGW(\rho)^2+ \big(1+16 (M_2(\rho_0)\vee M_2(\rho_1)+\epsilon^2 d)^2\big) d\epsilon\\
        &\leq \ell_\IGW(\rho)^2+O(\epsilon).\qed
    \end{align*}

\section{Additional Experiments for \cref{sec:experiment}}\label{appen:additional_experiments}

We provide additional numerical experiments, including the gradient flow of potential, interaction, and entropy starting from different shapes, as well as more shape matching examples. Due to space considerations, we provide these in \url{https://github.com/ZhengxinZh/IGW/blob/main/additional_experiments/additional_experiments.pdf}, with the numbering scheme continued therein. The gradient flows are illustrated in Fig. 9, with the initial distributions following the shape of an ellipse 9a, 9b, 9c; square 9d, 9e, 9f; two moons 9g, 9h, 9i; two circles 9j, 9k, 9l; and the infinity symbol 9m, 9n, 9o. The flow matchings results are given in Fig. 10, including cat to rotated cat 10a; heart to rotated heart 10b; ellipse to rotated ellipse 10c; cat to heart 10d; and cat to rotated heart 10e.

\section*{Acknowledgements} Z. Goldfeld is partially supported by NSF grants CAREER CCF-2046018, DMS-2210368, and CCF-2308446, and the IBM Academic Award. B. K. Sriperumbudur is partially supported by the NSF CAREER award DMS-1945396. This work was initiated during Z. Zhang's internship at the MIT-IBM Watson AI Lab.

\bibliographystyle{plain}
\bibliography{references}

\begin{thebibliography}{10}

\bibitem{adams2011large}
Stefan Adams, Nicolas Dirr, Mark~A Peletier, and Johannes Zimmer.
\newblock From a large-deviations principle to the {W}asserstein gradient flow:
  {A} new micro-macro passage.
\newblock {\em Communications in Mathematical Physics}, 307:791--815, 2011.

\bibitem{alvarez2018gromov}
D.~Alvarez-Melis and T.~S. Jaakkola.
\newblock {G}romov-{W}asserstein alignment of word embedding spaces.
\newblock {\em arXiv preprint arXiv:1809.00013}, Aug. 2018.

\bibitem{alvarez2021optimizing}
David Alvarez-Melis, Yair Schiff, and Youssef Mroueh.
\newblock Optimizing functionals on the space of probabilities with input
  convex neural networks.
\newblock {\em arXiv preprint arXiv:2106.00774}, 2021.

\bibitem{ambrosio2021lectures}
Luigi Ambrosio, Elia Bru{\'e}, Daniele Semola, et~al.
\newblock {\em {L}ectures on {O}ptimal {T}ransport}, volume 130.
\newblock Springer, 2021.

\bibitem{ambrosio2005gradient}
Luigi Ambrosio, Nicola Gigli, and Giuseppe Savar{\'e}.
\newblock {\em {G}radient {F}lows: in {M}etric {S}paces and in the {S}pace of
  {P}robability {M}easures}.
\newblock Springer Science \& Business Media, 2008.

\bibitem{ambrosio2004topics}
Luigi Ambrosio and Paolo Tilli.
\newblock {\em {T}opics on {A}nalysis in {M}etric {S}paces}, volume~25.
\newblock OUP Oxford, 2004.

\bibitem{arbel2019maximum}
Michael Arbel, Anna Korba, Adil Salim, and Arthur Gretton.
\newblock Maximum mean discrepancy gradient flow.
\newblock {\em Advances in Neural Information Processing Systems}, 32, 2019.

\bibitem{arya2024gromov}
Shreya Arya, Arnab Auddy, Ranthony~A Clark, Sunhyuk Lim, Facundo Memoli, and
  Daniel Packer.
\newblock The {G}romov--{W}asserstein distance between spheres.
\newblock {\em Foundations of Computational Mathematics}, pages 1--56, 2024.

\bibitem{beinert2022assignment}
Robert Beinert, Cosmas Heiss, and Gabriele Steidl.
\newblock On assignment problems related to {G}omov-{W}asserstein distances on
  the real line.
\newblock {\em arXiv preprint arXiv:2205.09006}, 2022.

\bibitem{benamou2000computational}
J.-D. Benamou and Y.~Brenier.
\newblock A computational fluid mechanics solution to the {Monge-Kantorovich}
  mass transfer problem.
\newblock {\em Numerische Mathematik}, 84(3):375--393, Jan. 2000.

\bibitem{bernton2018langevin}
Espen Bernton.
\newblock Langevin {M}onte {C}arlo and {JKO} splitting.
\newblock In {\em Conference on Learning Theory}, pages 1777--1798. PMLR, 2018.

\bibitem{blumberg2020mrec}
Andrew~J Blumberg, Mathieu Carriere, Michael~A Mandell, Raul Rabadan, and
  Soledad Villar.
\newblock Mrec: a fast and versatile framework for aligning and matching point
  clouds with applications to single cell molecular data.
\newblock {\em arXiv preprint arXiv:2001.01666}, 2020.

\bibitem{bonneel2018optimal}
Nicolas Bonneel.
\newblock {\em Optimal Transport for Computer Graphics and Temporal Coherence
  of Image Processing Algorithms}.
\newblock PhD thesis, Universit{\'e} Lyon 1-Claude Bernard, 2018.

\bibitem{bonneel2019spot}
Nicolas Bonneel and David Coeurjolly.
\newblock {SPOT}: {S}liced partial optimal transport.
\newblock {\em ACM Transactions on Graphics (TOG)}, 38(4):1--13, 2019.

\bibitem{brenier1991polar}
Yann Brenier.
\newblock Polar factorization and monotone rearrangement of vector-valued
  functions.
\newblock {\em Communications on pure and applied mathematics}, 44(4):375--417,
  1991.

\bibitem{bunne2022proximal}
Charlotte Bunne, Laetitia Papaxanthos, Andreas Krause, and Marco Cuturi.
\newblock Proximal optimal transport modeling of population dynamics.
\newblock In {\em International Conference on Artificial Intelligence and
  Statistics}, pages 6511--6528. PMLR, 2022.

\bibitem{burger2023covariance}
Martin Burger, Matthias Erbar, Franca Hoffmann, Daniel Matthes, and Andr{\'e}
  Schlichting.
\newblock Covariance-modulated optimal transport and gradient flows.
\newblock {\em arXiv preprint arXiv:2302.07773}, 2023.

\bibitem{carlier2017convergence}
Guillaume Carlier, Vincent Duval, Gabriel Peyr{\'e}, and Bernhard Schmitzer.
\newblock Convergence of entropic schemes for optimal transport and gradient
  flows.
\newblock {\em SIAM Journal on Mathematical Analysis}, 49(2):1385--1418, 2017.

\bibitem{carrillo2022primal}
Jos{\'e}~A Carrillo, Katy Craig, Li~Wang, and Chaozhen Wei.
\newblock Primal dual methods for {W}asserstein gradient flows.
\newblock {\em Foundations of Computational Mathematics}, pages 1--55, 2022.

\bibitem{chen2018neural}
Ricky~TQ Chen, Yulia Rubanova, Jesse Bettencourt, and David~K Duvenaud.
\newblock Neural ordinary differential equations.
\newblock {\em Advances in neural information processing systems}, 31, 2018.

\bibitem{chen2016relation}
Yongxin Chen, Tryphon~T Georgiou, and Michele Pavon.
\newblock On the relation between optimal transport and {S}chr{\"o}dinger
  bridges: A stochastic control viewpoint.
\newblock {\em Journal of Optimization Theory and Applications}, 169:671--691,
  2016.

\bibitem{cheng2018convergence}
Xiang Cheng and Peter Bartlett.
\newblock Convergence of {L}angevin {MCMC} in {KL}-divergence.
\newblock In {\em Algorithmic Learning Theory}, pages 186--211. PMLR, 2018.

\bibitem{chizat2018interpolating}
Lenaic Chizat, Gabriel Peyr{\'e}, Bernhard Schmitzer, and Fran{\c{c}}ois-Xavier
  Vialard.
\newblock An interpolating distance between optimal transport and
  {F}isher--{R}ao metrics.
\newblock {\em Foundations of Computational Mathematics}, 18:1--44, 2018.

\bibitem{chizat2020faster}
Lenaic Chizat, Pierre Roussillon, Flavien L{\'e}ger, Fran{\c{c}}ois-Xavier
  Vialard, and Gabriel Peyr{\'e}.
\newblock Faster {W}asserstein distance estimation with the {S}inkhorn
  divergence.
\newblock {\em Proceedings of the 34th International Conference on Neural
  Information Processing Systems}, 2020.

\bibitem{chowdhury2019gromov}
Samir Chowdhury and Facundo M{\'e}moli.
\newblock The gromov--wasserstein distance between networks and stable network
  invariants.
\newblock {\em Information and Inference: A Journal of the IMA}, 8(4):757--787,
  2019.

\bibitem{chowdhury2020gromov}
Samir Chowdhury and Tom Needham.
\newblock {G}romov-{W}asserstein averaging in a {R}iemannian framework.
\newblock In {\em Proceedings of the IEEE/CVF Conference on Computer Vision and
  Pattern Recognition Workshops}, pages 842--843, 2020.

\bibitem{conforti2021formula}
Giovanni Conforti and Luca Tamanini.
\newblock A formula for the time derivative of the entropic cost and
  applications.
\newblock {\em Journal of Functional Analysis}, 280(11):108964, 2021.

\bibitem{delon2022gromov}
Julie Delon, Agnes Desolneux, and Antoine Salmona.
\newblock Gromov-{W}asserstein distances between {G}aussian distributions.
\newblock {\em Journal of Applied Probability}, pages 1--21, 2022.

\bibitem{demetci2020gromov}
Pinar Demetci, Rebecca Santorella, Bj{\"o}rn Sandstede, William~Stafford Noble,
  and Ritambhara Singh.
\newblock {G}romov-{W}asserstein optimal transport to align single-cell
  multi-omics data.
\newblock {\em BioRxiv}, pages 2020--04, 2020.

\bibitem{dumont2024existence}
Th{\'e}o Dumont, Th{\'e}o Lacombe, and Fran{\c{c}}ois-Xavier Vialard.
\newblock On the existence of {M}onge maps for the {G}romov--{W}asserstein
  problem.
\newblock {\em Foundations of Computational Mathematics}, pages 1--48, 2024.

\bibitem{duncan2023geometry}
Andrew Duncan, Nikolas N{\"u}sken, and Lukasz Szpruch.
\newblock On the geometry of {S}tein variational gradient descent.
\newblock {\em Journal of Machine Learning Research}, 24(56):1--39, 2023.

\bibitem{frogner2020approximate}
Charlie Frogner and Tomaso Poggio.
\newblock Approximate inference with {W}asserstein gradient flows.
\newblock In {\em International Conference on Artificial Intelligence and
  Statistics}, pages 2581--2590. PMLR, 2020.

\bibitem{gentil2017analogy}
Ivan Gentil, Christian L{\'e}onard, and Luigia Ripani.
\newblock About the analogy between optimal transport and minimal entropy.
\newblock In {\em Annales de la Facult{\'e} des sciences de Toulouse:
  Math{\'e}matiques}, volume~26, pages 569--600, 2017.

\bibitem{gigli2020benamou}
Nicola Gigli and Luca Tamanini.
\newblock {B}enamou-{B}renier and duality formulas for the entropic cost on
  {${\textsf{RCD}}^*(K,N)$} spaces.
\newblock {\em Probability Theory and Related Fields}, 176(1):1--34, 2020.

\bibitem{gretton2012kernel}
Arthur Gretton, Karsten~M Borgwardt, Malte~J Rasch, Bernhard Sch{\"o}lkopf, and
  Alexander Smola.
\newblock A kernel two-sample test.
\newblock {\em The Journal of Machine Learning Research}, 13(1):723--773, 2012.

\bibitem{groppe2023lower}
Michel Groppe and Shayan Hundrieser.
\newblock Lower complexity adaptation for empirical entropic optimal transport.
\newblock {\em arXiv preprint arXiv:2306.13580}, 2023.

\bibitem{he2022regularized}
Ye~He, Krishnakumar Balasubramanian, Bharath~K Sriperumbudur, and Jianfeng Lu.
\newblock Regularized {S}tein variational gradient flow.
\newblock {\em arXiv preprint arXiv:2211.07861}, 2022.

\bibitem{ikramov2012matrix}
Khakim~D Ikramov and Yu~O Vorontsov.
\newblock The matrix equation {$X+ AX^TB= C$}: Conditions for unique
  solvability and a numerical algorithm for its solution.
\newblock In {\em Doklady Mathematics}, volume~85, pages 265--267. SP MAIK
  Nauka/Interperiodica, 2012.

\bibitem{jordan1998variational}
Richard Jordan, David Kinderlehrer, and Felix Otto.
\newblock The variational formulation of the {F}okker--{P}lanck equation.
\newblock {\em SIAM journal on mathematical analysis}, 29(1):1--17, 1998.

\bibitem{koehl2023computing}
Patrice Koehl, Marc Delarue, and Henri Orland.
\newblock Computing the {G}romov-{W}asserstein distance between two surface
  meshes using optimal transport.
\newblock {\em Algorithms}, 16(3):131, 2023.

\bibitem{kondratyev2016new}
Stanislav Kondratyev, L{\'e}onard Monsaingeon, and Dmitry Vorotnikov.
\newblock A new optimal transport distance on the space of finite {R}adon
  measures.
\newblock {\em Advances in Differential Equations}, 21(11/12):1117 -- 1164,
  2016.

\bibitem{korba2020non}
Anna Korba, Adil Salim, Michael Arbel, Giulia Luise, and Arthur Gretton.
\newblock A non-asymptotic analysis for {S}tein variational gradient descent.
\newblock {\em Advances in Neural Information Processing Systems},
  33:4672--4682, 2020.

\bibitem{kozachenko1987sample}
L.~F. Kozachenko and N.~N. Leonenko.
\newblock Sample estimate of the entropy of a random vector.
\newblock {\em Problemy Peredachi Informatsii}, 23(2):9--16, 1987.

\bibitem{lambert2022variational}
Marc Lambert, Sinho Chewi, Francis Bach, Silv{\`e}re Bonnabel, and Philippe
  Rigollet.
\newblock Variational inference via {W}asserstein gradient flows.
\newblock {\em Advances in Neural Information Processing Systems},
  35:14434--14447, 2022.

\bibitem{le2022entropic}
Khang Le, Dung~Q Le, Huy Nguyen, Dat Do, Tung Pham, and Nhat Ho.
\newblock Entropic {G}romov-{W}asserstein between {G}aussian distributions.
\newblock In {\em International Conference on Machine Learning}, pages
  12164--12203. PMLR, 2022.

\bibitem{li2019hessian}
Wuchen Li and Lexing Ying.
\newblock Hessian transport gradient flows.
\newblock {\em Research in the Mathematical Sciences}, 6(4):34, 2019.

\bibitem{liero2018optimal}
Matthias Liero, Alexander Mielke, and Giuseppe Savar{\'e}.
\newblock Optimal entropy-transport problems and a new
  {H}ellinger--{K}antorovich distance between positive measures.
\newblock {\em Inventiones mathematicae}, 211(3):969--1117, 2018.

\bibitem{lin2021wasserstein}
Alex~Tong Lin, Wuchen Li, Stanley Osher, and Guido Mont{\'u}far.
\newblock {W}asserstein proximal of {GANs}.
\newblock In {\em International Conference on Geometric Science of
  Information}, pages 524--533. Springer, 2021.

\bibitem{liu2016stein}
Qiang Liu and Dilin Wang.
\newblock Stein variational gradient descent: A general purpose {B}ayesian
  inference algorithm.
\newblock {\em Advances in neural information processing systems}, 29, 2016.

\bibitem{memoli2007use}
Facundo M{\'e}moli.
\newblock On the use of {G}romov-{H}ausdorff distances for shape comparison.
\newblock 2007.

\bibitem{memoli2009spectral}
Facundo M{\'e}moli.
\newblock Spectral {G}romov-{W}asserstein distances for shape matching.
\newblock In {\em 2009 IEEE 12th International Conference on Computer Vision
  Workshops, ICCV Workshops}, pages 256--263. IEEE, 2009.

\bibitem{memoli2011gromov}
Facundo M{\'e}moli.
\newblock Gromov-{W}asserstein distances and the metric approach to object
  matching.
\newblock {\em Found. Comput. Math.}, 11(4):417--487, 2011.

\bibitem{memoli2022distance}
Facundo M{\'e}moli and Tom Needham.
\newblock Distance distributions and inverse problems for metric measure
  spaces.
\newblock {\em Studies in Applied Mathematics}, 149(4):943--1001, 2022.

\bibitem{mongeOT1781}
Gaspard Monge.
\newblock M{\'e}moire sur la th{\'e}orie des d{\'e}blais et des remblais.
\newblock {\em Histoire de l'Acad{\'e}mie Royale des Sciences de Paris}, 1781.

\bibitem{mroueh2020unbalanced}
Youssef Mroueh and Mattia Rigotti.
\newblock Unbalanced {S}obolev descent.
\newblock {\em Advances in Neural Information Processing Systems},
  33:17034--17043, 2020.

\bibitem{otto2001geometry}
Felix Otto.
\newblock The geometry of dissipative evolution equations: {T}he porous medium
  equation.
\newblock {\em Comm. Partial Differential Equations}, 26:101--174, 2001.

\bibitem{peyre2019computational}
Gabriel Peyr{\'e}, Marco Cuturi, et~al.
\newblock Computational optimal transport: With applications to data science.
\newblock {\em Foundations and Trends{\textregistered} in Machine Learning},
  11(5-6):355--607, 2019.

\bibitem{peyre2016gromov}
Gabriel Peyr{\'e}, Marco Cuturi, and Justin Solomon.
\newblock {G}romov-{W}asserstein averaging of kernel and distance matrices.
\newblock In {\em International Conference on Machine Learning}, pages
  2664--2672. PMLR, 2016.

\bibitem{rioux2023entropic}
Gabriel Rioux, Ziv Goldfeld, and Kengo Kato.
\newblock Entropic {G}romov-{W}asserstein distances: Stability, algorithms, and
  distributional limits.
\newblock {\em arXiv preprint arXiv:2306.00182}, 2023.

\bibitem{santambrogio2015optimal}
Filippo Santambrogio.
\newblock Optimal transport for applied mathematicians.
\newblock {\em Birk{\"a}user, NY}, 55(58-63):94, 2015.

\bibitem{scetbon2022linear}
Meyer Scetbon, Gabriel Peyr{\'e}, and Marco Cuturi.
\newblock Linear-time {G}romov-{W}asserstein distances using low rank couplings
  and costs.
\newblock In {\em International Conference on Machine Learning}, pages
  19347--19365. PMLR, 2022.

\bibitem{sebbouh2024structured}
Othmane Sebbouh, Marco Cuturi, and Gabriel Peyr{\'e}.
\newblock Structured transforms across spaces with cost-regularized optimal
  transport.
\newblock In {\em International Conference on Artificial Intelligence and
  Statistics}, pages 586--594. PMLR, 2024.

\bibitem{sejourne2021unbalanced}
Thibault S{\'e}journ{\'e}, Fran{\c{c}}ois-Xavier Vialard, and Gabriel
  Peyr{\'e}.
\newblock The unbalanced {G}romov-{W}asserstein distance: Conic formulation and
  relaxation.
\newblock {\em Advances in Neural Information Processing Systems},
  34:8766--8779, 2021.

\bibitem{shilane2004princeton}
Philip Shilane, Patrick Min, Michael Kazhdan, and Thomas Funkhouser.
\newblock The princeton shape benchmark.
\newblock In {\em Proceedings Shape Modeling Applications, 2004.}, pages
  167--178. IEEE, 2004.

\bibitem{solomon2015convolutional}
Justin Solomon, Fernando De~Goes, Gabriel Peyr{\'e}, Marco Cuturi, Adrian
  Butscher, Andy Nguyen, Tao Du, and Leonidas Guibas.
\newblock Convolutional {W}asserstein distances: Efficient optimal
  transportation on geometric domains.
\newblock {\em ACM Transactions on Graphics (TOG)}, 34(4):66, 2015.

\bibitem{solomon2016entropic}
Justin Solomon, Gabriel Peyr{\'e}, Vladimir~G Kim, and Suvrit Sra.
\newblock Entropic metric alignment for correspondence problems.
\newblock {\em ACM Transactions on Graphics (ToG)}, 35(4):1--13, 2016.

\bibitem{sturm2006geometry}
Karl-Theodor STURM.
\newblock On the geometry of metric measure spaces. i.
\newblock {\em Acta mathematica}, 196(1):65--131, 2006.

\bibitem{sturm2012space}
Karl-Theodor Sturm.
\newblock The space of spaces: curvature bounds and gradient flows on the space
  of metric measure spaces.
\newblock {\em arXiv preprint arXiv:1208.0434}, 2012.

\bibitem{sturm2023space}
Karl-Theodor Sturm.
\newblock {\em {T}he {S}pace of {S}paces: {C}urvature {B}ounds and {G}radient
  {F}lows on the {S}pace of {M}etric {M}easure {S}paces}, volume 290.
\newblock American Mathematical Society, 2023.

\bibitem{vayer2020contribution}
Titouan Vayer.
\newblock A contribution to optimal transport on incomparable spaces.
\newblock {\em arXiv preprint arXiv:2011.04447}, 2020.

\bibitem{vayer2020fused}
Titouan Vayer, Laetitia Chapel, R{\'e}mi Flamary, Romain Tavenard, and Nicolas
  Courty.
\newblock Fused {G}romov-{W}asserstein distance for structured objects.
\newblock {\em Algorithms}, 13(9):212, 2020.

\bibitem{vayer2020sliced}
Titouan Vayer, R{\'e}mi Flamary, Romain Tavenard, Laetitia Chapel, and Nicolas
  Courty.
\newblock {S}liced {G}romov-{W}asserstein.
\newblock {\em arXiv preprint arXiv:1905.10124}, 2020.

\bibitem{villani2003topics}
C{\'e}dric Villani.
\newblock {\em {T}opics in {O}ptimal {T}ransportation}.
\newblock Number~58. American Mathematical Soc., 2003.

\bibitem{villani2008optimal}
C\'{e}dric Villani.
\newblock {\em {O}ptimal {T}ransport: {O}ld and {N}ew}.
\newblock Springer, 2008.

\bibitem{wibisono2018sampling}
Andre Wibisono.
\newblock Sampling as optimization in the space of measures: The {L}angevin
  dynamics as a composite optimization problem.
\newblock In {\em Conference on Learning Theory}, pages 2093--3027. PMLR, 2018.

\bibitem{xu2019scalable}
Hongteng Xu, Dixin Luo, and Lawrence Carin.
\newblock Scalable {G}romov-{W}asserstein learning for graph partitioning and
  matching.
\newblock {\em Advances in neural information processing systems}, 32, 2019.

\bibitem{xu2019gromov}
Hongteng Xu, Dixin Luo, Hongyuan Zha, and Lawrence~Carin Duke.
\newblock Gromov-{W}asserstein learning for graph matching and node embedding.
\newblock In {\em International conference on machine learning}, pages
  6932--6941. PMLR, 2019.

\bibitem{zhang2022wassersplines}
Paul Zhang, Dmitriy Smirnov, and Justin Solomon.
\newblock Wassersplines for neural vector field-controlled animation.
\newblock In {\em Computer Graphics Forum}, volume~41, pages 31--41. Wiley
  Online Library, 2022.

\bibitem{zhang2024gromov}
Zhengxin Zhang, Ziv Goldfeld, Youssef Mroueh, and Bharath~K Sriperumbudur.
\newblock Gromov--{W}asserstein distances: {E}ntropic regularization, duality
  and sample complexity.
\newblock {\em The Annals of Statistics}, 52(4):1616--1645, 2024.

\end{thebibliography}

\end{document}